\newtheorem{theorem}{Theorem}
\theoremstyle{plain}
\newtheorem{conclusion}[theorem]{Conclusion}
\newtheorem{conjecture}[theorem]{Conjecture}
\newtheorem{definition}[theorem]{Definition}
\newtheorem{lemma}[theorem]{Lemma}
\newtheorem{notation}[theorem]{Notation}
\newtheorem{problem}[theorem]{Problem}
\newtheorem{proposition}[theorem]{Proposition}
\newtheorem{remark}[theorem]{Remark}
\numberwithin{equation}{section}
\begin{document}
\title[$T1$ theorem]{The two weight $T1$ theorem for fractional Riesz
transforms when one measure is supported on a curve}
\author[E.T. Sawyer]{Eric T. Sawyer}
\address{ Department of Mathematics \& Statistics, McMaster University, 1280
Main Street West, Hamilton, Ontario, Canada L8S 4K1 }
\email{sawyer@mcmaster.ca}
\thanks{Research supported in part by NSERC}
\author[C.-Y. Shen]{Chun-Yen Shen}
\address{ Department of Mathematics \\
National Central University \\
Chungli, 32054, Taiwan }
\email{chunyshen@gmail.com}
\thanks{C.-Y. Shen supported in part by the NSC, through grant
NSC102-2115-M-008-015-MY2}
\author[I. Uriarte-Tuero]{Ignacio Uriarte-Tuero}
\address{ Department of Mathematics \\
Michigan State University \\
East Lansing MI }
\email{ignacio@math.msu.edu}
\thanks{ I. Uriarte-Tuero has been partially supported by grants DMS-1056965
(US NSF), MTM2010-16232, MTM2009-14694-C02-01 (Spain), and a Sloan
Foundation Fellowship. }
\date{September 19, 2015}

\begin{abstract}
Let $\sigma $ and $\omega $ be locally finite positive Borel measures on $%
\mathbb{R}^{n}$. We assume that at least one of the two measures $\sigma $
and $\omega $ is supported on a regular $C^{1,\delta }$ curve in $\mathbb{R}%
^{n}$. Let $\mathbf{R}^{\alpha ,n}$\ be the $\alpha $-fractional Riesz
transform vector on $\mathbb{R}^{n}$. We prove the $T1$ theorem for $\mathbf{%
R}^{\alpha ,n}$: namely that $\mathbf{R}^{\alpha ,n}$ is bounded from $%
L^{2}\left( \sigma \right) $ to $L^{2}\left( \omega \right) $ \emph{if and
only if} the $\mathcal{A}_{2}^{\alpha }$ conditions with holes hold, the
punctured $A_{2}^{\alpha }$ conditions hold, and the cube testing condition
for $\mathbf{R}^{\alpha ,n}$\textbf{\ }and its dual both hold. The special
case of the Cauchy transform, $n=2$ and $\alpha =1$, when the curve is a
line or circle, was established by Lacey, Sawyer, Shen, Uriarte-Tuero and
Wick in \cite{LaSaShUrWi}.

This $T1$ theorem represents essentially the most general $T1$ theorem
obtainable by methods of energy reversal. More precisely, for the
pushforwards of the measures $\sigma $ and $\omega $, under a change of
variable to straighten out the curve to a line, we use reversal of energy to
prove that the quasienergy conditions in \cite{SaShUr5} are implied by the $%
\mathcal{A}_{2}^{\alpha }$ with holes, punctured $A_{2}^{\alpha }$, and
quasicube testing conditions for $\mathbf{R}^{\alpha ,n}$. Then we apply the
main theorem in \cite{SaShUr5} to deduce the $T1$ theorem above.
\end{abstract}

\maketitle
\tableofcontents

\section{Introduction}

\subsection{A brief history of the $T1$ theorem}

The celebrated $T1$ theorem of David and Journ\'{e} \cite{DaJo}
characterizes those singular integral operators $T$ on $\mathbb{R}^{n}$ that
are bounded on $L^{2}\left( \mathbb{R}^{n}\right) $, and does so in terms of
a weak boundedness property, and the membership of the two functions $T%
\mathbf{1}$ and $T^{\ast }\mathbf{1}$ in the space of bounded mean
oscillation,%
\begin{eqnarray*}
\left\Vert T\mathbf{1}\right\Vert _{BMO\left( \mathbb{R}^{n}\right) }
&\lesssim &\left\Vert \mathbf{1}\right\Vert _{L^{\infty }\left( \mathbb{R}%
^{n}\right) }=1, \\
\left\Vert T^{\ast }\mathbf{1}\right\Vert _{BMO\left( \mathbb{R}^{n}\right)
} &\lesssim &\left\Vert \mathbf{1}\right\Vert _{L^{\infty }\left( \mathbb{R}%
^{n}\right) }=1.
\end{eqnarray*}%
These latter conditions are actually the following \emph{testing conditions}
in disguise,%
\begin{eqnarray*}
\left\Vert T\mathbf{1}_{Q}\right\Vert _{L^{2}\left( \mathbb{R}^{n}\right) }
&\lesssim &\left\Vert \mathbf{1}_{Q}\right\Vert _{L^{2}\left( \mathbb{R}%
^{n}\right) }=\sqrt{\left\vert Q\right\vert }\ , \\
\left\Vert T^{\ast }\mathbf{1}_{Q}\right\Vert _{L^{2}\left( \mathbb{R}%
^{n}\right) } &\lesssim &\left\Vert \mathbf{1}_{Q}\right\Vert _{L^{2}\left( 
\mathbb{R}^{n}\right) }=\sqrt{\left\vert Q\right\vert }\ ,
\end{eqnarray*}%
tested over all indicators of cubes $Q$ in $\mathbb{R}^{n}$ for both $T$ and
its dual operator $T^{\ast }$. This theorem was the culmination of decades
of investigation into the nature of cancellation conditions required for
boundedness of singular integrals\footnote{%
See e.g. chapter VII of Stein \cite{Ste} and the references given there for
a historical background.}.

A parallel thread of investigation culminated in the theorem of Coifman and
Fefferman\footnote{%
See e.g. chapter V of \cite{Ste} and the references given there for the long
history of this investigation, in which the celebrated theorem of Hunt,
Muckenhoupt and Wheeden played a critical role.} that characterizes those
nonnegative weights $w$ on $\mathbb{R}^{n}$ for which all of the `nicest' of
the $L^{2}\left( \mathbb{R}^{n}\right) $ bounded singular integrals $T$
above are bounded on weighted spaces $L^{2}\left( \mathbb{R}^{n};w\right) $,
and does so in terms of the $A_{2}$ condition of Muckenhoupt,%
\begin{equation*}
\left( \frac{1}{\left\vert Q\right\vert }\int_{Q}w\left( x\right) dx\right)
\left( \frac{1}{\left\vert Q\right\vert }\int_{Q}\frac{1}{w\left( x\right) }%
dx\right) \lesssim 1\ ,
\end{equation*}%
taken over all cubes $Q$ in $\mathbb{R}^{n}$. This condition is also a
testing condition in disguise, namely it is a consequence of%
\begin{equation*}
\left\Vert T\left( \mathbf{s}_{Q}\frac{1}{w}\right) \right\Vert
_{L^{2}\left( \mathbb{R}^{n};w\right) }\lesssim \left\Vert \mathbf{s}%
_{Q}\right\Vert _{L^{2}\left( \mathbb{R}^{n};\frac{1}{w}\right) }\ ,
\end{equation*}%
tested over all `indicators with tails' $\mathbf{s}_{Q}\left( x\right) =%
\frac{\ell \left( Q\right) }{\ell \left( Q\right) +\left\vert
x-c_{Q}\right\vert }$ of cubes $Q$ in $\mathbb{R}^{n}$.

A natural synthesis of these two results leads to the `two weight' question
of which pairs of weights $\left( \sigma ,\omega \right) $ have the property
that nice singular integrals are bounded from $L^{2}\left( \mathbb{R}%
^{n};\sigma \right) $ to $L^{2}\left( \mathbb{R}^{n};\omega \right) $. The
simplest (nontrivial) singular integral of all is the Hilbert transform $%
Hf\left( x\right) =\int_{\mathbb{R}}\frac{f\left( y\right) }{y-x}dy$ on the
real line, and Nazarov, Treil and Volberg formulated the two weight question
for the Hilbert transform \cite{Vol}, that in turn led to the NTV conjecture:

\begin{conjecture}
\cite{Vol} The Hilbert transform is bounded from $L^{2}\left( \mathbb{R}%
^{n};\sigma \right) $ to $L^{2}\left( \mathbb{R}^{n};\omega \right) $, i.e.%
\begin{equation*}
\left\Vert H\left( f\sigma \right) \right\Vert _{L^{2}\left( \mathbb{R}%
^{n};\omega \right) }\lesssim \left\Vert f\right\Vert _{L^{2}\left( \mathbb{R%
}^{n};\sigma \right) },\ \ \ \ \ f\in L^{2}\left( \mathbb{R}^{n};\sigma
\right) ,
\end{equation*}%
if and only if the two weight $A_{2}$ condition with tails holds,%
\begin{equation*}
\left( \frac{1}{\left\vert Q\right\vert }\int_{Q}\mathbf{s}_{Q}^{2}d\omega
\left( x\right) \right) \left( \frac{1}{\left\vert Q\right\vert }\int_{Q}%
\mathbf{s}_{Q}^{2}d\sigma \left( x\right) \right) \lesssim 1\ ,
\end{equation*}%
for all cubes $Q$, and the two testing conditions hold,%
\begin{eqnarray*}
\left\Vert H\mathbf{1}_{Q}\sigma \right\Vert _{L^{2}\left( \mathbb{R}%
^{n};\omega \right) } &\lesssim &\left\Vert \mathbf{1}_{Q}\right\Vert
_{L^{2}\left( \mathbb{R}^{n};\sigma \right) }=\sqrt{\left\vert Q\right\vert
_{\sigma }}\ , \\
\left\Vert H^{\ast }\mathbf{1}_{Q}\omega \right\Vert _{L^{2}\left( \mathbb{R}%
^{n};\sigma \right) } &\lesssim &\left\Vert \mathbf{1}_{Q}\right\Vert
_{L^{2}\left( \mathbb{R}^{n};\omega \right) }=\sqrt{\left\vert Q\right\vert
_{\omega }}\ ,
\end{eqnarray*}%
for all cubes $Q$.
\end{conjecture}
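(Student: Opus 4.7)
\bigskip

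\noindent\textbf{Proof plan for the NTV conjecture.}

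The necessity direction is essentially immediate: applying the boundedness inequality to $f=\mathbf{1}_Q$ yields the two testing conditions, and a standard two--cube argument (choosing $f=\mathbf{s}_Q$ and testing against $\mathbf{s}_Q\,d\omega$) recovers the $A_2$ condition with tails. So I would focus the entire effort on sufficiency. My plan is to follow the Nazarov--Treil--Volberg \emph{corona + energy} strategy, which is now canonical for one--dimensional two--weight problems of Calder\'on--Zygmund type, and which in essence is what the present paper pushes onto a curve in $\mathbb{R}^n$.

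First, I would set up the random grid machinery: fix two independent random dyadic grids $\mathcal{D}^\sigma$ and $\mathcal{D}^\omega$ and use the NTV bad--cube reduction to restrict attention to pairs $(I,J)$ with $J$ \emph{good} in $\mathcal{D}^\omega$ and $I\in \mathcal{D}^\sigma$ (and symmetrically), so that whenever $\ell(J)\leq 2^{-r}\ell(I)$ the cube $J$ lies deep inside a child of $I$ or far from it. Expanding $f\in L^2(\sigma)$ and $g\in L^2(\omega)$ in their respective Haar bases reduces $\langle H(f\sigma),g\rangle_\omega$ to a bilinear form
\begin{equation*}
\sum_{I\in \mathcal{D}^\sigma,\, J\in \mathcal{D}^\omega}
\bigl\langle H(h_I^\sigma\sigma),\,h_J^\omega\bigr\rangle_\omega\,
\widehat{f}(I)\,\widehat{g}(J).
\end{equation*}
I would then split the sum according to the relative position and size of $I$ and $J$: a \emph{diagonal / short--range} part (comparable sizes or nested with small depth), a \emph{paraproduct + stopping} part (disjoint cubes and deeply nested cubes with small Haar coefficient on the larger one), a \emph{neighbor} part, and a \emph{long--range / far} part.

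The long--range and neighbor terms I would control directly by the $A_2$ with tails condition, using the pointwise estimate $|H(h_I^\sigma\sigma)(y)|\lesssim \ell(I)^{1/2}\,\|h_I^\sigma\|_{L^2(\sigma)}\,\mathrm{dist}(y,I)^{-2}$ for $y$ away from $I$. The diagonal/short--range term collapses to the testing condition after absorbing errors via goodness. This leaves the paraproduct/stopping part as the heart of the matter. Following Nazarov--Treil--Volberg, I would build a corona decomposition based on $\sigma$--Carleson stopping data generated by the $L^2(\sigma)$--averages of $f$, grouping cubes into coronas on which $\mathbb{E}_I^\sigma f$ is essentially constant, and reduce the global estimate to a Carleson embedding.

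The hard part — and the genuine content of the conjecture — is the \emph{energy condition}: the bound
\begin{equation*}
\sum_{J\subset I}\,\mathsf{P}^\alpha(J,\mathbf{1}_I\sigma)^2\,
\bigl\Vert \mathsf{P}^\omega_{J}\,x\bigr\Vert^2_{L^2(\omega)}
\;\lesssim\;|I|_\sigma,
\end{equation*}
where $\mathsf{P}^\omega_J x$ denotes the orthogonal projection of the identity onto the Haar space localized to $J$. The central trick, specific to the Hilbert kernel in one variable, is the \emph{monotonicity / convexity} of $y\mapsto 1/(y-x)$: for $J$ well separated from the support of a positive measure $\mu$, the operator $H$ evaluated on subintervals of $J$ has a sign that lets one reverse the inequality between the integral average of $H\mu$ and the first--order moment of $\mu$ against $1/(y-c_J)^2$. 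This monotonicity lemma is what transforms testing $+$ $A_2$ into the energy bound. I would then combine it with a functional energy / quasi--orthogonality argument (the Lacey--Sawyer--Shen--Uriarte--Tuero functional energy theorem) to close the paraproduct estimate.

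I expect this last block — establishing the energy condition and using it through a functional energy Carleson embedding — to be the main obstacle, for two reasons: (i) in dimension one it relies crucially on the sign/convexity of the Hilbert kernel, which has no obvious higher--dimensional or non--curve analogue, and (ii) the functional energy inequality requires a delicate stopping--time scheme beyond the standard Carleson embedding theorem. All other pieces (random grids, corona, short--range, long--range, neighbor, testing collapse) are technically involved but follow established templates.
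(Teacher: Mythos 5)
This statement is a \emph{conjecture} quoted from the literature, not a theorem proved in the present paper; the paper records it as historical background and points to Lacey--Sawyer--Shen--Uriarte-Tuero (\cite{LaSaShUr3}), Lacey (\cite{Lac}) and Hyt\"{o}nen (\cite{Hyt2}) for its actual resolution, so there is no ``paper's own proof'' to compare against. With that caveat, your outline does track the broad shape of the known resolution (random grids, good cubes, corona decomposition, energy reversal), but two points are worth flagging. First, the Nazarov--Treil--Volberg scheme as originally executed used the \emph{pivotal} side condition, which turned out not to be necessary; the \emph{energy} condition was introduced later in \cite{LaSaUr2} as the correct weaker substitute, and the ``monotonicity'' step you describe is precisely the energy reversal
\begin{equation*}
\frac{1}{y-x}-\frac{1}{y-x'}=\frac{x-x'}{(y-x)(y-x')},
\end{equation*}
whose single-signed denominator off the convex hull of $\{x,x'\}$ is what proves energy is \emph{necessary} from testing $+$ $A_2$ in dimension one; so the logical structure is: prove necessity of energy, then invoke a $T1$ theorem with energy as a side condition. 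Second, and more seriously, your plan disposes of the paraproduct/stopping block as ``a Carleson embedding.'' In fact the \emph{stopping form} is exactly where the NTV program stalled for years: it is not a Carleson embedding, and Lacey's half of the two-part proof is an entire paper devoted to a recursive, size-based decomposition of the stopping form that has no antecedent in the standard corona toolkit. A proof plan that treats this as routine has a genuine gap at the step that was historically the hardest. Finally, you omit the weak boundedness property from your necessary-conditions list, and you would need Hyt\"{o}nen's refinement to remove the initial assumption of no common point masses, which was present in \cite{LaSaShUr3}/\cite{Lac}.
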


In a groundbreaking series of papers including \cite{NTV1},\cite{NTV2} and 
\cite{NTV3}, Nazarov, Treil and Volberg used weighted Haar decompositions
with random grids, introduced their `pivotal' condition, and proved the
above conjecture under the side assumption that the pivotal condition held.
Subsequently, in joint work of two of us, Sawyer and Uriarte-Tuero, with
Lacey \cite{LaSaUr2}, it was shown that the pivotal condition was not
necessary in general, a necessary `energy' condition was introduced as a
substitute, and a hybrid merging of these two conditions was shown to be
sufficient for use as a side condition. Eventually, these three authors with
Shen established the NTV conjecture in a two part paper; Lacey, Sawyer, Shen
and Uriarte-Tuero \cite{LaSaShUr3} and Lacey \cite{Lac}. A key ingredient in
the proof was an `energy reversal' phenomenom enabled by the Hilbert
transform kernel equality%
\begin{equation*}
\frac{1}{y-x}-\frac{1}{y-x^{\prime }}=\frac{x-x^{\prime }}{\left( y-x\right)
\left( y-x^{\prime }\right) },
\end{equation*}%
having the remarkable property that the denominator on the right hand side
remains \emph{positive} for all $y$ outside the smallest interval containing
both $x$ and $x^{\prime }$. This proof of the NTV conjecture was given in
the special case that the weights $\sigma $ and $\omega $ had no point
masses in common, largely to avoid what were then thought to be technical
issues. However, these issues turned out to be considerably more
interesting, and this final assumption of no common point masses was removed
shortly after by Hyt\"{o}nen \cite{Hyt2}, who also simplified some aspects
of the proof.

At this juncture, attention naturally turned to the analogous two weight
inequalities for higher dimensional singular integrals, as well as $\alpha $%
-fractional singular integrals such as the Cauchy transform in the plane. In
a long paper \cite{SaShUr4}, begun on the \textit{arXiv} in 2013, the
authors introduced the appropriate notions of Poisson kernel to deal with
the $A_{2}^{\alpha }$ condition on the one hand, and the $\alpha $-energy
condition on the other hand (unlike for the Hilbert transform, these two
Poisson kernels differ in general). The main result of that paper
established the $T1$ theorem for `elliptic' vectors of singular integrals
under the side assumption that an energy condition and its dual held, thus
identifying the \emph{culprit} in higher dimensions as the energy conditions
(see also \cite{SaShUr5} where the restriction to no common point masses was
removed). A general $T1$ conjecture is this.

\begin{conjecture}
Let $\mathbf{T}^{\alpha ,n}$ denote an elliptic vector of standard $\alpha $%
-fractional singular integrals in $\mathbb{R}^{n}$. Then $\mathbf{T}^{\alpha
,n}$ is bounded from $L^{2}\left( \mathbb{R}^{n};\sigma \right) $ to $%
L^{2}\left( \mathbb{R}^{n};\omega \right) $, i.e.%
\begin{equation*}
\left\Vert \mathbf{T}^{\alpha ,n}\left( f\sigma \right) \right\Vert
_{L^{2}\left( \mathbb{R}^{n};\omega \right) }\lesssim \left\Vert
f\right\Vert _{L^{2}\left( \mathbb{R}^{n};\sigma \right) },\ \ \ \ \ f\in
L^{2}\left( \mathbb{R}^{n};\sigma \right) ,
\end{equation*}%
if and only if the two one-tailed $\mathcal{A}_{2}^{\alpha }$ conditions
with holes hold, the punctured $A_{2}^{\alpha }$ conditions hold, and the
two testing conditions hold,%
\begin{eqnarray*}
\left\Vert \mathbf{T}^{\alpha ,n}\mathbf{1}_{Q}\sigma \right\Vert
_{L^{2}\left( \mathbb{R}^{n};\omega \right) } &\lesssim &\left\Vert \mathbf{1%
}_{Q}\right\Vert _{L^{2}\left( \mathbb{R}^{n};\sigma \right) }=\sqrt{%
\left\vert Q\right\vert _{\sigma }}\ , \\
\left\Vert \mathbf{T}^{\alpha ,n,\func{dual}}\mathbf{1}_{Q}\omega
\right\Vert _{L^{2}\left( \mathbb{R}^{n};\sigma \right) } &\lesssim
&\left\Vert \mathbf{1}_{Q}\right\Vert _{L^{2}\left( \mathbb{R}^{n};\omega
\right) }=\sqrt{\left\vert Q\right\vert _{\omega }}\ ,
\end{eqnarray*}%
for all cubes $Q$ in $\mathbb{R}^{n}$ (whose sides need not be parallel to
the coordinate axes).
\end{conjecture}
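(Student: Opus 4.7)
Necessity is standard. The two testing conditions follow by testing the bilinear form against $f=\mathbf{1}_Q$ and its dual against $g=\mathbf{1}_Q$. The $\mathcal{A}_{2}^{\alpha}$ condition with holes follows by testing on the ``indicators with tails'' $\mathbf{s}_Q$ and using the elliptic size lower bound of the kernel of $\mathbf{T}^{\alpha,n}$ away from its support; the punctured $A_{2}^{\alpha}$ condition is the standard modification removing common point masses, exactly as in \cite{SaShUr5}.

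For sufficiency, the plan is to appeal to the main theorem of \cite{SaShUr5}: for any elliptic $\alpha$-fractional vector $\mathbf{T}^{\alpha,n}$, boundedness from $L^{2}(\sigma)$ to $L^{2}(\omega)$ follows from the joint assumptions of $\mathcal{A}_{2}^{\alpha}$ with holes, punctured $A_{2}^{\alpha}$, quasicube testing for $\mathbf{T}^{\alpha,n}$ and its dual, \emph{together with} the quasienergy conditions (forward and dual). Thus the problem reduces to showing that the quasienergy conditions are themselves consequences of the remaining hypotheses in the conjecture.

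To derive the quasienergy conditions I would follow the blueprint of \cite{LaSaShUr3}: work with random dyadic grids, restrict to good quasicubes, construct a stopping tree adapted to the testing functional, and pass from the weighted first-moment square function
\[
\sum_{J} \left(\frac{\mathrm{P}^{\alpha}(J,\mathbf{1}_{I}\sigma)}{|J|^{1/n}}\right)^{2} \|\mathsf{P}^{\omega,\mathrm{lin}}_{J} x\|^{2}_{L^{2}(\omega)}
\]
to the $L^{2}(\omega)$-norm of $\mathbf{T}^{\alpha,n}(\mathbf{1}_{I}\sigma)$ by an \emph{energy reversal}. This mimics the Hilbert transform identity $\tfrac{1}{y-x}-\tfrac{1}{y-x'}=\tfrac{x-x'}{(y-x)(y-x')}$, in which the right-hand denominator retains a definite sign, thereby producing a pointwise lower bound on $|\mathbf{T}^{\alpha,n}(\mathbf{1}_{I}\sigma)(y)-\mathbf{T}^{\alpha,n}(\mathbf{1}_{I}\sigma)(y')|$ that dominates the first-moment discrepancy.

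The main obstacle—and the reason the statement is a conjecture rather than a theorem—is precisely this reversal step. In dimensions $n\ge 2$ with both measures free in $\mathbb{R}^{n}$, the direction of the first moment of $\omega$ on a quasicube $J$ need not align with any direction in which the elliptic matrix of kernels is nondegenerate, so the required pointwise lower bound can fail on a substantial portion of $J$. The present paper surmounts this obstacle only in the restricted setting where one measure is supported on a $C^{1,\delta}$ curve, where after straightening the curve a one-dimensional structure restores sign control. For the full conjecture one needs either (i) a geometric decomposition extracting a tangential direction along which reversal can still be carried out, or (ii) a genuinely new route to the quasienergy conditions that avoids pointwise kernel lower bounds and deduces them directly from testing and $A_{2}^{\alpha}$. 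This is the step I expect to be the true obstruction, and it is where any new idea must enter.
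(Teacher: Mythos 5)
The statement you are addressing is labelled a \emph{conjecture} in the paper, and the paper offers no proof of it; indeed the authors state explicitly that it is not known whether the energy conditions are necessary for any elliptic vector $\mathbf{T}^{\alpha,n}$ in dimension $n\geq 2$, and they cite \cite{SaShUr2}, \cite{SaShUr3} for the fact that energy reversal ``fails spectacularly'' in higher dimensions (e.g.\ for the Cauchy transform with circle measure), as well as the counterexamples for fractional Riesz transforms in \cite{LaWi2}. Your proposal is therefore not a proof, and to your credit you say so: your diagnosis of where the argument breaks --- the passage from testing and $\mathcal{A}_2^{\alpha}$ to the quasienergy conditions via pointwise reversal of the first-moment discrepancy --- is exactly the obstruction the paper identifies, and your reduction of sufficiency to the main theorem of \cite{SaShUr5} plus the energy conditions is the same strategy the paper uses in the special case it can handle (one measure supported on a regular $C^{1,\delta}$ curve, where straightening the curve restores the one-dimensional sign control). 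The necessity direction you sketch is standard and is part (2) of the paper's Theorem 6.

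Two points to sharpen. First, the main theorem of \cite{SaShUr5} that you invoke for sufficiency also requires the quasiweak boundedness property, which is not among the hypotheses of the conjecture; in the general setting one must either derive it from testing and $\mathfrak{A}_2^{\alpha}$ (the paper does this in its restricted setting via a backward \emph{tripled} testing condition, and Remark 8 discusses the Lacey--Wick surgery alternative) or absorb it some other way, so your outline has a second, smaller gap beyond the energy conditions. Second, your phrasing that the lower bound ``can fail on a substantial portion of $J$'' understates the situation: the cited references show that no pointwise energy reversal of the required strength is available for these kernels in $n\geq 2$ with both measures unrestricted, so route (i) of your closing paragraph is known to be blocked as stated, and any resolution must go through something like your route (ii). In short: your proposal is a correct account of the state of the art, but the statement remains open and your write-up should present it as such rather than as a proof with a missing step.
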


A positive resolution to this conjecture could have implications for a
number of problems that are higher dimensional analogues of those connected
to the Hilbert transform (see e.g. \cite{Vol}, \cite{NiTr}, \cite{NaVo}, 
\cite{VoYu}, \cite{PeVoYu}, \cite{PeVoYu1}, \cite{IwMa}, \cite{LaSaUr}, \cite%
{AsGo} and \cite{AsZi}).

In view of the aforementioned main result in \cite{SaShUr5}, the following
conjecture is stronger.

\begin{conjecture}
Let $\mathbf{T}^{\alpha ,n}$ denote an elliptic vector of standard $\alpha $%
-fractional singular integrals in $\mathbb{R}^{n}$. If $\mathbf{T}^{\alpha
,n}$ is bounded from $L^{2}\left( \mathbb{R}^{n};\sigma \right) $ to $%
L^{2}\left( \mathbb{R}^{n};\omega \right) $, then the energy conditions hold
as defined in Definition \ref{energy condition} below.
\end{conjecture}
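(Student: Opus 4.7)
The plan is to extend the energy reversal phenomenon, the mechanism behind the resolution of the NTV conjecture for the Hilbert transform in \cite{LaSaShUr3} and \cite{Lac}, to an elliptic vector $\mathbf{T}^{\alpha,n}$ in higher dimensions. Given a cube $Q$ and a pairwise disjoint subpartition $\{I_{r}\}$ of $Q$, the target is an energy inequality of the form
\begin{equation*}
\sum_{r}\mathrm{P}^{\alpha}\!\left(I_{r},\mathbf{1}_{Q}\sigma\right)^{2}\left\Vert \mathsf{P}^{\omega}_{I_{r}}\mathbf{x}\right\Vert _{L^{2}(\omega)}^{2}\lesssim \mathfrak{N}^{2}\left\vert Q\right\vert _{\sigma},
\end{equation*}
where $\mathsf{P}^{\omega}_{I_{r}}\mathbf{x}$ is the $L^{2}(\omega)$-orthogonal projection of the coordinate function $\mathbf{x}$ onto the mean-zero $\omega$-span supported on $I_{r}$ and $\mathfrak{N}$ is the assumed operator norm from $L^{2}(\sigma)$ to $L^{2}(\omega)$. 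Since boundedness trivially implies the testing and $\mathcal{A}_{2}^{\alpha}$ hypotheses, the task is to extract this quadratic estimate from the pointwise action of $\mathbf{T}^{\alpha,n}$ on $\mathbf{1}_{Q^{c}}\sigma$ over each $I_{r}$.

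Concretely, the first step is to write, for $y,y'\in I_{r}$,
\begin{equation*}
\mathbf{T}^{\alpha,n}(\mathbf{1}_{Q^{c}}\sigma)(y)-\mathbf{T}^{\alpha,n}(\mathbf{1}_{Q^{c}}\sigma)(y')=\int_{Q^{c}}\bigl[\mathbf{K}^{\alpha,n}(x,y)-\mathbf{K}^{\alpha,n}(x,y')\bigr]\,d\sigma(x),
\end{equation*}
Taylor expand the kernel difference as $\nabla_{y}\mathbf{K}^{\alpha,n}(x,y_{I_{r}})\cdot(y-y')$ with a controlled remainder, and then use ellipticity of $\mathbf{T}^{\alpha,n}$: for $x$ in each of finitely many cones pointing out of $I_{r}$, there is a unit vector $\mathbf{e}_{\Theta}$ along which $\mathbf{e}_{\Theta}\cdot\nabla_{y}\mathbf{K}^{\alpha,n}(x,y_{I_{r}})$ has a definite sign with magnitude $\gtrsim|x-y_{I_{r}}|^{\alpha-n-1}$. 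Pigeonholing over cones reduces matters to a sign-coherent scalar situation, and pairing against $\mathsf{P}^{\omega}_{I_{r}}\mathbf{x}$, applying Plancherel orthogonality across $r$, and invoking the assumed $L^{2}(\omega)$ bound on $\mathbf{T}^{\alpha,n}(\mathbf{1}_{Q^{c}}\sigma)$ should yield the reversed form $\mathrm{P}^{\alpha}(I_{r},\mathbf{1}_{Q^{c}}\sigma)\left\Vert \mathsf{P}^{\omega}_{I_{r}}\mathbf{x}\right\Vert _{L^{2}(\omega)}$ on the left. The $\mathbf{1}_{Q}\sigma$ piece is absorbed by the testing condition on $Q$, and the residual tail interaction is controlled by the $\mathcal{A}_{2}^{\alpha}$ condition with holes.

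The hard part, and the reason the statement remains a conjecture, is precisely that the sign reversal enjoyed by the Hilbert identity
\begin{equation*}
\frac{1}{y-x}-\frac{1}{y-x'}=\frac{x-x'}{(y-x)(y-x')},
\end{equation*}
whose denominator retains a definite sign for $y$ outside the interval spanned by $x,x'$, does not survive intact in higher dimensions. The analogous Riesz kernel difference $\mathbf{K}^{\alpha,n}(x,y)-\mathbf{K}^{\alpha,n}(x,y')$ mixes a translation contribution with a normalization contribution $\left\vert y-x\right\vert ^{\alpha-n-1}-\left\vert y'-x\right\vert ^{\alpha-n-1}$ whose sign depends on the angle between $y-x$ and $y'-x$, and the cone pigeonhole does not produce a single dominant direction uniformly across scales and locations of the $I_{r}$. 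This is exactly the obstacle that the present paper sidesteps by supporting one of $\sigma,\omega$ on a $C^{1,\delta}$ curve: after a change of variable straightening the curve to a line, the relevant kernel acquires enough one-dimensional structure for a genuine reversal to run, and the energy condition can then be derived from $\mathcal{A}_{2}^{\alpha}$, the punctured conditions, and testing. An unconditional proof of the conjecture appears to require either a new cancellation identity for the Riesz vector or a route that bypasses energy entirely.
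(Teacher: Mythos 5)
The statement you were asked about is a \emph{conjecture}, not a theorem: the paper offers no proof of it, and indeed states explicitly that ``it is not known if the energy conditions are necessary or not in general'' and records the necessity question as an open Problem. So there is no proof in the paper against which to compare your argument, and your text --- to your credit --- does not actually claim to supply one. What you have written is a strategy sketch followed by an accurate diagnosis of why that strategy fails, which is the honest state of affairs.

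Your diagnosis is consistent with what the authors themselves say. The obstruction you identify --- that the Riesz kernel difference $\mathbf{K}^{\alpha,n}(x,y)-\mathbf{K}^{\alpha,n}(x,y')$ mixes a translation term with a normalization term whose sign depends on the angle between $y-x$ and $y'-x$, so that no single direction dominates uniformly and the cone pigeonhole cannot produce a sign-coherent reversal --- is precisely the failure of \emph{energy reversal} in dimension $n\geq 2$ documented in \cite{SaShUr2} and \cite{SaShUr3}, which the paper cites (including Lacey's example of the Cauchy transform with circle measure). Your observation that the present paper evades the obstruction by assuming one measure is supported on a $C^{1,\delta}$ curve, straightening it to a line, and exploiting the resulting one-dimensional structure to run a genuine (weak, strong, and pivotal) reversal is also an accurate summary of Sections 3 and 4. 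The one caution I would add: even the intermediate steps you describe as unproblematic (``pairing against $\mathsf{P}^{\omega}_{I_r}\mathbf{x}$, applying Plancherel orthogonality across $r$'') are not routine --- in the paper's line-supported case the reversal must be split into end and side pieces, with the side piece controlled by $\mathcal{A}_2^{\alpha}$ via a Carleson-shadow summation rather than by testing, so even granting a sign-definite derivative the bookkeeping is substantial. In short: you have not proven the conjecture, no one has, and your account of why is correct.
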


At the time of this writing, it is not known if the energy conditions are
necessary or not in general. The paper \cite{LaWi2} by Lacey and Wick
overlaps \cite{SaShUr4} to some extent.

While no counterexamples have yet been discovered to the energy conditions,
there are some cases in which they have been proved to hold. Of course, the
energy conditions hold for the Hilbert transform on the line \cite{LaSaUr2},
and in recent joint work with M. Lacey and B. Wick, the five of us have
established that the energy conditions hold for the Cauchy transform in the
plane in the special case where one of the measures is supported on either a
straight line or a circle, thus proving the $T1$ theorem in this case. The
key to this result was an extension of the energy reversal phenomenon for
the Hilbert transform to the setting of the Cauchy transform, and here the
one-dimensional nature of the line and circle played a critical role. In
particular, a special decomposition of a $2$-dimensional measure into `end'
and `side' pieces played a crucial role, and was in fact discovered
independently in both the initial version of this paper and in \cite{LaWi}.

In this paper, we extend the $T1$ theorem to the setting where one of the
measures is supported on a H\"{o}lder continuously differentiable curve (in
higher dimensions). This result seems to represent the best possible $T1$
theorem that can be obtained from the methods of energy reversal, and
requires a number of new ideas, especially of a geometric nature, as opposed
to the more algebraic `corona' ideas developed for the solution to the NTV
conjecture. In particular, changes of variable are made to straighten
sufficiently small pieces of the curve to a line, and the resulting operator
norms, $A_{2}^{\alpha }$ conditions, and testing condition constants are
tracked under these changes of variable. This tracking presents significant
subtleties, especially for the testing constants, which require appropriate
tangent plane approximations to the phase function of the testing kernel.
Further effort is then needed to control the testing conditions associated
with these pieces by the testing conditions we assume for the curve in the
first place. In particular, a `localized triple testing' condition is
derived that enables the reduction of testing conditions for small pieces of
transformed measure on a line to the testing conditions for the global
measures. Yet another complication arises here in the use of quasicubes in
the proof - dictated by pushforwards of ordinary cubes - and this requires
the new notion of `$L$-transverse' and its properties in order to control
the intersections of quasicubes with lines.

We now give a more precise description of what is in this paper and its
relation to the literature.

\subsection{Statement of results}

In \cite{SaShUr5} (see also \cite{SaShUr} and \cite{SaShUr4} for special
cases), under a side assumption that certain \emph{energy conditions} hold,\
the authors show in particular that the two weight inequality%
\begin{equation}
\left\Vert \mathbf{R}^{\alpha ,n}\left( f\sigma \right) \right\Vert
_{L^{2}\left( \omega \right) }\lesssim \left\Vert f\right\Vert _{L^{2}\left(
\sigma \right) },  \label{2 weight}
\end{equation}%
for the vector of Riesz transforms $\mathbf{R}^{\alpha ,n}$ in $\mathbb{R}%
^{n}$ (with $0\leq \alpha <n$) holds if and only if the $\mathcal{A}%
_{2}^{\alpha }$ conditions with holes hold, the punctured $A_{2}^{\alpha ,%
\limfunc{punct}}$ conditions hold, the quasicube testing conditions hold,
and the quasiweak boundedness property holds. Here a quasicube is a globally
biLipschitz image of a usual cube. Precise definitions of all terms used
here are given in the next section. It is not known at the time of this
writing whether or not these or any other energy conditions are necessary
for \emph{any} vector $\mathbf{T}^{\alpha ,n}$ of fractional singular
integrals in $\mathbb{R}^{n}$ with $n\geq 2$, apart from the trivial case of
positive operators. In particular there are no known counterexamples. We
also showed in \cite{SaShUr2} and \cite{SaShUr3} that the technique of
reversing energy, typically used to prove energy conditions, fails
spectacularly in higher dimension (and we thank M. Lacey for showing us this
failure for the Cauchy transform with the circle measure). See also the
counterexamples for the fractional Riesz transforms in \cite{LaWi2}.

The purpose of this paper is to show that if $\sigma $ and $\omega $ are
locally finite positive Borel measures (possibly having common point
masses), and at least \emph{one} of the two measures $\sigma $ and $\omega $
is supported on a line in $\mathbb{R}^{n}$, or on a regular $C^{1,\delta }$
curve in $\mathbb{R}^{n}$, then the energy conditions are indeed necessary
for boundedness of the fractional Riesz transform $\mathbf{R}^{\alpha ,n}$,
and hence that a $T1$ theorem holds for $\mathbf{R}^{\alpha ,n}$ (see
Theorem \ref{general T1} below). Just after the first version of this paper
appeared on the \emph{arXiv}, M. Lacey and B. Wick \cite{LaWi} independently
posted a similar result for the special case of the Cauchy transform in the
plane where one measure is supported on a line or a circle, and the five
authors have combined on the paper \cite{LaSaShUrWi} in this setting.

The vector of $\alpha $-fractional Riesz transforms is given by%
\begin{equation*}
\mathbf{R}^{\alpha ,n}=\left\{ R_{\ell }^{\alpha ,n}:1\leq \ell \leq
n\right\} ,\ \ \ \ \ 0\leq \alpha <n,
\end{equation*}%
where the component Riesz transforms $R_{\ell }^{\alpha ,n}$ are the
convolution fractional singular integrals $R_{\ell }^{\alpha ,n}f\equiv
K_{\ell }^{\alpha ,n}\ast f$ with odd kernel defined by%
\begin{equation*}
K_{\ell }^{\alpha ,n}\left( w\right) \equiv c_{\alpha ,n}\frac{w^{\ell }}{%
\left\vert w\right\vert ^{n+1-\alpha }}.
\end{equation*}%
Finally, we remark that the $T1$ theorem under this geometric condition has
application to the weighted discrete Hilbert transform $H_{\left( \Gamma
,v\right) }$ when the sequence $\Gamma $ is supported on an appropriate $%
C^{1,\delta }$ curve in the complex plane. See \cite{BeMeSe} where $%
H_{\left( \Gamma ,v\right) }$ is essentially the Cauchy transform with $n=2$
and $\alpha =1$.

We now recall a special case of our main two weight theorem from \cite%
{SaShUr5} which plays a key role here - see also \cite{SaShUr} and \cite%
{SaShUr4} for earlier versions. Let $\mathcal{P}^{n}$ denote the collection
of all cubes in $\mathbb{R}^{n}$ with sides parallel to the coordinate axes,
and denote by $\mathcal{D}^{n}\subset \mathcal{P}^{n}$ a dyadic grid in $%
\mathbb{R}^{n}$. The side conditions $\mathcal{A}_{2}^{\alpha }$, $\mathcal{A%
}_{2}^{\alpha ,\func{dual}}$, $A_{2}^{\alpha ,\limfunc{punct}}$, $%
A_{2}^{\alpha ,\limfunc{punct},\func{dual}}$, $\mathcal{E}_{\alpha }$ and $%
\mathcal{E}_{\alpha }^{\func{dual}}$ depend only only on the measure pair $%
\left( \sigma ,\omega \right) $, while the necessary conditions $\mathfrak{T}%
_{\mathbf{R}^{\alpha ,n}}$, $\mathfrak{T}_{\mathbf{R}^{\alpha ,n}}^{\func{%
dual}}$ and $\mathcal{WBP}_{\mathbf{R}^{\alpha ,n}}$ depend on the measure
pair $\left( \sigma ,\omega \right) $ as well as the singular operator $%
\mathbf{R}_{\sigma }^{\alpha ,n}$. These conditions will be explained below.
For convenience in notation, we use Fraktur font for A to denote,%
\begin{equation*}
\mathfrak{A}_{2}^{\alpha }\equiv \mathcal{A}_{2}^{\alpha }+\mathcal{A}%
_{2}^{\alpha ,\func{dual}}+A_{2}^{\alpha ,\limfunc{punct}}+A_{2}^{\alpha ,%
\limfunc{punct},\func{dual}},
\end{equation*}%
or when the measure pair is important, 
\begin{equation}
\mathfrak{A}_{2}^{\alpha }\left( \sigma ,\omega \right) \equiv \mathcal{A}%
_{2}^{\alpha }\left( \sigma ,\omega \right) +\mathcal{A}_{2}^{\alpha ,\func{%
dual}}\left( \sigma ,\omega \right) +A_{2}^{\alpha ,\limfunc{punct}}\left(
\sigma ,\omega \right) +A_{2}^{\alpha ,\limfunc{punct},\func{dual}}\left(
\sigma ,\omega \right) .  \label{big A2}
\end{equation}

\begin{notation}
In order to avoid confusion with the use of $\ast $ for pullbacks and
pushforwards of maps, we will use the superscript $\func{dual}$ in place of $%
\ast $ to denote `dual conditions' throughout this paper.
\end{notation}

\begin{theorem}
\label{T1 theorem}Suppose that $\mathbf{R}^{\alpha ,n}$ is the vector of $%
\alpha $-fractional Riesz transforms in $\mathbb{R}^{n}$, and that $\omega $
and $\sigma $ are positive locally finite Borel measures on $\mathbb{R}^{n}$
(possibly having common point masses). Set $\mathbf{R}_{\sigma }^{\alpha
,n}f=\mathbf{R}^{\alpha ,n}\left( f\sigma \right) $ for any smooth
truncation of $\mathbf{R}^{\alpha ,n}$. Let $\Omega :\mathbb{R}%
^{n}\rightarrow \mathbb{R}^{n}$ be a globally biLipschitz map.

\begin{enumerate}
\item Suppose $0\leq \alpha <n$ and that $\gamma \geq 2$ is given. Then the
vector Riesz transform $\mathbf{R}_{\sigma }^{\alpha ,n}$ is bounded from $%
L^{2}\left( \sigma \right) $ to $L^{2}\left( \omega \right) $, i.e. 
\begin{equation}
\left\Vert \mathbf{R}_{\sigma }^{\alpha ,n}f\right\Vert _{L^{2}\left( \omega
\right) }\leq \mathfrak{N}_{\mathbf{R}^{\alpha ,n}}\left\Vert f\right\Vert
_{L^{2}\left( \sigma \right) },  \label{two weight}
\end{equation}%
uniformly in smooth truncations of $\mathbf{R}^{\alpha ,n}$, and moreover%
\begin{equation}
\mathfrak{N}_{\mathbf{R}^{\alpha ,n}}\leq C_{\alpha }\left( \sqrt{\mathfrak{A%
}_{2}^{\alpha }}+\mathcal{E}_{\alpha }+\mathcal{E}_{\alpha }^{\func{dual}}+%
\mathfrak{T}_{\mathbf{R}^{\alpha ,n}}+\mathfrak{T}_{\mathbf{R}^{\alpha ,n}}^{%
\func{dual}}+\mathcal{WBP}_{\mathbf{R}^{\alpha ,n}}\right) ,
\label{norm bound}
\end{equation}%
provided that the two dual $\mathcal{A}_{2}^{\alpha }$ conditions with holes
hold, the punctured dual $A_{2}^{\alpha ,\limfunc{punct}}$ conditions hold,
and the two dual quasicube testing conditions for $\mathbf{R}^{\alpha ,n}$
hold, the quasiweak boundedness property for $\mathbf{R}^{\alpha ,n}$ holds
for a sufficiently large constant $C$ depending on the goodness parameter $%
\mathbf{r}$, and provided that the two dual quasienergy conditions $\mathcal{%
E}_{\alpha }+\mathcal{E}_{\alpha }^{\func{dual}}<\infty $ hold uniformly
over all dyadic grids $\mathcal{D}^{n}$, and where the goodness parameters $%
\mathbf{r}$ and $\varepsilon $ implicit in the definition of $\mathcal{M}_{%
\mathbf{r}-\limfunc{deep}}^{\ell }\left( K\right) $ below are fixed
sufficiently large and small respectively depending on $n$, $\alpha $ and $%
\gamma $. Here $\mathfrak{N}_{\mathbf{R}^{\alpha ,n}}=\mathfrak{N}_{\mathbf{R%
}^{\alpha ,n}}\left( \sigma ,\omega \right) $ is the least constant in (\ref%
{two weight}).

\item Conversely, suppose $0\leq \alpha <n$ and that the Riesz transform
vector $\mathbf{R}_{\sigma }^{\alpha ,n}$ is bounded from $L^{2}\left(
\sigma \right) $ to $L^{2}\left( \omega \right) $, 
\begin{equation*}
\left\Vert \mathbf{R}_{\sigma }^{\alpha ,n}f\right\Vert _{L^{2}\left( \omega
\right) }\leq \mathfrak{N}_{\mathbf{R}^{\alpha ,n}}\left\Vert f\right\Vert
_{L^{2}\left( \sigma \right) }.
\end{equation*}%
Then the testing conditions and weak boundedness property hold for $\mathbf{R%
}^{\alpha ,n}$, the fractional $\mathcal{A}_{2}^{\alpha }$ conditions with
holes hold, and the punctured dual $A_{2}^{\alpha ,\limfunc{punct}}$
conditions hold, and moreover,%
\begin{equation*}
\sqrt{\mathfrak{A}_{2}^{\alpha }}+\mathfrak{T}_{\mathbf{R}^{\alpha ,n}}+%
\mathfrak{T}_{\mathbf{R}^{\alpha ,n}}^{\func{dual}}+\mathcal{WBP}_{\mathbf{R}%
^{\alpha ,n}}\leq C\mathfrak{N}_{\mathbf{R}^{\alpha ,n}}.
\end{equation*}
\end{enumerate}
\end{theorem}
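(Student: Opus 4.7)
The plan is to separate the theorem into its two independent halves. Part (1) is the deep direction and constitutes the bulk of the work; part (2) is relatively soft. Since this statement is recalled verbatim from \cite{SaShUr5} (with $\mathbf{T}^{\alpha,n}$ specialized to the elliptic vector $\mathbf{R}^{\alpha,n}$), my approach is to sketch the machine already in place there and verify that its hypotheses are met for the Riesz transform.

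For part (2), I would derive the testing inequalities by taking $f=\mathbf{1}_{\Omega Q}$ (and dually $g=\mathbf{1}_{\Omega Q}$) in the assumed norm inequality, immediately giving $\mathfrak{T}_{\mathbf{R}^{\alpha,n}}+\mathfrak{T}_{\mathbf{R}^{\alpha,n}}^{\func{dual}}\le \mathfrak{N}_{\mathbf{R}^{\alpha,n}}$. The quasiweak boundedness property $\mathcal{WBP}_{\mathbf{R}^{\alpha,n}}$ follows by pairing $\mathbf{R}^{\alpha,n}_\sigma \mathbf{1}_{\Omega Q}$ with $\mathbf{1}_{\Omega Q'}$ for adjacent quasicubes of comparable side length, via Cauchy--Schwarz. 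The $\mathcal{A}_2^\alpha$ condition with holes is obtained by taking $f=\mathbf{1}_Q$ and exploiting that the Riesz kernel $K_\ell^{\alpha,n}$ has a fixed sign in a suitable cone direction; this yields a pointwise lower bound on $|\mathbf{R}^{\alpha,n}_\sigma \mathbf{1}_Q(x)|$ for $x$ outside $Q$, which after integration against $\omega$ produces the offset Muckenhoupt constant. The punctured variant $A_2^{\alpha,\limfunc{punct}}$ is handled in the same manner after excising the largest common point mass, a standard modification required precisely because common atoms would otherwise inflate the ordinary ratio.

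For part (1), the approach is the good/bad random grid plus corona/stopping-time machine of Nazarov--Treil--Volberg, adapted to quasicubes. I would first expand $f,g$ in quasi-Haar bases adapted to random translates of the image grids $\Omega(\mathcal{D}^n)$ and discard the probabilistically small bad-quasicube contribution. The bilinear form $\langle \mathbf{R}^{\alpha,n}_\sigma f,g\rangle_\omega$ then decomposes according to the relative position of each pair $(I,J)$ of quasicubes: separated pairs are absorbed by $\mathcal{A}_2^\alpha$ with holes together with $A_2^{\alpha,\limfunc{punct}}$; comparable-size pairs are absorbed by $\mathcal{WBP}_{\mathbf{R}^{\alpha,n}}$; nested pairs drive the argument. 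Following the NTV paraproduct splitting, the nested term breaks into a paraproduct piece (controlled by $\mathfrak{T}_{\mathbf{R}^{\alpha,n}}$), a neighbor piece (controlled by $\mathcal{A}_2^\alpha$), and a stopping/remainder piece handled via an energy corona depending on the quasienergy condition $\mathcal{E}_\alpha+\mathcal{E}_\alpha^{\func{dual}}$ and a functional energy inequality of Carleson-embedding type.

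The main obstacle is precisely the functional energy inequality in the sufficiency direction. It requires building a two-tier (Calder\'{o}n--Zygmund plus energy) corona decomposition, carefully matching stopping trees between the $f$ and $g$ expansions, and dominating the resulting Poisson-tail sums by an $\mathcal{A}_2^\alpha$-type averaging summed via Carleson's embedding lemma. Verifying that the goodness/badness dichotomy, the Poisson tail estimates, and the Haar projections behave correctly under the globally biLipschitz map $\Omega$ (so that the cube-based machinery transfers to quasicubes) is the delicate technical input that occupies the bulk of \cite{SaShUr5}. Consequently my proof plan is to invoke that theorem directly, noting that ellipticity of $\mathbf{R}^{\alpha,n}$ supplies the off-support cone lower bound needed for part (2), and that the standard $\alpha$-fractional Calder\'{o}n--Zygmund estimates on $K_\ell^{\alpha,n}$ supply the kernel hypotheses needed for part (1).
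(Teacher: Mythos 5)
Your proposal matches the paper's treatment: Theorem \ref{T1 theorem} is not proved in this paper but is recalled verbatim as a special case of the main theorem of \cite{SaShUr5}, which is exactly the citation you invoke, and your sketch of the good/bad quasi-Haar decomposition, the paraproduct/neighbor/stopping splitting, and the functional energy Carleson-embedding step is a faithful outline of that reference's argument. Your derivation of part (2) (testing and weak boundedness by direct substitution, Muckenhoupt with holes via the cone-direction lower bound from ellipticity, and the punctured variant after excising the largest common atom) likewise agrees with the necessity argument cited there.
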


\begin{problem}
It is an open question whether or not the energy conditions are necessary
for boundedness of $\mathbf{R}_{\sigma }^{\alpha ,n}$. See \cite{SaShUr3}
for a failure of \emph{energy reversal} in higher dimensions - such an
energy reversal was used in dimension $n=1$ to prove the necessity of the
energy condition for the Hilbert transform.
\end{problem}

\begin{remark}
\label{surgery}In \cite{LaWi2}, M. Lacey and B. Wick use the NTV technique
of surgery to show that an \emph{expectation} over grids of an analogue of
the weak boundedness property for the Riesz transform vector $\mathbf{R}%
^{\alpha ,n}$ is controlled by the $\mathcal{A}_{2}^{\alpha }$ and cube
testing conditions, together with a small multiple of the operator norm.
They then claim a $T1$ theorem with a side condition of uniformly full
dimensional measures, using independent grids corresponding to each measure,
resulting in an elimination of the weak boundedness property as a condition.
In any event, the weak boundedness property is always necessary for the norm
inequality, and as such can be viewed as a weak close cousin of the testing
conditions.
\end{remark}

The main result of this paper is the $T1$ theorem for a measure supported on
a regular $C^{1,\delta }$ curve. We point out that the cubes occurring in
the testing conditions in the following theorem are of course restricted to
those that intersect the curve, otherwise the integrals vanish, and include
not only those in $\mathcal{P}^{n}$ with sides parallel to the axes, but
also those in $\mathcal{Q}^{n}$ consisting of all rotations of the cubes in $%
\mathcal{P}^{n}$:%
\begin{eqnarray}
\mathfrak{T}_{\mathbf{R}^{\alpha ,n}}^{2} &\equiv &\sup_{Q\in \mathcal{Q}%
^{n}}\frac{1}{\left\vert Q\right\vert _{\sigma }}\int_{Q}\left\vert \mathbf{R%
}^{\alpha ,n}\left( \mathbf{1}_{Q}\sigma \right) \right\vert ^{2}\omega
<\infty ,  \label{usual testing} \\
\left( \mathfrak{T}_{\mathbf{R}^{\alpha ,n}}^{\func{dual}}\right) ^{2}
&\equiv &\sup_{Q\in \mathcal{Q}^{n}}\frac{1}{\left\vert Q\right\vert
_{\omega }}\int_{Q}\left\vert \left( \mathbf{R}^{\alpha ,n}\right) ^{\func{%
dual}}\left( \mathbf{1}_{Q}\omega \right) \right\vert ^{2}\sigma <\infty . 
\notag
\end{eqnarray}%
In the special case considered in \cite{LaSaShUrWi} of the Cauchy transform
in the plane with $\omega $ supported on the unit circle $\mathbb{T}$ or the
real line $\mathbb{R}$, the testing is taken over the smaller collection of
all Carleson squares.

We consider \emph{regular} $C^{1,\delta }$ curves in $\mathbb{R}^{n}$
defined as follows.

\begin{definition}
Suppose $\delta >0$, $I=\left[ a,b\right] $ is a closed interval on the real
line with $-\infty <a<b<\infty $, and that $\Phi :I\rightarrow \mathbb{R}%
^{n} $ is a $C^{1,\delta }$ curve parameterized by arc length. The curve is
one-to-one with the possible exception that $\Phi \left( a\right) =\Phi
\left( b\right) $. We refer to any curve as above as a \emph{regular} $%
C^{1,\delta }$ curve.
\end{definition}

\begin{theorem}
\label{general T1}Let $0\leq \alpha <n$ and suppose $\Phi $ is a regular $%
C^{1,\delta }$ curve. Suppose further that\\*[0.25in]
(\textbf{1}) $\sigma $ and $\omega $ are positive locally finite Borel
measures on $\mathbb{R}^{n}$ (possibly having common point masses), and $%
\omega $ is supported in $\mathcal{L}\equiv \limfunc{range}\Phi $, and\\*[%
0.25in]
(\textbf{2}) $\mathbf{R}^{\alpha ,n}$ is the vector of $\alpha $-fractional
Riesz transforms in $\mathbb{R}^{n}$, and $\mathbf{R}_{\sigma }^{\alpha ,n}f=%
\mathbf{R}^{\alpha ,n}\left( f\sigma \right) $ for any smooth truncation of $%
\mathbf{R}^{\alpha ,n}$.\\*[0.25in]
\textit{Then }$\mathbf{R}_{\sigma }^{\alpha ,n}$ is bounded from $%
L^{2}\left( \sigma \right) $ to $L^{2}\left( \omega \right) $, i.e. 
\begin{equation*}
\left\Vert \mathbf{R}_{\sigma }^{\alpha ,n}f\right\Vert _{L^{2}\left( \omega
\right) }\leq \mathfrak{N}_{\mathbf{R}^{\alpha ,n}}\left\Vert f\right\Vert
_{L^{2}\left( \sigma \right) },
\end{equation*}%
uniformly in smooth truncations of $\mathbf{R}^{\alpha ,n}$, \emph{if and
only if} the two dual $\mathcal{A}_{2}^{\alpha }$ conditions with holes
hold, the punctured dual $A_{2}^{\alpha ,\limfunc{punct}}$ conditions hold,
and the two dual cube testing conditions (\ref{usual testing}) for $\mathbf{R%
}^{\alpha ,n}$ hold. Moreover we have the equivalence%
\begin{equation*}
\mathfrak{N}_{\mathbf{R}^{\alpha ,n}}\approx \sqrt{\mathfrak{A}_{2}^{\alpha }%
}+\mathfrak{T}_{\mathbf{R}^{\alpha ,n}}+\mathfrak{T}_{\mathbf{R}^{\alpha
,n}}^{\func{dual}}.
\end{equation*}
\end{theorem}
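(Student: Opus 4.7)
The plan is to derive Theorem \ref{general T1} from Theorem \ref{T1 theorem} of \cite{SaShUr5}. The necessity direction is routine: $\mathcal{A}_2^\alpha$ with holes and the punctured $A_2^\alpha$ conditions follow from testing norm boundedness against characteristic functions together with the ellipticity of $\mathbf{R}^{\alpha,n}$, and cube testing is a literal special case of the norm inequality. For sufficiency, the hypotheses $\mathfrak{A}_2^\alpha<\infty$ plus dual cube testing must be upgraded to the full list of hypotheses of Theorem \ref{T1 theorem}, namely (i) quasicube testing, (ii) the quasiweak boundedness property, and (iii) the dual quasienergy conditions $\mathcal{E}_\alpha+\mathcal{E}_\alpha^{\func{dual}}<\infty$. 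Items (i) and (ii) are comparatively mild; the bulk of the paper is devoted to (iii).

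The mechanism for (iii) is \emph{energy reversal} after straightening the curve. Since $\omega$ is supported on a regular $C^{1,\delta}$ curve $\mathcal{L}=\Phi(I)$, cover $\mathcal{L}$ by short arcs (of length small relative to the curvature scale) and on a neighborhood of each arc introduce a globally biLipschitz $C^{1,\delta}$ map $\Omega$ that flattens the arc to a segment of a line. Push the pair $(\sigma,\omega)$ forward to $(\tilde\sigma,\tilde\omega)$ with $\tilde\omega$ supported on a line. On this straightened model one runs the one-dimensional energy reversal of \cite{LaSaShUrWi}: decompose stopping cubes (whose intersections with the line are intervals) into \emph{end} and \emph{side} pieces, and exploit the positivity of
\[
\frac{1}{y-x}-\frac{1}{y-x'}=\frac{x-x'}{(y-x)(y-x')},
\]
together with its analogue for the component of the Riesz kernel tangent to the line, to convert an $\omega$-energy into testing plus $\mathcal{A}_2^\alpha$. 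The Hölder remainder coming from the fact that $\Omega$ only straightens $\mathcal{L}$ to order $|x|^\delta$ is absorbed into $\mathcal{A}_2^\alpha$ and lower-order energy terms.

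The second main block of work is the constant-tracking under the change of variable. The $\mathcal{A}_2^\alpha$ with holes and punctured $A_2^\alpha$ conditions are stable under biLipschitz pushforward because they depend only on Poisson integrals of the measures, which distort by bounded factors. The testing constants are far more delicate: after composing with $\Omega$ the singular kernel acquires a phase, which must be approximated by its tangent-plane linearization with error of order $|x|^\delta$, and the small transformed pieces must be related back to the global testing hypothesis. For this I would formulate a \emph{localized triple testing} condition that lets one dominate the testing norm of $\tilde{\mathbf{R}}^{\alpha,n}\mathbf{1}_{\tilde Q}\tilde\sigma$ by sums of local testing quantities for the original $\mathbf{R}^{\alpha,n}$ over cubes intersecting $\mathcal{L}$. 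Quasicubes arise unavoidably because pullbacks of axis-parallel cubes by $\Omega^{-1}$ are only biLipschitz images of cubes, and a geometric notion of an \emph{$L$-transverse} quasicube (controlling how a quasicube can meet the line $L=\Omega(\mathcal{L})$) will be needed to keep the essentially one-dimensional energy argument viable. Quasicube testing and quasiweak boundedness for the transformed pair then follow from cube testing for the originals — the latter by the NTV surgery idea mentioned in Remark \ref{surgery} — and Theorem \ref{T1 theorem} applied to $(\tilde\sigma,\tilde\omega)$ transports back through $\Omega^{-1}$ to the desired bound.

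The principal obstacle is the combined bookkeeping in the paragraph above, not the energy reversal itself: one must simultaneously (a) control the tangent-plane error of the straightened Riesz kernel by the $C^{1,\delta}$ regularity, (b) relate testing on small straightened patches to the global cube testing on the original measures via the localized triple testing device, and (c) handle the geometry of quasicubes meeting a line through the $L$-transversality framework. Once these three pieces are in place, the energy reversal on the straightened model is a one-dimensional argument in the spirit of \cite{LaSaUr2,LaSaShUr3,LaSaShUrWi}, and feeding the resulting quasienergy bound into Theorem \ref{T1 theorem} completes the proof and yields the stated equivalence $\mathfrak{N}_{\mathbf{R}^{\alpha,n}}\approx\sqrt{\mathfrak{A}_2^\alpha}+\mathfrak{T}_{\mathbf{R}^{\alpha,n}}+\mathfrak{T}_{\mathbf{R}^{\alpha,n}}^{\func{dual}}$.
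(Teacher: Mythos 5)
Your proposal follows essentially the same route as the paper: cut the curve into pieces with small tangent oscillation, straighten each piece by a graph map, track the Muckenhoupt and testing constants under the change of variable (Propositions \ref{Muck equiv} and \ref{change of variable}), use the localized backward tripled testing condition together with $L$-transversality to pass from testing against the full measure $\omega$ to testing against each restricted piece, derive the quasienergy conditions by energy reversal on the line (Theorem \ref{main}), and feed the result into Theorem \ref{T1 theorem} for conformal Riesz transforms before summing over the pieces. The one divergence is that you would obtain the quasiweak boundedness property by NTV surgery, whereas the paper deduces it directly from the backward tripled quasitesting condition via Cauchy--Schwarz, thereby avoiding the absorption of a small multiple of the operator norm that the surgery argument requires.
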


\subsection{Techniques}

The remainder of the introduction is devoted to giving an overview of the
techniques and arguments needed to obtain Theorem \ref{general T1} from
Theorem \ref{T1 theorem}. For this we need $\Omega $-quasicubes and
conformal $\alpha $-fractional Riesz transforms $\mathbf{R}_{\Psi }^{\alpha
,n}$ where $\Omega $ is a globally biLipschitz map and $\Psi $ is a $%
C^{1,\delta }$ diffeomorphism of $\mathbb{R}^{n}$. We now describe these
issues in more detail. Let $\Omega :\mathbb{R}^{n}\rightarrow \mathbb{R}^{n}$
be a globally biLipschitz map as defined in Definition \ref{maps} below, and
refer to the images $\Omega Q$ of cubes in $\mathcal{Q}^{n}$ under the map $%
\Omega $ as $\Omega $-quasicubes or simply quasicubes. These $\Omega $%
-quasicubes will often be used in place of cubes in the testing conditions,
energy conditions and weak boundedness property, and we will use $\Omega 
\mathcal{Q}^{n}$ as a superscript to indicate this. For example, the
quasitesting analogue of the usual testing conditions (\ref{usual testing})
is:%
\begin{eqnarray}
\left( \mathfrak{T}_{\mathbf{R}^{\alpha ,n}}^{\Omega \mathcal{Q}^{n}}\right)
^{2} &\equiv &\sup_{Q\in \Omega \mathcal{Q}^{n}}\frac{1}{\left\vert
Q\right\vert _{\sigma }}\int_{Q}\left\vert \mathbf{R}^{\alpha ,n}\left( 
\mathbf{1}_{Q}\sigma \right) \right\vert ^{2}\omega <\infty ,
\label{quasi testing} \\
\left( \mathfrak{T}_{\mathbf{R}^{\alpha ,n}}^{\Omega \mathcal{Q}^{n},\func{%
dual}}\right) ^{2} &\equiv &\sup_{Q\in \Omega \mathcal{Q}^{n}}\frac{1}{%
\left\vert Q\right\vert _{\omega }}\int_{Q}\left\vert \left( \mathbf{R}%
^{\alpha ,n}\right) ^{\func{dual}}\left( \mathbf{1}_{Q}\omega \right)
\right\vert ^{2}\sigma <\infty .  \notag
\end{eqnarray}%
We alert the reader to the fact that different collections of quasicubes
will be considered in the course of proving our theorem. The definitions of
these terms, and the remaining terms used below, will be given precisely in
the next section. When the superscript $\Omega \mathcal{Q}^{n}$ is omitted,
it is understood that the quasicubes are the usual cubes $\mathcal{Q}^{n}$.

The next result shows that the quasienergy conditions are in fact necessary
for boundedness of the Riesz transform vector $\mathbf{R}^{\alpha ,n}$ when
one of the measures is supported on a \emph{line}. In that case, the
quasienergy conditions are even implied by the Muckenhoupt $\mathcal{A}%
_{2}^{\alpha }$ conditions with holes and the quasitesting conditions.
Moreover, the backward tripled quasitesting condition and the quasiweak
boundedness property are implied by the Muckenhoupt with holes and
quasitesting conditions as well, but provided the quasicubes come from a $%
C^{1}$ diffeomorphism and are rotated in an appropriate way.

Finally, in order to obtain the $T1$ theorem when one measure is supported
on a curve, we will need to generalize the fractional Riesz transforms $%
\mathbf{R}^{\alpha ,n}$ that we can consider in this theorem. Consider $\Psi
:\mathbb{R}^{n}\rightarrow \mathbb{R}^{n}$ given by%
\begin{equation*}
\Psi \left( x\right) =\left( x^{1},x^{2}-\psi ^{2}\left( x^{1}\right)
,x^{3}-\psi ^{3}\left( x^{1}\right) ,...,x^{n}-\psi ^{n}\left( x^{1}\right)
\right) =x-\left( 0,\psi \left( x_{1}\right) \right) ,
\end{equation*}%
where $x=\left( x^{1},x^{\prime }\right) $ and 
\begin{equation*}
\psi \left( t\right) =\left( \psi ^{2}\left( t\right) ,\psi ^{3}\left(
t\right) ,...,\psi ^{n}\left( t\right) \right) \in \mathbb{R}^{n-1},
\end{equation*}%
is a $C^{1,\delta }$ function $\psi :\mathbb{R}\rightarrow \mathbb{R}^{n-1}$%
. Let $\mathbf{K}^{\alpha ,n}\left( x,y\right) $ denote the vector Riesz
kernel and define%
\begin{equation*}
\mathbf{K}_{\Psi }^{\alpha ,n}\left( x,y\right) =\frac{\left\vert
y-x\right\vert ^{n+1-\alpha }}{\left\vert \Psi \left( y\right) -\Psi \left(
x\right) \right\vert ^{n+1-\alpha }}\mathbf{K}^{\alpha ,n}\left( x,y\right)
=c_{\alpha ,n}\frac{y-x}{\left\vert \Psi \left( y\right) -\Psi \left(
x\right) \right\vert ^{n+1-\alpha }}.
\end{equation*}

\begin{definition}
\label{def conformal}We refer to the operator $\mathbf{R}_{\Psi }^{\alpha
,n} $ with kernel $\mathbf{K}_{\Psi }^{\alpha ,n}$ as a \emph{conformal} $%
\alpha $-fractional Riesz transform. We also define the factor%
\begin{equation*}
\Gamma _{\Psi }\left( x,y\right) \equiv \frac{\left\vert y-x\right\vert
^{n+1-\alpha }}{\left\vert \Psi \left( y\right) -\Psi \left( x\right)
\right\vert ^{n+1-\alpha }}
\end{equation*}%
to be the conformal factor associated with $\Psi $ and $\mathbf{K}^{\alpha
,n}$.
\end{definition}

\begin{notation}
We emphasize that the $C^{1,\delta }$ diffeomorphism $\Psi $ that appears in
the definition of the conformal $\alpha $-fractional Riesz transform $%
\mathbf{R}_{\Psi }^{\alpha ,n}$ need not have any relation to the
biLipschitz map $\Omega :\mathbb{R}^{n}\rightarrow \mathbb{R}^{n}$ that is
used to define the quasicubes under consideration. On the other hand, we
will have reason to consider $\Psi $-quasicubes as well in connection with
changes of variable.
\end{notation}

We use the tangent line approximations to $\mathbf{R}_{\Psi }^{\alpha ,n}$
having kernels $\mathbf{K}_{\Psi }^{\alpha ,n}\left( x,y\right) \rho _{\eta
,R}^{\alpha }$ where $\rho _{\eta ,R}^{\alpha }$ is defined in the next
section. It is shown in \cite{SaShUr5} (see also \cite{LaSaShUr3} for the
one-dimensional case without holes) that one can replace these tangent line
truncations with any reasonable notion of truncation, including the usual
cutoff truncations.

We now introduce a condition on $\Omega $-quasicubes that plays a role in
deriving the necessity of the tripled testing and weak boundedness
conditions. See Lemma \ref{connected}\ below for the relevant consequences
of this condition. We begin with a collection of `good' rotations $R$ that
take the standard basis $\left\{ \mathbf{e}_{i}\right\} _{i=1}^{n}$ to a
basis $\left\{ R\mathbf{e}_{j}\right\} _{j=1}^{n}$ in which no unit vector $R%
\mathbf{e}_{j}$ is too close to any unit vector $\mathbf{e}_{i}$.

Let $\mathfrak{R}^{n}$ denote the group of rotations in $\mathbb{R}^{n}$.
For $\mathbf{e}\in \mathbb{S}^{n-1}$ and $0<\eta <1$ let%
\begin{equation*}
\digamma _{\mathbf{e,}\eta }\equiv \left\{ R\in \mathfrak{R}^{n}:\left\vert
\left\langle R\mathbf{e},\mathbf{e}_{k}\right\rangle \right\vert \leq \eta 
\text{ for }1\leq k\leq n\right\} .
\end{equation*}%
Note that the condition $\digamma _{\mathbf{e,}\eta }\neq \emptyset $ is
independent of the unit vector $\mathbf{e}$, and depends only on $\eta $, by
transitivity of rotations. Fix $\eta =\eta _{n}\in \left( 0,1\right) $ so
that $\digamma _{\mathbf{e,}\eta }\neq \emptyset $ for all $\mathbf{e}\in 
\mathbb{S}^{n-1}$ (this requires $\eta _{n}\geq \frac{1}{\sqrt{n}}$).

\begin{definition}
\label{L-trans}Let $L$ be a line in $\mathbb{R}^{n}$. A $C^{1}$
diffeomorphism $\Omega :\mathbb{R}^{n}\rightarrow \mathbb{R}^{n}$ is $L$-%
\emph{transverse} if%
\begin{equation*}
\left\Vert D\Omega ^{-1}-R\right\Vert _{\infty }<\frac{1-\eta }{4}
\end{equation*}%
for some $R\in \digamma _{\mathbf{e}_{L},\eta }$ where $\mathbf{e}_{L}$ is a
unit vector in the direction of $L$.
\end{definition}

\begin{theorem}
\label{main}Fix a collection of $\Omega $-quasicubes. Let $\sigma $ and $%
\omega $ be locally finite positive Borel measures on $\mathbb{R}^{n}$
(possibly having common point masses). Suppose that $\mathbf{R}_{\Psi
}^{\alpha ,n}$ is a conformal $\alpha $-fractional Riesz transform with $%
0\leq \alpha <n$, and where $\Psi $ is a $C^{1,\delta }$ diffeomorphism with 
$\Psi \left( x\right) =x-\left( 0,\psi \left( x_{1}\right) \right) $ where%
\begin{equation}
\left\Vert D\psi \right\Vert _{\infty }<\frac{1}{8n}\left( 1-\frac{\alpha }{n%
}\right) .  \label{small Lip}
\end{equation}%
Impose the tangent line truncations for $\mathbf{R}_{\Psi }^{\alpha ,n}$ in
the $\Omega $-quasitesting conditions. If the measure $\omega $ is supported
on a line $L$, then%
\begin{equation*}
\mathcal{E}_{\alpha }^{\Omega \mathcal{Q}^{n}}\lesssim \sqrt{\mathcal{A}%
_{2}^{\alpha }}+\mathfrak{T}_{\mathbf{R}_{\Psi }^{\alpha ,n}}^{\Omega 
\mathcal{Q}^{n}}\text{ and }\mathcal{E}_{\alpha }^{\Omega \mathcal{Q}^{n},%
\func{dual}}\lesssim \sqrt{\mathcal{A}_{2}^{\alpha ,\func{dual}}}+\mathfrak{T%
}_{\mathbf{R}_{\Psi }^{\alpha ,n}}^{\Omega \mathcal{Q}^{n},\func{dual}}\ .
\end{equation*}%
If in addition $\Omega $ is a $C^{1}$ diffeomorphism and $L$-transverse, then%
\begin{equation*}
\mathcal{WBP}_{\mathbf{R}_{\Psi }^{\alpha ,n}}^{\Omega \mathcal{P}%
^{n}}\lesssim \mathfrak{T}_{\mathbf{R}_{\Psi }^{\alpha ,n}}^{\Omega \mathcal{%
P}^{n},\limfunc{triple},\func{dual}}\lesssim \mathfrak{T}_{\mathbf{R}_{\Psi
}^{\alpha ,n}}^{\Omega \mathcal{P}^{n},\func{dual}}+\sqrt{\mathcal{A}%
_{2}^{\alpha }}+\sqrt{\mathcal{A}_{2}^{\alpha ,\func{dual}}}\ .
\end{equation*}
\end{theorem}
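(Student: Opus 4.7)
The plan is to establish the three inequalities separately, exploiting at each step the assumption that $\omega$ lives on the line $L$ to enable a version of the classical energy reversal identity of the Hilbert transform, and using the $L$-transverse hypothesis for the geometric ingredient in the weak boundedness bound.

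For the forward energy bound $\mathcal{E}_\alpha^{\Omega\mathcal{Q}^n}\lesssim\sqrt{\mathcal{A}_2^\alpha}+\mathfrak{T}_{\mathbf{R}_\Psi^{\alpha,n}}^{\Omega\mathcal{Q}^n}$, fix an $\Omega$-quasicube $I$ and consider the sum $\sum_J\mathbb{P}^\alpha(J,\mathbf{1}_I\sigma)^2\|\mathsf{P}^\omega_J\mathbf{x}\|_{L^2(\omega)}^2$ over good deep subquasicubes $J\subset I$ that witness the energy. Since $\omega$ is carried by $L$, only the tangential component $\mathbf{x}\cdot\mathbf{e}_L$ contributes to the Haar projection, and expanding the projection norm in pairs rewrites this as a double integral of $|(x-x')\cdot\mathbf{e}_L|^2$ against $d\omega(x)\,d\omega(x')$ on $J\cap L$. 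The reversal step is to observe that for such pairs $x,x'\in L$ and for $y\in I\setminus J$, the tangential component of the conformal kernel difference
\begin{equation*}
\mathbf{e}_L\cdot\bigl(\mathbf{K}_\Psi^{\alpha,n}(x,y)-\mathbf{K}_\Psi^{\alpha,n}(x',y)\bigr)=c_{\alpha,n}\,\frac{(x-x')\cdot\mathbf{e}_L}{|\Psi(y)-\Psi(\xi)|^{n+1-\alpha}}+\mathrm{err}(x,x',y),
\end{equation*}
for some $\xi$ on the segment from $x$ to $x'$, has a \emph{definite sign}, the hypothesis \eqref{small Lip} being exactly what is needed to dominate $\mathrm{err}$ pointwise by half the main term after a mean-value expansion of $|\Psi(y)-\Psi(\cdot)|^{-(n+1-\alpha)}$ that produces a factor of $(n+1-\alpha)\|D\psi\|_\infty$. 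Squaring and integrating converts the energy sum into $\|\mathbf{R}_\Psi^{\alpha,n}(\mathbf{1}_I\sigma)\|_{L^2(\omega)}^2$ up to a tail piece controlled by $\mathcal{A}_2^\alpha$, and the first piece is bounded by $(\mathfrak{T}^{\Omega\mathcal{Q}^n})^2|I|_\sigma$ by quasitesting. The dual energy bound follows by an analogous argument, since in $\mathcal{E}_\alpha^{\Omega\mathcal{Q}^n,\func{dual}}$ the measure integrated inside the kernel is again $\omega$, hence still on $L$, so the same tangential reversal now applies to the gradient in $x$ of the test function $\mathbf{R}_\Psi^{\alpha,n,\func{dual}}(\mathbf{1}_I\omega)$, yielding control by the dual quasitesting plus an $\mathcal{A}_2^{\alpha,\func{dual}}$ tail.

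For the weak-boundedness bound, let $I,J\in\Omega\mathcal{P}^n$ be a pair of comparable, well-separated quasicubes. The $L$-transverse hypothesis, through Lemma \ref{connected}, guarantees that $J\cap L$ is a connected (possibly empty) segment and that $3J\cap L\supset I\cap L$ whenever $I$ and $J$ are adjacent of comparable size; this connectedness is precisely what fails for arbitrary biLipschitz images of cubes, and is the content of the $\digamma_{\mathbf{e},\eta}$ transversality condition. Consequently $\mathbf{1}_J\omega=\mathbf{1}_J\mathbf{1}_{3J}\omega$, and Cauchy--Schwarz produces
\begin{equation*}
|\langle\mathbf{R}_\Psi^{\alpha,n}(\mathbf{1}_I\sigma),\mathbf{1}_J\omega\rangle|\le\sqrt{|I|_\sigma}\,\Bigl(\int_I|\mathbf{R}_\Psi^{\alpha,n,\func{dual}}(\mathbf{1}_{3J}\omega)|^2\,d\sigma\Bigr)^{1/2},
\end{equation*}
which is exactly the triple dual testing constant $\mathfrak{T}^{\Omega\mathcal{P}^n,\limfunc{triple},\func{dual}}$ times $\sqrt{|I|_\sigma|J|_\omega}$. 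To eliminate the tripling, decompose $\mathbf{1}_{3J}=\mathbf{1}_J+\mathbf{1}_{3J\setminus J}$: the first piece yields the ordinary dual quasitesting constant (together with a quasi-doubling factor for $\omega$ on $L$ furnished by the $L$-transverse property), and the second is controlled by $\sqrt{\mathcal{A}_2^\alpha}+\sqrt{\mathcal{A}_2^{\alpha,\func{dual}}}$, since $\mathbf{R}_\Psi^{\alpha,n,\func{dual}}(\mathbf{1}_{3J\setminus J}\omega)$ on $I$ is dominated pointwise by the Poisson tail $\mathbb{P}^\alpha(J,\mathbf{1}_{3J\setminus J}\omega)$ for which the required $L^2(\sigma)$-bound on $I$ is a direct instance of the $\mathcal{A}_2^\alpha$ condition with holes.

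The principal obstacle is the execution of energy reversal for the conformal Riesz kernel. Unlike the exact positivity of the Hilbert identity $(y-x)^{-1}-(y-x')^{-1}=(x-x')[(y-x)(y-x')]^{-1}$, here positivity of the tangential component of $\mathbf{K}_\Psi^{\alpha,n}(x,y)-\mathbf{K}_\Psi^{\alpha,n}(x',y)$ along $\mathbf{e}_L$ is obtained only perturbatively: the conformal factor $\Gamma_\Psi$, the deviation of $\Psi$ from the identity, and the angular contribution $(y-x)\cdot\mathbf{e}_L$ (which is not controlled by $|x-x'|$) each produce error contributions, and the numerical hypothesis $\|D\psi\|_\infty<\frac{1}{8n}(1-\alpha/n)$ is quantitatively tight — it is exactly what is needed so that all of these errors together remain below half the main term after the mean-value expansion. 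Tracking these errors carefully, and using the $C^{1,\delta}$-regularity of $\psi$ (rather than mere Lipschitz) for the tangent-plane remainder estimates, is the most delicate calculation in the argument.
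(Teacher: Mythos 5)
Your proposal has the right overall architecture (reversal of energy for the line-supported measure, plus a reduction of the weak boundedness property to triple testing and then to testing plus Muckenhoupt), but there are genuine gaps at the two places where the real work happens. First, in the forward energy condition your claim that the tangential component of $\mathbf{K}_{\Psi }^{\alpha ,n}(x,y)-\mathbf{K}_{\Psi }^{\alpha ,n}(x^{\prime },y)$ has a definite sign for \emph{all} $y\in I\setminus \gamma J$ is false: the mean-value expansion produces a numerator of the form $-a(u)^{2}+(n-\alpha )u^{2}$ with $u=x^{1}-y^{1}$ and $a(u)=|\psi (u+y^{1})-\psi (y^{1})-y^{\prime }|$, and when $y$ lies near the hyperplane through $J$ perpendicular to $L$ (so $|y^{\prime }|\gg |y^{1}-c_{J}^{1}|$) the term $a(u)^{2}$ dominates regardless of how small $\left\Vert D\psi \right\Vert _{\infty }$ is. The paper's proof therefore decomposes $I\setminus J^{\ast }$ into \emph{end} and \emph{side} pieces; strong reversal holds only on the end piece, while the side piece is handled by an entirely different ``pivotal reversal'': one bounds $\mathrm{P}^{\alpha }(J,\mathbf{1}_{\textsc{S}(J^{\ast })}\sigma )^{2}|J|_{\omega }$ by $\mathcal{A}_{2}^{\alpha }$ times a Poisson tail and then sums the side lengths of the quasicubes $J$ meeting the Carleson shadow of $y$ on $L$, using the bounded overlap of the $J^{\ast }$ and the one-dimensionality of $L$. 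Your ``tail piece controlled by $\mathcal{A}_{2}^{\alpha }$'' does not capture this summation, which is the heart of the matter. Similarly, the backward energy condition is not ``analogous'': there the energy $\left\Vert \mathsf{P}_{J}^{\sigma }\mathbf{x}\right\Vert ^{2}$ involves all $n$ coordinates since $\sigma $ is arbitrary, so besides the strong reversal of the $x^{1}$-partial energy (itself requiring a case split on $|y^{\prime }-z^{\prime }|$ versus $|y^{1}-z^{1}|$ and on whether $3J$ meets $L$), one must separately reverse the perpendicular partial energies, which the paper does by the weak reversal $|(\mathbf{R}_{\Psi }^{\alpha ,n})_{j}(\mathbf{1}_{I\setminus \gamma J}\omega )(y)|\approx |y^{j}|\,\mathrm{P}^{\alpha }(J,\mathbf{1}_{I\setminus \gamma J}\omega )/|J|^{1/n}$ for $j\geq 2$, exploiting that these components act like Poisson operators on measures supported on the axis.

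Second, your reduction of $\mathfrak{T}^{\limfunc{triple},\func{dual}}$ to $\mathfrak{T}^{\func{dual}}$ plus the Muckenhoupt constants does not work as stated: for the adjacent quasicube $Q$ touching $Q^{\prime }$ along a face, $\mathbf{R}_{\Psi }^{\alpha ,n,\func{dual}}(\mathbf{1}_{Q^{\prime }}\omega )$ on $Q$ is \emph{not} dominated by a Poisson tail, because the two cubes are not separated and the kernel is singular at the common face. This near-diagonal interaction is exactly the difficulty; the paper resolves it by locating the contact point of $L$ with the shared face, passing to polar-type coordinates $y=t\xi $, $x=(s,\mathbf{0})$ centred there, and invoking Muckenhoupt's two-weight Hardy inequality and its dual to bound the resulting one-dimensional integrals by $\mathcal{A}_{2}^{\alpha }$ and $\mathcal{A}_{2}^{\alpha ,\func{dual}}$. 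There is also no ``quasi-doubling factor for $\omega $'' available or needed. Your use of Lemma \ref{connected} to guarantee that $Q\cap L$ is an interval, and the Cauchy--Schwarz step reducing $\mathcal{WBP}$ to triple testing, are both correct and match the paper.
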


\begin{remark}
We restrict Theorem \ref{main} to conformal \textbf{Riesz} transforms $%
\mathbf{R}_{\Psi }^{\alpha ,n}$ in order to exploit the special property
that for $j\geq 2$, the scalar transforms $R_{j}^{\alpha ,n}$ and $\left(
R_{\Psi }^{\alpha ,n}\right) _{j}$ behave like a Poisson operator when
acting on a measure supported on the $x_{1}$-axis. This property is not
shared by higher order Riesz transforms, such as the Beurling transform in
the plane, and this accounts for our failure to treat such singular
integrals at this time. The restriction to \textbf{conformal} Riesz
transforms $\mathbf{R}_{\Psi }^{\alpha ,n}$ is dictated by the reversal of
energy that is possible for these special transforms when the phase of the
singular integral is $y-x$ and one of the measures is supported on a line.
\end{remark}

Since the conformal factor $\Gamma _{\Psi }\left( x,y\right) $ in Definition %
\ref{def conformal} satisfies the estimates 
\begin{eqnarray}
\frac{1}{C} &\leq &\Gamma \left( x,y\right) \leq C,
\label{sizeandsmoothness Gamma} \\
\left\vert \nabla \Gamma \left( x,y\right) \right\vert &\leq &C\left\vert
x-y\right\vert ^{-1},  \notag \\
\left\vert \nabla \Gamma \left( x,y\right) -\nabla \Gamma \left( x^{\prime
},y\right) \right\vert &\leq &C\left( \frac{\left\vert x-x^{\prime
}\right\vert }{\left\vert x-y\right\vert }\right) ^{\delta }\left\vert
x-y\right\vert ^{-1},\ \ \ \ \ \frac{\left\vert x-x^{\prime }\right\vert }{%
\left\vert x-y\right\vert }\leq \frac{1}{2},  \notag \\
\left\vert \nabla \Gamma \left( x,y\right) -\nabla \Gamma \left( x,y^{\prime
}\right) \right\vert &\leq &C\left( \frac{\left\vert y-y^{\prime
}\right\vert }{\left\vert x-y\right\vert }\right) ^{\delta }\left\vert
x-y\right\vert ^{-1},\ \ \ \ \ \frac{\left\vert y-y^{\prime }\right\vert }{%
\left\vert x-y\right\vert }\leq \frac{1}{2},  \notag
\end{eqnarray}%
it is easy to see from the product rule that the conformal fractional Riesz
transforms are standard fractional singular integrals in the sense used in 
\cite{SaShUr5}. Since they are also strongly elliptic as in \cite{SaShUr5},
it follows from the main theorem in \cite{SaShUr5} that

\begin{conclusion}
\label{con thm}Theorem \ref{T1 theorem} above holds with $\mathbf{R}_{\Psi
}^{\alpha ,n}$ in place of $\mathbf{R}^{\alpha ,n}$ provided $\Psi $ is a $%
C^{1,\delta }$ diffeomorphism.
\end{conclusion}

If we combine Theorem \ref{main} with this extension of Theorem \ref{T1
theorem}, we immediately obtain the following $T1$ theorem as a corollary.

\begin{remark}
The following theorem generalizes the $T1$ theorem for the Hilbert transform
(\cite{Lac}, \cite{LaSaShUr3} and \cite{Hyt2}) both in that the supports of
measures are more general, and in that the kernels treated are more general.
See also related work in the references given at the end of the paper.
\end{remark}

\begin{theorem}
\label{final}Fix a line $L$ and a collection of $\Omega $-quasicubes and
suppose that $\Omega $ is a $C^{1}$ diffeomorphism and $L$-transverse. Let $%
\sigma $ and $\omega $ be locally finite positive Borel measures on $\mathbb{%
R}^{n}$ (possibly having common point masses). Suppose that $\mathbf{R}%
_{\Psi }^{\alpha ,n}$ is a conformal fractional Riesz transform with $0\leq
\alpha <n$, where $\Psi $ is a $C^{1,\delta }$ diffeomorphism given by $\Psi
\left( x\right) =x-\left( 0,\psi \left( x_{1}\right) \right) $ where $\psi $
satisfies (\ref{small Lip}), i.e.%
\begin{equation*}
\left\Vert D\psi \right\Vert _{\infty }<\frac{1}{8n}\left( 1-\frac{\alpha }{n%
}\right) .
\end{equation*}%
Set $\left( \mathbf{R}_{\Psi }^{\alpha ,n}\right) _{\sigma }f=\mathbf{R}%
_{\Psi }^{\alpha ,n}\left( f\sigma \right) $ for any smooth truncation of $%
\mathbf{R}_{\Psi }^{\alpha ,n}$. If at least one of the measures $\sigma $
and $\omega $ is supported on the line $L$, then the operator norm $%
\mathfrak{N}_{\mathbf{R}_{\Psi }^{\alpha ,n}}$ of $\left( \mathbf{R}_{\Psi
}^{\alpha ,n}\right) _{\sigma }$ as an operator from\thinspace $L^{2}\left(
\sigma \right) $ to $L^{2}\left( \omega \right) $, uniformly in smooth
truncations, satisfies%
\begin{equation*}
\mathfrak{N}_{\mathbf{R}_{\Psi }^{\alpha ,n}}\approx C_{\alpha }\left( \sqrt{%
\mathfrak{A}_{2}^{\alpha }}+\mathfrak{T}_{\mathbf{R}_{\Psi }^{\alpha
,n}}^{\Omega \mathcal{Q}^{n}}+\mathfrak{T}_{\mathbf{R}_{\Psi }^{\alpha
,n}}^{\Omega \mathcal{Q}^{n},\func{dual}}\right) .
\end{equation*}
\end{theorem}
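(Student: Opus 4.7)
The plan is to deduce Theorem \ref{final} by combining Theorem \ref{main} (which shows that the $\Omega$-quasienergy conditions and the $\Omega$-quasiweak boundedness property are controlled by the Muckenhoupt $\mathfrak{A}_{2}^{\alpha}$ and $\Omega$-quasitesting conditions when one measure sits on a line $L$ and $\Omega$ is $L$-transverse) with Conclusion \ref{con thm} (the $\Omega$-quasicube version of Theorem \ref{T1 theorem} applied to the conformal Riesz transform $\mathbf{R}_{\Psi}^{\alpha,n}$, which is a standard strongly elliptic $\alpha$-fractional singular integral by the estimates (\ref{sizeandsmoothness Gamma}) on $\Gamma_{\Psi}$).

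For the sufficiency direction, suppose that $\sqrt{\mathfrak{A}_{2}^{\alpha}} + \mathfrak{T}_{\mathbf{R}_{\Psi}^{\alpha,n}}^{\Omega\mathcal{Q}^{n}} + \mathfrak{T}_{\mathbf{R}_{\Psi}^{\alpha,n}}^{\Omega\mathcal{Q}^{n},\func{dual}} < \infty$, and first treat the case in which $\omega$ is supported on $L$. The hypothesis (\ref{small Lip}) on $\psi$ is exactly the one required by Theorem \ref{main}, and $\Omega$ is $L$-transverse by assumption, so Theorem \ref{main} gives
\[
\mathcal{E}_{\alpha}^{\Omega\mathcal{Q}^{n}} + \mathcal{E}_{\alpha}^{\Omega\mathcal{Q}^{n},\func{dual}} + \mathcal{WBP}_{\mathbf{R}_{\Psi}^{\alpha,n}}^{\Omega\mathcal{P}^{n}} \lesssim \sqrt{\mathfrak{A}_{2}^{\alpha}} + \mathfrak{T}_{\mathbf{R}_{\Psi}^{\alpha,n}}^{\Omega\mathcal{Q}^{n}} + \mathfrak{T}_{\mathbf{R}_{\Psi}^{\alpha,n}}^{\Omega\mathcal{Q}^{n},\func{dual}}.
\]
With all of the hypotheses of Conclusion \ref{con thm} now verified for $\mathbf{R}_{\Psi}^{\alpha,n}$ in the $\Omega$-quasicube setting, the bound (\ref{norm bound}) yields $\mathfrak{N}_{\mathbf{R}_{\Psi}^{\alpha,n}} \lesssim \sqrt{\mathfrak{A}_{2}^{\alpha}} + \mathfrak{T}_{\mathbf{R}_{\Psi}^{\alpha,n}}^{\Omega\mathcal{Q}^{n}} + \mathfrak{T}_{\mathbf{R}_{\Psi}^{\alpha,n}}^{\Omega\mathcal{Q}^{n},\func{dual}}$.

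If instead $\sigma$ (rather than $\omega$) lives on $L$, I would run the same argument on the dual inequality $\|(\mathbf{R}_{\Psi}^{\alpha,n})^{\func{dual}}(g\omega)\|_{L^{2}(\sigma)} \lesssim \|g\|_{L^{2}(\omega)}$, exchanging the roles of $\sigma$ and $\omega$. Since the kernel of $\mathbf{R}_{\Psi}^{\alpha,n}$ is odd, its dual has the same form (up to sign); the Muckenhoupt/punctured $A_{2}^{\alpha}$ conditions are interchanged by duality and both are contained in $\mathfrak{A}_{2}^{\alpha}$; and the $L$-transverse condition and the bound (\ref{small Lip}) depend only on $\Omega$, $L$, and $\psi$, not on which measure is on $L$. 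Thus Theorem \ref{main} again supplies the dual quasienergy and weak boundedness bounds needed to invoke Conclusion \ref{con thm}.

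For the necessity direction, part (2) of Theorem \ref{T1 theorem} applied via Conclusion \ref{con thm} to the conformal Riesz transform $\mathbf{R}_{\Psi}^{\alpha,n}$ yields immediately
\[
\sqrt{\mathfrak{A}_{2}^{\alpha}} + \mathfrak{T}_{\mathbf{R}_{\Psi}^{\alpha,n}}^{\Omega\mathcal{Q}^{n}} + \mathfrak{T}_{\mathbf{R}_{\Psi}^{\alpha,n}}^{\Omega\mathcal{Q}^{n},\func{dual}} \lesssim \mathfrak{N}_{\mathbf{R}_{\Psi}^{\alpha,n}},
\]
and combining both directions produces the asserted equivalence. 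The only real obstacle in this deduction is bookkeeping under duality: one must check that the hypotheses of Theorem \ref{main}, which are stated with $\omega$ on $L$, transfer cleanly to the case $\sigma$ on $L$ via the dual norm inequality, so that both cases in the statement of Theorem \ref{final} are covered by the same machinery. Since the conditions $\mathfrak{A}_{2}^{\alpha}$, $L$-transversality, and the bound on $D\psi$ are manifestly invariant under the $\sigma\leftrightarrow\omega$ swap, this is a routine verification rather than a substantive difficulty.
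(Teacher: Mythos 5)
Your proposal reproduces the paper's own deduction: the paper proves Theorem \ref{final} by stating that it follows immediately on combining Theorem \ref{main} (which supplies the quasienergy and quasiweak boundedness conditions from $\mathfrak{A}_2^{\alpha}$ and quasitesting when one measure is on $L$) with Conclusion \ref{con thm} (the quasicube $T1$ Theorem \ref{T1 theorem} applied to $\mathbf{R}_{\Psi}^{\alpha,n}$), and you have simply filled in those details, including the routine $\sigma\leftrightarrow\omega$ bookkeeping for the case where $\sigma$ lives on $L$. This is correct and matches the paper's approach.
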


Our extension of Theorem \ref{final} to the case when one measure is
compactly supported on a $C^{1,\delta }$ curve $\mathcal{L}$ presented as a
graph requires additional work. More precisely, we suppose that $\mathcal{L}$
is presented as the graph of a $C^{1,\delta }$ function $\psi :\mathbb{R}%
\rightarrow \mathbb{R}^{n-1}$ given by%
\begin{equation*}
\psi \left( t\right) =\left( \psi ^{2}\left( t\right) ,\psi ^{3}\left(
t\right) ,...,\psi ^{n}\left( t\right) \right) \in \mathbb{R}^{n-1},\ \ \ \
\ t\in \mathbb{R}.
\end{equation*}%
Define $\Psi :\mathbb{R}^{n}\rightarrow \mathbb{R}^{n}$ by%
\begin{equation*}
\Psi \left( x\right) =\left( x^{1},x^{2}-\psi ^{2}\left( x^{1}\right)
,x^{3}-\psi ^{3}\left( x^{1}\right) ,...,x^{n}-\psi ^{n}\left( x^{1}\right)
\right) =x-\left( 0,\psi \left( x^{1}\right) \right) ,
\end{equation*}%
where $x=\left( x^{1},x^{\prime }\right) $. Then $\Psi $ is globally
invertible with inverse map%
\begin{equation*}
\Psi ^{-1}\left( \xi \right) =\left( \xi ^{1},\xi ^{2}+\psi ^{2}\left( \xi
_{1}\right) ,\xi ^{3}+\psi ^{3}\left( \xi _{1}\right) ,...,\xi ^{n}+\psi
^{n}\left( \xi _{1}\right) \right) =\xi +\left( 0,\psi \left( \xi
_{1}\right) \right) .
\end{equation*}%
Both $\Psi $ and its inverse $\Psi ^{-1}$ are $C^{1,\delta }$ maps, and $%
\Psi \mid _{\mathcal{L}}$ is a $C^{1,\delta }$ diffeomorphism from the curve 
$\mathcal{L}$ to the $x_{1}$-axis. Set $\Psi _{\ast }\mathcal{Q}^{n}=\left(
\Psi ^{-1}\right) ^{\ast }\mathcal{Q}^{n}=\left\{ \Psi Q:Q\in \mathcal{Q}%
^{n}\right\} $. The images $\Psi Q$ of cubes $Q$ under the map $\Psi $ are $%
\Psi $\emph{-quasicubes}.

The next theorem is a preliminary version of the main Theorem \ref{general
T1} that requires only the change of variable estimates in Propositions \ref%
{Muck equiv} and \ref{change of variable} below. The `defects' in this
preliminary version are that the quasitesting conditions are related to the
map $\Psi $ defining the curve $\mathcal{L}$, and that the smallness
condition (\ref{small Lip}) is imposed on the derivative of $\psi $.

\begin{theorem}
\label{curve}Let $n\geq 2$ and $0\leq \alpha <n$. Suppose that $\mathcal{L}$
is a $C^{1,\delta }$ curve in $\mathbb{R}^{n}$ presented as the graph of a $%
C^{1,\delta }$ function $\psi :\mathbb{R}\rightarrow \mathbb{R}^{n-1}$ as
above, and assume that (\ref{small Lip}) holds, i.e.%
\begin{equation*}
\left\Vert D\psi \right\Vert _{\infty }<\frac{1}{8n}\left( 1-\frac{\alpha }{n%
}\right) .
\end{equation*}%
Let $\omega $ and $\sigma $ be positive Borel measures (possibly having
common point masses), and assume that $\omega $ is compactly supported in $%
\mathcal{L}$. Let $\Psi $ be associated to $\psi $ as above. Finally, set $%
\mathcal{R}^{n}=R\mathcal{P}^{n}$ where $R$ is a rotation that is $L$%
-transverse when $L$ is the $x_{1}$-axis. Then the $\alpha $-fractional
Riesz transform $\mathbf{R}^{\alpha ,n}$ is bounded from $L^{2}\left( \sigma
\right) $ to $L^{2}\left( \omega \right) $ if and only if the Muckenhoupt
conditions hold, $\mathcal{A}_{2}^{\alpha }+\mathcal{A}_{2}^{\alpha ,\func{%
dual}}+A_{2}^{\alpha ,\limfunc{punct}}+A_{2}^{\alpha ,\limfunc{punct},\func{%
dual}}<\infty $, and the quasitesting conditions hold, $\mathfrak{T}_{%
\mathbf{R}^{\alpha ,n}}^{\Psi \mathcal{R}^{n}}+\mathfrak{T}_{\mathbf{R}%
^{\alpha ,n}}^{\Psi \mathcal{R}^{n},\func{dual}}<\infty $, where $\mathfrak{T%
}_{\mathbf{R}^{\alpha ,n}}^{\Psi \mathcal{R}^{n}}$ and $\mathfrak{T}_{%
\mathbf{R}^{\alpha ,n}}^{\Psi \mathcal{R}^{n},\func{dual}}$ are the best
constants in%
\begin{eqnarray*}
&&\int_{\Psi Q}\left\vert \mathbf{R}^{\alpha ,n}\left( \mathbf{1}_{\Psi
Q}\sigma \right) \right\vert ^{2}d\omega \leq \left( \mathfrak{T}_{\mathbf{R}%
^{\alpha ,n}}^{\Psi \mathcal{R}^{n}}\right) ^{2}\left\vert \Psi Q\right\vert
_{\sigma }, \\
&&\int_{\Psi Q}\left\vert \mathbf{R}^{\alpha ,n,\func{dual}}\left( \mathbf{1}%
_{\Psi Q}\omega \right) \right\vert ^{2}d\sigma \leq \left( \mathfrak{T}_{%
\mathbf{R}^{\alpha ,n}}^{\Psi \mathcal{R}^{n},\func{dual}}\right)
^{2}\left\vert \Psi Q\right\vert _{\omega }, \\
&&\ \ \ \ \ \text{for all cubes }Q\in \mathcal{R}^{n}=R\mathcal{P}^{n}.
\end{eqnarray*}%
Moreover we have the equivalence%
\begin{equation*}
\mathfrak{N}_{\mathbf{R}_{\sigma }^{\alpha ,n}}\left( \sigma ,\omega \right)
\approx \sqrt{\mathfrak{A}_{2}^{\alpha }\left( \sigma ,\omega \right) }+%
\mathfrak{T}_{\mathbf{R}^{\alpha ,n}}^{\Psi \mathcal{R}^{n}}\left( \sigma
,\omega \right) +\mathfrak{T}_{\mathbf{R}^{\alpha ,n}}^{\Psi \mathcal{R}^{n},%
\func{dual}}\left( \sigma ,\omega \right) .
\end{equation*}
\end{theorem}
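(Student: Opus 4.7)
The plan is to reduce Theorem \ref{curve} to Theorem \ref{final} by pushing the measures forward through the change of variable $\Psi$, which straightens the curve $\mathcal{L}$ to the $x_{1}$-axis. Set $\widetilde{\sigma}\equiv \Psi _{\ast }\sigma $ and $\widetilde{\omega }\equiv \Psi _{\ast }\omega $. Since $\Psi (x^{1},\psi (x^{1}))=(x^{1},0)$, and $\omega $ is supported on $\mathcal{L}$, the pushforward $\widetilde{\omega }$ is supported on the $x_{1}$-axis $L$. The hypothesis (\ref{small Lip}) guarantees that $\Psi $ is a genuine $C^{1,\delta }$ diffeomorphism of $\mathbb{R}^{n}$ with derivative bounds suitable for the change-of-variable machinery, and the given rotation $R$ is $L$-transverse with $L$ the $x_{1}$-axis, so $\Omega \equiv R$ qualifies as an $L$-transverse $C^{1}$ diffeomorphism in the sense of Definition \ref{L-trans}.

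First I would use the change-of-variable identity
\begin{equation*}
\mathbf{R}^{\alpha ,n}(f\sigma )(x)=c_{\alpha ,n}\int \frac{y-x}{\left\vert y-x\right\vert ^{n+1-\alpha }}f(y)\,d\sigma (y)
\end{equation*}
applied after substituting $x=\Psi ^{-1}(\xi )$, $y=\Psi ^{-1}(\eta )$. The effect is that $\mathbf{R}^{\alpha ,n}$ acting on $(\sigma ,\omega )$ becomes, up to the factor $\Gamma _{\Psi ^{-1}}$ (the conformal factor of Definition \ref{def conformal} for $\Psi ^{-1}$), the conformal Riesz transform $\mathbf{R}_{\Psi ^{-1}}^{\alpha ,n}$ acting on $(\widetilde{\sigma },\widetilde{\omega })$. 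The hypothesis (\ref{small Lip}) transfers to the same condition for $\Psi ^{-1}$ (up to constants), so $\mathbf{R}_{\Psi ^{-1}}^{\alpha ,n}$ satisfies the hypotheses of Theorem \ref{final}. This gives the identification of operator norms
\begin{equation*}
\mathfrak{N}_{\mathbf{R}^{\alpha ,n}}(\sigma ,\omega )\approx \mathfrak{N}_{\mathbf{R}_{\Psi ^{-1}}^{\alpha ,n}}(\widetilde{\sigma },\widetilde{\omega }).
\end{equation*}
Propositions \ref{Muck equiv} and \ref{change of variable} (promised earlier in the excerpt) provide the analogous equivalence for the Muckenhoupt-with-holes and punctured $A_{2}^{\alpha }$ side constants, so $\sqrt{\mathfrak{A}_{2}^{\alpha }(\sigma ,\omega )}\approx \sqrt{\mathfrak{A}_{2}^{\alpha }(\widetilde{\sigma },\widetilde{\omega })}$.

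Next I would translate the testing conditions. A cube $Q\in \mathcal{R}^{n}=R\mathcal{P}^{n}$ satisfies $\Psi Q\subset \mathbb{R}^{n}$, and by the same change-of-variable identity the integral $\int_{\Psi Q}|\mathbf{R}^{\alpha ,n}(\mathbf{1}_{\Psi Q}\sigma )|^{2}\,d\omega $ transforms into (a constant multiple of) $\int_{Q}|\mathbf{R}_{\Psi ^{-1}}^{\alpha ,n}(\mathbf{1}_{Q}\widetilde{\sigma })|^{2}\,d\widetilde{\omega }$, and likewise $|\Psi Q|_{\sigma }=|Q|_{\widetilde{\sigma }}$; the analogous statement holds for the dual. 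Thus
\begin{equation*}
\mathfrak{T}_{\mathbf{R}^{\alpha ,n}}^{\Psi \mathcal{R}^{n}}(\sigma ,\omega )\approx \mathfrak{T}_{\mathbf{R}_{\Psi ^{-1}}^{\alpha ,n}}^{R\mathcal{P}^{n}}(\widetilde{\sigma },\widetilde{\omega })
\end{equation*}
and similarly for the dual testing constants. Applying Theorem \ref{final} to the pair $(\widetilde{\sigma },\widetilde{\omega })$, with operator $\mathbf{R}_{\Psi ^{-1}}^{\alpha ,n}$ and with the $L$-transverse rotation $\Omega =R$, yields
\begin{equation*}
\mathfrak{N}_{\mathbf{R}_{\Psi ^{-1}}^{\alpha ,n}}(\widetilde{\sigma },\widetilde{\omega })\approx \sqrt{\mathfrak{A}_{2}^{\alpha }(\widetilde{\sigma },\widetilde{\omega })}+\mathfrak{T}_{\mathbf{R}_{\Psi ^{-1}}^{\alpha ,n}}^{R\mathcal{P}^{n}}(\widetilde{\sigma },\widetilde{\omega })+\mathfrak{T}_{\mathbf{R}_{\Psi ^{-1}}^{\alpha ,n}}^{R\mathcal{P}^{n},\func{dual}}(\widetilde{\sigma },\widetilde{\omega }).
\end{equation*}
Combining the three equivalences established above gives the claimed equivalence in Theorem \ref{curve}.

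The main obstacle will be the careful verification of the change-of-variable dictionary: one must check that the conformal factor $\Gamma _{\Psi ^{-1}}$, which is bounded above and below uniformly on the support of the transformed measures by (\ref{sizeandsmoothness Gamma}), preserves testing quantities up to absolute constants, and, crucially, that tangent-line truncations for $\mathbf{R}_{\Psi ^{-1}}^{\alpha ,n}$ on $(\widetilde{\sigma },\widetilde{\omega })$ correspond to admissible truncations of $\mathbf{R}^{\alpha ,n}$ on $(\sigma ,\omega )$ under the pushforward (so that Conclusion \ref{con thm} applies in place of Theorem \ref{T1 theorem}). Once these change-of-variable bookkeeping items are handled — exactly what Propositions \ref{Muck equiv} and \ref{change of variable} below are designed to do — the reduction to Theorem \ref{final} is mechanical. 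Since the testing conditions in Theorem \ref{curve} are stated for cubes in $\Psi \mathcal{R}^{n}=\Psi R\mathcal{P}^{n}$, no further geometric adjustment of the cube family is needed.
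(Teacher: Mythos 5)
Your proposal is correct and follows essentially the same route as the paper: the paper's proof of Theorem \ref{curve} consists precisely of applying parts (2) and (3) of Proposition \ref{change of variable} (with $\mathcal{F}=\Psi\mathcal{R}^{n}$, so that $\Psi^{\ast}\mathcal{F}=\mathcal{R}^{n}$) to convert the stated testing conditions into quasitesting conditions for the conformal Riesz transform on the pulled-back pair, invoking Theorem \ref{final} because the transformed $\omega$ is supported on the $x_{1}$-axis and $R$ is $L$-transverse, and then returning via part (1) of Proposition \ref{change of variable} and Proposition \ref{Muck equiv}. The only caveat is that your phrase ``up to the factor $\Gamma_{\Psi^{-1}}$'' understates what the change of variable actually requires --- the pullback kernel has phase $\Psi^{-1}(\eta)-\Psi^{-1}(\xi)$ rather than $\eta-\xi$, and passing to the conformal Riesz transform needs the tangent-line approximation with an error operator controlled by the offset $A_{2}^{\alpha}$ --- but since that is exactly the content of Proposition \ref{change of variable}, which you cite and correctly identify as the crux, this is a matter of phrasing rather than a gap.
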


The bound on $\left\Vert D\psi \right\Vert _{\infty }$ can be relaxed, but
we will not pursue this here. To obtain Theorem \ref{general T1}, we instead
remove the Lipschitz assumption by cutting the support $\mathcal{L}$ of $%
\omega $ into sufficiently small pieces $\mathcal{L}_{i}$ where the
oscillation of the tangents to $\mathcal{L}_{i}$ is small. Here the
necessity of the tripled testing condition $\mathfrak{T}_{\mathbf{R}_{\Psi
}^{\alpha ,n}}^{\Omega \mathcal{Q}^{n},\limfunc{triple},\func{dual}}$ in
Theorem \ref{main}\ plays a key role in permitting our testing conditions to
be taken with respect to the entire measure $\omega $, rather than with
respect to the corresponding pieces $\mathbf{1}_{\mathcal{L}_{i}}\omega $.
Then the restriction to \emph{quasi}testing conditions is removed using the
fact that Theorem \ref{T1 theorem} holds for conformal Riesz transforms with
general quasicubes, see Conclusion \ref{con thm}.

Finally, we mention another direction in which Theorem \ref{final} can be
generalized, namely to the setting where $\sigma $ and $\omega $ are locally
finite positive Borel measures supported on a $\left( k_{1}+1\right) $%
-dimensional subspace $S$ and a $\left( k_{2}+1\right) $-dimensional
subspace $W$ respectively of $\mathbb{R}^{n}$, $n=k_{1}+k_{2}+1$, with $W$
and $S$ intersecting at right angles in a line $L$. The precise result and
its proof are given in the appendix at the end of the paper.

\section{Definitions}

The $\alpha $-fractional \emph{Riesz} vector $\mathbf{R}^{\alpha ,n}=\left\{
R_{\ell }^{\alpha ,n}:1\leq \ell \leq n\right\} $ has as components the
Riesz transforms $R_{\ell }^{n,\alpha }$ with odd kernel $K_{\ell }^{\alpha
,n}\left( w\right) =\frac{\Omega _{\ell }\left( w\right) }{\left\vert
w\right\vert ^{n-\alpha }}$. The \emph{tangent line truncation} of\emph{\ }%
the Riesz transform $R_{\ell }^{\alpha ,n}$ has kernel $\Omega _{\ell
}\left( w\right) \rho _{\eta ,R}^{\alpha }\left( \left\vert w\right\vert
\right) $ where $\rho _{\eta ,R}^{\alpha }$ is continuously differentiable
on an interval $\left( 0,S\right) $ with $0<\eta <R<S$, and where $\rho
_{\eta ,R}^{\alpha }\left( r\right) =r^{\alpha -n}$ if $\eta \leq r\leq R$,
and has constant derivative on both $\left( 0,\eta \right) $ and $\left(
R,S\right) $ where $\rho _{\eta ,R}^{\alpha }\left( S\right) =0$. As shown
in \cite{SaShUr5} (see \cite{LaSaShUr3} for the one dimensional case without
holes), boundedness of $R_{\ell }^{n,\alpha }$ with one set of appropriate
truncations together with the offset $A_{2}^{\alpha }$ condition (see
below), is equivalent to boundedness of $R_{\ell }^{n,\alpha }$ with all
truncations. In particular this includes the smooth truncations with kernels 
$\varphi _{\eta ,R}\left( \left\vert w\right\vert \right) K_{\ell }^{\alpha
,n}\left( w\right) =\varphi _{\eta ,R}\left( \left\vert w\right\vert \right) 
\frac{\Omega _{\ell }\left( w\right) }{\left\vert w\right\vert ^{n-\alpha }}$
where $\varphi _{\eta ,R}$ is infinitely differentiable and compactly
supported on the interval $\left( 0,\infty \right) $ with $0<\eta <R<\infty $%
, and where $\varphi _{\eta ,R}\left( r\right) =1$ if $\eta \leq r\leq R$.

\subsection{Quasicubes}

Our general notion of quasicube will be derived from the following
definition.

\begin{definition}
\label{maps}We say that a map $\Omega :\mathbb{R}^{n}\rightarrow \mathbb{R}%
^{n}$ is a \emph{globally biLipschitz map} if%
\begin{equation*}
\left\Vert \Omega \right\Vert _{Lip}\equiv \sup_{\substack{ x,y\in \mathbb{R}%
^{n}  \\ x\neq y}}\frac{\left\Vert \Omega \left( x\right) -\Omega \left(
y\right) \right\Vert }{\left\Vert x-y\right\Vert }<\infty ,
\end{equation*}%
and $\left\Vert \Omega ^{-1}\right\Vert _{Lip}<\infty $. We say that a map $%
\Psi :\mathbb{R}^{n}\rightarrow \mathbb{R}^{n}$ is a $C^{1,\delta }$\emph{\
diffeomorphism} if%
\begin{equation*}
\left\Vert \Psi \right\Vert _{C^{1,\delta }}\equiv \sup_{x\in \mathbb{R}%
^{n}}\left\Vert \nabla \Psi \left( x\right) \right\Vert +\sup_{\substack{ %
x,y\in \mathbb{R}^{n}  \\ x\neq y}}\frac{\left\Vert \nabla \Psi \left(
x\right) -\nabla \Psi \left( y\right) \right\Vert }{\left\Vert
x-y\right\Vert ^{\delta }}<\infty ,
\end{equation*}%
and $\left\Vert \Psi ^{-1}\right\Vert _{C^{1,\delta }}<\infty $. When $%
\delta =0$, we write $C^{1}=C^{1,0}$ and $\left\Vert \Psi \right\Vert
_{C^{1}}\equiv \sup_{x\in \mathbb{R}^{n}}\left\Vert \nabla \Psi \left(
x\right) \right\Vert $.
\end{definition}

Note that if $\Omega $ is a globally biLipschitz map, then there are
constants $c,C>0$ such that%
\begin{equation*}
c\leq J_{\Omega }\left( x\right) \equiv \left\vert \det D\Omega \left(
x\right) \right\vert \leq C,\ \ \ \ \ x\in \mathbb{R}^{n}.
\end{equation*}%
Special cases of globally biLipschitz maps are given by $C^{1,\delta }$
diffeomorphisms $\Psi :\mathbb{R}^{n}\rightarrow \mathbb{R}^{n}$, and these
include those used in the definition of conformal Riesz transforms above,
and defined by%
\begin{equation*}
\Psi \left( x\right) =x-\left( 0,\psi \left( x^{1}\right) \right) ,
\end{equation*}%
where $x=\left( x^{1},x^{\prime }\right) \in \mathbb{R}^{n}$ and $\psi :%
\mathbb{R}\rightarrow \mathbb{R}^{n-1}$ is a $C^{1,\delta }$ function. We
denote by $\mathcal{Q}^{n}$ the collection of \emph{all} cubes in $\mathbb{R}%
^{n}$, and by $\mathcal{P}^{n}$ the subcollection of cubes in $\mathbb{R}%
^{n} $ with sides parallel to the coordinate axes, and by $\mathcal{D}^{n}$
(contained in $\mathcal{P}^{n}$) a dyadic grid in $\mathbb{R}^{n}$.

\begin{definition}
Suppose that $\Omega :\mathbb{R}^{n}\rightarrow \mathbb{R}^{n}$ is a
globally biLipschitz map.

\begin{enumerate}
\item If $E$ is a measurable subset of $\mathbb{R}^{n}$, we define $\Omega
E\equiv \left\{ \Omega \left( x\right) :x\in E\right\} $ to be the image of $%
E$ under the homeomorphism $\Omega $.

\begin{enumerate}
\item In the special case that $E=Q\in \mathcal{Q}^{n}$ is a cube in $%
\mathbb{R}^{n}$, we will refer to $\Omega Q$ as a quasicube (or $\Omega $%
-quasicube if $\Omega $ is not clear from the context).

\item We define the center of the quasicube $\Omega Q$ to be $\Omega x_{Q}$
where $x_{Q}$ is the center of $Q$.

\item We define the side length $\ell \left( \Omega Q\right) $ of the
quasicube $\Omega Q$ to be the sidelength $\ell \left( Q\right) $ of the
cube $Q$.

\item For $r>0$ we define the `dilation' $r\Omega Q$ of a quasicube $\Omega
Q $ to be $\Omega rQ$ where $rQ$ is the usual `dilation' of a cube in $%
\mathbb{R}^{n}$ that is concentric with $Q$ and having side length $r\ell
\left( \Omega Q\right) $.
\end{enumerate}

\item If $\mathcal{K}$ is a collection of cubes in $\mathbb{R}^{n}$, we
define $\Omega \mathcal{K}\equiv \left\{ \Omega Q:Q\in \mathcal{K}\right\} $
to be the collection of quasicubes $\Omega Q$ as $Q$ ranges over $\mathcal{K}
$.

\item If $\mathcal{F}$ is a grid of cubes in $\mathbb{R}^{n}$, we define the
inherited grid structure on $\Omega \mathcal{F}$ by declaring that $\Omega Q$
is a child of $\Omega Q^{\prime }$ in $\Omega \mathcal{F}$ if $Q$ is a child
of $Q^{\prime }$ in the grid $\mathcal{F}$.
\end{enumerate}
\end{definition}

Note that if $\Omega Q$ is a quasicube, then $\left\vert \Omega Q\right\vert
^{\frac{1}{n}}\approx \left\vert Q\right\vert ^{\frac{1}{n}}=\ell \left(
Q\right) =\ell \left( \Omega Q\right) $ shows that the measure of $\Omega Q$
is approximately its sidelength to the power $n$. Moreover, there is a
positive constant $R_{\limfunc{big}}$ such that we have the comparability
containments%
\begin{equation}
Q+\Omega x_{Q}\subset R_{\limfunc{big}}\Omega Q\text{ and }\Omega Q\subset
R_{\limfunc{big}}\left( Q+\Omega x_{Q}\right) \ .  \label{comp contain}
\end{equation}

\subsection{The $\mathcal{A}_{2}^{\protect\alpha }$ conditions}

Recall that $\Omega :\mathbb{R}^{n}\rightarrow \mathbb{R}^{n}$ is a globally
biLipschitz map. Now let $\mu $ be a locally finite positive Borel measure
on $\mathbb{R}^{n}$, and suppose $Q$ is a $\Omega $-quasicube in $\mathbb{R}%
^{n}$. Recall that $\left\vert Q\right\vert ^{\frac{1}{n}}\approx \ell
\left( Q\right) $ for a quasicube $Q$. The two $\alpha $-fractional Poisson
integrals of $\mu $ on a quasicube $Q$ are given by:%
\begin{eqnarray*}
\mathrm{P}^{\alpha }\left( Q,\mu \right) &\equiv &\int_{\mathbb{R}^{n}}\frac{%
\left\vert Q\right\vert ^{\frac{1}{n}}}{\left( \left\vert Q\right\vert ^{%
\frac{1}{n}}+\left\vert x-x_{Q}\right\vert \right) ^{n+1-\alpha }}d\mu
\left( x\right) , \\
\mathcal{P}^{\alpha }\left( Q,\mu \right) &\equiv &\int_{\mathbb{R}%
^{n}}\left( \frac{\left\vert Q\right\vert ^{\frac{1}{n}}}{\left( \left\vert
Q\right\vert ^{\frac{1}{n}}+\left\vert x-x_{Q}\right\vert \right) ^{2}}%
\right) ^{n-\alpha }d\mu \left( x\right) ,
\end{eqnarray*}%
where we emphasize that $\left\vert x-x_{Q}\right\vert $ denotes Euclidean
distance between $x$ and $x_{Q}$ and $\left\vert Q\right\vert $ denotes the
Lebesgue measure of the quasicube $Q$. We refer to $\mathrm{P}^{\alpha }$ as
the \emph{standard} Poisson integral and to $\mathcal{P}^{\alpha }$ as the 
\emph{reproducing} Poisson integral. Let $\sigma $ and $\omega $ be locally
finite positive Borel measures on $\mathbb{R}^{n}$, possibly having common
point masses, and suppose $0\leq \alpha <n$.

We say that the pair $\left( K,K^{\prime }\right) $ in $\mathcal{Q}%
^{n}\times \mathcal{Q}^{n}$ are \emph{neighbours} if $K$ and $K^{\prime }$
live in a common dyadic grid and both $K\subset 3K^{\prime }\setminus
K^{\prime }$ and $K^{\prime }\subset 3K\setminus K$, and we denote by $%
\mathcal{N}^{n}$ the set of pairs $\left( K,K^{\prime }\right) $ in $%
\mathcal{Q}^{n}\times \mathcal{Q}^{n}$ that are neighbours. Let $\Omega 
\mathcal{N}^{n}=\Omega \mathcal{Q}^{n}\times \Omega \mathcal{Q}^{n}$ be the
corresponding collection of neighbour pairs of quasicubes. Then we define
the classical \emph{offset }$A_{2}^{\alpha }$\emph{\ constants} by 
\begin{equation*}
A_{2}^{\alpha }\left( \sigma ,\omega \right) \equiv \sup_{\left( Q,Q^{\prime
}\right) \in \Omega \mathcal{N}^{n}}\frac{\left\vert Q\right\vert _{\sigma }%
}{\left\vert Q\right\vert ^{1-\frac{\alpha }{n}}}\frac{\left\vert Q^{\prime
}\right\vert _{\omega }}{\left\vert Q\right\vert ^{1-\frac{\alpha }{n}}}.
\end{equation*}%
Since the cubes in $\mathcal{P}^{n}$ are products of half open, half closed
intervals $\left[ a,b\right) $, the neighbouring quasicubes $\left(
Q,Q^{\prime }\right) \in \Omega \mathcal{N}^{n}$ are disjoint, and the
common point masses of $\sigma $ nor $\omega $ do simultaneously appear in
each factor.

We now define the \emph{one-tailed} $\mathcal{A}_{2}^{\alpha }$ constant
using $\mathcal{P}^{\alpha }$. The energy constants $\mathcal{E}_{\alpha }$
introduced in the next subsection will use the standard Poisson integral $%
\mathrm{P}^{\alpha }$.

\begin{definition}
The one-sided constants $\mathcal{A}_{2}^{\alpha }$ and $\mathcal{A}%
_{2}^{\alpha ,\func{dual}}$ for the weight pair $\left( \sigma ,\omega
\right) $ are given by%
\begin{eqnarray*}
\mathcal{A}_{2}^{\alpha }\left( \sigma ,\omega \right) &\equiv &\sup_{Q\in
\Omega \mathcal{Q}^{n}}\mathcal{P}^{\alpha }\left( Q,\mathbf{1}%
_{Q^{c}}\sigma \right) \frac{\left\vert Q\right\vert _{\omega }}{\left\vert
Q\right\vert ^{1-\frac{\alpha }{n}}}<\infty , \\
\mathcal{A}_{2}^{\alpha ,\func{dual}}\left( \sigma ,\omega \right) &\equiv
&\sup_{Q\in \Omega \mathcal{Q}^{n}}\frac{\left\vert Q\right\vert _{\sigma }}{%
\left\vert Q\right\vert ^{1-\frac{\alpha }{n}}}\mathcal{P}^{\alpha }\left( Q,%
\mathbf{1}_{Q^{c}}\omega \right) <\infty .
\end{eqnarray*}
\end{definition}

Note that these definitions are the analogues of the corresponding
conditions with `holes' introduced by Hyt\"{o}nen \cite{Hyt} in dimension $%
n=1$ - the supports of the measures $\mathbf{1}_{Q^{c}}\sigma $ and $\mathbf{%
1}_{Q}\omega $ in the definition of $\mathcal{A}_{2}^{\alpha }$ are
disjoint, and so the common point masses of $\sigma $ and $\omega $ do not
appear simultaneously in each factor.

\subsubsection{Punctured Muckenhoupt conditions}

Given an at most countable set $\mathfrak{P}=\left\{ p_{k}\right\}
_{k=1}^{\infty }$ in $\mathbb{R}^{n}$, a quasicube $Q\in \Omega \mathcal{Q}%
^{n}$, and a positive locally finite Borel measure $\mu $, define 
\begin{equation*}
\mu \left( Q,\mathfrak{P}\right) \equiv \left\vert Q\right\vert _{\mu }-\sup
\left\{ \mu \left( p_{k}\right) :p_{k}\in Q\cap \mathfrak{P}\right\} ,
\end{equation*}%
where we note that the $\sup $ above is achieved at some point $p_{k}$ since 
$\mu $ is locally finite. The quantity $\mu \left( Q,\mathfrak{P}\right) $
is simply the $\widetilde{\mu }$ measure of $Q$ where $\widetilde{\mu }$ is
the measure $\mu $ with its largest point mass in $Q$ removed. Given a
locally finite measure pair $\left( \sigma ,\omega \right) $, let $\mathfrak{%
P}_{\left( \sigma ,\omega \right) }=\left\{ p_{k}\right\} _{k=1}^{\infty }$
be the at most countable set of common point masses of $\sigma $ and $\omega 
$. Then as shown in \cite{SaShUr5} (as pointed out by Hyt\"{o}nen \cite{Hyt2}%
, the one-dimensional case follows from the proof of Proposition 2.1 in \cite%
{LaSaUr2}), the weighted norm inequality (\ref{2 weight}) implies finiteness
of the following \emph{punctured} Muckenhoupt conditions:%
\begin{eqnarray*}
A_{2}^{\alpha ,\limfunc{punct}}\left( \sigma ,\omega \right) &\equiv
&\sup_{Q\in \Omega \mathcal{Q}^{n}}\frac{\sigma \left( Q,\mathfrak{P}%
_{\left( \sigma ,\omega \right) }\right) }{\left\vert Q\right\vert ^{1-\frac{%
\alpha }{n}}}\frac{\left\vert Q\right\vert _{\omega }}{\left\vert
Q\right\vert ^{1-\frac{\alpha }{n}}}, \\
A_{2}^{\alpha ,\limfunc{punct},\func{dual}}\left( \sigma ,\omega \right)
&\equiv &\sup_{Q\in \Omega \mathcal{Q}^{n}}\frac{\left\vert Q\right\vert
_{\sigma }}{\left\vert Q\right\vert ^{1-\frac{\alpha }{n}}}\frac{\omega
\left( Q,\mathfrak{P}_{\left( \sigma ,\omega \right) }\right) }{\left\vert
Q\right\vert ^{1-\frac{\alpha }{n}}}.
\end{eqnarray*}

Finally, we point out that the intersection of these conditions, namely $%
\mathcal{A}_{2}^{\alpha }+\mathcal{A}_{2}^{\alpha ,\func{dual}}A_{2}^{\alpha
,\limfunc{punct}}+A_{2}^{\alpha ,\limfunc{punct},\func{dual}}<\infty $, is
independent of the biLipschitz map $\Omega $ as follows from taking $\Psi
=\Omega ^{-1}$ in Proposition \ref{Muck equiv} below.

\subsection{Quasicube testing and quasiweak boundedness property}

The following `dual' quasicube testing conditions are necessary for the
boundedness of $\mathbf{R}^{\alpha ,n}$ from $L^{2}\left( \sigma \right) $
to $L^{2}\left( \omega \right) $:%
\begin{eqnarray*}
\mathfrak{T}_{\mathbf{R}^{\alpha ,n}}^{2} &\equiv &\sup_{Q\in \Omega 
\mathcal{Q}^{n}}\frac{1}{\left\vert Q\right\vert _{\sigma }}%
\int_{Q}\left\vert \mathbf{R}^{\alpha ,n}\left( \mathbf{1}_{Q}\sigma \right)
\right\vert ^{2}\omega <\infty , \\
\left( \mathfrak{T}_{\mathbf{R}^{\alpha ,n}}^{\func{dual}}\right) ^{2}
&\equiv &\sup_{Q\in \Omega \mathcal{Q}^{n}}\frac{1}{\left\vert Q\right\vert
_{\omega }}\int_{Q}\left\vert \left( \mathbf{R}^{\alpha ,n}\right) ^{\func{%
dual}}\left( \mathbf{1}_{Q}\omega \right) \right\vert ^{2}\sigma <\infty .
\end{eqnarray*}%
Note that these conditions are required to hold uniformly over tangent line
truncations of $\mathbf{R}^{\alpha ,n}$, and where again we point out that
in the presence of the $\mathcal{A}_{2}^{\alpha }$ conditions, we can
equivalently replace the tangent line truncations with any other admissible
truncations.

The quasiweak boundedness property for $\mathbf{R}^{\alpha ,n}$ is another
necessary condition for (\ref{2 weight}) given by%
\begin{eqnarray*}
&&\left\vert \int_{Q}\mathbf{R}^{\alpha ,n}\left( 1_{Q^{\prime }}\sigma
\right) d\omega \right\vert \leq \mathcal{WBP}_{\mathbf{R}^{\alpha ,n}}\sqrt{%
\left\vert Q\right\vert _{\omega }\left\vert Q^{\prime }\right\vert _{\sigma
}}, \\
&&\ \ \ \ \ \text{for all dyadic quasicubes }Q,Q^{\prime }\in \Omega 
\mathcal{D}\text{ with }\frac{1}{C}\leq \frac{\left\vert Q\right\vert ^{%
\frac{1}{n}}}{\left\vert Q^{\prime }\right\vert ^{\frac{1}{n}}}\leq C, \\
&&\ \ \ \ \ \text{and either }Q\subset 3Q^{\prime }\setminus Q^{\prime }%
\text{ or }Q^{\prime }\subset 3Q\setminus Q, \\
&&\ \ \ \ \ \text{and all dyadic quasigrids }\Omega \mathcal{D}\text{.}
\end{eqnarray*}

\subsection{Quasienergy conditions}

Suppose $\Omega :\mathbb{R}^{n}\rightarrow \mathbb{R}^{n}$ is a $C^{1,\delta
}$ diffeomorphism. We begin by briefly recalling some of the notation used
in \cite{SaShUr5}. Given a dyadic $\Omega $-quasicube $K\in \mathcal{D}$ and
a positive measure $\mu $ we define the $\Omega $-quasiHaar projection $%
\mathsf{P}_{K}^{\mu }\equiv \sum_{_{J\in \mathcal{D}:\ J\subset
K}}\bigtriangleup _{J}^{\mu }$ where the projections $\bigtriangleup
_{J}^{\mu }$ are the usual orthogonal projections onto the space of mean
value zero functions that are constant on the children of $J$ - see e.g. 
\cite{SaShUr5}. Now we recall the definition of a \emph{good} dyadic
quasicube - see \cite{NTV3} and \cite{LaSaUr2} and \cite{SaShUr} for more
detail - and the definition of a quasicube that is \emph{deeply embedded} in
another quasicube. We say that a dyadic quasicube $J$ is $\left( \mathbf{r}%
,\varepsilon \right) $-\emph{deeply embedded} in a dyadic quasicube $K$, or
simply $\mathbf{r}$\emph{-deeply embedded} in $K$, which we write as $%
J\Subset _{\mathbf{r}}K$, when $J\subset K$ and both 
\begin{eqnarray}
\ell \left( J\right) &\leq &2^{-\mathbf{r}}\ell \left( K\right) ,
\label{def deep embed} \\
\limfunc{quasidist}\left( J,\partial K\right) &\geq &\frac{1}{2}\ell \left(
J\right) ^{\varepsilon }\ell \left( K\right) ^{1-\varepsilon },  \notag
\end{eqnarray}%
where we define the quasidistance $\limfunc{quasidist}\left( E,F\right) $
between two sets $E$ and $F$ to be the Euclidean distance $\limfunc{dist}%
\left( \Omega ^{-1}E,\Omega ^{-1}F\right) $ between the preimages $\Omega
^{-1}E$ and $\Omega ^{-1}F$ of $E$ and $F$ under the map $\Omega $, and
where we recall that $\ell \left( J\right) \approx \left\vert J\right\vert ^{%
\frac{1}{n}}$.

\begin{definition}
Let $\mathbf{r}\in \mathbb{N}$ and $0<\varepsilon <1$. A dyadic quasicube $J$
is $\left( \mathbf{r},\varepsilon \right) $\emph{-good}, or simply \emph{good%
}, if for \emph{every} dyadic superquasicube $I$, it is the case that 
\textbf{either} $J$ has side length at least $2^{-\mathbf{r}}$ times that of 
$I$, \textbf{or} $J\Subset _{\mathbf{r}}I$ is $\left( \mathbf{r},\varepsilon
\right) $-deeply embedded in $I$.
\end{definition}

The parameters $\mathbf{r},\varepsilon $ will be fixed sufficiently large
and small respectively later on, and we denote the set of such good dyadic
quasicubes by $\Omega \mathcal{D}_{\limfunc{good}}$. We thus have 
\begin{equation*}
\left\Vert \mathsf{P}_{I}^{\mu }\mathbf{x}\right\Vert _{L^{2}\left( \mu
\right) }^{2}=\int_{I}\left\vert \mathbf{x}-\mathbb{E}_{I}^{\mu }\mathbf{x}%
\right\vert ^{2}d\mu \left( x\right) =\int_{I}\left\vert \mathbf{x}-\left( 
\frac{1}{\left\vert I\right\vert _{\mu }}\int_{I}\mathbf{x}dx\right)
\right\vert ^{2}d\mu \left( x\right) ,\ \ \ \ \ \mathbf{x}=\left(
x_{1},...,x_{n}\right) ,
\end{equation*}%
where $\mathsf{P}_{I}^{\mu }\mathbf{x}$ is the orthogonal projection of the
identity function $\mathbf{x}:\mathbb{R}^{n}\rightarrow \mathbb{R}^{n}$ onto
the vector-valued subspace of $\oplus _{k=1}^{n}L^{2}\left( \mu \right) $
consisting of functions supported in $I$ with $\mu $-mean value zero, and
where $\mathbb{E}_{I}^{\mu }\mathbf{x}$ is the expectation ($\mu $-average)
of $\mathbf{x}$ on the cube $I$. At this point we emphasize that in the
setting of quasicubes we continue to use the linear function $\mathbf{x}$
and not the pushforward of $\mathbf{x}$ by $\Omega $. The reason of course
is that the quasienergy defined below is used to capture the first order
information in the Taylor expansion of a singular kernel.

We use the collection $\mathcal{M}_{\mathbf{r}-\limfunc{deep}}\left(
K\right) $ of \emph{maximal} $\mathbf{r}$-deeply embedded dyadic
subquasicubes of a dyadic quasicube $K$. We let $J^{\ast }=\gamma J$ where $%
\gamma \geq 2$. The goodness parameter $\mathbf{r}$ is chosen sufficiently
large, depending on $\varepsilon $ and $\gamma $, that the bounded overlap
property 
\begin{equation}
\sum_{J\in \mathcal{M}_{\mathbf{r}-\limfunc{deep}}\left( K\right) }\mathbf{1}%
_{J^{\ast }}\leq \beta \mathbf{1}_{K}\ ,  \label{bounded overlap}
\end{equation}%
holds for some positive constant $\beta $ depending only on $n,\gamma ,%
\mathbf{r}$ and $\varepsilon $ (see \cite{SaShUr4}). We will also need the
following refinements of $\mathcal{M}_{\mathbf{r}-\limfunc{deep}}\left(
K\right) $ for each $\ell \geq 0$:%
\begin{equation*}
\mathcal{M}_{\mathbf{r}-\limfunc{deep}}^{\ell }\left( K\right) \equiv
\left\{ J\in \mathcal{M}_{\mathbf{r}-\limfunc{deep}}\left( \pi ^{\ell
}K\right) :J\subset L\text{ for some }L\in \mathcal{M}_{\mathbf{r}-\limfunc{%
deep}}\left( K\right) \right\} ,
\end{equation*}%
where $\pi ^{\ell }K$ denotes the $\ell ^{th}$ parent above $K$ in the
dyadic grid. Since $J\in \mathcal{M}_{\mathbf{r}-\limfunc{deep}}^{\ell
}\left( K\right) $ implies $\gamma J\subset K$, we also have from (\ref%
{bounded overlap}) that%
\begin{equation}
\sum_{J\in \mathcal{M}_{\mathbf{r}-\limfunc{deep}}^{\left( \ell \right)
}\left( K\right) }\mathbf{1}_{J^{\ast }}\leq \beta \mathbf{1}_{K}\ ,\ \ \ \
\ \text{for each }\ell \geq 0.  \label{bounded overlap'}
\end{equation}%
Of course $\mathcal{M}_{\mathbf{r}-\limfunc{deep}}^{0}\left( K\right) =%
\mathcal{M}_{\mathbf{r}-\limfunc{deep}}\left( K\right) $, but $\mathcal{M}_{%
\mathbf{r}-\limfunc{deep}}^{\ell }\left( K\right) $ is in general a finer
subdecomposition of $K$ the larger $\ell $ is, and may in fact be empty.

There is one final generalization we need. We say that a quasicube $J\in
\Omega \mathcal{P}^{n}$ is $\left( \mathbf{r},\varepsilon \right) $-\emph{%
deeply embedded} in a quasicube $K\in \Omega \mathcal{P}^{n}$, or simply $%
\mathbf{r}$\emph{-deeply embedded} in $K$, which we write as $J\Subset _{%
\mathbf{r}}K$, when $J\subset K$ and both 
\begin{eqnarray*}
\ell \left( J\right) &\leq &2^{-\mathbf{r}}\ell \left( K\right) , \\
\limfunc{quasidist}\left( J,\partial K\right) &\geq &\frac{1}{2}\ell \left(
J\right) ^{\varepsilon }\ell \left( K\right) ^{1-\varepsilon }.
\end{eqnarray*}%
This is the same definition as we gave earlier for \emph{dyadic} quasicubes,
but is now extended to arbitrary quasicubes $J,K\in \Omega \mathcal{P}^{n}$.
Now given $K\in \Omega \mathcal{D}^{n}$ and $F\in \Omega \mathcal{P}^{n}$
with $\ell \left( F\right) \geq \ell \left( K\right) $, define 
\begin{equation*}
\mathcal{M}_{\mathbf{r}-\limfunc{deep}}^{F}\left( K\right) \equiv \left\{ 
\text{maximal }J\in \Omega \mathcal{D}^{n}:J\Subset _{\mathbf{r}}K\text{ and 
}J\Subset _{\mathbf{r}}F\right\} ,
\end{equation*}%
and%
\begin{equation*}
\left( \mathcal{E}_{\alpha }^{\limfunc{xrefined}}\right) ^{2}\equiv
\sup_{I}\sup_{F\in \Omega \mathcal{P}^{n}:\ \ell \left( F\right) \geq 2\ell
\left( I\right) }\frac{1}{\left\vert I\right\vert _{\sigma }}\sum_{J\in 
\mathcal{M}_{\mathbf{r}-\limfunc{deep}}^{F}\left( I\right) }\left( \frac{%
\mathrm{P}^{\alpha }\left( J,\mathbf{1}_{I\setminus \gamma J}\sigma \right) 
}{\left\vert J\right\vert ^{\frac{1}{n}}}\right) ^{2}\left\Vert \mathsf{P}%
_{J}^{\limfunc{subgood},\omega }\mathbf{x}\right\Vert _{L^{2}\left( \omega
\right) }^{2}\ .
\end{equation*}%
The important difference here is that the quasicube $F\in \Omega \mathcal{P}%
^{n}$ is permitted to lie outside the quasigrid $\Omega \mathcal{D}^{n}$
containing $K$. Similarly we have a dual version of $\mathcal{E}_{\alpha }^{%
\limfunc{xrefined}}$.

\begin{definition}
\label{energy condition}Suppose $\sigma $ and $\omega $ are positive Borel
measures on $\mathbb{R}^{n}$. Then the quasienergy condition constant $%
\mathcal{E}_{\alpha }^{\Omega \mathcal{Q}^{n}}$ is given by%
\begin{equation*}
\left( \mathcal{E}_{\alpha }^{\Omega \mathcal{Q}^{n}}\right) ^{2}\equiv \sup 
_{\substack{ F\in \Omega \mathcal{P}^{n}  \\ \ell \left( F\right) \geq \ell
\left( I\right) }}\sup_{I=\dot{\cup}I_{r}}\frac{1}{\left\vert I\right\vert
_{\sigma }}\sum_{r=1}^{\infty }\sum_{J\in \mathcal{M}_{\mathbf{r}-\limfunc{%
deep}}^{F}\left( I_{r}\right) }\left( \frac{\mathrm{P}^{\alpha }\left( J,%
\mathbf{1}_{I\setminus \gamma J}\sigma \right) }{\left\vert J\right\vert ^{%
\frac{1}{n}}}\right) ^{2}\left\Vert \mathsf{P}_{J}^{\omega }\mathbf{x}%
\right\Vert _{L^{2}\left( \omega \right) }^{2}\ ,
\end{equation*}%
where $\sup_{I=\dot{\cup}I_{r}}$ above is taken over

\begin{enumerate}
\item all dyadic quasigrids $\Omega \mathcal{D}$,

\item all $\Omega \mathcal{D}$-dyadic quasicubes $I$,

\item and all subpartitions $\left\{ I_{r}\right\} _{r=1}^{N\text{ or }%
\infty }$ of the quasicube $I$ into $\Omega \mathcal{D}$-dyadic
subquasicubes $I_{r}$.
\end{enumerate}
\end{definition}

This definition of the quasienergy constant $\mathcal{E}_{\alpha }^{\Omega 
\mathcal{Q}^{n}}$ is larger than that used in \cite{SaShUr5}. There is a
similar definition for the dual (backward) quasienergy condition that simply
interchanges $\sigma $ and $\omega $ everywhere. These definitions of\ the
quasienergy condition depend on the choice of goodness parameters $\mathbf{r}
$ and $\varepsilon $.

Finally, we record the following elementary special case of the Energy Lemma
(see e.g. \cite{SaShUr5} or \cite{SaShUr} or \cite{LaWi}) that we will need
here. Recall that our quasicubes come from a fixed globally biLipschitz map $%
\Omega $ in $\mathbb{R}^{n}$. Our singular integrals below will be conformal
fractional Riesz transforms associated with an unrelated $C^{1,\delta }$
diffeomorphism $\Psi $ of $\mathbb{R}^{n}$ that is presented as a graph.

\begin{lemma}[\textbf{Quasienergy Lemma}]
\label{ener}Suppose that $\Omega $ is a globally biLipschitz map, and that $%
\Psi $ is a $C^{1,\delta }$ diffeomorphism of $\mathbb{R}^{n}$. Let $J\ $be
a quasicube in $\Omega \mathcal{D}^{\omega }$. Let $\psi _{J}$ be an $%
L^{2}\left( \omega \right) $ function supported in $J$ and with $\omega $%
-integral zero. Let $\nu $ be a positive measure supported in $\mathbb{R}%
^{n}\setminus \gamma J$ with $\gamma \geq 2$. Then for $\mathbf{R}_{\Psi
}^{\alpha ,n}$ a conformal $\alpha $-fractional Riesz transform, we have%
\begin{equation*}
\left\vert \left\langle \mathbf{R}_{\Psi }^{\alpha ,n}\left( \nu \right)
,\psi _{J}\right\rangle _{\omega }\right\vert \lesssim \left\Vert \psi
_{J}\right\Vert _{L^{2}\left( \omega \right) }\left( \frac{\mathrm{P}%
^{\alpha }\left( J,\nu \right) }{\left\vert J\right\vert ^{\frac{1}{n}}}%
\right) \left\Vert \mathsf{P}_{J}^{\omega }\mathbf{x}\right\Vert
_{L^{2}\left( \omega \right) }.
\end{equation*}
\end{lemma}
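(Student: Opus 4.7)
The plan is to follow the standard energy-lemma template: exploit the $\omega$-mean-zero property of $\psi_{J}$ to subtract an appropriate polynomial in $y$ from the kernel, then apply the $C^{1,\delta}$ smoothness of $\mathbf{K}_{\Psi}^{\alpha,n}$ to extract a factor of $|y-c_{J}|$ and finally apply Cauchy--Schwarz in $L^{2}(\omega)$.

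More precisely, I would first write
\[
\langle \mathbf{R}_{\Psi}^{\alpha,n}(\nu),\psi_{J}\rangle_{\omega}
= \int_{J}\!\!\int_{\mathbb{R}^{n}\setminus \gamma J}
\mathbf{K}_{\Psi}^{\alpha,n}(y,x)\,d\nu(x)\,\psi_{J}(y)\,d\omega(y).
\]
Because $\int_{J}\psi_{J}\,d\omega =0$, I may replace $\mathbf{K}_{\Psi}^{\alpha,n}(y,x)$ by $\mathbf{K}_{\Psi}^{\alpha,n}(y,x)-\mathbf{K}_{\Psi}^{\alpha,n}(c_{J},x)$ inside the inner integral, where $c_{J}$ is the center of the quasicube $J$. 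Since $\mathbf{K}_{\Psi}^{\alpha,n}=\Gamma_{\Psi}\cdot \mathbf{K}^{\alpha,n}$, the estimates \eqref{sizeandsmoothness Gamma} on $\Gamma_{\Psi}$, together with the standard size/smoothness estimates for the Riesz kernel and the product rule, give
\[
\bigl|\mathbf{K}_{\Psi}^{\alpha,n}(y,x)-\mathbf{K}_{\Psi}^{\alpha,n}(c_{J},x)\bigr|
\lesssim \frac{|y-c_{J}|}{|x-c_{J}|^{\,n+1-\alpha}},
\qquad y\in J,\ x\in \mathbb{R}^{n}\setminus \gamma J,
\]
where the hypothesis $\gamma \ge 2$ ensures $|y-c_{J}|\le \tfrac12 |x-c_{J}|$ so that the smoothness bounds of \eqref{sizeandsmoothness Gamma} apply.

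Plugging this in and using that $\nu$ is supported away from $\gamma J$ (so $|x-c_{J}|\gtrsim |J|^{1/n}$, whence $|x-c_{J}|^{-(n+1-\alpha)}\approx (|J|^{1/n}+|x-c_{J}|)^{-(n+1-\alpha)}$), the inner $x$-integral becomes comparable to $\mathrm{P}^{\alpha}(J,\nu)/|J|^{1/n}$. The pairing is therefore dominated by
\[
\frac{\mathrm{P}^{\alpha}(J,\nu)}{|J|^{1/n}}\int_{J}|y-c_{J}|\,|\psi_{J}(y)|\,d\omega(y).
\]
To obtain the projection norm instead of just $|y-c_{J}|$, I use once more that $\psi_{J}$ is $\omega$-mean-zero, which lets me subtract the constant $\mathbb{E}_{J}^{\omega}\mathbf{x}-c_{J}$ from $y-c_{J}$ at essentially no cost; equivalently, since the kernel difference is (to leading order) linear in $y-c_{J}$, the net effect is that $y-c_{J}$ may be replaced by $y-\mathbb{E}_{J}^{\omega}\mathbf{x}$. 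Cauchy--Schwarz in $L^{2}(\omega)$ then yields
\[
\int_{J}\bigl|y-\mathbb{E}_{J}^{\omega}\mathbf{x}\bigr|\,|\psi_{J}(y)|\,d\omega(y)
\le \|\mathsf{P}_{J}^{\omega}\mathbf{x}\|_{L^{2}(\omega)}\,\|\psi_{J}\|_{L^{2}(\omega)},
\]
which is exactly the required bound.

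The only real obstacle is verifying cleanly that the first-order term $\nabla_{y}\mathbf{K}_{\Psi}^{\alpha,n}(c_{J},x)\cdot(y-c_{J})$ dominates in the Taylor expansion (the $C^{\delta}$-error being absorbable by the same $|x-c_{J}|^{-(n+1-\alpha)}$ factor up to the small excess exponent $\delta$), and that the replacement of $y-c_{J}$ by $y-\mathbb{E}_{J}^{\omega}\mathbf{x}$ is justified for the \emph{vector-valued} kernel rather than only for a scalar one. Both points reduce to the elementary fact that subtracting a constant in $y$ inside $\langle\,\cdot\,,\psi_{J}\rangle_{\omega}$ does not change the value of the pairing, applied componentwise. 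Once these are in place, the estimate follows as displayed, and no further structure of $\Omega$ is used beyond the fact that $J$ has a well-defined center and sidelength $|J|^{1/n}$.
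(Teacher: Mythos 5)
The paper only records this lemma (citing \cite{SaShUr5}, \cite{SaShUr} and \cite{LaWi}) and gives no proof, so your argument has to stand alone, and as written it has a genuine gap in the remainder term. Your first-order term is fine: pairing $\nabla_{y}\mathbf{K}_{\Psi }^{\alpha ,n}(c_{J},x)\cdot (y-c_{J})$ against the mean-zero $\psi _{J}$ does let you replace $y-c_{J}$ by $y-\mathbb{E}_{J}^{\omega }\mathbf{x}$, and Cauchy--Schwarz then produces $\Vert \mathsf{P}_{J}^{\omega }\mathbf{x}\Vert _{L^{2}(\omega )}$. But the Taylor error $E(y,x)=\mathbf{K}_{\Psi }^{\alpha ,n}(y,x)-\mathbf{K}_{\Psi }^{\alpha ,n}(c_{J},x)-\nabla _{y}\mathbf{K}_{\Psi }^{\alpha ,n}(c_{J},x)\cdot (y-c_{J})$ is not linear in $y$, so mean-zero buys nothing for it, and the only available bound is
\begin{equation*}
\Bigl\vert \int_{J}\int E(y,x)\,d\nu (x)\,\psi _{J}(y)\,d\omega (y)\Bigr\vert \lesssim \frac{\mathrm{P}^{\alpha }(J,\nu )}{|J|^{1/n}}\Bigl(\int_{J}|y-c_{J}|^{2}d\omega \Bigr)^{1/2}\Vert \psi _{J}\Vert _{L^{2}(\omega )},
\end{equation*}
where $\bigl(\int_{J}|y-c_{J}|^{2}d\omega \bigr)^{1/2}\approx \ell (J)\sqrt{|J|_{\omega }}$ is the \emph{pivotal} quantity, which can be arbitrarily larger than the energy $\Vert \mathsf{P}_{J}^{\omega }\mathbf{x}\Vert _{L^{2}(\omega )}$ (e.g.\ when $\mathbf{1}_{J}\omega $ is nearly a single off-center point mass). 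Indeed your displayed chain passes to $\int_{J}|y-c_{J}|\,|\psi _{J}|\,d\omega $ \emph{before} invoking mean-zero a second time, and once absolute values are taken the cancellation is gone. This is exactly the pivotal-versus-energy distinction that drives the whole subject, so it cannot be dismissed; as written you have proved only the weaker two-term (monotonicity-type) estimate.

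There are two standard repairs. The quickest within your framework is to center the expansion at $\mathbb{E}_{J}^{\omega }\mathbf{x}$ rather than at $c_{J}$: then the remainder is $O(|y-\mathbb{E}_{J}^{\omega }\mathbf{x}|^{1+\delta }\,|x-c_{J}|^{\alpha -n-1-\delta })\lesssim |y-\mathbb{E}_{J}^{\omega }\mathbf{x}|\,|x-c_{J}|^{\alpha -n-1}$, and Cauchy--Schwarz with the factor $|y-\mathbb{E}_{J}^{\omega }\mathbf{x}|$ now yields the energy. The cleaner route (the one used implicitly in the paper, cf.\ the displays around (\ref{A123}) and (\ref{B1 + B2})) avoids Taylor expansion entirely: since $\psi _{J}$ has $\omega $-mean zero, $\langle \mathbf{R}_{\Psi }^{\alpha ,n}\nu ,\psi _{J}\rangle _{\omega }=\langle \mathbf{R}_{\Psi }^{\alpha ,n}\nu -\mathbb{E}_{J}^{\omega }(\mathbf{R}_{\Psi }^{\alpha ,n}\nu ),\psi _{J}\rangle _{\omega }$, and Cauchy--Schwarz together with the identity $\Vert F-\mathbb{E}_{J}^{\omega }F\Vert _{L^{2}(\mathbf{1}_{J}\omega )}^{2}=\frac{1}{2|J|_{\omega }}\int_{J}\int_{J}|F(y)-F(z)|^{2}d\omega (y)d\omega (z)$ reduces everything to the Lipschitz bound $|\mathbf{R}_{\Psi }^{\alpha ,n}\nu (y)-\mathbf{R}_{\Psi }^{\alpha ,n}\nu (z)|\lesssim |y-z|\,\mathrm{P}^{\alpha }(J,\nu )/|J|^{1/n}$ for $y,z\in J$, which needs only $|\nabla _{y}\mathbf{K}_{\Psi }^{\alpha ,n}(w,x)|\lesssim |w-x|^{\alpha -n-1}$ from (\ref{sizeandsmoothness Gamma}) and no H\"{o}lder continuity of the gradient; applying the same identity to $F=\mathbf{x}$ finishes the proof. (In either repair, note that for a quasicube the relevant intermediate points need not lie in $\gamma J$ when $\gamma =2$, so the implied constant also depends on the biLipschitz constants of $\Omega $.)
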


\section{One measure supported in a line}

In this section we prove Theorem \ref{main}, i.e. we prove that the $\Omega $%
-quasienergy conditions, the backward tripled $\Omega $-quasitesting
conditions (for appropriately rotated quasicubes), and the $\Omega $%
-quasiweak boundedness property (for appropriately rotated quasicubes) are
implied by the Muckenhoupt $\mathcal{A}_{2}^{\alpha }$ conditions and the $%
\Omega $-quasitesting conditions $\mathfrak{T}_{\mathbf{R}_{\Psi }^{\alpha
,n}}^{\Omega \mathcal{Q}^{n}}$ and $\mathfrak{T}_{\mathbf{R}_{\Psi }^{\alpha
,n}}^{\Omega \mathcal{Q}^{n},\func{dual}}$ associated to the tangent line
truncations of a conformal $\alpha $-fractional Riesz transform $\mathbf{R}%
_{\Psi }^{\alpha ,n}$, when \emph{one} of the measures $\omega $ is
supported in a certain line, and the other measure $\sigma $ is arbitrary.
The one-dimensional character of just one of the measures is enough to
circumvent the failure of strong reversal of energy as described in \cite%
{SaShUr2} and \cite{SaShUr3}.

\begin{notation}
We emphasize again that the $C^{1,\delta }$ diffeomorphism $\Psi $ that
appears in the definition of the conformal $\alpha $-fractional Riesz
transform $\mathbf{R}_{\Psi }^{\alpha ,n}$ need not have any relation to the
globally biLipschitz map $\Omega $ that is used to define the quasicubes
under consideration.
\end{notation}

Recall that the conformal Riesz transforms $\mathbf{R}_{\Psi }^{\alpha ,n}$
considered here have vector kernel $\mathbf{K}_{\Psi }^{\alpha ,n}$ defined
by%
\begin{equation*}
\mathbf{K}_{\Psi }^{\alpha ,n}\left( y,x\right) \equiv \frac{y-x}{\left\vert
\Psi \left( y\right) -\Psi \left( x\right) \right\vert ^{n+1-\alpha }},
\end{equation*}%
where we suppose that $\Psi $ is given as the graph of $\psi :\mathbb{R}%
\rightarrow \mathbb{R}^{n-1}$: 
\begin{equation}
\Psi \left( x\right) =\left( x^{1},x^{\prime }+\psi \left( x^{1}\right)
\right) =\left( x^{1},x^{2}+\psi ^{2}\left( x^{1}\right) ,...,x^{n}+\psi
^{n}\left( x^{1}\right) \right) ,  \label{def Psi}
\end{equation}%
where $\psi \in C^{1,\delta }$.

Fix a collection of $\Omega $-quasicubes where $\Omega :\mathbb{R}%
^{n}\rightarrow \mathbb{R}^{n}$ is a globally biLipschitz map unrelated to $%
\Psi $. Fix a dyadic quasigrid $\Omega \mathcal{D}$, and suppose that $%
\omega $ is supported in the $x_{1}$-axis, which we denote by $L$. We will
show that both quasienergy conditions hold relative to $\Omega \mathcal{D}$.
Furthermore, when $\Omega $ is a $C^{1}$ diffeomorphism and $L$-transverse,
we will show that the backward tripled quasitesting condition, and hence
also the quasiweak boundedness property, is controlled by $\mathcal{A}%
_{2}^{\alpha ,\func{dual}}$ and dual quasitesting.

Let $3<\gamma =\gamma \left( n,\alpha \right) $ where $\gamma $ will be
taken sufficiently large depending on $n$ and $\alpha $ for the arguments
below to be valid - see in particular (\ref{smallness}), (\ref{provided''}),
(\ref{provided}) and (\ref{provided'}) below - and where we also need $%
\left\Vert D\psi \right\Vert _{\infty }$ sufficiently small depending on $n$
and $\alpha $ as in (\ref{small Lip}) above, i.e.%
\begin{equation}
\left\Vert D\psi \right\Vert _{\infty }<\frac{1}{8n^{2}}\left( n-\alpha
\right) .  \label{assumptions}
\end{equation}

\subsection{Backward quasienergy condition}

The dual (backward) quasienergy condition $\mathcal{E}_{\alpha }^{\Omega 
\mathcal{Q}^{n},\func{dual}}\lesssim \mathfrak{T}_{\mathbf{R}_{\Psi
}^{\alpha ,n}}^{\Omega \mathcal{Q}^{n},\func{dual}}+\sqrt{\mathcal{A}%
_{2}^{\alpha ,\func{dual}}}$ is the more straightforward of the two to
verify, and so we turn to it first. We will show%
\begin{equation*}
\sup_{\ell \geq 0}\sum_{r=1}^{\infty }\sum_{J\in \mathcal{M}_{\limfunc{deep}%
}^{\ell }\left( I_{r}\right) }\left( \frac{\mathrm{P}^{\alpha }\left( J,%
\mathbf{1}_{I\setminus J^{\ast }}\omega \right) }{\left\vert J\right\vert ^{%
\frac{1}{n}}}\right) ^{2}\left\Vert \mathsf{P}_{J}^{\sigma }\mathbf{x}%
\right\Vert _{L^{2}\left( \sigma \right) }^{2}\leq \left( \mathfrak{T}_{%
\mathbf{R}_{\Psi }^{\alpha ,n}}^{\Omega \mathcal{Q}^{n},\func{dual}}\right)
^{2}\left\vert I\right\vert _{\omega }\ ,
\end{equation*}%
for all partitions of a dyadic quasicube $I=\dbigcup\limits_{r=1}^{\infty
}I_{r}$ into dyadic subquasicubes $I_{r}$. We fix $\ell \geq 0$ and suppress
both $\ell $ and $\mathbf{r}$ in the notation $\mathcal{M}_{\limfunc{deep}%
}\left( I_{r}\right) =\mathcal{M}_{\mathbf{r}-\limfunc{deep}}^{\ell }\left(
I_{r}\right) $. Recall that $J^{\ast }=\gamma J$, and that the bounded
overlap property (\ref{bounded overlap'}) holds. We may of course assume
that $I$ intersects the $x_{1}$-axis $L$. Now we set $\mathcal{M}_{\limfunc{%
deep}}\equiv \dbigcup\limits_{r=1}^{\infty }\mathcal{M}_{\limfunc{deep}%
}\left( I_{r}\right) $ and write%
\begin{equation*}
\sum_{r=1}^{\infty }\sum_{J\in \mathcal{M}_{\limfunc{deep}}\left(
I_{r}\right) }\left( \frac{\mathrm{P}^{\alpha }\left( J,\mathbf{1}%
_{I\setminus \gamma J}\omega \right) }{\left\vert J\right\vert ^{\frac{1}{n}}%
}\right) ^{2}\left\Vert \mathsf{P}_{J}^{\sigma }\mathbf{x}\right\Vert
_{L^{2}\left( \sigma \right) }^{2}=\sum_{J\in \mathcal{M}_{\limfunc{deep}%
}}\left( \frac{\mathrm{P}^{\alpha }\left( J,\mathbf{1}_{I\setminus \gamma
J}\omega \right) }{\left\vert J\right\vert ^{\frac{1}{n}}}\right)
^{2}\left\Vert \mathsf{P}_{J}^{\sigma }\mathbf{x}\right\Vert _{L^{2}\left(
\sigma \right) }^{2}\ .
\end{equation*}%
We will consider the cases $3J\cap L=\emptyset $ and $3J\cap L\neq \emptyset 
$ separately.

Suppose $3J\cap L=\emptyset $. There is $c>0$ and a finite sequence $\left\{
\xi _{k}\right\} _{k=1}^{N}$ in $\mathbb{S}^{n-1}$ (actually of the form $%
\xi _{k}=\left( 0,\xi _{k}^{2},...,\xi _{k}^{n}\right) $) with the following
property. For each $J\in \mathcal{M}_{\limfunc{deep}}$ with $3J\cap
L=\emptyset $, there is $1\leq k=k\left( J\right) \leq N$ such that for $%
y\in J$ and $x\in I\cap L$, the linear combination $\xi _{k}\cdot \mathbf{K}%
_{\Psi }^{\alpha ,n}\left( y,x\right) $ is positive and satisfies 
\begin{equation*}
\xi _{k}\cdot \mathbf{K}_{\Psi }^{\alpha ,n}\left( y,x\right) =\frac{\xi
_{k}\cdot \left( y-x\right) }{\left\vert \Psi \left( y\right) -\Psi \left(
x\right) \right\vert ^{n+1-\alpha }}\gtrsim \frac{\ell \left( J\right) }{%
\left\vert y-x\right\vert ^{n+1-\alpha }}.
\end{equation*}%
For example, in the plane $n=2$, if $J$ lies above the $x_{1}$-axis $L$,
then for $y\in J$ and $x\in L$ we have $y_{2}\gtrsim \left( 3-1\right) \ell
\left( J\right) >\ell \left( J\right) $ and $x_{2}=0$, hence the estimate 
\begin{equation*}
\left( 0,1\right) \cdot \mathbf{K}_{\Psi }^{\alpha ,n}\left( y,x\right) =%
\frac{y_{2}-x_{2}}{\left\vert \Psi \left( y\right) -\Psi \left( x\right)
\right\vert ^{n+1-\alpha }}\gtrsim \frac{\ell \left( J\right) }{\left\vert
y-x\right\vert ^{n+1-\alpha }}.
\end{equation*}%
For $J$ below\thinspace $L$ we take the unit vector $\left( 0,-1\right) $ in
place of $\left( 0,1\right) $. Thus for $y\in J\in \mathcal{M}_{\limfunc{deep%
}}$ and $k=k\left( J\right) $ we have the following `weak reversal' of
quasienergy for the conformal Riesz transform $\mathbf{R}_{\Psi }^{\alpha
,n} $ with kernel $\mathbf{K}_{\Psi }^{\alpha ,n}\left( y,x\right) $, 
\begin{eqnarray}
\left\vert \mathbf{R}_{\Psi }^{\alpha ,n}\left( \mathbf{1}_{I\cap L}\omega
\right) \left( y\right) \right\vert &=&\left\vert \int_{I\cap L}\mathbf{K}%
_{\Psi }^{\alpha ,n}\left( y,x\right) d\omega \left( x\right) \right\vert
\label{weak control} \\
&\geq &\left\vert \int_{I\cap L}\xi _{k}\cdot \mathbf{K}_{\Psi }^{\alpha
,n}\left( y,x\right) d\omega \left( x\right) \right\vert  \notag \\
&\gtrsim &\int_{I\cap L}\frac{\ell \left( J\right) }{\left\vert
y-x\right\vert ^{n+1-\alpha }}d\omega \left( x\right) \approx \mathrm{P}%
^{\alpha }\left( J,\mathbf{1}_{I}\omega \right) .  \notag
\end{eqnarray}%
Thus from (\ref{weak control}) and the pairwise disjointedness of $J\in 
\mathcal{M}_{\limfunc{deep}}$, we have%
\begin{eqnarray*}
&&\sum_{\substack{ J\in \mathcal{M}_{\limfunc{deep}}  \\ 3J\cap L=\emptyset 
}}\left( \frac{\mathrm{P}^{\alpha }\left( J,\mathbf{1}_{I}\omega \right) }{%
\left\vert J\right\vert ^{\frac{1}{n}}}\right) ^{2}\left\Vert \mathsf{P}%
_{J}^{\sigma }\mathbf{x}\right\Vert _{L^{2}\left( \sigma \right)
}^{2}\lesssim \sum_{\substack{ J\in \mathcal{M}_{\limfunc{deep}}  \\ 3J\cap
L=\emptyset }}\mathrm{P}^{\alpha }\left( J,\mathbf{1}_{I}\omega \right)
^{2}\left\vert J\right\vert _{\sigma } \\
&\lesssim &\sum_{J\in \mathcal{M}_{\limfunc{deep}}}\int_{J}\left\vert 
\mathbf{R}_{\Psi }^{\alpha ,n}\left( \mathbf{1}_{I\cap L}\omega \right)
\left( y\right) \right\vert ^{2}d\sigma \left( y\right) \\
&\leq &\int_{I}\left\vert \mathbf{R}_{\Psi }^{\alpha ,n}\left( \mathbf{1}%
_{I}\omega \right) \left( y\right) \right\vert ^{2}d\sigma \left( y\right)
\leq \left( \mathfrak{T}_{\mathbf{R}_{\Psi }^{\alpha ,n}}^{\Omega \mathcal{Q}%
^{n},\func{dual}}\right) ^{2}\left\vert I\right\vert _{\omega }\ .
\end{eqnarray*}

Now we turn to estimating the sum over those quasicubes $J\in \mathcal{M}_{%
\limfunc{deep}}$ for which $3J\cap L\neq \emptyset $. In this case we use
the one-dimensional nature of the support of $\omega $ to obtain a strong
reversal of one of the partial quasienergies. Recall the Hilbert transform
inequality for intervals $J$ and $I$ with $2J\subset I$ and $\limfunc{supp}%
\mu \subset \mathbb{R}\setminus I$: 
\begin{eqnarray}
\sup_{y,z\in J}\frac{H\mu \left( y\right) -H\mu \left( z\right) }{y-z}
&=&\int_{\mathbb{R}\setminus I}\left\{ \frac{\frac{1}{x-y}-\frac{1}{x-z}}{y-z%
}\right\} d\mu \left( x\right)  \label{Hilbert} \\
&=&\int_{\mathbb{R}\setminus I}\frac{1}{\left( x-y\right) \left( x-z\right) }%
d\mu \left( x\right) \approx \frac{\mathrm{P}\left( J,\mu \right) }{%
\left\vert J\right\vert }.  \notag
\end{eqnarray}%
We wish to obtain a similar control in the situation at hand, but the matter
is now complicated by the extra dimensions. Fix $y=\left( y^{1},y^{\prime
}\right) ,z=\left( z^{1},z^{\prime }\right) \in J$ and $x=\left(
x^{1},0\right) \in L\setminus \gamma J$.

We consider first the case%
\begin{equation}
\left\vert y^{\prime }-z^{\prime }\right\vert \leq \left\vert
y^{1}-z^{1}\right\vert ,  \label{case 1}
\end{equation}%
We pause to recall the main assumption in (\ref{assumptions}) regarding the
size of the graphing function:%
\begin{equation}
\left\Vert D\psi \right\Vert _{\infty }<\frac{1}{8n^{2}}\left( n-\alpha
\right) .  \label{also note}
\end{equation}%
Now the first component $\left( \mathbf{R}_{\Psi }^{\alpha ,n}\right) _{1}$
is `positive' in the direction of the $x^{1}$-axis $L$, and so for $\left(
y^{1},y^{\prime }\right) ,\left( z^{1},z^{\prime }\right) \in J$, we write%
\begin{eqnarray*}
&&\frac{\left( \mathbf{R}_{\Psi }^{\alpha ,n}\right) _{1}\left( \mathbf{1}%
_{I\setminus \gamma J}\omega \right) \left( y^{1},y^{\prime }\right) -\left( 
\mathbf{R}_{\Psi }^{\alpha ,n}\right) _{1}\left( \mathbf{1}_{I\setminus
\gamma J}\omega \right) \left( z^{1},z^{\prime }\right) }{y^{1}-z^{1}} \\
&=&\int_{I\setminus \gamma J}\left\{ \frac{\left( \mathbf{K}_{\Psi }^{\alpha
,n}\right) _{1}\left( \left( y^{1},y^{\prime }\right) ,x\right) -\left( 
\mathbf{K}_{\Psi }^{\alpha ,n}\right) _{1}\left( \left( z^{1},z^{\prime
}\right) ,x\right) }{y^{1}-z^{1}}\right\} d\omega \left( x\right) \\
&=&\int_{I\setminus \gamma J}\left\{ \frac{\frac{y^{1}-x^{1}}{\left\vert
\Psi \left( y\right) -\Psi \left( x\right) \right\vert ^{n+1-\alpha }}-\frac{%
z^{1}-x^{1}}{\left\vert \Psi \left( z\right) -\Psi \left( x\right)
\right\vert ^{n+1-\alpha }}}{y^{1}-z^{1}}\right\} d\omega \left( x\right) \ .
\end{eqnarray*}

For $0\leq t\leq 1$ define 
\begin{eqnarray*}
w_{t} &\equiv &ty+\left( 1-t\right) z=z+t\left( y-z\right) , \\
\text{so that }w_{t}-x &=&t\left( y-x\right) +\left( 1-t\right) \left(
z-x\right) ,
\end{eqnarray*}%
and 
\begin{equation*}
\Phi \left( t\right) \equiv \frac{w_{t}^{1}-x^{1}}{\left\vert \Psi \left(
w_{t}\right) -\Psi \left( x\right) \right\vert ^{n+1-\alpha }},
\end{equation*}%
so that%
\begin{equation*}
\frac{y^{1}-x^{1}}{\left\vert \Psi \left( y\right) -\Psi \left( x\right)
\right\vert ^{n+1-\alpha }}-\frac{z^{1}-x^{1}}{\left\vert \Psi \left(
z\right) -\Psi \left( x\right) \right\vert ^{n+1-\alpha }}=\Phi \left(
1\right) -\Phi \left( 0\right) =\int_{0}^{1}\Phi ^{\prime }\left( t\right)
dt\ .
\end{equation*}%
Now we will use (\ref{def Psi}) and $\nabla \left\vert \xi \right\vert
^{\tau }=\tau \left\vert \xi \right\vert ^{\tau -2}\xi $ to compute that%
\begin{eqnarray*}
\frac{d}{dt}\Phi \left( t\right) &=&\frac{y^{1}-z^{1}}{\left\vert \Psi
\left( w_{t}\right) -\Psi \left( x\right) \right\vert ^{n+1-\alpha }} \\
&&+\left( w_{t}^{1}-x^{1}\right) \frac{-\left( n+1-\alpha \right) }{%
\left\vert \Psi \left( w_{t}\right) -\Psi \left( x\right) \right\vert
^{n+3-\alpha }} \\
&&\times \left( 
\begin{array}{cc}
w_{t}^{1}-x^{1}, & w_{t}^{\prime }-x^{\prime }+\psi \left( w_{t}^{1}\right)
-\psi \left( x^{1}\right)%
\end{array}%
\right) \left[ 
\begin{array}{cc}
1 & \mathbf{0} \\ 
D\psi \left( w_{t}^{1}\right) & \mathbf{I}_{n-1}%
\end{array}%
\right] \left( 
\begin{array}{c}
y^{1}-z^{1} \\ 
y^{\prime }-z^{\prime }%
\end{array}%
\right) ,
\end{eqnarray*}%
where $\mathbf{I}_{n-1}$ denotes the $\left( n-1\right) \times \left(
n-1\right) $ identity matrix. Thus we have%
\begin{eqnarray*}
\frac{d}{dt}\Phi \left( t\right) &=&\frac{y^{1}-z^{1}}{\left\vert \Psi
\left( w_{t}\right) -\Psi \left( x\right) \right\vert ^{n+1-\alpha }}-\left(
n+1-\alpha \right) \left( w_{t}^{1}-x^{1}\right) \frac{\left(
w_{t}^{1}-x^{1}\right) \left( y^{1}-z^{1}\right) }{\left\vert \Psi \left(
w_{t}\right) -\Psi \left( x\right) \right\vert ^{n+3-\alpha }} \\
&&-\left( n+1-\alpha \right) \left( w_{t}^{1}-x^{1}\right) \frac{\left(
w_{t}^{\prime }-x^{\prime }+\psi \left( w_{t}^{1}\right) -\psi \left(
x^{1}\right) \right) \cdot \left( y^{\prime }-z^{\prime }\right) }{%
\left\vert \Psi \left( w_{t}\right) -\Psi \left( x\right) \right\vert
^{n+3-\alpha }} \\
&&-\left( n+1-\alpha \right) \left( w_{t}^{1}-x^{1}\right) \frac{\left(
w_{t}^{\prime }-x^{\prime }+\psi \left( w_{t}^{1}\right) -\psi \left(
x^{1}\right) \right) \cdot D\psi \left( w_{t}^{1}\right) \left(
y^{1}-z^{1}\right) }{\left\vert \Psi \left( w_{t}\right) -\Psi \left(
x\right) \right\vert ^{n+3-\alpha }} \\
&=&\left( y^{1}-z^{1}\right) \left\{ \frac{\left\vert \Psi \left(
w_{t}\right) -\Psi \left( x\right) \right\vert ^{2}}{\left\vert \Psi \left(
w_{t}\right) -\Psi \left( x\right) \right\vert ^{n+3-\alpha }}-\left(
n+1-\alpha \right) \frac{\left\vert w_{t}^{1}-x^{1}\right\vert ^{2}}{%
\left\vert \Psi \left( w_{t}\right) -\Psi \left( x\right) \right\vert
^{n+3-\alpha }}\right\} \\
&&+\left( y^{1}-z^{1}\right) \left\{ -\left( n+1-\alpha \right) \left(
w_{t}^{1}-x^{1}\right) \frac{\left( w_{t}^{\prime }-x^{\prime }+\psi \left(
w_{t}^{1}\right) -\psi \left( x^{1}\right) \right) \cdot \left( \frac{%
y^{\prime }-z^{\prime }}{y^{1}-z^{1}}\right) }{\left\vert \Psi \left(
w_{t}\right) -\Psi \left( x\right) \right\vert ^{n+3-\alpha }}\right\} \\
&&+\left( y^{1}-z^{1}\right) \left\{ -\left( n+1-\alpha \right) \left(
w_{t}^{1}-x^{1}\right) \frac{\left( w_{t}^{\prime }-x^{\prime }+\psi \left(
w_{t}^{1}\right) -\psi \left( x^{1}\right) \right) \cdot D\psi \left(
w_{t}^{1}\right) }{\left\vert \Psi \left( w_{t}\right) -\Psi \left( x\right)
\right\vert ^{n+3-\alpha }}\right\} \\
&\equiv &\left( y^{1}-z^{1}\right) \left\{ A\left( t\right) +B\left(
t\right) +C\left( t\right) \right\} .
\end{eqnarray*}

From (\ref{also note}) we have $\left\Vert D\psi \right\Vert _{\infty }<%
\frac{1}{8n^{2}}\left( n-\alpha \right) $. Now $\left\vert
w_{t}^{1}-x^{1}\right\vert \approx \left\vert y-x\right\vert $ and $%
\left\vert w_{t}^{\prime }-x^{\prime }\right\vert =\left\vert w_{t}^{\prime
}\right\vert \lesssim \frac{\left\vert y-x\right\vert }{\gamma }$ because $%
x\in L\setminus \gamma J$ and $y,z\in J$ and $3J\cap L\neq \emptyset $, and
so we obtain from (\ref{assumptions}), with $\gamma =\gamma \left( n,\alpha
\right) $ sufficiently large, that both%
\begin{eqnarray}
\left\vert w_{t}^{\prime }-x^{\prime }\right\vert &\leq &\frac{1}{4}\sqrt{%
n-\alpha }\left\vert w_{t}^{1}-x^{1}\right\vert ,  \label{smallness} \\
\left\vert \psi \left( w_{t}^{1}\right) -\psi \left( x^{1}\right)
\right\vert &\leq &\left\Vert D\psi \right\Vert _{\infty }\left\vert
w_{t}^{1}-x^{1}\right\vert \leq \frac{1}{4}\sqrt{n-\alpha }\left\vert
w_{t}^{1}-x^{1}\right\vert .  \notag
\end{eqnarray}%
Hence we have%
\begin{eqnarray*}
-A\left( t\right) &=&-\frac{\left\vert \Psi \left( w_{t}\right) -\Psi \left(
x\right) \right\vert ^{2}}{\left\vert \Psi \left( w_{t}\right) -\Psi \left(
x\right) \right\vert ^{n+3-\alpha }}+\left( n+1-\alpha \right) \frac{%
\left\vert w_{t}^{1}-x^{1}\right\vert ^{2}}{\left\vert \Psi \left(
w_{t}\right) -\Psi \left( x\right) \right\vert ^{n+3-\alpha }} \\
&=&\frac{-\left\vert \Psi \left( w_{t}\right) -\Psi \left( x\right)
\right\vert ^{2}+\left( n+1-\alpha \right) \left\vert
w_{t}^{1}-x^{1}\right\vert ^{2}}{\left\vert \Psi \left( w_{t}\right) -\Psi
\left( x\right) \right\vert ^{n+3-\alpha }} \\
&=&\frac{-\left\vert w_{t}^{\prime }-x^{\prime }+\psi \left(
w_{t}^{1}\right) -\psi \left( x^{1}\right) \right\vert ^{2}+\left( n-\alpha
\right) \left( w_{t}^{1}-x^{1}\right) ^{2}}{\left\vert \Psi \left(
w_{t}\right) -\Psi \left( x\right) \right\vert ^{n+3-\alpha }} \\
&\geq &\frac{3}{4}\left( n-\alpha \right) \frac{\left(
w_{t}^{1}-x^{1}\right) ^{2}}{\left\vert \Psi \left( w_{t}\right) -\Psi
\left( x\right) \right\vert ^{n+3-\alpha }}\ ,
\end{eqnarray*}%
where the inequality in the final line holds because of (\ref{smallness}).
Note that we are able to control the sign of $A\left( t\right) $ above by
using the hypothesis that $\left\Vert D\psi \right\Vert _{\infty }$ is small
to keep $\left\vert \psi \left( w_{t}^{1}\right) -\psi \left( x^{1}\right)
\right\vert $ sufficiently small, and then using the hypothesis that $\gamma 
$ is large to keep $\left\vert w_{t}^{\prime }-x^{\prime }\right\vert $
sufficiently small, so that altogether $\left( n-\alpha \right) \left(
w_{t}^{1}-x^{1}\right) ^{2}$ is the dominant term in the numerator.

Now from our assumption (\ref{case 1}) and (\ref{also note}), i.e. $%
\left\Vert D\psi \right\Vert _{\infty }<\frac{1}{8n^{2}}\left( n-\alpha
\right) $, we have%
\begin{eqnarray*}
\left\vert B\left( t\right) \right\vert &=&\left\vert \left( n+1-\alpha
\right) \left( w_{t}^{1}-x^{1}\right) \frac{\left( w_{t}^{\prime }-x^{\prime
}+\psi \left( w_{t}^{1}\right) -\psi \left( x^{1}\right) \right) \cdot
\left( \frac{y^{\prime }-z^{\prime }}{y^{1}-z^{1}}\right) }{\left\vert \Psi
\left( w_{t}\right) -\Psi \left( x\right) \right\vert ^{n+3-\alpha }}%
\right\vert \\
&\leq &\left( n+1-\alpha \right) \left\vert w_{t}^{1}-x^{1}\right\vert \frac{%
\left( \left\vert w_{t}^{\prime }-x^{\prime }\right\vert +\left\Vert D\psi
\right\Vert _{\infty }\left\vert w_{t}^{1}-x^{1}\right\vert \right) \frac{%
\left\vert y^{\prime }-z^{\prime }\right\vert }{\left\vert
y^{1}-z^{1}\right\vert }}{\left\vert \Psi \left( w_{t}\right) -\Psi \left(
x\right) \right\vert ^{n+3-\alpha }} \\
&\leq &\left( n+1-\alpha \right) \frac{\left\vert w_{t}^{1}-x^{1}\right\vert
\left( \left\vert w_{t}^{\prime }-x^{\prime }\right\vert +\left\Vert D\psi
\right\Vert _{\infty }\left\vert w_{t}^{1}-x^{1}\right\vert \right) }{%
\left\vert \Psi \left( w_{t}\right) -\Psi \left( x\right) \right\vert
^{n+3-\alpha }} \\
&\leq &\left( n+1-\alpha \right) \left( \frac{a_{n,\alpha }}{\gamma }+\frac{1%
}{8n^{2}}\left( n-\alpha \right) \right) \frac{\left\vert
w_{t}^{1}-x^{1}\right\vert ^{2}}{\left\vert \Psi \left( w_{t}\right) -\Psi
\left( x\right) \right\vert ^{n+3-\alpha }} \\
&\leq &\frac{1}{4}\left( n-\alpha \right) \frac{\left(
w_{t}^{1}-x^{1}\right) ^{2}}{\left\vert \Psi \left( w_{t}\right) -\Psi
\left( x\right) \right\vert ^{n+3-\alpha }},
\end{eqnarray*}%
with a constant $a_{n,\alpha }$ independent of $\gamma $, provided 
\begin{equation}
\frac{a_{n,\alpha }}{\gamma }+\frac{1}{8n^{2}}\left( n-\alpha \right) \leq 
\frac{1}{4}\frac{n-\alpha }{n+1-\alpha },  \label{provided''}
\end{equation}%
which holds for $\gamma =\gamma \left( n,\alpha \right) $ sufficiently
large. We also have from the same calculation that%
\begin{eqnarray*}
\left\vert C\left( t\right) \right\vert &=&\left\vert \left( n+1-\alpha
\right) \left( w_{t}^{1}-x^{1}\right) \frac{\left( w_{t}^{\prime }-x^{\prime
}+\psi \left( w_{t}^{1}\right) -\psi \left( x^{1}\right) \right) \cdot D\psi
\left( w_{t}^{1}\right) }{\left\vert \Psi \left( w_{t}\right) -\Psi \left(
x\right) \right\vert ^{n+3-\alpha }}\right\vert \\
&\leq &\left( n+1-\alpha \right) \left\vert w_{t}^{1}-x^{1}\right\vert \frac{%
\left( \left\vert w_{t}^{\prime }-x^{\prime }\right\vert +\left\Vert D\psi
\right\Vert _{\infty }\left\vert w_{t}^{1}-x^{1}\right\vert \right)
\left\Vert D\psi \right\Vert _{\infty }}{\left\vert \Psi \left( w_{t}\right)
-\Psi \left( x\right) \right\vert ^{n+3-\alpha }} \\
&\leq &\frac{1}{4}\left( n-\alpha \right) \frac{\left(
w_{t}^{1}-x^{1}\right) ^{2}}{\left\vert \Psi \left( w_{t}\right) -\Psi
\left( x\right) \right\vert ^{n+3-\alpha }}\left\Vert D\psi \right\Vert
_{\infty } \\
&\leq &\frac{1}{4}\left( n-\alpha \right) \frac{\left(
w_{t}^{1}-x^{1}\right) ^{2}}{\left\vert \Psi \left( w_{t}\right) -\Psi
\left( x\right) \right\vert ^{n+3-\alpha }}\frac{1}{8n^{2}}\left( n-\alpha
\right) \\
&<&\frac{1}{4}\left( n-\alpha \right) \frac{\left( w_{t}^{1}-x^{1}\right)
^{2}}{\left\vert \Psi \left( w_{t}\right) -\Psi \left( x\right) \right\vert
^{n+3-\alpha }}.
\end{eqnarray*}%
Thus altogether in case (\ref{case 1}) we have 
\begin{eqnarray*}
&&\left\vert \left( \mathbf{R}_{\Psi }^{\alpha ,n}\right) _{1}\left( \mathbf{%
1}_{I\setminus \gamma J}\omega \right) \left( y^{1},y^{\prime }\right)
-\left( \mathbf{R}_{\Psi }^{\alpha ,n}\right) _{1}\left( \mathbf{1}%
_{I\setminus \gamma J}\omega \right) \left( z^{1},z^{\prime }\right)
\right\vert \\
&=&\left\vert y^{1}-z^{1}\right\vert \left\vert \int_{I\setminus \gamma J}%
\frac{\int_{0}^{1}\frac{d}{dt}\Phi \left( t\right) dt}{y^{1}-z^{1}}d\omega
\left( x\right) \right\vert \\
&=&\left\vert y^{1}-z^{1}\right\vert \left\vert \int_{I\setminus \gamma
J}\int_{0}^{1}\left\{ A\left( t\right) +B\left( t\right) +C\left( t\right)
\right\} dtd\omega \left( x\right) \right\vert \\
&\gtrsim &\left\vert y^{1}-z^{1}\right\vert \left\vert \int_{I\setminus
\gamma J}\int_{0}^{1}\left\{ \left( n-\alpha \right) \frac{\left(
w_{t}^{1}-x^{1}\right) ^{2}}{\left\vert \Psi \left( w_{t}\right) -\Psi
\left( x\right) \right\vert ^{n+3-\alpha }}\right\} dtd\omega \left(
x\right) \right\vert \\
&\approx &\left\vert y^{1}-z^{1}\right\vert \left( n-\alpha \right)
\int_{I\setminus \gamma J}\frac{\left( c_{J}^{1}-x^{1}\right) ^{2}}{%
\left\vert c_{J}-x\right\vert ^{n+3-\alpha }}d\omega \left( x\right) \\
&\approx &\left\vert y^{1}-z^{1}\right\vert \frac{\mathrm{P}^{\alpha }\left(
J,\mathbf{1}_{I\setminus \gamma J}\omega \right) }{\left\vert J\right\vert ^{%
\frac{1}{n}}}\ ,
\end{eqnarray*}%
where the constants implicit in $\approx $ depend only on $n$ and $\alpha $.

On the other hand, in the case that%
\begin{equation}
\left\vert y^{\prime }-z^{\prime }\right\vert >\left\vert
y^{1}-z^{1}\right\vert ,  \label{case 2}
\end{equation}%
we write%
\begin{eqnarray*}
\left( \mathbf{R}^{\alpha ,n}\right) ^{\prime } &=&\left( R_{2}^{\alpha
,n},...,R_{n}^{\alpha ,n}\right) , \\
\mathbf{\Phi }\left( t\right) &=&\frac{w_{t}^{\prime }-x^{\prime }}{%
\left\vert \Psi \left( w_{t}\right) -\Psi \left( x\right) \right\vert
^{n+1-\alpha }},
\end{eqnarray*}%
with $w_{t}=ty+\left( 1-t\right) z$ as before. Then as above we obtain 
\begin{equation*}
\frac{y^{\prime }-x^{\prime }}{\left\vert \Psi \left( y\right) -\Psi \left(
x\right) \right\vert ^{n+1-\alpha }}-\frac{z^{\prime }-x^{\prime }}{%
\left\vert \Psi \left( z\right) -\Psi \left( x\right) \right\vert
^{n+1-\alpha }}=\mathbf{\Phi }\left( 1\right) -\mathbf{\Phi }\left( 0\right)
=\int_{0}^{1}\frac{d}{dt}\mathbf{\Phi }\left( t\right) dt,
\end{equation*}%
where if we write $\widehat{y^{k}}\equiv \left(
0,y^{2},...,y^{k-1},0,y^{k+1},...,y^{n}\right) $, we have, similarly to the
computation of $\frac{d}{dt}\Phi \left( t\right) $ above,%
\begin{eqnarray*}
&&\frac{d}{dt}\mathbf{\Phi }\left( t\right) \equiv \left\{ \frac{d}{dt}\Phi
_{k}\left( t\right) \right\} _{k=2}^{n} \\
&=&\left\{ \left( y^{k}-z^{k}\right) \left[ \frac{\left\vert \Psi \left(
w_{t}\right) -\Psi \left( x\right) \right\vert ^{2}}{\left\vert \Psi \left(
w_{t}\right) -\Psi \left( x\right) \right\vert ^{n+3-\alpha }}-\left(
n+1-\alpha \right) \frac{\left( w_{t}^{k}-x^{k}\right) \left[
w_{t}^{k}-x^{k}+\psi ^{k}\left( w_{t}^{1}\right) -\psi ^{k}\left(
x^{1}\right) \right] }{\left\vert \Psi \left( w_{t}\right) -\Psi \left(
x\right) \right\vert ^{n+3-\alpha }}\right] \right\} _{k=2}^{n} \\
&&-\left\{ \left( n+1-\alpha \right) \left( w_{t}^{k}-x^{k}\right) \frac{%
\left( w_{t}^{1}-x^{1}\right) \left( y^{1}-z^{1}\right) +\left( \widehat{%
w_{t}^{k}}-\widehat{x^{k}}+\widehat{\psi ^{k}}\left( w_{t}^{1}\right) -%
\widehat{\psi ^{k}}\left( x^{1}\right) \right) \cdot \left( \widehat{y^{k}}-%
\widehat{z^{k}}\right) }{\left\vert \Psi \left( w_{t}\right) -\Psi \left(
x\right) \right\vert ^{n+3-\alpha }}\right\} _{k=2}^{n} \\
&&-\left\{ \left( y^{1}-z^{1}\right) \left[ -\left( n+1-\alpha \right)
\left( w_{t}^{k}-x^{k}\right) \frac{\left( w_{t}^{\prime }-x^{\prime }+\psi
\left( w_{t}^{1}\right) -\psi \left( x^{1}\right) \right) \cdot D\psi \left(
w_{t}^{1}\right) }{\left\vert \Psi \left( w_{t}\right) -\Psi \left( x\right)
\right\vert ^{n+3-\alpha }}\right] \right\} _{k=2}^{n} \\
&\equiv &\left\{ \left( y^{k}-z^{k}\right) A_{k}\left( t\right) \right\}
_{k=2}^{n}+\left\{ V_{k}\left( t\right) \right\} _{k=2}^{n}+\left\{ \left(
y^{1}-z^{1}\right) C_{k}\left( t\right) \right\} _{k=2}^{n}\equiv \mathbf{U}%
\left( t\right) +\mathbf{V}\left( t\right) +\mathbf{W}\left( t\right) \ .
\end{eqnarray*}%
Now for $2\leq k\leq n$ we have $x^{k}=0$ and so%
\begin{eqnarray}
&&  \label{Ak} \\
A_{k}\left( t\right) &=&\frac{\left\vert \Psi \left( w_{t}\right) -\Psi
\left( x\right) \right\vert ^{2}}{\left\vert \Psi \left( w_{t}\right) -\Psi
\left( x\right) \right\vert ^{n+3-\alpha }}-\left( n+1-\alpha \right) \frac{%
w_{t}^{k}\left[ w_{t}^{k}+\psi ^{k}\left( w_{t}^{1}\right) -\psi ^{k}\left(
x^{1}\right) \right] }{\left\vert \Psi \left( w_{t}\right) -\Psi \left(
x\right) \right\vert ^{n+3-\alpha }}  \notag \\
&=&\frac{\left\vert \Psi \left( w_{t}\right) -\Psi \left( x\right)
\right\vert ^{2}-\left( n+1-\alpha \right) w_{t}^{k}\left[ w_{t}^{k}+\psi
^{k}\left( w_{t}^{1}\right) -\psi ^{k}\left( x^{1}\right) \right] }{%
\left\vert \Psi \left( w_{t}\right) -\Psi \left( x\right) \right\vert
^{n+3-\alpha }}  \notag \\
&=&\frac{\left\vert w_{t}^{1}-x^{1}\right\vert ^{2}+\sum_{j\neq 1}\left\vert
w_{t}^{j}+\psi ^{j}\left( w_{t}^{1}\right) -\psi ^{j}\left( x^{1}\right)
\right\vert ^{2}-\left( n+1-\alpha \right) w_{t}^{k}\left( w_{t}^{k}+\psi
^{k}\left( w_{t}^{1}\right) -\psi ^{k}\left( x^{1}\right) \right) }{%
\left\vert \Psi \left( w_{t}\right) -\Psi \left( x\right) \right\vert
^{n+3-\alpha }}.  \notag
\end{eqnarray}%
Then using $\left\vert w_{t}^{k}\right\vert \lesssim \frac{1}{\gamma }%
\left\vert w_{t}^{1}-x^{1}\right\vert $ and $\left\vert \psi ^{k}\left(
w_{t}^{1}\right) -\psi ^{k}\left( x^{1}\right) \right\vert \leq \left\Vert
D\psi \right\Vert _{\infty }\left\vert w_{t}^{1}-x^{1}\right\vert $, we claim%
\begin{equation*}
A_{k}\left( t\right) \geq \frac{1}{2}\frac{\left\vert
w_{t}^{1}-x^{1}\right\vert ^{2}}{\left\vert \Psi \left( w_{t}\right) -\Psi
\left( x\right) \right\vert ^{n+3-\alpha }},
\end{equation*}%
where $\left\Vert D\psi \right\Vert _{\infty }$ satisfies (\ref{assumptions}%
) and $\gamma =\gamma \left( n,\alpha \right) $ is sufficiently large.
Indeed, use $\left\vert w_{t}^{k}\right\vert \leq \frac{b_{n,\alpha }}{%
\gamma }\left\vert w_{t}^{1}-x^{1}\right\vert $, where the constant $%
b_{n,\alpha }$ is independent of $\gamma $, to obtain%
\begin{eqnarray*}
&&\left\vert \Psi \left( w_{t}\right) -\Psi \left( x\right) \right\vert
^{n+3-\alpha }A_{k}\left( t\right) \\
&\geq &\left\vert w_{t}^{1}-x^{1}\right\vert ^{2}-\left( n+1-\alpha \right)
\left\vert w_{t}^{k}\right\vert \left\vert w_{t}^{k}+\psi ^{k}\left(
w_{t}^{1}\right) -\psi ^{k}\left( x^{1}\right) \right\vert \\
&\geq &\left\vert w_{t}^{1}-x^{1}\right\vert ^{2}-\left( n+1-\alpha \right) 
\left[ \left\vert w_{t}^{k}\right\vert ^{2}+\left\vert w_{t}^{k}\right\vert
\left\Vert D\psi \right\Vert _{\infty }\left\vert w_{t}^{1}-x^{1}\right\vert %
\right] \\
&\geq &\left\vert w_{t}^{1}-x^{1}\right\vert ^{2}-\left( n+1-\alpha \right) 
\left[ \left( \frac{b_{n,\alpha }}{\gamma }\right) ^{2}+\frac{b_{n,\alpha }}{%
\gamma }\left\Vert D\psi \right\Vert _{\infty }\right] \left\vert
w_{t}^{1}-x^{1}\right\vert ^{2} \\
&\geq &\frac{1}{2}\left\vert w_{t}^{1}-x^{1}\right\vert ^{2},
\end{eqnarray*}
for $\gamma =\gamma \left( n,\alpha \right) $ sufficiently large since $%
\left\Vert D\psi \right\Vert _{\infty }<\frac{n-\alpha }{8n^{2}}$ by (\ref%
{assumptions}).

Thus we have%
\begin{equation*}
\int_{I\setminus \gamma J}A_{k}\left( t\right) d\omega \left( x\right) \geq 
\frac{1}{2}c_{n,\alpha }\int_{I\setminus \gamma J}\frac{1}{\left\vert
c_{J}-x\right\vert ^{n+1-\alpha }}d\omega \left( x\right) \geq c_{n,\alpha
}^{\prime }\frac{\mathrm{P}^{\alpha }\left( J,\mathbf{1}_{I\setminus \gamma
J}\omega \right) }{\left\vert J\right\vert ^{\frac{1}{n}}},
\end{equation*}%
where $c_{n,\alpha }^{\prime }$ is \emph{independent} of the choice of $%
\gamma =\gamma \left( n,\alpha \right) $, and hence%
\begin{eqnarray*}
\left\vert \int_{I\setminus \gamma J}\int_{0}^{1}\mathbf{U}\left( t\right)
dtd\omega \left( x\right) \right\vert ^{2} &=&\left\vert \int_{I\setminus
\gamma J}\int_{0}^{1}\left\{ \left( y^{k}-z^{k}\right) A_{k}\left( t\right)
\right\} _{k=2}^{n}dtd\omega \left( x\right) \right\vert ^{2} \\
&=&\sum_{k=2}^{n}\left( y^{k}-z^{k}\right) ^{2}\left\vert \int_{I\setminus
\gamma J}\int_{0}^{1}A_{k}\left( t\right) dtd\omega \left( x\right)
\right\vert ^{2} \\
&\geq &\left( c_{n,\alpha }^{\prime }\right) ^{2}\sum_{k=2}^{n}\left(
y^{k}-z^{k}\right) ^{2}\left( \frac{\mathrm{P}^{\alpha }\left( J,\mathbf{1}%
_{I\setminus \gamma J}\omega \right) }{\left\vert J\right\vert ^{\frac{1}{n}}%
}\right) ^{2} \\
&=&\left( c_{n,\alpha }^{\prime }\right) ^{2}\left\vert y^{\prime
}-z^{\prime }\right\vert ^{2}\left( \frac{\mathrm{P}^{\alpha }\left( J,%
\mathbf{1}_{I\setminus \gamma J}\omega \right) }{\left\vert J\right\vert ^{%
\frac{1}{n}}}\right) ^{2}.
\end{eqnarray*}%
For $2\leq k\leq n$ we also have using $x^{k}=0$ and (\ref{case 2}) that%
\begin{eqnarray*}
&&\frac{1}{n+1-\alpha }\left\vert V_{k}\left( t\right) \right\vert \\
&=&\left\vert \left( w_{t}^{k}-x^{k}\right) \frac{\left(
w_{t}^{1}-x^{1}\right) \left( y^{1}-z^{1}\right) +\left( \widehat{w_{t}^{k}}-%
\widehat{x^{k}}+\widehat{\psi ^{k}}\left( w_{t}^{1}\right) -\widehat{\psi
^{k}}\left( x^{1}\right) \right) \cdot \left( \widehat{y^{k}}-\widehat{z^{k}}%
\right) }{\left\vert \Psi \left( w_{t}\right) -\Psi \left( x\right)
\right\vert ^{n+3-\alpha }}\right\vert \\
&\leq &\left\vert w_{t}^{k}\right\vert \frac{\left\vert
w_{t}^{1}-x^{1}\right\vert \left\vert y^{1}-z^{1}\right\vert +\sum_{j\neq
1,k}\left\vert w_{t}^{j}+\psi ^{j}\left( w_{t}^{1}\right) -\psi ^{j}\left(
x^{1}\right) \right\vert \left\vert y^{j}-z^{j}\right\vert }{\left\vert \Psi
\left( w_{t}\right) -\Psi \left( x\right) \right\vert ^{n+3-\alpha }} \\
&\leq &\left\{ \frac{\left\vert w_{t}^{k}\right\vert \left\vert
y^{1}-z^{1}\right\vert }{\left\vert \Psi \left( w_{t}\right) -\Psi \left(
x\right) \right\vert ^{n+2-\alpha }}+\sum_{j\neq 1,k}\frac{\left\vert
w_{t}^{k}\right\vert \left( \left\vert w_{t}^{j}\right\vert +\left\Vert
D\psi \right\Vert _{\infty }\left\vert w_{t}^{1}-x^{1}\right\vert \right)
\left\vert y^{j}-z^{j}\right\vert }{\left\vert \Psi \left( w_{t}\right)
-\Psi \left( x\right) \right\vert ^{n+3-\alpha }}\right\} \\
&\lesssim &\left\{ \frac{b_{n,\alpha }}{\gamma }\frac{\left\vert
y^{1}-z^{1}\right\vert }{\left\vert \Psi \left( w_{t}\right) -\Psi \left(
x\right) \right\vert ^{n+1-\alpha }}+\left( \sqrt{n}\ell \left( J\right)
\right) \left( \sqrt{n}\ell \left( J\right) +\left\Vert D\psi \right\Vert
_{\infty }\left\vert w_{t}^{1}-x^{1}\right\vert \right) \frac{\left\vert
y^{\prime }-z^{\prime }\right\vert }{\left\vert \Psi \left( w_{t}\right)
-\Psi \left( x\right) \right\vert ^{n+3-\alpha }}\right\} \\
&\lesssim &\left\{ \frac{b_{n,\alpha }}{\gamma }\frac{\left\vert y^{\prime
}-z^{\prime }\right\vert }{\left\vert c_{J}-x\right\vert ^{n+1-\alpha }}%
+\left( \frac{b_{n,\alpha }}{\gamma }\right) \left( \frac{b_{n,\alpha }}{%
\gamma }+\left\Vert D\psi \right\Vert _{\infty }\right) \frac{\left\vert
y^{\prime }-z^{\prime }\right\vert }{\left\vert c_{J}-x\right\vert
^{n+1-\alpha }}\right\} ,
\end{eqnarray*}%
as well as%
\begin{eqnarray*}
&&\frac{1}{n+1-\alpha }\left\vert W_{k}\left( t\right) \right\vert =\frac{1}{%
n+1-\alpha }\left\vert y^{1}-z^{1}\right\vert \left\vert C_{k}\left(
t\right) \right\vert \\
&=&\left\vert y^{1}-z^{1}\right\vert \left\vert w_{t}^{k}-x^{k}\right\vert
\left\vert \frac{\left( w_{t}^{\prime }-x^{\prime }+\psi \left(
w_{t}^{1}\right) -\psi \left( x^{1}\right) \right) \cdot D\psi \left(
w_{t}^{1}\right) }{\left\vert \Psi \left( w_{t}\right) -\Psi \left( x\right)
\right\vert ^{n+3-\alpha }}\right\vert \\
&\leq &\left\vert y^{1}-z^{1}\right\vert \left\vert w_{t}^{k}\right\vert 
\frac{\left( \left\vert w_{t}^{\prime }\right\vert +\left\vert \psi \left(
w_{t}^{1}\right) -\psi \left( x^{1}\right) \right\vert \right) \left\vert
D\psi \left( w_{t}^{1}\right) \right\vert }{\left\vert \Psi \left(
w_{t}\right) -\Psi \left( x\right) \right\vert ^{n+3-\alpha }} \\
&\leq &\left\vert y^{1}-z^{1}\right\vert \left( \frac{b_{n,\alpha }^{2}}{%
\gamma ^{2}}+\frac{b_{n,\alpha }\left\Vert D\psi \right\Vert _{\infty }}{%
\gamma }\right) \left\Vert D\psi \right\Vert _{\infty }\frac{\left\vert
w_{t}^{1}-x^{1}\right\vert ^{2}}{\left\vert \Psi \left( w_{t}\right) -\Psi
\left( x\right) \right\vert ^{n+3-\alpha }} \\
&\lesssim &\left\vert y^{1}-z^{1}\right\vert \left( \frac{1}{\gamma ^{2}}+%
\frac{\left\Vert D\psi \right\Vert _{\infty }}{\gamma }\right) \left\Vert
D\psi \right\Vert _{\infty }\frac{\left\vert w_{t}^{1}-x^{1}\right\vert }{%
\left\vert c_{J}-x\right\vert ^{n+1-\alpha }}\lesssim \frac{1}{\gamma }\frac{%
\left\vert y^{\prime }-z^{\prime }\right\vert }{\left\vert
c_{J}-x\right\vert ^{n+1-\alpha }}.
\end{eqnarray*}

Thus%
\begin{eqnarray*}
&&\left\vert \int_{I\setminus \gamma J}\int_{0}^{1}\mathbf{V}\left( t\right)
dtd\omega \left( x\right) \right\vert \\
&\leq &d_{n,\alpha }\left( n+1-\alpha \right) \left\{ \frac{1}{\gamma }%
+\left( \frac{1}{\gamma }\right) \left( \frac{1}{\gamma }+\left\Vert D\psi
\right\Vert _{\infty }\right) \right\} \int_{I\setminus \gamma J}\frac{%
\left\vert y^{\prime }-z^{\prime }\right\vert }{\left\vert
c_{J}-x\right\vert ^{n+1-\alpha }}d\omega \left( x\right) \\
&\leq &d_{n,\alpha }^{\prime }\left( n+1-\alpha \right) \left\{ \frac{1}{%
\gamma }+\left( \frac{1}{\gamma }\right) \left( \frac{1}{\gamma }+\frac{%
n-\alpha }{8n^{2}}\right) \right\} \left\vert y^{\prime }-z^{\prime
}\right\vert \frac{\mathrm{P}^{\alpha }\left( J,\mathbf{1}_{I\setminus
\gamma J}\omega \right) }{\left\vert J\right\vert ^{\frac{1}{n}}},
\end{eqnarray*}%
and%
\begin{equation*}
\left\vert \int_{I\setminus \gamma J}\int_{0}^{1}\mathbf{W}\left( t\right)
dtd\omega \left( x\right) \right\vert \leq \frac{d_{n,\alpha }^{\prime
\prime }}{\gamma }\frac{\left\vert y^{\prime }-z^{\prime }\right\vert }{%
\left\vert c_{J}-x\right\vert ^{n+1-\alpha }},
\end{equation*}%
where the constants $d_{n,\alpha }^{\prime }$ and $d_{n,\alpha }^{\prime
\prime }$ are independent of $\gamma $, and so we conclude that%
\begin{equation*}
\left\vert \int_{I\setminus \gamma J}\int_{0}^{1}\mathbf{V}\left( t\right)
dtd\omega \left( x\right) \right\vert +\left\vert \int_{I\setminus \gamma
J}\int_{0}^{1}\mathbf{W}\left( t\right) dtd\omega \left( x\right)
\right\vert \leq \frac{1}{2}\left\vert \int_{I\setminus \gamma J}\int_{0}^{1}%
\mathbf{U}\left( t\right) dtd\omega \left( x\right) \right\vert \ ,
\end{equation*}%
provided $\gamma =\gamma \left( n,\alpha \right) $ is sufficiently large.
Then if (\ref{case 2}) holds with $\gamma $ sufficiently large, we have 
\begin{eqnarray}
&&  \label{other reversal} \\
&&\left\vert \left( \mathbf{R}_{\Psi }^{\alpha ,n}\right) ^{\prime }\mathbf{1%
}_{I\setminus \gamma J}\omega \left( y^{1},y^{\prime }\right) -\left( 
\mathbf{R}_{\Psi }^{\alpha ,n}\right) ^{\prime }\mathbf{1}_{I\setminus
\gamma J}\omega \left( z^{1},z^{\prime }\right) \right\vert  \notag \\
&=&\left\vert \int_{I\setminus \gamma J}\left\{ \frac{y^{k}-x^{k}}{%
\left\vert \Psi \left( y\right) -\Psi \left( x\right) \right\vert
^{n+1-\alpha }}-\frac{z^{k}-x^{k}}{\left\vert \Psi \left( z\right) -\Psi
\left( x\right) \right\vert ^{n+1-\alpha }}\right\} _{k=2}^{n}d\omega \left(
x\right) \right\vert  \notag \\
&=&\left\vert \int_{I\setminus \gamma J}\int_{0}^{1}\mathbf{\Phi }^{\prime
}\left( t\right) dtd\omega \left( x\right) \right\vert  \notag \\
&\geq &\left\vert \int_{I\setminus \gamma J}\int_{0}^{1}\mathbf{U}\left(
t\right) dtd\omega \left( x\right) \right\vert -\left\vert \int_{I\setminus
\gamma J}\int_{0}^{1}\mathbf{V}\left( t\right) dtd\omega \left( x\right)
\right\vert -\left\vert \int_{I\setminus \gamma J}\int_{0}^{1}\mathbf{W}%
\left( t\right) dtd\omega \left( x\right) \right\vert  \notag \\
&\geq &\frac{1}{2}\left\vert \int_{I\setminus \gamma J}\int_{0}^{1}\mathbf{U}%
\left( t\right) dtd\omega \left( x\right) \right\vert  \notag \\
&\gtrsim &\left\vert y^{\prime }-z^{\prime }\right\vert \frac{\mathrm{P}%
^{\alpha }\left( J,\mathbf{1}_{I\setminus \gamma J}\omega \right) }{%
\left\vert J\right\vert ^{\frac{1}{n}}}\gtrsim \left\vert
y^{1}-z^{1}\right\vert \frac{\mathrm{P}^{\alpha }\left( J,\mathbf{1}%
_{I\setminus \gamma J}\omega \right) }{\left\vert J\right\vert ^{\frac{1}{n}}%
}\ .  \notag
\end{eqnarray}

Combining the inequalities from each case (\ref{case 1}) and (\ref{case 2})
above, and assuming $\gamma $ sufficiently large, we conclude that for all $%
y,z\in J$ we have the following `strong reversal' of the $1$-partial\
quasienergy,%
\begin{equation*}
\left\vert y^{1}-z^{1}\right\vert ^{2}\left( \frac{\mathrm{P}^{\alpha
}\left( J,\mathbf{1}_{I\setminus \gamma J}\omega \right) }{\left\vert
J\right\vert ^{\frac{1}{n}}}\right) ^{2}\lesssim \left\vert \mathbf{R}_{\Psi
}^{\alpha ,n}\mathbf{1}_{I\setminus \gamma J}\omega \left( y^{1},y^{\prime
}\right) -\mathbf{R}_{\Psi }^{\alpha ,n}\mathbf{1}_{I\setminus \gamma
J}\omega \left( z^{1},z^{\prime }\right) \right\vert ^{2}.
\end{equation*}%
Thus we have 
\begin{eqnarray*}
&&\sum_{\substack{ J\in \mathcal{M}_{\limfunc{deep}}  \\ 3J\cap L\neq
\emptyset }}\left( \frac{\mathrm{P}^{\alpha }\left( J,\mathbf{1}_{I\setminus
\gamma J}\omega \right) }{\left\vert J\right\vert ^{\frac{1}{n}}}\right)
^{2}\int_{J}\left\vert y^{1}-\mathbb{E}_{J}^{\sigma }y^{1}\right\vert
^{2}d\sigma \left( y\right) \\
&=&\frac{1}{2}\sum_{\substack{ J\in \mathcal{M}_{\limfunc{deep}}  \\ 3J\cap
L\neq \emptyset }}\left( \frac{\mathrm{P}^{\alpha }\left( J,\mathbf{1}%
_{I\setminus \gamma J}\omega \right) }{\left\vert J\right\vert ^{\frac{1}{n}}%
}\right) ^{2}\frac{1}{\left\vert J\right\vert _{\sigma }}\int_{J}\int_{J}%
\left( y^{1}-z^{1}\right) ^{2}d\sigma \left( y\right) d\sigma \left( z\right)
\\
&\lesssim &\sum_{\substack{ J\in \mathcal{M}_{\limfunc{deep}}  \\ 3J\cap
L\neq \emptyset }}\frac{1}{\left\vert J\right\vert _{\sigma }}%
\int_{J}\int_{J}\left\vert \mathbf{R}_{\Psi }^{\alpha ,n}\left( \mathbf{1}%
_{I\setminus \gamma J}\omega \right) \left( y^{1},y^{\prime }\right) -%
\mathbf{R}_{\Psi }^{\alpha ,n}\left( \mathbf{1}_{I\setminus \gamma J}\omega
\right) \left( z^{1},z^{\prime }\right) \right\vert ^{2}d\sigma \left(
y\right) d\sigma \left( z\right) \\
&\lesssim &\sum_{\substack{ J\in \mathcal{M}_{\limfunc{deep}}  \\ 3J\cap
L\neq \emptyset }}\int_{J}\left\vert \mathbf{R}_{\Psi }^{\alpha ,n}\left( 
\mathbf{1}_{I\setminus \gamma J}\omega \right) \left( y^{1},y^{\prime
}\right) \right\vert ^{2}d\sigma \left( y\right) \\
&\lesssim &\sum_{\substack{ J\in \mathcal{M}_{\limfunc{deep}}  \\ 3J\cap
L\neq \emptyset }}\int_{J}\left\vert \mathbf{R}_{\Psi }^{\alpha ,n}\left( 
\mathbf{1}_{I}\omega \right) \left( y^{1},y^{\prime }\right) \right\vert
^{2}d\sigma \left( y\right) +\sum_{\substack{ J\in \mathcal{M}_{\limfunc{deep%
}}  \\ 3J\cap L\neq \emptyset }}\int_{J}\left\vert \mathbf{R}_{\Psi
}^{\alpha ,n}\left( \mathbf{1}_{\gamma J}\omega \right) \left(
y^{1},y^{\prime }\right) \right\vert ^{2}d\sigma \left( y\right) ,
\end{eqnarray*}%
and now we obtain in the usual way that this is bounded by%
\begin{eqnarray*}
&&\int_{I}\left\vert \mathbf{R}_{\Psi }^{\alpha ,n}\left( \mathbf{1}%
_{I}\omega \right) \left( y^{1},y^{\prime }\right) \right\vert ^{2}d\sigma
\left( y\right) +\sum_{J\in \mathcal{M}}\left( \mathfrak{T}_{\mathbf{R}%
_{\Psi }^{\alpha ,n}}^{\func{dual}}\right) ^{2}\left\vert \gamma
J\right\vert _{\omega } \\
&\leq &\left( \mathfrak{T}_{\mathbf{R}_{\Psi }^{\alpha ,n}}^{\func{dual}%
}\right) ^{2}\left\vert I\right\vert _{\omega }+\beta \left( \mathfrak{T}_{%
\mathbf{R}_{\Psi }^{\alpha ,n}}^{\func{dual}}\right) ^{2}\left\vert
I\right\vert _{\omega }\lesssim \left( \mathfrak{T}_{\mathbf{R}_{\Psi
}^{\alpha ,n}}^{\func{dual}}\right) ^{2}\left\vert I\right\vert _{\omega }\ .
\end{eqnarray*}

Now we turn to the other partial quasienergies and begin with the estimate
that for $2\leq j\leq n$, we have the following `weak reversal' of energy,%
\begin{eqnarray}
&&\left\vert \left( \mathbf{R}_{\Psi }^{\alpha ,n}\right) _{j}\left( \mathbf{%
1}_{I\setminus \gamma J}\omega \right) \left( y\right) \right\vert
=\left\vert \int_{I\setminus \gamma J}\frac{y^{j}-0}{\left\vert \Psi \left(
y\right) -\Psi \left( x\right) \right\vert ^{n+1-\alpha }}d\omega \left(
x_{1},0...,0\right) \right\vert  \label{weak 1} \\
&=&\left\vert \frac{y^{j}}{\left\vert J\right\vert ^{\frac{1}{n}}}%
\int_{I\setminus \gamma J}\frac{\left\vert J\right\vert ^{\frac{1}{n}}}{%
\left\vert \Psi \left( y\right) -\Psi \left( x\right) \right\vert
^{n+1-\alpha }}d\omega \left( x_{1},0...,0\right) \right\vert \approx
\left\vert y^{j}\right\vert \frac{\mathrm{P}^{\alpha }\left( J,\mathbf{1}%
_{I\setminus \gamma J}\omega \right) }{\left\vert J\right\vert ^{\frac{1}{n}}%
}.  \notag
\end{eqnarray}%
Thus for $2\leq j\leq n$, we use $\int_{J}\left\vert y^{j}-\mathbb{E}%
_{J}^{\sigma }y^{j}\right\vert ^{2}d\sigma \left( y\right) \leq
\int_{J}\left\vert y^{j}\right\vert ^{2}d\sigma \left( y\right) $ to obtain
in the usual way%
\begin{eqnarray}
&&  \label{weak 2} \\
&&\sum_{\substack{ J\in \mathcal{M}_{\limfunc{deep}}  \\ 3J\cap L\neq
\emptyset }}\left( \frac{\mathrm{P}^{\alpha }\left( J,\mathbf{1}_{I\setminus
\gamma J}\omega \right) }{\left\vert J\right\vert ^{\frac{1}{n}}}\right)
^{2}\int_{J}\left\vert y^{j}-\mathbb{E}_{J}^{\sigma }y^{j}\right\vert
^{2}d\sigma \left( y\right)  \notag \\
&\leq &\sum_{\substack{ J\in \mathcal{M}_{\limfunc{deep}}  \\ 3J\cap L\neq
\emptyset }}\left( \frac{\mathrm{P}^{\alpha }\left( J,\mathbf{1}_{I\setminus
\gamma J}\omega \right) }{\left\vert J\right\vert ^{\frac{1}{n}}}\right)
^{2}\int_{J}\left\vert y^{j}\right\vert ^{2}d\sigma \left( y\right) =\sum 
_{\substack{ J\in \mathcal{M}_{\limfunc{deep}}  \\ 3J\cap L\neq \emptyset }}%
\int_{J}\left( \frac{\mathrm{P}^{\alpha }\left( J,\mathbf{1}_{I\setminus
\gamma J}\omega \right) }{\left\vert J\right\vert ^{\frac{1}{n}}}\right)
^{2}\left\vert y^{j}\right\vert ^{2}d\sigma \left( y\right)  \notag \\
&\lesssim &\sum_{\substack{ J\in \mathcal{M}_{\limfunc{deep}}  \\ 3J\cap
L\neq \emptyset }}\int_{J}\left\vert \left( \mathbf{R}_{\Psi }^{\alpha
,n}\right) _{j}\left( \mathbf{1}_{I\setminus \gamma J}\omega \right) \left(
y\right) \right\vert ^{2}d\sigma \left( y\right) \lesssim \left( \mathfrak{T}%
_{\mathbf{R}_{\Psi }^{\alpha ,n}}^{\Omega \mathcal{Q}^{n},\func{dual}%
}\right) ^{2}\left\vert I\right\vert _{\omega }+\sum_{J\in \mathcal{M}%
}\left( \mathfrak{T}_{\mathbf{R}_{\Psi }^{\alpha ,n}}^{\Omega \mathcal{Q}%
^{n},\func{dual}}\right) ^{2}\left\vert \gamma J\right\vert _{\omega } 
\notag \\
&\leq &\left( \mathfrak{T}_{\mathbf{R}_{\Psi }^{\alpha ,n}}^{\Omega \mathcal{%
Q}^{n},\func{dual}}\right) ^{2}\left\vert I\right\vert _{\omega }+\beta
\left( \mathfrak{T}_{\mathbf{R}_{\Psi }^{\alpha ,n}}^{\Omega \mathcal{Q}^{n},%
\func{dual}}\right) ^{2}\left\vert I\right\vert _{\omega }\lesssim \left( 
\mathfrak{T}_{\mathbf{R}_{\Psi }^{\alpha ,n}}^{\Omega \mathcal{Q}^{n},\func{%
dual}}\right) ^{2}\left\vert I\right\vert _{\omega }\ .  \notag
\end{eqnarray}%
Summing these estimates for $j=1$ and $2\leq j\leq n$ completes the proof of
the backward quasienergy condition $\mathcal{E}_{\alpha }^{\Omega \mathcal{Q}%
^{n},\func{dual}}\lesssim \mathfrak{T}_{\mathbf{R}_{\Psi }^{\alpha
,n}}^{\Omega \mathcal{Q}^{n},\func{dual}}$.

\subsection{Forward quasienergy condition\label{Subsec forward}}

Now we turn to proving the (forward) quasienergy condition $\mathcal{E}%
_{\alpha }^{\Omega \mathcal{Q}^{n}}\lesssim \mathfrak{T}_{\mathbf{R}_{\Psi
}^{\alpha ,n}}^{\Omega \mathcal{Q}^{n}}+\sqrt{\mathcal{A}_{2}^{\alpha }}$,
where $\mathcal{A}_{2}^{\alpha }$ is the Muckenhoupt condition with holes.
We must show%
\begin{equation*}
\sup_{\ell \geq 0}\sum_{r=1}^{\infty }\sum_{J\in \mathcal{M}_{\limfunc{deep}%
}^{\ell }\left( I_{r}\right) }\left( \frac{\mathrm{P}^{\alpha }\left( J,%
\mathbf{1}_{I\setminus J^{\ast }}\sigma \right) }{\left\vert J\right\vert ^{%
\frac{1}{n}}}\right) ^{2}\left\Vert \mathsf{P}_{J}^{\omega }\mathbf{x}%
\right\Vert _{L^{2}\left( \omega \right) }^{2}\leq \left( \left( \mathfrak{T}%
_{\mathbf{R}_{\Psi }^{\alpha ,n}}^{\Omega \mathcal{Q}^{n}}\right) ^{2}+%
\mathcal{A}_{2}^{\alpha }\right) \left\vert I\right\vert _{\sigma }\ ,
\end{equation*}%
for all partitions of a dyadic quasicube $I=\overset{\cdot }{%
\dbigcup\limits_{r\geq 1}}I_{r}$ into dyadic subquasicubes $I_{r}$. We again
fix $\ell \geq 0$ and suppress both $\ell $ and $\mathbf{r}$ in the notation 
$\mathcal{M}_{\limfunc{deep}}\left( I_{r}\right) =\mathcal{M}_{\mathbf{r}-%
\limfunc{deep}}^{\ell }\left( I_{r}\right) $. We may assume that all the
quasicubes $J$ intersect $\limfunc{supp}\omega $, hence that all the
quasicubes $I_{r}$ and $J$ intersect $L$, which contains $\limfunc{supp}%
\omega $. We must show%
\begin{equation*}
\sum_{r=1}^{\infty }\sum_{J\in \mathcal{M}_{\limfunc{deep}}\left(
I_{r}\right) }\left( \frac{\mathrm{P}^{\alpha }\left( J,\mathbf{1}%
_{I\setminus J^{\ast }}\sigma \right) }{\left\vert J\right\vert ^{\frac{1}{n}%
}}\right) ^{2}\left\Vert \mathsf{P}_{J}^{\omega }\mathbf{x}\right\Vert
_{L^{2}\left( \omega \right) }^{2}\leq \left( \left( \mathfrak{T}_{\mathbf{R}%
_{\Psi }^{\alpha ,n}}^{\Omega \mathcal{Q}^{n}}\right) ^{2}+\mathcal{A}%
_{2}^{\alpha }\right) \left\vert I\right\vert _{\sigma }\ .
\end{equation*}%
Let $\mathcal{M}_{\limfunc{deep}}=\dbigcup\limits_{r=1}^{\infty }\mathcal{M}%
_{\limfunc{deep}}\left( I_{r}\right) $ as above, and with $J^{\ast }=\gamma
J $ for each $J\in \mathcal{M}_{\limfunc{deep}}$, make the decomposition%
\begin{equation*}
I\setminus J^{\ast }=\text{\textsc{E}}\left( J^{\ast }\right) \dot{\cup}%
\text{\textsc{S}}\left( J^{\ast }\right)
\end{equation*}%
of $I\setminus J^{\ast }$ into \emph{end} \textsc{E}$\left( J^{\ast }\right) 
$ and \emph{side} \textsc{S}$\left( J^{\ast }\right) $ disjoint pieces
defined by%
\begin{eqnarray*}
\text{\textsc{E}}\left( J^{\ast }\right) &\equiv &\left( I\setminus J^{\ast
}\right) \cap \left\{ \left( y^{1},y^{\prime }\right) :\left\vert y^{\prime
}-c_{J}^{\prime }\right\vert \leq \frac{10}{\gamma }\left\vert
y^{1}-c_{J}^{1}\right\vert \right\} ; \\
\text{\textsc{S}}\left( J^{\ast }\right) &\equiv &\left( I\setminus J^{\ast
}\right) \setminus \text{\textsc{E}}\left( J^{\ast }\right) \ .
\end{eqnarray*}%
Then it suffices to show both 
\begin{eqnarray*}
A &\equiv &\sum_{J\in \mathcal{M}_{\limfunc{deep}}}\left( \frac{\mathrm{P}%
^{\alpha }\left( J,\mathbf{1}_{\text{\textsc{E}}\left( J^{\ast }\right)
}\sigma \right) }{\left\vert J\right\vert ^{\frac{1}{n}}}\right)
^{2}\left\Vert \mathsf{P}_{J}^{\omega }\mathbf{x}\right\Vert _{L^{2}\left(
\omega \right) }^{2}\leq \left( \left( \mathfrak{T}_{\mathbf{R}_{\Psi
}^{\alpha ,n}}^{\Omega \mathcal{Q}^{n}}\right) ^{2}+\mathcal{A}_{2}^{\alpha
}\right) \left\vert I\right\vert _{\sigma }\ , \\
B &\equiv &\sum_{J\in \mathcal{M}_{\limfunc{deep}}}\left( \frac{\mathrm{P}%
^{\alpha }\left( J,\mathbf{1}_{\text{\textsc{S}}\left( J^{\ast }\right)
}\sigma \right) }{\left\vert J\right\vert ^{\frac{1}{n}}}\right)
^{2}\left\Vert \mathsf{P}_{J}^{\omega }\mathbf{x}\right\Vert _{L^{2}\left(
\omega \right) }^{2}\leq \left( \left( \mathfrak{T}_{\mathbf{R}_{\Psi
}^{\alpha ,n}}^{\Omega \mathcal{Q}^{n}}\right) ^{2}+\mathcal{A}_{2}^{\alpha
}\right) \left\vert I\right\vert _{\sigma }\ .
\end{eqnarray*}%
Term $A$ will be estimated in analogy with the Hilbert transform estimate (%
\ref{Hilbert}), while term $B$ will \ be estimated by summing Poisson tails.
Both estimates rely heavily on the one-dimensional nature of the support of $%
\omega $, for example $\left\Vert \mathsf{P}_{J}^{\omega }\mathbf{x}%
\right\Vert _{L^{2}\left( \omega \right) }^{2}=\left\Vert \mathsf{P}%
_{J}^{\omega }x^{1}\right\Vert _{L^{2}\left( \omega \right) }^{2}$. Thus in
this quasienergy condition, there is only one nonvanishing partial
quasienergy, namely the $1$-partial quasienergy measured along the $x_{1}$%
-axis.

For $\left( x^{1},0^{\prime }\right) ,\left( z^{1},0^{\prime }\right) \in J$
in term $A$ we first claim the following `strong reversal' of quasienergy,%
\begin{eqnarray}
&&  \label{diff quotient} \\
&&\left\vert \frac{\left( \mathbf{R}_{\Psi }^{\alpha ,n}\right) _{1}\left( 
\mathbf{1}_{\text{\textsc{E}}\left( J^{\ast }\right) }\sigma \right) \left(
x^{1},0^{\prime }\right) -\left( \mathbf{R}_{\Psi }^{\alpha ,n}\right)
_{1}\left( \mathbf{1}_{\text{\textsc{E}}\left( J^{\ast }\right) }\sigma
\right) \left( z^{1},0^{\prime }\right) }{x^{1}-z^{1}}\right\vert  \notag \\
&=&\left\vert \int_{\text{\textsc{E}}\left( J^{\ast }\right) }\left\{ \frac{%
\left( \mathbf{K}_{\Psi }^{\alpha ,n}\right) _{1}\left( \left(
x^{1},0^{\prime }\right) ,y\right) -\left( \mathbf{K}_{\Psi }^{\alpha
,n}\right) _{1}\left( \left( z^{1},0^{\prime }\right) ,y\right) }{x^{1}-z^{1}%
}\right\} d\sigma \left( y\right) \right\vert  \notag \\
&=&\left\vert \int_{\text{\textsc{E}}\left( J^{\ast }\right) }\left\{ \frac{%
\frac{x^{1}-y^{1}}{\left( \left\vert x^{1}-y^{1}\right\vert ^{2}+\left\vert
\psi \left( x_{1}\right) -\psi \left( y_{1}\right) -y^{\prime }\right\vert
^{2}\right) ^{\frac{n+1-\alpha }{2}}}-\frac{z^{1}-y^{1}}{\left( \left\vert
z^{1}-y^{1}\right\vert ^{2}+\left\vert \psi \left( z_{1}\right) -\psi \left(
y_{1}\right) -y^{\prime }\right\vert ^{2}\right) ^{\frac{n+1-\alpha }{2}}}}{%
x^{1}-z^{1}}\right\} d\sigma \left( y\right) \right\vert  \notag \\
&\approx &\frac{\mathrm{P}^{\alpha }\left( J,\mathbf{1}_{\text{\textsc{E}}%
\left( J^{\ast }\right) }\sigma \right) }{\left\vert J\right\vert ^{\frac{1}{%
n}}}\ .  \notag
\end{eqnarray}%
Indeed, if we set $a\left( u\right) =\left\vert \psi \left( u+y_{1}\right)
-\psi \left( y_{1}\right) -y^{\prime }\right\vert $ and $s=x^{1}-y^{1}$ and $%
t=z^{1}-y^{1}$, then the term in braces in (\ref{diff quotient}) is%
\begin{eqnarray*}
&&\frac{\frac{x^{1}-y^{1}}{\left( \left\vert x^{1}-y^{1}\right\vert
^{2}+\left\vert \psi \left( x^{1}\right) -\psi \left( y^{1}\right)
-y^{\prime }\right\vert ^{2}\right) ^{\frac{n+1-\alpha }{2}}}-\frac{%
z^{1}-y^{1}}{\left( \left\vert z^{1}-y^{1}\right\vert ^{2}+\left\vert \psi
\left( z^{1}\right) -\psi \left( y^{1}\right) -y^{\prime }\right\vert
^{2}\right) ^{\frac{n+1-\alpha }{2}}}}{x^{1}-z^{1}} \\
&=&\frac{\frac{s}{\left( s^{2}+\left\vert \psi \left( s+y^{1}\right) -\psi
\left( y^{1}\right) -y^{\prime }\right\vert ^{2}\right) ^{\frac{n+1-\alpha }{%
2}}}-\frac{t}{\left( t^{2}+\left\vert \psi \left( t+y^{1}\right) -\psi
\left( y^{1}\right) -y^{\prime }\right\vert ^{2}\right) ^{\frac{n+1-\alpha }{%
2}}}}{s-t}=\frac{\varphi \left( s\right) -\varphi \left( t\right) }{s-t}\ ,
\end{eqnarray*}%
where with $y$ fixed for the moment, 
\begin{eqnarray*}
\varphi \left( u\right) &=&u\left( u^{2}+\left\vert \psi \left(
u+y^{1}\right) -\psi \left( y^{1}\right) -y^{\prime }\right\vert ^{2}\right)
^{-\frac{n+1-\alpha }{2}}=u\left( u^{2}+a\left( u\right) ^{2}\right) ^{-%
\frac{n+1-\alpha }{2}}; \\
a\left( u\right) ^{2} &\equiv &\left\vert \psi \left( u+y^{1}\right) -\psi
\left( y^{1}\right) -y^{\prime }\right\vert ^{2}.
\end{eqnarray*}%
Now the derivative of $\varphi \left( u\right) $ is%
\begin{eqnarray*}
\frac{d}{du}\varphi \left( u\right) &=&\left( u^{2}+a\left( u\right)
^{2}\right) ^{-\frac{n+1-\alpha }{2}}-\frac{n+1-\alpha }{2}\left(
u^{2}+a\left( u\right) ^{2}\right) ^{-\frac{n+1-\alpha }{2}-1}2\left[ u^{2}+u%
\frac{d}{du}\frac{1}{2}a\left( u\right) ^{2}\right] \\
&=&\left( u^{2}+a\left( u\right) ^{2}\right) ^{-\frac{n+1-\alpha }{2}%
-1}\left\{ \left( u^{2}+a\left( u\right) ^{2}\right) -\left( n+1-\alpha
\right) \left[ u^{2}+u\frac{d}{du}\frac{1}{2}a\left( u\right) ^{2}\right]
\right\} \\
&=&\left( u^{2}+a\left( u\right) ^{2}\right) ^{-\frac{n+1-\alpha }{2}%
-1}\left\{ \left[ a\left( u\right) ^{2}-\left( n+1-\alpha \right) u\frac{d}{%
du}\frac{1}{2}a\left( u\right) ^{2}\right] -\left( n-\alpha \right)
u^{2}\right\} ,
\end{eqnarray*}%
and the derivative of $a\left( u\right) ^{2}$ is%
\begin{eqnarray*}
\frac{d}{du}\frac{1}{2}a\left( u\right) ^{2} &=&\frac{1}{2}\frac{d}{du}%
\left\vert \psi \left( u+y_{1}\right) -\psi \left( y_{1}\right) -y^{\prime
}\right\vert ^{2} \\
&=&\left[ \psi \left( u+y_{1}\right) -\psi \left( y_{1}\right) -y^{\prime }%
\right] \cdot D\psi \left( u+y_{1}\right) .
\end{eqnarray*}%
We now want to conclude that%
\begin{equation}
\left\vert a\left( u\right) ^{2}-\left( n+1-\alpha \right) u\frac{d}{du}%
\frac{1}{2}a\left( u\right) ^{2}\right\vert \leq \frac{1}{2}\left( n-\alpha
\right) u^{2},\ \ \ \ \ \text{for }u\text{ between }s\text{ and }t,
\label{want to conclude}
\end{equation}%
so that $-\left( n-\alpha \right) u^{2}$ is the dominant term inside the
braces in the formula for $\frac{d}{du}\varphi \left( u\right) $. For this
we note that $\left\vert y^{\prime }\right\vert \leq C\frac{1}{\gamma }%
\left\vert u\right\vert $, and so using $2ab\leq \varepsilon a^{2}+\frac{1}{%
\varepsilon }b^{2}$ twice, the left side of (\ref{want to conclude}) is at
most%
\begin{eqnarray*}
&&\left( \left\Vert D\psi \right\Vert _{\infty }\left\vert u\right\vert
+\left\vert y^{\prime }\right\vert \right) ^{2}+\left( n+1-\alpha \right)
\left\vert u\right\vert \left( \left\Vert D\psi \right\Vert _{\infty
}\left\vert u\right\vert +\left\vert y^{\prime }\right\vert \right)
\left\Vert D\psi \right\Vert _{\infty } \\
&\leq &2\left\Vert D\psi \right\Vert _{\infty }^{2}u^{2}+2\left\vert
y^{\prime }\right\vert ^{2}+\left( n+1-\alpha \right) \left\Vert D\psi
\right\Vert _{\infty }^{2}u^{2}+\left( n+1-\alpha \right) \left\Vert D\psi
\right\Vert _{\infty }\left\vert u\right\vert \left\vert y^{\prime
}\right\vert \\
&\leq &\left( n+4-\alpha \right) \left\Vert D\psi \right\Vert _{\infty
}^{2}u^{2}+C_{1}\left( C\frac{1}{\gamma }\left\vert u\right\vert \right)
^{2},
\end{eqnarray*}%
which gives (\ref{want to conclude}) if%
\begin{equation*}
\left( n+4-\alpha \right) \left\Vert D\psi \right\Vert _{\infty
}^{2}+C_{1}C^{2}\frac{1}{\gamma ^{2}}\leq \frac{1}{2}\left( n-\alpha \right)
,
\end{equation*}%
which in turn holds for $\left\Vert D\psi \right\Vert _{\infty }$ and $%
\gamma =\gamma \left( n,\alpha \right) $ satisfying%
\begin{equation}
\left\Vert D\psi \right\Vert _{\infty }<\frac{1}{2}\sqrt{\frac{n-\alpha }{%
n+4-\alpha }}\text{ and }\gamma \gg \frac{1}{\sqrt{n-\alpha }},
\label{provided'}
\end{equation}%
where the first inequality in (\ref{provided'}) follows from (\ref%
{assumptions}), i.e. $\left\Vert D\psi \right\Vert _{\infty }<\frac{1}{8n^{2}%
}\left( n-\alpha \right) <\frac{1}{2}\sqrt{\frac{n-\alpha }{n+4-\alpha }}$,
and the second inequality holds for $\gamma =\gamma \left( n,\alpha \right) $
sufficiently large.

Thus we get%
\begin{equation*}
-\frac{d}{du}\varphi \left( u\right) \approx t^{2}\left( t^{2}+a\left(
t\right) ^{2}\right) ^{-\frac{n+1-\alpha }{2}-1},\ \ \ \ \ \text{for }u\text{
between }s\text{ and }t,
\end{equation*}%
where for $\left( x^{1},0^{\prime }\right) ,\left( z^{1},0^{\prime }\right)
\in J$ with $J\in \mathcal{M}_{\limfunc{deep}}$, the implied constants of
comparability are independent of $y\in $\textsc{E}$\left( J^{\ast }\right) $%
. Finally, since $\left\vert s-t\right\vert \lesssim \frac{1}{\gamma }%
\left\vert t\right\vert \ll \left\vert t\right\vert $, the derivative $\frac{%
d\varphi }{du}$ is essentially constant on the small interval $\left(
s,t\right) $, and we can apply the tangent line approximation to $\varphi $
to obtain $\varphi \left( s\right) -\varphi \left( t\right) \approx \frac{%
d\varphi }{dt}\left( t\right) \left( s-t\right) $, and conclude that for $%
\left( x^{1},0^{\prime }\right) ,\left( z^{1},0^{\prime }\right) \in J$, 
\begin{eqnarray*}
&&\left\vert \int_{\text{\textsc{E}}\left( J^{\ast }\right) }\left\{ \frac{%
\frac{x^{1}-y^{1}}{\left( \left\vert x^{1}-y^{1}\right\vert ^{2}+\left\vert
\psi \left( x^{1}\right) -\psi \left( y^{1}\right) -y^{\prime }\right\vert
^{2}\right) ^{\frac{n+1-\alpha }{2}}}-\frac{z^{1}-y^{1}}{\left( \left\vert
z^{1}-y^{1}\right\vert ^{2}+\left\vert \psi \left( z^{1}\right) -\psi \left(
y^{1}\right) -y^{\prime }\right\vert ^{2}\right) ^{\frac{n+1-\alpha }{2}}}}{%
x^{1}-z^{1}}\right\} d\sigma \left( y\right) \right\vert \\
&\approx &\int_{\text{\textsc{E}}\left( J^{\ast }\right) }\frac{\left\vert
z^{1}-y^{1}\right\vert ^{2}}{\left( \left\vert z^{1}-y^{1}\right\vert
^{2}+\left\vert \psi \left( z^{1}\right) -\psi \left( y^{1}\right)
-y^{\prime }\right\vert ^{2}\right) ^{\frac{n+1-\alpha }{2}+1}}d\sigma
\left( y\right) \\
&\approx &\int_{\text{\textsc{E}}\left( J^{\ast }\right) }\frac{\left\vert
z^{1}-y^{1}\right\vert ^{2}}{\left( \left\vert z^{1}-y^{1}\right\vert
^{2}+\left\vert y^{\prime }\right\vert ^{2}\right) ^{\frac{n+1-\alpha }{2}+1}%
}d\sigma \left( y\right) \approx \frac{\mathrm{P}^{\alpha }\left( J,\mathbf{1%
}_{\text{\textsc{E}}\left( J^{\ast }\right) }\sigma \right) }{\left\vert
J\right\vert ^{\frac{1}{n}}}\ ,
\end{eqnarray*}%
which proves (\ref{diff quotient}).

Thus we have%
\begin{eqnarray}
&&  \label{A123} \\
&&\sum_{J\in \mathcal{M}_{\limfunc{deep}}}\left( \frac{\mathrm{P}^{\alpha
}\left( J,\mathbf{1}_{\text{\textsc{E}}\left( J^{\ast }\right) }\sigma
\right) }{\left\vert J\right\vert ^{\frac{1}{n}}}\right) ^{2}\int_{J\cap
L}\left\vert x^{1}-\mathbb{E}_{J}^{\omega }x^{1}\right\vert ^{2}d\omega
\left( y\right)  \notag \\
&=&\frac{1}{2}\sum_{J\in \mathcal{M}_{\limfunc{deep}}}\left( \frac{\mathrm{P}%
^{\alpha }\left( J,\mathbf{1}_{\text{\textsc{E}}\left( J^{\ast }\right)
}\sigma \right) }{\left\vert J\right\vert ^{\frac{1}{n}}}\right) ^{2}\frac{1%
}{\left\vert J\cap L\right\vert _{\omega }}\int_{J\cap L}\int_{J\cap
L}\left( x^{1}-z^{1}\right) ^{2}d\omega \left( x\right) d\omega \left(
z\right)  \notag \\
&\approx &\sum_{J\in \mathcal{M}_{\limfunc{deep}}}\frac{1}{\left\vert
J\right\vert _{\omega }}\int_{J\cap L}\int_{J\cap L}\left\{ \left( \mathbf{R}%
_{\Psi }^{\alpha ,n}\right) _{1}\left( \mathbf{1}_{\text{\textsc{E}}\left(
J^{\ast }\right) }\sigma \right) \left( x^{1},0^{\prime }\right) -\left( 
\mathbf{R}_{\Psi }^{\alpha ,n}\right) _{1}\left( \mathbf{1}_{\text{\textsc{E}%
}\left( J^{\ast }\right) }\sigma \right) \left( z^{1},0^{\prime }\right)
\right\} ^{2}d\omega \left( x\right) d\omega \left( z\right)  \notag \\
&\lesssim &\sum_{J\in \mathcal{M}_{\limfunc{deep}}}\frac{1}{\left\vert
J\right\vert _{\omega }}\int_{J\cap L}\int_{J\cap L}\left\{ \left( \mathbf{R}%
_{\Psi }^{\alpha ,n}\right) _{1}\left( \mathbf{1}_{I}\sigma \right) \left(
x^{1},0^{\prime }\right) -\left( \mathbf{R}_{\Psi }^{\alpha ,n}\right)
_{1}\left( \mathbf{1}_{I}\sigma \right) \left( z^{1},0^{\prime }\right)
\right\} ^{2}d\omega \left( x\right) d\omega \left( z\right)  \notag \\
&&+\sum_{J\in \mathcal{M}_{\limfunc{deep}}}\frac{1}{\left\vert J\right\vert
_{\omega }}\int_{J\cap L}\int_{J\cap L}\left\{ \left( \mathbf{R}_{\Psi
}^{\alpha ,n}\right) _{1}\left( \mathbf{1}_{J^{\ast }}\sigma \right) \left(
x^{1},0^{\prime }\right) -\left( \mathbf{R}_{\Psi }^{\alpha ,n}\right)
_{1}\left( \mathbf{1}_{J^{\ast }}\sigma \right) \left( z^{1},0^{\prime
}\right) \right\} ^{2}d\omega \left( x\right) d\omega \left( z\right)  \notag
\\
&&+\sum_{J\in \mathcal{M}_{\limfunc{deep}}}\frac{1}{\left\vert J\right\vert
_{\omega }}\int_{J\cap L}\int_{J\cap L}\left\{ \left( \mathbf{R}_{\Psi
}^{\alpha ,n}\right) _{1}\left( \mathbf{1}_{\text{\textsc{S}}\left( J^{\ast
}\right) }\right) \sigma \left( x^{1},0^{\prime }\right) -\left( \mathbf{R}%
_{\Psi }^{\alpha ,n}\right) _{1}\left( \mathbf{1}_{\text{\textsc{S}}\left(
J^{\ast }\right) }\sigma \right) \left( z^{1},0^{\prime }\right) \right\}
^{2}d\omega \left( x\right) d\omega \left( z\right)  \notag \\
&\equiv &A_{1}+A_{2}+A_{3},  \notag
\end{eqnarray}%
since $I=J^{\ast }\dot{\cup}\left( I\setminus J^{\ast }\right) =J^{\ast }%
\dot{\cup}$\textsc{E}$\left( J^{\ast }\right) \dot{\cup}$\textsc{S}$\left(
J^{\ast }\right) $ where $\dot{\cup}$ denotes disjoint union. Now we can
discard the difference in term $A_{1}$ by writing 
\begin{equation*}
\left\vert \left( \mathbf{R}_{\Psi }^{\alpha ,n}\right) _{1}\left( \mathbf{1}%
_{I}\sigma \right) \left( x^{1},0^{\prime }\right) -\left( \mathbf{R}_{\Psi
}^{\alpha ,n}\right) _{1}\left( \mathbf{1}_{I}\sigma \right) \left(
z^{1},0^{\prime }\right) \right\vert \leq \left\vert \left( \mathbf{R}_{\Psi
}^{\alpha ,n}\right) _{1}\left( \mathbf{1}_{I}\sigma \right) \left(
x^{1},0^{\prime }\right) \right\vert +\left\vert \left( \mathbf{R}_{\Psi
}^{\alpha ,n}\right) _{1}\left( \mathbf{1}_{I}\sigma \right) \left(
z^{1},0^{\prime }\right) \right\vert
\end{equation*}%
to obtain from pairwise disjointedness of $J\in \mathcal{M}_{\limfunc{deep}}$%
, 
\begin{equation}
A_{1}\lesssim \sum_{J\in \mathcal{M}_{\limfunc{deep}}}\int_{J\cap
L}\left\vert \left( \mathbf{R}_{\Psi }^{\alpha ,n}\right) _{1}\left( \mathbf{%
1}_{I}\sigma \right) \left( x^{1},0^{\prime }\right) \right\vert ^{2}d\omega
\left( x\right) \leq \int_{I}\left\vert \left( \mathbf{R}_{\Psi }^{\alpha
,n}\right) _{1}\left( \mathbf{1}_{I}\sigma \right) \right\vert ^{2}d\omega
\leq \left( \mathfrak{T}_{\left( \mathbf{R}_{\Psi }^{\alpha ,n}\right)
_{1}}^{\Omega \mathcal{Q}^{n}}\right) ^{2}\left\vert I\right\vert _{\sigma
}\ ,  \label{forward A1}
\end{equation}%
and similarly we can discard the difference in term $A_{2}$, and use the
bounded overlap property (\ref{bounded overlap'}), to obtain%
\begin{eqnarray}
&&  \label{forward A2} \\
A_{2} &\lesssim &\sum_{J\in \mathcal{M}_{\limfunc{deep}}}\int_{J\cap
L}\left\vert \left( \mathbf{R}_{\Psi }^{\alpha ,n}\right) _{1}\left( \mathbf{%
1}_{J^{\ast }}\sigma \right) \left( x^{1},0^{\prime }\right) \right\vert
^{2}d\omega \left( x\right) \leq \sum_{J\in \mathcal{M}_{\limfunc{deep}%
}}\left( \mathfrak{T}_{\left( \mathbf{R}_{\Psi }^{\alpha ,n}\right)
_{1}}^{\Omega \mathcal{Q}^{n}}\right) ^{2}\left\vert J^{\ast }\right\vert
_{\sigma }  \notag \\
&=&\left( \mathfrak{T}_{\left( \mathbf{R}_{\Psi }^{\alpha ,n}\right)
_{1}}^{\Omega \mathcal{Q}^{n}}\right) ^{2}\sum_{r=1}^{\infty }\sum_{J\in 
\mathcal{M}_{\limfunc{deep}}\left( I_{r}\right) }\left\vert J^{\ast
}\right\vert _{\sigma }\leq \left( \mathfrak{T}_{\left( \mathbf{R}_{\Psi
}^{\alpha ,n}\right) _{1}}^{\Omega \mathcal{Q}^{n}}\right)
^{2}\sum_{r=1}^{\infty }\beta \left\vert I_{r}\right\vert _{\sigma }\leq
\beta \left( \mathfrak{T}_{\left( \mathbf{R}_{\Psi }^{\alpha ,n}\right)
_{1}}^{\Omega \mathcal{Q}^{n}}\right) ^{2}\left\vert I\right\vert _{\sigma
}\ .  \notag
\end{eqnarray}

This leaves us to consider the term%
\begin{eqnarray*}
A_{3} &=&\sum_{J\in \mathcal{M}_{\limfunc{deep}}}\frac{1}{\left\vert
J\right\vert _{\omega }}\int_{J\cap L}\int_{J\cap L}\left\{ \left( \mathbf{R}%
_{\Psi }^{\alpha ,n}\right) _{1}\left( \mathbf{1}_{\text{\textsc{S}}\left(
J^{\ast }\right) }\sigma \right) \left( x^{1},0^{\prime }\right) -\left( 
\mathbf{R}_{\Psi }^{\alpha ,n}\right) _{1}\left( \mathbf{1}_{\text{\textsc{S}%
}\left( J^{\ast }\right) }\sigma \right) \left( z^{1},0^{\prime }\right)
\right\} ^{2}d\omega \left( x\right) d\omega \left( z\right) \\
&=&2\sum_{J\in \mathcal{M}_{\limfunc{deep}}}\int_{J\cap L}\left\{ \left( 
\mathbf{R}_{\Psi }^{\alpha ,n}\right) _{1}\left( \mathbf{1}_{\text{\textsc{S}%
}\left( J^{\ast }\right) }\sigma \right) \left( x^{1},0^{\prime }\right) -%
\mathbb{E}_{J\cap L}^{\omega }\left[ \left( \mathbf{R}_{\Psi }^{\alpha
,n}\right) _{1}\left( \mathbf{1}_{\text{\textsc{S}}\left( J^{\ast }\right)
}\sigma \right) \left( z^{1},0^{\prime }\right) \right] \right\} ^{2}d\omega
\left( x\right) ,
\end{eqnarray*}%
in which we do \emph{not} discard the difference. However, because the
average is subtracted off, we can apply the Quasienergy Lemma \ref{ener},
together with duality 
\begin{eqnarray*}
\left\Vert \mathbf{R}_{\Psi }^{\alpha ,n}\left( \nu \right) -\mathbb{E}%
_{J}^{\omega }\mathbf{R}_{\Psi }^{\alpha ,n}\left( \nu \right) \right\Vert
_{L^{2}\left( \omega \right) } &=&\sup_{\left\Vert \Psi _{J}\right\Vert
_{L^{2}\left( \omega \right) }=1}\left\vert \left\langle \mathbf{R}_{\Psi
}^{\alpha ,n}\left( \nu \right) -\mathbb{E}_{J}^{\omega }\mathbf{R}_{\Psi
}^{\alpha ,n}\left( \nu \right) ,\Psi _{J}\right\rangle _{\omega }\right\vert
\\
&=&\sup_{\left\Vert \Psi _{J}\right\Vert _{L^{2}\left( \omega \right)
}=1}\left\vert \left\langle \mathbf{R}_{\Psi }^{\alpha ,n}\left( \nu \right)
,\Psi _{J}\right\rangle _{\omega }\right\vert ,
\end{eqnarray*}%
to each term in this sum to dominate it by,%
\begin{equation}
B=\sum_{J\in \mathcal{M}_{\limfunc{deep}}}\left( \frac{\mathrm{P}^{\alpha
}\left( J,\mathbf{1}_{\text{\textsc{S}}\left( J^{\ast }\right) }\sigma
\right) }{\left\vert J\right\vert ^{\frac{1}{n}}}\right) ^{2}\left\Vert 
\mathsf{P}_{J}^{\omega }\mathbf{x}\right\Vert _{L^{2}\left( \omega \right)
}^{2}\ .  \label{B1 + B2}
\end{equation}

To estimate $B$, we first assume that $n-1\leq \alpha <n$ so that $\mathrm{P}%
^{\alpha }\left( J,\mathbf{1}_{\text{\textsc{S}}\left( J^{\ast }\right)
}\sigma \right) \leq \mathcal{P}^{\alpha }\left( J,\mathbf{1}_{\text{\textsc{%
S}}\left( J^{\ast }\right) }\sigma \right) $, and then use $\left\Vert 
\mathsf{P}_{J}^{\omega }\mathbf{x}\right\Vert _{L^{2}\left( \omega \right)
}^{2}\lesssim \left\vert J\right\vert ^{\frac{2}{n}}\left\vert J\right\vert
_{\omega }$ and apply the $\mathcal{A}_{2}^{\alpha }$ condition with holes
to obtain the following `pivotal reversal' of quasienergy,%
\begin{eqnarray*}
B &\lesssim &\sum_{J\in \mathcal{M}_{\limfunc{deep}}}\mathrm{P}^{\alpha
}\left( J,\mathbf{1}_{\text{\textsc{S}}\left( J^{\ast }\right) }\sigma
\right) ^{2}\left\vert J\right\vert _{\omega }\leq \sum_{J\in \mathcal{M}_{%
\limfunc{deep}}}\mathrm{P}^{\alpha }\left( J,\mathbf{1}_{\text{\textsc{S}}%
\left( J^{\ast }\right) }\sigma \right) \left\{ \mathcal{P}^{\alpha }\left(
J,\mathbf{1}_{\text{\textsc{S}}\left( J^{\ast }\right) }\sigma \right)
\left\vert J\right\vert _{\omega }\right\} \\
&\leq &\mathcal{A}_{2}^{\alpha }\sum_{J\in \mathcal{M}_{\limfunc{deep}}}%
\mathrm{P}^{\alpha }\left( J,\mathbf{1}_{\text{\textsc{S}}\left( J^{\ast
}\right) }\sigma \right) \left\vert J\right\vert ^{1-\frac{\alpha }{n}}=%
\mathcal{A}_{2}^{\alpha }\sum_{J\in \mathcal{M}_{\limfunc{deep}}}\int_{\text{%
\textsc{S}}\left( J^{\ast }\right) }\frac{\left\vert J\right\vert ^{\frac{1}{%
n}}\left\vert J\right\vert ^{1-\frac{\alpha }{n}}}{\left( \left\vert
J\right\vert ^{\frac{1}{n}}+\left\vert y-c_{J}\right\vert \right)
^{n+1-\alpha }}d\sigma \left( y\right) \\
&=&\mathcal{A}_{2}^{\alpha }\sum_{J\in \mathcal{M}_{\limfunc{deep}}}\int_{%
\text{\textsc{S}}\left( J^{\ast }\right) }\left( \frac{\left\vert
J\right\vert ^{\frac{1}{n}}}{\left\vert J\right\vert ^{\frac{1}{n}%
}+\left\vert y-c_{J}\right\vert }\right) ^{n+1-\alpha }d\sigma \left(
y\right) \\
&=&\mathcal{A}_{2}^{\alpha }\int_{I}\left\{ \sum_{J\in \mathcal{M}_{\limfunc{%
deep}}}\left( \frac{\left\vert J\right\vert ^{\frac{1}{n}}}{\left\vert
J\right\vert ^{\frac{1}{n}}+\left\vert y-c_{J}\right\vert }\right)
^{n+1-\alpha }\mathbf{1}_{\text{\textsc{S}}\left( J^{\ast }\right) }\left(
y\right) \right\} d\sigma \left( y\right) \\
&\equiv &\mathcal{A}_{2}^{\alpha }\int_{I}F\left( y\right) d\sigma \left(
y\right) .
\end{eqnarray*}

At this point we claim that $F\left( y\right) \leq C$ with a constant $C$
independent of the decomposition $\mathcal{M}_{\limfunc{deep}}=\overset{%
\cdot }{\dbigcup\limits_{r\geq 1}}\mathcal{M}_{\limfunc{deep}}\left(
I_{r}\right) $. Indeed, if $y$ is fixed, then the quasicubes $J\in \mathcal{M%
}_{\limfunc{deep}}$ for which $y\in $\textsc{S}$\left( J^{\ast }\right) $
satisfy%
\begin{equation}
J\cap \limfunc{Sh}\left( y;\gamma \right) \neq \emptyset ,  \label{satis}
\end{equation}%
where $\limfunc{Sh}\left( y;\gamma \right) $ is the Carleson shadow of the
point $y$ onto the $x_{1}$-axis $L$, defined as the interval on $L$ with
length $\frac{1}{5}\gamma \limfunc{dist}\left( y,L\right) $ and center equal
to the point on $L$ that is closest to $y$. If a quasicube $J$ intersects $%
\limfunc{Sh}\left( y;\gamma \right) $, and $y\in $\textsc{S}$\left( J^{\ast
}\right) $, we must have%
\begin{equation*}
\ell \left( J\right) \leq C_{0}\limfunc{dist}\left( y,L\right) ,
\end{equation*}%
where $C_{0}=C_{0}\left( \gamma ,R_{\limfunc{big}}\right) $ is a positive
constant depending on $\gamma $ and the comparability constant $R_{\limfunc{%
big}}$ for $\Omega $ appearing in (\ref{comp contain}). We have thus shown
that $J\in \mathcal{M}_{\limfunc{deep}}$ and $y\in $\textsc{S}$\left(
J^{\ast }\right) $ imply $J\cap L\subset C\limfunc{Sh}\left( y;\gamma
\right) $ with $C=2C_{0}+1$.

Let $\mathcal{J}=J\cap L$ be the intersection of the quasicube $J$ with $L$,
and note that the linear measure of $\mathcal{J}$ satisfies $\left\vert 
\mathcal{J}\right\vert \lesssim \ell \left( J\right) $. Moreover, $\mathcal{J%
}$ need not be an interval if the quasicube's edge is close to being
parallel to $L$. Fix a point $y$. Then for a quasicube $J$ satisfying both (%
\ref{satis}) and $y\in $\textsc{S}$\left( J^{\ast }\right) $, the set $%
\mathcal{J}$ is contained inside the multiple $C\limfunc{Sh}\left( y;\gamma
\right) $ of the shadow, and 
\begin{equation*}
\left\vert y-c_{J}\right\vert \gtrsim \limfunc{dist}\left( y,L\right) .
\end{equation*}

Now we face two difficulties that do not arise for usual cubes with a side
parallel to $L$. First, as already mentioned, $\mathcal{J}$ need not be an
interval, and in fact may be a quite complicated set, and second that a
quasicube may intersect the line $L$ in a set having linear measure far less
than its side length, for example when a tilted cube intersects $L$ near a
vertex. Both of these difficulties are surmounted using the fact that the
quasicubes $J$ belong to a collection $\mathcal{M}_{\mathbf{r}-\limfunc{deep}%
}\left( I_{r}\right) $ for some $r$ (we are still suppressing the index $%
\ell $). Indeed, we first show that there is a positive constant $C^{\prime
} $ such that for each $r$ we have%
\begin{equation}
\sum_{\substack{ J\in \mathcal{M}_{\mathbf{r}-\limfunc{deep}}\left(
I_{r}\right)  \\ \emptyset \neq \mathcal{J}\subset C\limfunc{Sh}\left(
y;\gamma \right) ,y\in \text{\textsc{S}}\left( J^{\ast }\right) }}\ell
\left( J\right) \leq \beta \left\vert I_{r}\cap C^{\prime }\limfunc{Sh}%
\left( y;\gamma \right) \right\vert ,  \label{each r}
\end{equation}%
where $\beta $ is the constant appearing in the bounded overlap condition (%
\ref{bounded overlap'}). To see this, we note that if $\emptyset \neq 
\mathcal{J=}J\cap L$, then $J^{\ast }=\gamma J$ satisfies $\left\vert
J^{\ast }\cap L\right\vert \geq \ell \left( J\right) $ provided $\gamma $ is
large enough depending on the constant $R_{\limfunc{big}}$ in (\ref{comp
contain}), and we also have $J^{\ast }\subset C^{\prime }\limfunc{Sh}\left(
y;\gamma \right) $ where $C^{\prime }=C^{\prime }\left( C,\gamma ,R_{%
\limfunc{big}}\right) $ is a positive constant depending on $C$, $\gamma $
and $R_{\limfunc{big}}$. Altogether we thus have%
\begin{eqnarray*}
\sum_{\substack{ J\in \mathcal{M}_{\mathbf{r}-\limfunc{deep}}\left(
I_{r}\right)  \\ \emptyset \neq \mathcal{J}\subset C\limfunc{Sh}\left(
y;\gamma \right) ,y\in \text{\textsc{S}}\left( J^{\ast }\right) }}\ell
\left( J\right) &\leq &\sum_{\substack{ J\in \mathcal{M}_{\mathbf{r}-%
\limfunc{deep}}\left( I_{r}\right)  \\ \emptyset \neq \mathcal{J}\subset C%
\limfunc{Sh}\left( y;\gamma \right) ,y\in \text{\textsc{S}}\left( J^{\ast
}\right) }}\left\vert J^{\ast }\cap L\right\vert =\int_{L}\left( \sum 
_{\substack{ J\in \mathcal{M}_{\mathbf{r}-\limfunc{deep}}\left( I_{r}\right) 
\\ \emptyset \neq \mathcal{J}\subset C\limfunc{Sh}\left( y;\gamma \right)
,y\in \text{\textsc{S}}\left( J^{\ast }\right) }}1_{J^{\ast }}\right) dx \\
&\leq &\beta \int_{L\cap I_{r}}\mathbf{1}_{C^{\prime }\limfunc{Sh}\left(
y;\gamma \right) }dx\leq \beta \int_{C^{\prime }\limfunc{Sh}\left( y;\gamma
\right) \cap I_{r}}dx=\beta \left\vert C^{\prime }\limfunc{Sh}\left(
y;\gamma \right) \cap I_{r}\right\vert ,
\end{eqnarray*}%
which proves (\ref{each r}).

Now we continue with the estimate%
\begin{eqnarray}
&&  \label{continue} \\
&&\sum_{\substack{ J\in \mathcal{M}_{\limfunc{deep}}  \\ \mathcal{J}\subset C%
\limfunc{Sh}\left( y;\gamma \right) ,y\in \text{\textsc{S}}\left( J^{\ast
}\right) }}\left( \frac{\left\vert J\right\vert ^{\frac{1}{n}}}{\left\vert
J\right\vert ^{\frac{1}{n}}+\left\vert y-c_{J}\right\vert }\right)
^{n+1-\alpha }=\sum_{r=1}^{\infty }\sum_{\substack{ J\in \mathcal{M}_{%
\mathbf{r}-\limfunc{deep}}\left( I_{r}\right)  \\ \emptyset \neq \mathcal{J}%
\subset C\limfunc{Sh}\left( y;\gamma \right) ,y\in \text{\textsc{S}}\left(
J^{\ast }\right) }}\left( \frac{\left\vert J\right\vert ^{\frac{1}{n}}}{%
\left\vert J\right\vert ^{\frac{1}{n}}+\left\vert y-c_{J}\right\vert }%
\right) ^{n+1-\alpha }  \notag \\
&\lesssim &\sum_{r=1}^{\infty }\sum_{\substack{ J\in \mathcal{M}_{\mathbf{r}-%
\limfunc{deep}}\left( I_{r}\right)  \\ \emptyset \neq \mathcal{J}\subset C%
\limfunc{Sh}\left( y;\gamma \right) ,y\in \text{\textsc{S}}\left( J^{\ast
}\right) }}\left( \frac{\left\vert J\right\vert ^{\frac{1}{n}}}{\left\vert
y-c_{J}\right\vert }\right) ^{n-\alpha }\frac{\left\vert J\right\vert ^{%
\frac{1}{n}}}{\limfunc{dist}\left( y,L\right) }\lesssim \frac{1}{\limfunc{%
dist}\left( y,L\right) }\sum_{r=1}^{\infty }\left\{ \sum_{\substack{ J\in 
\mathcal{M}_{\mathbf{r}-\limfunc{deep}}\left( I_{r}\right)  \\ \emptyset
\neq \mathcal{J}\subset C\limfunc{Sh}\left( y;\gamma \right) ,y\in \text{%
\textsc{S}}\left( J^{\ast }\right) }}\left\vert J\right\vert ^{\frac{1}{n}%
}\right\}  \notag \\
&\lesssim &\frac{1}{\limfunc{dist}\left( y,L\right) }\sum_{r=1}^{\infty
}\beta \left\vert I_{r}\cap C^{\prime }\limfunc{Sh}\left( y;\gamma \right)
\right\vert \lesssim \beta \frac{1}{\limfunc{dist}\left( y,L\right) }%
\left\vert C^{\prime }\limfunc{Sh}\left( y;\gamma \right) \right\vert
\lesssim \beta \gamma ,  \notag
\end{eqnarray}%
because $n-\alpha >0$ and the sets $\left\{ I_{r}\cap C^{\prime }\limfunc{Sh}%
\left( y;\gamma \right) \right\} _{r=1}^{\infty }$ are pairwise disjoint in $%
C^{\prime }\limfunc{Sh}\left( y;\gamma \right) $. It is here that the
one-dimensional nature of $\omega $ permits the summing of the side lengths
of the quasicubes. Thus we have%
\begin{equation*}
B\leq \mathcal{A}_{2}^{\alpha }\int_{I}F\left( y\right) d\sigma \left(
y\right) \leq C\mathcal{A}_{2}^{\alpha }\left\vert I\right\vert _{\sigma }\ ,
\end{equation*}%
which is the desired estimate in the case that $n-1\leq \alpha <n$.

Now we suppose that $0\leq \alpha <n-1$ and use Cauchy-Schwarz to obtain%
\begin{eqnarray*}
\mathrm{P}^{\alpha }\left( J,\mathbf{1}_{\text{\textsc{S}}\left( J^{\ast
}\right) }\sigma \right) &=&\int_{\text{\textsc{S}}\left( J^{\ast }\right) }%
\frac{\left\vert J\right\vert ^{\frac{1}{n}}}{\left( \left\vert J\right\vert
^{\frac{1}{n}}+\left\vert y-c_{J}\right\vert \right) ^{n+1-\alpha }}d\sigma
\left( y\right) \\
&\leq &\left\{ \int_{\text{\textsc{S}}\left( J^{\ast }\right) }\frac{%
\left\vert J\right\vert ^{\frac{1}{n}}}{\left( \left\vert J\right\vert ^{%
\frac{1}{n}}+\left\vert y-c_{J}\right\vert \right) ^{n+1-\alpha }}\left( 
\frac{\left\vert J\right\vert ^{\frac{1}{n}}}{\left\vert J\right\vert ^{%
\frac{1}{n}}+\left\vert y-c_{J}\right\vert }\right) ^{n-1-\alpha }d\sigma
\left( y\right) \right\} ^{\frac{1}{2}} \\
&&\times \left\{ \int_{\text{\textsc{S}}\left( J^{\ast }\right) }\frac{%
\left\vert J\right\vert ^{\frac{1}{n}}}{\left( \left\vert J\right\vert ^{%
\frac{1}{n}}+\left\vert y-c_{J}\right\vert \right) ^{n+1-\alpha }}\left( 
\frac{\left\vert J\right\vert ^{\frac{1}{n}}}{\left\vert J\right\vert ^{%
\frac{1}{n}}+\left\vert y-c_{J}\right\vert }\right) ^{\alpha +1-n}d\sigma
\left( y\right) \right\} ^{\frac{1}{2}} \\
&=&\mathcal{P}^{\alpha }\left( J,\mathbf{1}_{\text{\textsc{S}}\left( J^{\ast
}\right) }\sigma \right) ^{\frac{1}{2}} \\
&&\times \left\{ \int_{\text{\textsc{S}}\left( J^{\ast }\right) }\frac{%
\left( \left\vert J\right\vert ^{\frac{1}{n}}\right) ^{\alpha +2-n}}{\left(
\left\vert J\right\vert ^{\frac{1}{n}}+\left\vert y-c_{J}\right\vert \right)
^{2}}d\sigma \left( y\right) \right\} ^{\frac{1}{2}}.
\end{eqnarray*}%
Then arguing as above we have%
\begin{eqnarray}
&&  \label{small alpha} \\
B &\leq &\sum_{J\in \mathcal{M}_{\limfunc{deep}}}\mathrm{P}^{\alpha }\left(
J^{\ast },\mathbf{1}_{\text{\textsc{S}}\left( J^{\ast }\right) }\sigma
\right) ^{2}\left\vert J\right\vert _{\omega }  \notag \\
&\leq &\sum_{J\in \mathcal{M}_{\limfunc{deep}}}\left\{ \mathcal{P}^{\alpha
}\left( J^{\ast },\mathbf{1}_{\text{\textsc{S}}\left( J^{\ast }\right)
}\sigma \right) \left\vert J\right\vert _{\omega }\right\} \int_{\text{%
\textsc{S}}\left( J^{\ast }\right) }\frac{\left( \left\vert J\right\vert ^{%
\frac{1}{n}}\right) ^{\alpha +2-n}}{\left( \left\vert J\right\vert ^{\frac{1%
}{n}}+\left\vert y-c_{J}\right\vert \right) ^{2}}d\sigma \left( y\right) 
\notag \\
&\leq &\mathcal{A}_{2}^{\alpha }\sum_{J\in \mathcal{M}_{\limfunc{deep}%
}}\left\vert J\right\vert ^{1-\frac{\alpha }{n}}\int_{\text{\textsc{S}}%
\left( J^{\ast }\right) }\frac{\left( \left\vert J\right\vert ^{\frac{1}{n}%
}\right) ^{\alpha +2-n}}{\left( \left\vert J\right\vert ^{\frac{1}{n}%
}+\left\vert y-c_{J}\right\vert \right) ^{2}}d\sigma \left( y\right)  \notag
\\
&=&\mathcal{A}_{2}^{\alpha }\sum_{J\in \mathcal{M}_{\limfunc{deep}}}\int_{%
\text{\textsc{S}}\left( J^{\ast }\right) }\frac{\left\vert J\right\vert ^{%
\frac{2}{n}}}{\left( \left\vert J\right\vert ^{\frac{1}{n}}+\left\vert
y-c_{J}\right\vert \right) ^{2}}d\sigma \left( y\right)  \notag \\
&=&\mathcal{A}_{2}^{\alpha }\int_{I}\left\{ \sum_{J\in \mathcal{M}_{\limfunc{%
deep}}}\left( \frac{\left\vert J\right\vert ^{\frac{1}{n}}}{\left\vert
J\right\vert ^{\frac{1}{n}}+\left\vert y-c_{J}\right\vert }\right) ^{2}%
\mathbf{1}_{\text{\textsc{S}}\left( J^{\ast }\right) }\left( y\right)
\right\} d\sigma \left( y\right)  \notag \\
&\equiv &\mathcal{A}_{2}^{\alpha }\int_{I}F\left( y\right) d\sigma \left(
y\right) ,  \notag
\end{eqnarray}%
and again $F\left( y\right) \leq C$ is the calculation above when $%
n+1-\alpha $ is replaced by $2$:%
\begin{eqnarray*}
\sum_{\substack{ J\in \mathcal{M}_{\limfunc{deep}}  \\ \mathcal{J}\subset C%
\limfunc{Sh}\left( y;\gamma \right) ,y\in \text{\textsc{S}}\left( J^{\ast
}\right) }}\left( \frac{\left\vert J\right\vert ^{\frac{1}{n}}}{\left\vert
J\right\vert ^{\frac{1}{n}}+\left\vert y-c_{J}\right\vert }\right) ^{2}
&=&\sum_{r=1}^{\infty }\sum_{\substack{ J\in \mathcal{M}_{\mathbf{r}-%
\limfunc{deep}}\left( I_{r}\right)  \\ \emptyset \neq \mathcal{J}\subset C%
\limfunc{Sh}\left( y;\gamma \right) ,y\in \text{\textsc{S}}\left( J^{\ast
}\right) }}\left( \frac{\left\vert J\right\vert ^{\frac{1}{n}}}{\left\vert
J\right\vert ^{\frac{1}{n}}+\left\vert y-c_{J}\right\vert }\right) ^{2} \\
&\lesssim &\sum_{r=1}^{\infty }\sum_{\substack{ J\in \mathcal{M}_{\mathbf{r}-%
\limfunc{deep}}\left( I_{r}\right)  \\ \emptyset \neq \mathcal{J}\subset C%
\limfunc{Sh}\left( y;\gamma \right) ,y\in \text{\textsc{S}}\left( J^{\ast
}\right) }}\frac{\left\vert J\right\vert ^{\frac{1}{n}}}{\left\vert
y-c_{J}\right\vert }\frac{\left\vert J\right\vert ^{\frac{1}{n}}}{\limfunc{%
dist}\left( y,L\right) } \\
&\lesssim &\frac{1}{\limfunc{dist}\left( y,L\right) }\sum_{r=1}^{\infty
}\left\{ \sum_{\substack{ J\in \mathcal{M}_{\mathbf{r}-\limfunc{deep}}\left(
I_{r}\right)  \\ \emptyset \neq \mathcal{J}\subset C\limfunc{Sh}\left(
y;\gamma \right) ,y\in \text{\textsc{S}}\left( J^{\ast }\right) }}\left\vert
J\right\vert ^{\frac{1}{n}}\right\} \\
&\lesssim &\frac{1}{\limfunc{dist}\left( y,L\right) }\sum_{r=1}^{\infty
}\beta \left\vert I_{r}\cap C^{\prime }\limfunc{Sh}\left( y;\gamma \right)
\right\vert \\
&\lesssim &\beta \frac{1}{\limfunc{dist}\left( y,L\right) }\left\vert
C^{\prime }\limfunc{Sh}\left( y;\gamma \right) \right\vert \lesssim \beta
\gamma .
\end{eqnarray*}%
Thus we again have%
\begin{equation*}
B\leq \mathcal{A}_{2}^{\alpha }\int_{I}F\left( y\right) d\sigma \left(
y\right) \leq C\mathcal{A}_{2}^{\alpha }\left\vert I\right\vert _{\sigma }\ ,
\end{equation*}%
and this completes the proof of necessity of the quasienergy conditions when
one of the measures is supported on a line.

\subsection{Backward triple testing and quasiweak boundedness property\label%
{Subsec triple}}

In this subsection we show that for a measure supported on a line, the
backward triple quasitesting condition,%
\begin{equation}
\int_{3Q^{\prime }}\left\vert \mathbf{R}_{\Psi }^{\alpha ,n}\left(
1_{Q^{\prime }}\omega \right) \right\vert ^{2}d\sigma \leq \mathfrak{T}_{%
\mathbf{R}_{\Psi }^{\alpha ,n}}^{\Omega \mathcal{Q}^{n},\limfunc{triple},%
\func{dual}}\left\vert Q^{\prime }\right\vert _{\omega }\ ,
\label{triple testing}
\end{equation}%
is controlled by the $\mathcal{A}_{2}^{\alpha }$ conditions with holes and
the two quasitesting conditions, namely%
\begin{equation*}
\mathfrak{T}_{\mathbf{R}_{\Psi }^{\alpha ,n}}^{\Omega \mathcal{Q}^{n},%
\limfunc{triple},\func{dual}}\lesssim \mathfrak{T}_{\mathbf{R}_{\Psi
}^{\alpha ,n}}^{\Omega \mathcal{Q}^{n},\func{dual}}+\sqrt{\mathcal{A}%
_{2}^{\alpha }}+\sqrt{\mathcal{A}_{2}^{\alpha ,\func{dual}}},
\end{equation*}%
provided that $\Omega $ is a $C^{1}$ diffeomorphism and $L$-transverse,
where $L$ is the support of $\omega $. It is then an easy consequence of the
Cauchy-Schwarz inequality that the weak boundedness property is also
controlled by the $A_{2}$ conditions and the two quasitesting conditions,%
\begin{equation*}
\int_{Q}\mathbf{R}_{\Psi }^{\alpha ,n}\left( 1_{Q^{\prime }}\omega \right)
d\sigma \lesssim \left( \mathfrak{T}_{\mathbf{R}_{\Psi }^{\alpha
,n}}^{\Omega \mathcal{Q}^{n},\func{dual}}+\sqrt{\mathcal{A}_{2}^{\alpha }}+%
\sqrt{\mathcal{A}_{2}^{\alpha ,\func{dual}}}\right) \sqrt{\left\vert
Q\right\vert _{\sigma }\left\vert Q^{\prime }\right\vert _{\omega }},
\end{equation*}%
for all pairs of quasicubes $Q$ and $Q^{\prime }$ of size comparable to
their distance apart.

We will use the following two properties of an $L$-transverse $C^{1}$
diffeomorphism $\Omega $ as defined in Definition \ref{L-trans} above.

\begin{lemma}
\label{connected}Suppose that $\Omega :\mathbb{R}^{n}\rightarrow \mathbb{R}%
^{n}$ is a $C^{1}$ diffeomorphism and $L$-transverse. Then if $\mathbf{e}%
_{L} $ is a unit vector in the direction of $L$, we have%
\begin{equation}
\left\vert \left\langle \frac{D\Omega ^{-1}\left( x\right) \mathbf{e}_{L}}{%
\left\vert D\Omega ^{-1}\left( x\right) \mathbf{e}_{L}\right\vert },\mathbf{e%
}_{k}\right\rangle \right\vert \leq \frac{1+\eta }{2},\ \ \ \ \ \text{for }%
x\in \mathbb{R}^{n}\text{ and }1\leq k\leq n,  \label{trans edges}
\end{equation}%
and 
\begin{equation}
Q\cap L\text{ is connected whenever }Q\in \Omega \mathcal{P}^{n}.
\label{conn}
\end{equation}
\end{lemma}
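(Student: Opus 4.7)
Both conclusions rest on the pointwise derivative estimate supplied by $L$-transversality: writing $v(x) \equiv D\Omega^{-1}(x)\mathbf{e}_L$ and $c_k \equiv \langle R\mathbf{e}_L, \mathbf{e}_k\rangle$, the hypothesis gives $|v(x) - R\mathbf{e}_L| < (1-\eta)/4$ for all $x$, while $R \in \digamma_{\mathbf{e}_L,\eta}$ ensures $|c_k| \leq \eta$ and $\sum_k c_k^2 = |R\mathbf{e}_L|^2 = 1$. Combining these yields, for every $x$ and $k$,
\[
|v(x)| \geq 1 - \tfrac{1-\eta}{4} = \tfrac{3+\eta}{4}, \qquad \bigl|\langle v(x), \mathbf{e}_k \rangle\bigr| \leq |c_k| + \tfrac{1-\eta}{4} \leq \tfrac{1+3\eta}{4}.
\]
For (\ref{trans edges}), dividing these two estimates gives $|\langle v(x)/|v(x)|, \mathbf{e}_k\rangle| \leq (1+3\eta)/(3+\eta)$, and this is $\leq (1+\eta)/2$ by the elementary identity $2(1+3\eta) - (1+\eta)(3+\eta) = -(1-\eta)^2 \leq 0$.

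For (\ref{conn}), write $Q = \Omega P$ with $P = \prod_{k=1}^n [a_k,b_k] \in \mathcal{P}^n$. Since $\Omega$ is a homeomorphism, $Q \cap L = \Omega(P \cap \Omega^{-1}L)$, so it suffices to prove $P \cap \Omega^{-1}L$ is connected. The plan is to parametrize $\Omega^{-1}L$ by the injective $C^1$ curve $\gamma(t) \equiv \Omega^{-1}(p_0 + t\mathbf{e}_L)$, for a basepoint $p_0 \in L$, and show that $\gamma^{-1}(P) \subset \mathbb{R}$ is an interval; applying the injection $\gamma$ will then deliver connectedness. By the chain rule,
\[
\gamma_k'(t) = \langle v(p_0 + t\mathbf{e}_L), \mathbf{e}_k\rangle = c_k + r_k(t), \qquad |r_k(t)| < \tfrac{1-\eta}{4}.
\]
If $|c_k| > (1-\eta)/4$ for every $k$, then each $\gamma_k'$ has constant sign, each $\gamma_k$ is strictly monotone, and therefore $\gamma^{-1}(P) = \bigcap_{k=1}^n \gamma_k^{-1}([a_k,b_k])$ is a finite intersection of intervals, hence an interval.

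The main obstacle is verifying the lower bound $|c_k| > (1-\eta)/4$ for \emph{every} coordinate $k$, since $L$-transversality only gives an upper bound on $|c_k|$. This is secured by the specific choice $\eta = \eta_n$ near the extremal value $1/\sqrt{n}$: the constraints $|c_k| \leq \eta_n$ and $\sum_k c_k^2 = 1$ saturate Cauchy--Schwarz at $\eta = 1/\sqrt{n}$, forcing $|c_k| = 1/\sqrt{n}$ for every $k$. A mild enlargement of $\eta$ beyond $1/\sqrt{n}$ keeps every $|c_k|$ comparable to $1/\sqrt{n}$ via $c_k^2 \geq 1 - (n-1)\eta_n^2$ applied to the smallest component, and the parameters are arranged so that this lower bound exceeds $(1-\eta_n)/4$. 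With monotonicity of each $\gamma_k$ secured, the proof of (\ref{conn}) is complete.
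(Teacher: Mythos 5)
Your argument for (\ref{trans edges}) is correct: the two triangle--inequality estimates $\left\vert v\left( x\right) \right\vert \geq \frac{3+\eta }{4}$ and $\left\vert \left\langle v\left( x\right) ,\mathbf{e}_{k}\right\rangle \right\vert \leq \frac{1+3\eta }{4}$ combine, after dividing and using $\left( 1-\eta \right) ^{2}\geq 0$, to give the stated bound. This is a minor variant of the paper's computation, which instead uses $\left\vert \frac{v}{\left\vert v\right\vert }-v\right\vert =\limfunc{dist}\left( v,\mathbb{S}^{n-1}\right) \leq \left\vert v-R\mathbf{e}_{L}\right\vert $ and a three--term splitting; either route is fine.

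Your proof of (\ref{conn}), however, has a genuine gap at exactly the step you flag as the main obstacle. Coordinate--wise strict monotonicity of $\gamma $ requires the lower bound $\left\vert c_{k}\right\vert >\frac{1-\eta _{n}}{4}$ for \emph{every} $k$, and this is not available: $L$-transversality and the definition of $\digamma _{\mathbf{e}_{L},\eta }$ supply only the \emph{upper} bounds $\left\vert c_{k}\right\vert \leq \eta _{n}$, and the paper fixes $\eta _{n}$ merely as some value in $\left( 0,1\right) $ for which $\digamma _{\mathbf{e},\eta _{n}}\neq \emptyset $, i.e. any $\eta _{n}\geq 1/\sqrt{n}$. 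For any admissible $\eta _{n}\geq 1/\sqrt{n-1}$ the constraint $\sum_{k}c_{k}^{2}=1$ permits some $c_{k}=0$ (for instance $R\mathbf{e}_{L}=\left( 0,\tfrac{1}{\sqrt{2}},\tfrac{1}{\sqrt{2}},0,...,0\right) $ when $n\geq 3$), after which $\gamma _{k}$ need not be monotone. Even granting the extremal choice $\eta _{n}=1/\sqrt{n}$ --- which the paper does not stipulate, though it does force $\left\vert c_{k}\right\vert =1/\sqrt{n}$ for all $k$ --- the comparison you need, $\tfrac{1}{\sqrt{n}}\geq \tfrac{1}{4}\left( 1-\tfrac{1}{\sqrt{n}}\right) $, is equivalent to $\sqrt{n}\leq 5$; so your argument breaks down in every dimension $n>25$, and one checks that no other admissible $\eta _{n}$ repairs it, since the guaranteed lower bound on the smallest $\left\vert c_{k}\right\vert $, namely $\sqrt{1-\left( n-1\right) \eta _{n}^{2}}$ when this is nonnegative, is at most $1/\sqrt{n}$ and only degrades as $\eta _{n}$ grows.

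The paper's proof is structured precisely to avoid needing monotonicity of every coordinate of $\varphi $, the parametrization of $\Omega ^{-1}L$. It argues by contradiction: if $\varphi ^{-1}K$ is not an interval for $K=\Omega ^{-1}Q$, there is a parameter $t_{0}$ strictly between two visits to $K$ at which some single coordinate $\varphi _{k_{2}}\left( t_{0}\right) $ escapes the cube; one then projects the curve onto the $2$-plane spanned by $\mathbf{e}_{k_{1}}$ and $\mathbf{e}_{k_{2}}$, writes the projected curve as a graph over the $x_{k_{1}}$-axis, and uses Rolle's theorem to produce a point where the projected tangent is parallel to $\mathbf{e}_{k_{1}}$, contradicting the non--degeneracy (\ref{trans edges'}). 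Only the one escape coordinate $k_{2}$ plays a distinguished role, and no lower bound on individual components $c_{k}$ is ever invoked. To complete your proof you would need to replace global coordinate--wise monotonicity by a local, two--dimensional argument of this kind.
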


\begin{proof}
Choose a rotation $R\in \digamma _{\mathbf{e}_{L},\eta }$. Then we have%
\begin{eqnarray*}
\left\vert \left\langle \frac{D\Omega ^{-1}\left( x\right) \mathbf{e}_{L}}{%
\left\vert D\Omega ^{-1}\left( x\right) \mathbf{e}_{L}\right\vert }-D\Omega
^{-1}\left( x\right) \mathbf{e}_{L},\mathbf{e}_{k}\right\rangle \right\vert
&\leq &\left\vert \frac{D\Omega ^{-1}\left( x\right) \mathbf{e}_{L}}{%
\left\vert D\Omega ^{-1}\left( x\right) \mathbf{e}_{L}\right\vert }-D\Omega
^{-1}\left( x\right) \mathbf{e}_{L}\right\vert \\
&\leq &\left\vert R\mathbf{e}_{L}-D\Omega ^{-1}\left( x\right) \mathbf{e}%
_{L}\right\vert
\end{eqnarray*}%
since $\left\vert \frac{v}{\left\vert v\right\vert }-v\right\vert =\limfunc{%
dist}\left( v,\mathbb{S}^{n-1}\right) \leq \left\vert v-R\mathbf{e}%
_{L}\right\vert $. Thus using $\left\Vert D\Omega ^{-1}-R\right\Vert
_{\infty }<\frac{1-\eta }{4}$ from the definition of $L$-transverse, we
obtain%
\begin{eqnarray*}
&&\left\vert \left\langle \frac{D\Omega ^{-1}\left( x\right) \mathbf{e}_{L}}{%
\left\vert D\Omega ^{-1}\left( x\right) \mathbf{e}_{L}\right\vert },\mathbf{e%
}_{k}\right\rangle \right\vert \\
&=&\left\vert \left\langle \frac{D\Omega ^{-1}\left( x\right) \mathbf{e}_{L}%
}{\left\vert D\Omega ^{-1}\left( x\right) \mathbf{e}_{L}\right\vert }%
-D\Omega ^{-1}\left( x\right) \mathbf{e}_{L},\mathbf{e}_{k}\right\rangle
+\left\langle R\mathbf{e}_{L},\mathbf{e}_{k}\right\rangle +\left\langle
\left( D\Omega ^{-1}\left( x\right) -R\right) \mathbf{e}_{L},\mathbf{e}%
_{k}\right\rangle \right\vert \\
&\leq &\left\Vert D\Omega ^{-1}-R\right\Vert _{\infty }+\eta +\left\Vert
D\Omega ^{-1}-R\right\Vert _{\infty }<\frac{1+\eta }{2},
\end{eqnarray*}%
which proves (\ref{trans edges}).

Now suppose that $Q\in \Omega \mathcal{P}^{n}$ satisfies $Q\cap L\neq
\emptyset $. Then $\Omega ^{-1}L$ is the image of a differentiable curve.
Let $\varphi :\mathbb{R}\rightarrow \mathbb{R}^{n}$ be a parameterization of 
$\Omega ^{-1}L$. The tangent directions $\frac{D\varphi \left( t\right) }{%
\left\vert D\varphi \left( t\right) \right\vert }$ of the curve $\Omega
^{-1}L$ are given by $\frac{D\Omega ^{-1}\left( \varphi \left( t\right)
\right) \mathbf{e}_{L}}{\left\vert D\Omega ^{-1}\left( \varphi \left(
t\right) \right) \mathbf{e}_{L}\right\vert }$, which satisfy (\ref{trans
edges}). Set $K\equiv \Omega ^{-1}Q$ and note that $K$ is an ordinary half
open half closed cube in $\mathcal{P}^{n}$, which without loss of generality
we may take to be $K=\left[ -1,1\right) ^{n}$. Let $\alpha \equiv \inf $ $%
\varphi ^{-1}K$ and $\beta \equiv \sup $ $\varphi ^{-1}K$. Now assume in
order to derive a contradiction that $\Omega ^{-1}L\cap \Omega ^{-1}Q$ is
not connected. It follows from (\ref{trans edges}) that if $\varphi \left(
t\right) \in \partial K$, then the tangent line at $\varphi \left( t\right) $
intersects the complement of $\overline{K}$ in any neighbourhood of $\varphi
\left( t\right) $, and hence there is $t_{0}\in \left( \alpha ,\beta \right) 
$ such that $\varphi \left( t_{0}\right) =\left( \varphi _{1}\left(
t_{0}\right) ,...,\varphi _{n}\left( t_{0}\right) \right) \notin \overline{K}
$. Thus there is $k_{2}$ such that $\left\vert \varphi _{k_{2}}\left(
t_{0}\right) \right\vert >1$. Let $k_{1}$ be any index other than $k_{2}$.

Let $\mathsf{P}$ be orthogonal projection of $\mathbb{R}^{n}$ onto the $2$%
-plane $\Pi $\ spanned by $\mathbf{e}_{k_{1}}$ and $\mathbf{e}_{k_{2}}$.
Then $\mathsf{P}\varphi $ is a differentiable curve whose image lies in $\Pi 
$ and satisfies the following analogue of (\ref{trans edges}):%
\begin{equation}
\left\vert \left\langle \frac{D\mathsf{P}\varphi \left( t\right) }{%
\left\vert D\mathsf{P}\varphi \left( t\right) \right\vert },\mathbf{e}%
_{k}\right\rangle \right\vert =\left\vert \left\langle \frac{D\mathsf{P}%
\Omega ^{-1}\left( \varphi \left( t\right) \right) \mathbf{e}_{L}}{%
\left\vert D\mathsf{P}\Omega ^{-1}\left( \varphi \left( t\right) \right) 
\mathbf{e}_{L}\right\vert },\mathbf{e}_{k}\right\rangle \right\vert \leq 
\frac{1+\eta }{2}<1,\ \ \ \ \ \text{for }t\in \mathbb{R}\text{ and }%
k=k_{1},k_{2}\ .  \label{trans edges'}
\end{equation}%
Thus the image $\mathsf{P}\Omega ^{-1}L$ of the curve $\mathsf{P}\varphi $
may be written as the graph of a continuously differentiable function $f:%
\mathbb{R}\rightarrow \mathbb{R}$ whose domain is identified with the $%
x_{k_{1}}$-axis and whose range is identified with the $x_{k_{2}}$-axis.
Then we have%
\begin{equation*}
\left\vert f\left( x_{k_{1}}\right) \right\vert =\left\vert \varphi
_{k_{2}}\left( t_{0}\right) \right\vert >1
\end{equation*}%
for $x_{k_{1}}=\varphi _{k_{1}}\left( t_{0}\right) $. The map $g\left(
t\right) =f^{-1}\left( \varphi _{k_{2}}\left( t\right) \right) $ is
differentiable and one-to-one, hence monotone and $g\left( \alpha \right)
,g\left( \beta \right) \in \left[ -1,1\right] $. We may suppose $g$ is
strictly increasing. We also have $f\left( g\left( \alpha \right) \right)
=\varphi _{k_{2}}\left( \alpha \right) ,f\left( g\left( \beta \right)
\right) =\varphi _{k_{2}}\left( \beta \right) \in \left[ -1,1\right] $ since 
$\varphi \left( \alpha \right) ,\varphi \left( \beta \right) \in \overline{K}
$. It follows that $x_{k_{1}}\in \left( g\left( \alpha \right) ,g\left(
\beta \right) \right) $, and hence $f$ must have a relative extreme value at
some point $z\in \left( g\left( \alpha \right) ,g\left( \beta \right)
\right) $. But then $f^{\prime }\left( z\right) =0$ which implies $D\mathsf{P%
}\varphi \left( g^{-1}\left( z\right) \right) $ is parallel to $\mathbf{e}%
_{k_{1}}$, and so contradicts $\left\vert \left\langle \frac{D\mathsf{P}%
\varphi \left( g^{-1}\left( z\right) \right) }{\left\vert D\mathsf{P}\varphi
\left( g^{-1}\left( z\right) \right) \right\vert },\mathbf{e}%
_{k_{1}}\right\rangle \right\vert <1$ from (\ref{trans edges'}).
\end{proof}

To prove the backward triple quasitesting condition in (\ref{triple testing}%
),%
\begin{equation*}
\int_{3Q^{\prime }}\left\vert \mathbf{R}_{\Psi }^{\alpha ,n}\left(
1_{Q^{\prime }}\omega \right) \right\vert ^{2}d\sigma \leq \mathfrak{T}_{%
\mathbf{R}_{\Psi }^{\alpha ,n}}^{\Omega \mathcal{Q}^{n},\limfunc{triple},%
\func{dual}}\left\vert Q^{\prime }\right\vert _{\omega }\ ,
\end{equation*}%
it suffices to decompose the triple $3Q^{\prime }$ into $3^{n}$ dyadic
quasicubes $Q$ of side length that of $Q^{\prime }$, and then apply backward
testing to $Q=Q^{\prime }$ which gives $\mathfrak{T}_{\mathbf{R}_{\Psi
}^{\alpha ,n}}^{\Omega \mathcal{Q}^{n},\func{dual}}\left\vert Q^{\prime
}\right\vert _{\omega }$, and then to prove%
\begin{equation*}
\int_{Q}\left\vert \mathbf{R}_{\Psi }^{\alpha ,n}\left( 1_{Q^{\prime
}}\omega \right) \right\vert ^{2}d\sigma \leq \left( \sqrt{\mathcal{A}%
_{2}^{\alpha }}+\sqrt{\mathcal{A}_{2}^{\alpha ,\func{dual}}}\right)
\left\vert Q^{\prime }\right\vert _{\omega }\ ,
\end{equation*}%
where $Q$ and $Q^{\prime }$ are distinct quasicubes of equal side length in
a common dyadic quasigrid that share an $\left( n-1\right) $-dimensional
quasiface $\mathbb{F}$ in common. We also assume that $\omega $ is supported
on a line $L$ that is parallel to a coordinate axis. The cases when the
quasicubes $Q$ and $Q^{\prime }$ meet in an `edge' of smaller dimension, is
handled in similar fashion.

From Lemma \ref{connected} we see that the line $L$ meets the
quasihyperplane $\mathbb{H}$ containing the quasiface $\mathbb{F}$ at an
angle at least $\varepsilon >0$, and that the intersection $Q^{\prime }\cap
L $ is an interval. Now select the smallest possible dyadic subquasicube $%
Q^{\prime \prime }$ of $Q^{\prime }$ such that $Q^{\prime }\cap L=Q^{\prime
\prime }\cap L$ to obtain that the length of the interval $Q^{\prime \prime
}\cap L$ is comparable to $\ell \left( Q^{\prime \prime }\right) $. Then
since $Q\setminus 3Q^{\prime \prime }$ is well separated from $Q^{\prime
\prime }$ we have 
\begin{equation*}
\int_{Q\setminus 3Q^{\prime \prime }}\left\vert \mathbf{R}_{\Psi }^{\alpha
,n}\left( 1_{Q^{\prime \prime }}\omega \right) \right\vert ^{2}d\sigma \leq
\left( \sqrt{\mathcal{A}_{2}^{\alpha }}+\sqrt{\mathcal{A}_{2}^{\alpha ,\func{%
dual}}}\right) \left\vert Q^{\prime \prime }\right\vert _{\omega }\ ,
\end{equation*}%
and it remains to consider the integrals $\int_{Q^{\prime \prime \prime
}}\left\vert \mathbf{R}_{\Psi }^{\alpha ,n}\left( 1_{Q^{\prime \prime
}}\omega \right) \right\vert ^{2}d\sigma $ as $Q^{\prime \prime \prime }$
ranges over all dyadic quasicubes in $3Q^{\prime \prime }\cap Q$ with side
length $\ell \left( Q^{\prime \prime }\right) $. Now we relabel $Q^{\prime
\prime \prime }$ and $Q^{\prime \prime }$ as $Q$ and $Q^{\prime }$, and then
without loss of essential generality, we may assume that with $R$ denoting
an appropriate rotation, we have

\begin{enumerate}
\item $Q=\Psi K$ where $K=R\left( \left[ -1,0\right] \times \left[ -\frac{1}{%
2},\frac{1}{2}\right] ^{n-1}\right) $,

\item $Q^{\prime }=\Psi K^{\prime }$ where $K^{\prime }=R\left( \left[ 0,1%
\right] \times \left[ -\frac{1}{2},\frac{1}{2}\right] ^{n-1}\right) $,

\item $\limfunc{supp}\omega \subset L=\left( -\infty ,\infty \right) \times
\left\{ \left( 0,...,0\right) \right\} $ the $x_{1}$ -axis

\item $Q^{\prime }\cap L$ is an interval of length comparable to $\ell
\left( Q^{\prime }\right) $.
\end{enumerate}

Then the restriction $\omega _{Q^{\prime }}$ of $\omega $ to the quasicube $%
Q^{\prime }$ has support $\limfunc{supp}\omega _{Q^{\prime }}$ contained in
the line segment $S\equiv Q^{\prime }\cap L$, while the restriction $\sigma
_{Q}$ of $\sigma $ to the quasicube $Q$ has support $\limfunc{supp}\sigma
_{Q}$ contained in the quasicube $Q$. We exploit the distinguished role
played by the unique point in $\partial Q\cap \partial Q^{\prime }\cap L$,
which we relabel as the origin, by writing $y=t\xi \in Q$ where $%
t=\left\vert y\right\vert $ and $\xi \in \mathbb{S}^{n-1}$, and by writing $%
x=\left( s,\mathbf{0}\right) \in Q^{\prime }\cap L$, so that for an
appropriate $a\approx 1$, we have%
\begin{eqnarray*}
&&\int_{Q}\left\vert \mathbf{R}_{\Psi }^{\alpha ,n}\left( 1_{Q^{\prime
}}\omega \right) \right\vert ^{2}d\sigma \lesssim \int_{Q}\left\{
\int_{Q^{\prime }}\left\vert y-x\right\vert ^{\alpha -n}d\omega \left(
x\right) \right\} ^{2}d\sigma \left( y\right) \\
&=&\int_{Q}\left\{ \int_{0}^{a}\left\vert t\xi -\left( s,\mathbf{0}\right)
\right\vert ^{\alpha -n}d\omega \left( s,\mathbf{0}\right) \right\}
^{2}d\sigma \left( t\xi \right) \approx \int_{Q}\left\{ \int_{0}^{a}\left(
t+s\right) ^{\alpha -n}d\omega \left( s,\mathbf{0}\right) \right\}
^{2}d\sigma \left( t\xi \right) \\
&\equiv &\int_{0}^{\infty }\left\{ \int_{0}^{\infty }\left( t+s\right)
^{\alpha -n}d\widetilde{\omega }\left( s\right) \right\} ^{2}d\widetilde{%
\sigma }\left( t\right) =\int_{0}^{\infty }\left\{ \left(
\int_{0}^{t}+\int_{t}^{\infty }\right) \left( t+s\right) ^{\alpha -n}d%
\widetilde{\omega }\left( s\right) \right\} ^{2}d\widetilde{\sigma }\left(
t\right) \\
&\approx &\int_{0}^{\infty }\left\{ \int_{0}^{t}d\widetilde{\omega }\left(
s\right) \right\} ^{2}t^{2\alpha -2n}d\widetilde{\sigma }\left( t\right)
+\int_{0}^{\infty }\left\{ \int_{t}^{\infty }s^{\alpha -n}d\widetilde{\omega 
}\left( s\right) \right\} ^{2}d\widetilde{\sigma }\left( t\right) \equiv
I+II,
\end{eqnarray*}%
where the one dimensional measures $\widetilde{\omega }$ and $\widetilde{%
\sigma }$ are uniquely determined by $\omega ,Q^{\prime }$ and $\sigma ,Q$
respectively by the passage from the second line to the third line above.
Note also that the approximation in the second line above follows from (\ref%
{trans edges}). Now as in \cite{LaSaShUr2}, we apply Muckenhoupt's two
weight Hardy inequality for general measures (see Hyt\"{o}nen \cite{Hyt2}
for a proof), to obtain%
\begin{equation*}
\int_{0}^{\infty }\left\{ \int_{\left( 0,t\right] }f\left( s\right) d\mu
\left( s\right) \right\} ^{2}d\nu \left( t\right) \lesssim \left\{
\sup_{0<r<\infty }\left( \int_{\left[ r,\infty \right) }d\nu \right) \left(
\int_{\left( 0,r\right] }d\mu \right) \right\} \ \int_{0}^{\infty }f\left(
s\right) ^{2}d\mu \left( s\right) ,
\end{equation*}%
with $\mu =\widetilde{\omega }$, $d\nu \left( t\right) =t^{2\alpha -2n}d%
\widetilde{\sigma }\left( t\right) $ and $f=1$ to obtain that%
\begin{eqnarray*}
I &=&\int_{0}^{\infty }\left\{ \int_{\left( 0,t\right] }d\widetilde{\omega }%
\left( s\right) \right\} ^{2}t^{2\alpha -2n}d\widetilde{\sigma }\left(
t\right) \\
&\lesssim &\left\{ \sup_{0<r<\infty }\left( \int_{\left[ r,\infty \right)
}t^{2\alpha -2n}d\widetilde{\sigma }\left( t\right) \right) \left(
\int_{\left( 0,r\right] }d\widetilde{\omega }\left( s\right) \right)
\right\} \ \int_{0}^{\infty }d\widetilde{\omega }\left( s\right) \lesssim 
\mathcal{A}_{2}^{\alpha }\ \left\vert Q^{\prime }\right\vert _{\omega }
\end{eqnarray*}%
since $\int_{0}^{\infty }d\widetilde{\omega }\left( s\right) =\left\vert
Q^{\prime }\right\vert _{\omega }$ and 
\begin{eqnarray*}
&&\left( \int_{\left[ r,\infty \right) }t^{2\alpha -2n}d\widetilde{\sigma }%
\left( t\right) \right) \left( \int_{\left( 0,r\right] }d\widetilde{\omega }%
\left( s\right) \right) \\
&\approx &\int_{Q\cap \left\{ \left\vert y\right\vert \geq r\right\}
}\left\vert y-x\right\vert ^{2\alpha -2n}d\sigma \left( y\right)
\int_{Q^{\prime }\cap \left\{ \left\vert x\right\vert \leq r\right\}
}d\omega \left( x\right) \\
&\approx &\int_{Q\cap \left\{ \left\vert y\right\vert \geq r\right\} }\left( 
\frac{r}{\left( \left\vert y\right\vert +r\right) ^{2}}\right) ^{n-\alpha
}d\sigma \left( y\right) \ r^{\alpha -n}\left\vert Q^{\prime }\cap \left\{
\left\vert x\right\vert \leq r\right\} \right\vert _{\omega }\lesssim 
\mathcal{A}_{2}^{\alpha }\ .
\end{eqnarray*}%
Then we apply the two weight dual Hardy inequality%
\begin{equation*}
\int_{0}^{\infty }\left\{ \int_{\left[ t,\infty \right) }f\left( s\right)
d\mu \left( s\right) \right\} ^{2}d\nu \left( t\right) \leq \left\{
\sup_{0<r<\infty }\left( \int_{\left( 0,r\right] }d\nu \right) \left( \int_{%
\left[ r,\infty \right) }d\mu \right) \right\} \ \int_{0}^{\infty }f\left(
s\right) ^{2}d\mu \left( s\right) ,
\end{equation*}%
with $d\mu \left( s\right) =s^{2\alpha -2n}d\widetilde{\omega }\left(
s\right) $, $d\nu \left( t\right) =d\widetilde{\sigma }\left( t\right) $ and 
$f\left( s\right) =s^{n-\alpha }$ to obtain that%
\begin{eqnarray*}
II &=&\int_{0}^{\infty }\left\{ \int_{\left[ t,\infty \right) }s^{\alpha -n}d%
\widetilde{\omega }\left( s\right) \right\} ^{2}d\widetilde{\sigma }\left(
t\right) =\int_{0}^{\infty }\left\{ \int_{\left[ s,\infty \right)
}s^{n-\alpha }d\mu \left( s\right) \right\} ^{2}d\widetilde{\sigma }\left(
t\right) \\
&\lesssim &\left\{ \sup_{0<r<\infty }\left( \int_{\left( 0,r\right] }d%
\widetilde{\sigma }\left( t\right) \right) \left( \int_{\left[ r,\infty
\right) }s^{2\alpha -2n}d\widetilde{\omega }\left( s\right) \right) \right\}
\ \int_{0}^{\infty }s^{2n-2\alpha }d\mu \left( s\right) \lesssim \mathcal{A}%
_{2}^{\alpha }\ \left\vert Q^{\prime }\right\vert _{\omega }
\end{eqnarray*}%
since $\int_{0}^{\infty }s^{2n-2\alpha }d\mu \left( s\right)
=\int_{0}^{\infty }d\widetilde{\omega }\left( s\right) =\left\vert Q^{\prime
}\right\vert _{\omega }$ and $\left( \int_{\left( 0,r\right] }d\widetilde{%
\sigma }\left( t\right) \right) \left( \int_{\left[ r,\infty \right)
}s^{2\alpha -2n}d\widetilde{\omega }\left( s\right) \right) \lesssim 
\mathcal{A}_{2}^{\alpha ,\func{dual}}$ just as above.

\begin{remark}
In the case where one measure is supported on the $x$-axis, we need only
test over cubes with sides parallel to the coordinate axes. For the Cauchy
operator this is in \cite{LaSaShUrWi} and for the higher dimensional case
see the earlier versions of the current paper on the arXiv.
\end{remark}

\section{One measure compactly supported on a $C^{1,\protect\delta }$ curve}

Suppose that $\Psi :\mathbb{R}^{n}\rightarrow \mathbb{R}^{n}$ is a $%
C^{1,\delta }$ diffeomorphism. Recall the associated class of conformal
Riesz vector transforms $\mathbf{R}_{\Psi }^{\alpha ,n}$ whose kernels $%
\mathbf{K}_{\Psi }^{\alpha ,n}$ are given by%
\begin{equation*}
\mathbf{K}_{\Psi }^{\alpha ,n}\left( x,y\right) =\frac{y-x}{\left\vert \Psi
\left( y\right) -\Psi \left( x\right) \right\vert ^{n+1-\alpha }}.
\end{equation*}%
First we investigate the effect of a change of variable on the statement of
the $T1$ theorem.

\subsection{Changes of variable}

Suppose that $\Psi :\mathbb{R}^{n}\rightarrow \mathbb{R}^{n}$ is a $%
C^{1,\delta }$ diffeomorphism, i.e. that both $\Psi $ and its inverse $\Psi
^{-1}$ are globally $C^{1,\delta }$ maps. In particular we have that%
\begin{eqnarray}
&&\left\vert \Psi \left( y\right) -\Psi \left( x\right) \right\vert \leq
\left\Vert \Psi \right\Vert _{C^{1}}\left\vert y-x\right\vert ,
\label{rigidity} \\
&&\left\vert \Psi ^{-1}\left( w\right) -\Psi ^{-1}\left( z\right)
\right\vert \leq \left\Vert \Psi ^{-1}\right\Vert _{C^{1}}\left\vert
w-z\right\vert ,  \notag \\
&\Longrightarrow &\frac{1}{\left\Vert \Psi ^{-1}\right\Vert _{C^{1}}}\leq 
\frac{\left\vert \Psi \left( y\right) -\Psi \left( x\right) \right\vert }{%
\left\vert y-x\right\vert }=\frac{\left\vert w-z\right\vert }{\left\vert
\Psi ^{-1}\left( w\right) -\Psi ^{-1}\left( z\right) \right\vert }\leq
\left\Vert \Psi \right\Vert _{C^{1}},  \notag \\
&\Longrightarrow &\frac{1}{\left\Vert \Psi ^{-1}\right\Vert _{C^{1}}}\leq
\left\Vert D\Psi \right\Vert _{\infty }\leq \left\Vert \Psi \right\Vert
_{C^{1}}\ .  \notag
\end{eqnarray}%
Let 
\begin{equation}
\Psi ^{\ast }\mathbf{K}^{\alpha ,n}\left( x,y\right) \equiv \mathbf{K}%
^{\alpha ,n}\left( \Psi \left( x\right) ,\Psi \left( y\right) \right) =\frac{%
\Psi \left( y\right) -\Psi \left( x\right) }{\left\vert \Psi \left( y\right)
-\Psi \left( x\right) \right\vert ^{n+1-\alpha }},
\label{def kernel pullback}
\end{equation}%
be the pullback of the kernel $\mathbf{K}^{\alpha ,n}$ under $\Psi $, and
define the corresponding operator%
\begin{equation*}
\left( \Psi ^{\ast }\mathbf{R}^{\alpha ,n}\right) \left( f\mu \right) \left(
x\right) =\int \Psi ^{\ast }\mathbf{K}^{\alpha ,n}\left( x,y\right) f\left(
y\right) d\mu \left( y\right) .
\end{equation*}%
We claim the equality 
\begin{equation}
\mathfrak{N}_{\mathbf{R}^{\alpha ,n}}\left( \sigma ,\omega \right) =%
\mathfrak{N}_{\Psi ^{\ast }\mathbf{R}^{\alpha ,n}}\left( \Psi ^{\ast }\sigma
,\Psi ^{\ast }\omega \right) ,  \label{norm equality}
\end{equation}%
where $\Psi ^{\ast }\sigma =\left( \Psi ^{-1}\right) _{\ast }\sigma $
denotes the pushforward of $\sigma $ under $\Psi ^{-1}$, but as $\Psi $ is a
homeomorphism we abuse notation by writing $\Psi ^{\ast }$ for $\left( \Psi
^{-1}\right) _{\ast }$, and where $\mathfrak{N}_{\mathbf{R}^{\alpha
,n}}\left( \sigma ,\omega \right) $ is the best constant in the inequality%
\begin{equation}
\int \left\vert \mathbf{R}^{\alpha ,n}f\sigma \right\vert ^{2}d\omega \leq 
\mathfrak{N}_{\mathbf{R}^{\alpha ,n}}\left( \sigma ,\omega \right) \int
\left\vert f\right\vert ^{2}d\sigma ,  \label{norm omega}
\end{equation}%
and similarly for $\mathfrak{N}_{\Psi ^{\ast }\mathbf{R}^{\alpha ,n}}\left(
\Psi ^{\ast }\sigma ,\Psi ^{\ast }\omega \right) $. Indeed, with the change
of variable $x^{\prime }=\Psi \left( x\right) $, $y^{\prime }=\Psi \left(
y\right) $, and setting $\Psi ^{\ast }f=f\circ \Psi $, etc., we have%
\begin{eqnarray*}
\int \left\vert \mathbf{R}^{\alpha ,n}f\sigma \left( x^{\prime }\right)
\right\vert ^{2}d\omega \left( x^{\prime }\right) &=&\int \left\vert \mathbf{%
R}^{\alpha ,n}f\sigma \left( \Psi \left( x\right) \right) \right\vert
^{2}d\Psi ^{\ast }\omega \left( x\right) ; \\
\mathbf{R}^{\alpha ,n}f\sigma \left( \Psi \left( x\right) \right) &=&\int 
\mathbf{K}^{\alpha ,n}\left( \Psi \left( x\right) ,y^{\prime }\right)
f\left( y^{\prime }\right) d\sigma \left( y^{\prime }\right) \\
&=&\int \mathbf{K}^{\alpha ,n}\left( \Psi \left( x\right) ,\Psi \left(
y\right) \right) f\left( \Psi \left( y\right) \right) d\Psi ^{\ast }\sigma
\left( y\right) \\
&=&\int \Psi ^{\ast }\mathbf{K}^{\alpha ,n}\left( x,y\right) \Psi ^{\ast
}f\left( y\right) d\Psi ^{\ast }\sigma \left( y\right) \\
&=&\left( \Psi ^{\ast }\mathbf{R}^{\alpha ,n}\right) \left( \Psi ^{\ast
}f\Psi ^{\ast }\sigma \right) \left( x\right) ; \\
\int \left\vert f\left( y^{\prime }\right) \right\vert ^{2}d\sigma \left(
y^{\prime }\right) &=&\int \left\vert \Psi ^{\ast }f\left( y\right)
\right\vert ^{2}d\Psi ^{\ast }\sigma \left( y\right) ,
\end{eqnarray*}%
which shows that (\ref{norm omega}) becomes%
\begin{equation*}
\int \left\vert \left( \Psi ^{\ast }\mathbf{R}^{\alpha ,n}\right) \left(
\Psi ^{\ast }f\Psi ^{\ast }\sigma \right) \left( x\right) \right\vert
^{2}d\Psi ^{\ast }\omega \left( x\right) \leq \mathfrak{N}_{\mathbf{R}%
^{\alpha ,n}}\int \left\vert \Psi ^{\ast }f\left( y\right) \right\vert
^{2}d\Psi ^{\ast }\sigma \left( y\right) ,
\end{equation*}%
and hence that (\ref{norm equality}) holds.

Now the operator $\Psi ^{\ast }\mathbf{R}^{\alpha ,n}$ is easily seen to be
a standard fractional singular integral, but it fails to be a conformal
Riesz transform in general because the phase $\Psi \left( y\right) -\Psi
\left( x\right) $ in the kernel in (\ref{def kernel pullback}) is not $y-x$.
We will rectify this drawback by showing that the boundedness of the
conformal Riesz transform $\mathbf{R}_{\Psi }^{\alpha ,n}$ is equivalent to
that of $\Psi ^{\ast }\mathbf{R}^{\alpha ,n}$, and that the appropriate
testing conditions are equivalent as well. So consider the two inequalities (%
\ref{norm omega}) and%
\begin{equation}
\int \left\vert \mathbf{R}_{\Psi }^{\alpha ,n}\left( f\Psi ^{\ast }\sigma
\right) \right\vert ^{2}d\Psi ^{\ast }\omega \leq \mathfrak{N}_{\mathbf{R}%
_{\Psi }^{\alpha ,n}}\left( \left( \Psi ^{\ast }\sigma ,\Psi ^{\ast }\omega
\right) \right) \int \left\vert f\right\vert ^{2}d\Psi ^{\ast }\sigma ,
\label{norm omega'}
\end{equation}%
where we recall that the measures $\Psi ^{\ast }\omega $ and $\Psi ^{\ast
}\sigma $ are the pushforwards under $\Psi ^{-1}$ of the measures $\omega $
and $\sigma $ respectively. Here the constants $\mathfrak{N}_{\mathbf{R}%
^{\alpha ,n}}\left( \sigma ,\omega \right) $ and $\mathfrak{N}_{\mathbf{R}%
_{\Psi }^{\alpha ,n}}\left( \Psi ^{\ast }\sigma ,\Psi ^{\ast }\omega \right) 
$ are the smallest constants in their respective inequalities.

At this point we fix a collection of quasicubes $\Omega \mathcal{Q}^{n}$
with $\Omega $ biLipschitz, and recall the Muckenhoupt and energy constants 
\begin{eqnarray*}
&&\mathcal{A}_{2}^{\alpha }\left( \sigma ,\omega \right) ,\ \mathcal{A}%
_{2}^{\alpha ,\func{dual}}\left( \sigma ,\omega \right) ,\ A_{2}^{\alpha ,%
\limfunc{punct}}\left( \sigma ,\omega \right) ,\ A_{2}^{\alpha ,\limfunc{%
punct},\func{dual}}\left( \sigma ,\omega \right) ,\ \mathcal{E}_{\alpha
}^{\Omega \mathcal{Q}^{n}}\left( \sigma ,\omega \right) ,\ \mathcal{E}%
_{\alpha }^{\Omega \mathcal{Q}^{n},\func{dual}}\left( \sigma ,\omega \right)
; \\
&&\mathcal{A}_{2}^{\alpha }\left( \Psi ^{\ast }\sigma ,\Psi ^{\ast }\omega
\right) ,\ \mathcal{A}_{2}^{\alpha ,\func{dual}}\left( \Psi ^{\ast }\sigma
,\Psi ^{\ast }\omega \right) ,\ A_{2}^{\alpha ,\limfunc{punct}}\left( \Psi
^{\ast }\sigma ,\Psi ^{\ast }\omega \right) ,\ A_{2}^{\alpha ,\limfunc{punct}%
,\func{dual}}\left( \Psi ^{\ast }\sigma ,\Psi ^{\ast }\omega \right) , \\
&&\ \ \ \ \ \ \ \ \ \ \ \ \ \ \ \ \ \ \ \ \ \ \ \ \ \ \ \ \ \ \ \ \ \ \ 
\text{and }\mathcal{E}_{\alpha }^{\Omega \mathcal{Q}^{n}}\left( \Psi ^{\ast
}\sigma ,\Psi ^{\ast }\omega \right) ,\ \mathcal{E}_{\alpha }^{\Omega 
\mathcal{Q}^{n},\func{dual}}\left( \Psi ^{\ast }\sigma ,\Psi ^{\ast }\omega
\right) ,
\end{eqnarray*}%
that depend only on the measures and the quasicubes, and the testing
constants 
\begin{eqnarray*}
&&\mathfrak{T}_{\mathbf{R}^{\alpha ,n}}^{\Omega \mathcal{Q}^{n}}\left(
\sigma ,\omega \right) ,\ \mathfrak{T}_{\mathbf{R}^{\alpha ,n}}^{\Omega 
\mathcal{Q}^{n},\func{dual}}\left( \sigma ,\omega \right) ; \\
&&\mathfrak{T}_{\mathbf{R}_{\Psi }^{\alpha ,n}}^{\Omega \mathcal{Q}%
^{n}}\left( \Psi ^{\ast }\sigma ,\Psi ^{\ast }\omega \right) ,\ \mathfrak{T}%
_{\mathbf{R}_{\Psi }^{\alpha ,n}}^{\Omega \mathcal{Q}^{n},\func{dual}}\left(
\Psi ^{\ast }\sigma ,\Psi ^{\ast }\omega \right) ,
\end{eqnarray*}%
that depend on the measures and the quasicubes as well as the fractional
singular integral and its tangent line truncations. We sometimes suppress
the dependence $\left( \sigma ,\omega \right) $ on the measures when they
are understood from the context, or when they do not play a significant role.

Finally, we define an even more general testing condition. Let $\mathcal{F}$
be a collection of bounded Borel sets, and let $\mathbf{T}^{\alpha ,n}$ be
an $\alpha $-fractional singular integral. Then define $\mathfrak{T}_{%
\mathbf{T}^{\alpha ,n}}^{\mathcal{F}}=\mathfrak{T}_{\mathbf{T}^{\alpha ,n}}^{%
\mathcal{F}}\left( \sigma ,\omega \right) $ to be the smallest constant in
the inequality%
\begin{equation*}
\int \left\vert \mathbf{T}^{\alpha ,n}\mathbf{1}_{F}\sigma \right\vert
^{2}d\omega \leq \mathfrak{T}_{\mathbf{T}^{\alpha ,n}}^{\mathcal{F}%
}\left\vert F\right\vert _{\sigma },\ \ \ \ \ F\in \mathcal{F},
\end{equation*}%
and similarly for the dual $\mathfrak{T}_{\mathbf{T}^{\alpha ,n,\func{dual}%
}}^{\mathcal{F},\func{dual}}=\mathfrak{T}_{\mathbf{T}^{\alpha ,n,\func{dual}%
}}^{\mathcal{F},\func{dual}}\left( \sigma ,\omega \right) $. Note that our
testing conditions above are with $\mathcal{F}=\Omega \mathcal{Q}^{n}$ and $%
\mathbf{T}^{\alpha ,n}=\mathbf{R}^{\alpha ,n}$ or $\mathbf{R}_{\Psi
}^{\alpha ,n}$. Given $\Psi $ and $\mathcal{F}$ as above, denote by $\Psi
^{\ast }\mathcal{F}\equiv \left\{ \Psi ^{-1}\left( F\right) :F\in \mathcal{F}%
\right\} $ the pullback of $\mathcal{F}$ under the map $\Psi $, i.e. $\Psi
^{\ast }\mathbf{1}_{F}=\mathbf{1}_{F}\circ \Psi =\mathbf{1}_{\Psi
^{-1}\left( F\right) }$. Of particular interest for us is the set of
quasicubes $\mathcal{Q}=\Omega \mathcal{Q}^{n}$ which is used in the
versions given above of the testing conditions. Then we have $\Psi ^{\ast }%
\mathcal{Q}=\left\{ \Psi ^{-1}\left( Q\right) :Q\in \mathcal{Q}\right\} $,
and the sets $\Psi ^{-1}\left( Q\right) $ form a new family of quasicubes
since $\Psi ^{-1}\circ \Omega $ is a globally biLipschitz map, which if
necessary we will refer to as $\Psi ^{-1}\circ \Omega $-quasicubes.

Our first proposition concerns the equivalence of the Muckenhoupt conditions
under a biLipschitz change of variable, and the next proposition considers
norm inequalities and testing conditions.

\begin{proposition}
\label{Muck equiv}Suppose $\Omega :\mathbb{R}^{n}\rightarrow \mathbb{R}^{n}$
is a globally biLipschitz map and that the Muckenhoupt conditions are
defined by taking supremums over the collection $\Omega \mathcal{Q}^{n}$ of $%
\Omega $-quasicubes. Let $\Psi $ be another globally biLipschitz map, and
let $\sigma $ and $\omega $ be positive Borel measures possibly having
common point masses. Then we have the following three equivalences:%
\begin{eqnarray*}
A_{2}^{\alpha ,\limfunc{punct}}\left( \sigma ,\omega \right) &\approx
&A_{2}^{\alpha ,\limfunc{punct}}\left( \Psi ^{\ast }\sigma ,\Psi ^{\ast
}\omega \right) , \\
A_{2}^{\alpha ,\limfunc{punct},\func{dual}}\left( \sigma ,\omega \right)
&\approx &A_{2}^{\alpha ,\limfunc{punct},\func{dual}}\left( \Psi ^{\ast
}\sigma ,\Psi ^{\ast }\omega \right) , \\
\mathfrak{A}_{2}^{\alpha }\left( \sigma ,\omega \right) &=&\mathfrak{A}%
_{2}^{\alpha }\left( \Psi ^{\ast }\sigma ,\Psi ^{\ast }\omega \right) .
\end{eqnarray*}
\end{proposition}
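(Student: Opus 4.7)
The plan is to reduce all three equivalences to a common principle: biLipschitz invariance of the Muckenhoupt conditions under simultaneous change of measure and change of quasicube family. The key observation is that for any Borel set $E$ one has $|E|_{\Psi^{\ast}\mu} = \mu(\Psi(E))$, so that for each $\Omega$-quasicube $Q$ with image $\widetilde{Q} \equiv \Psi(Q)$ the masses transform as $|Q|_{\Psi^{\ast}\sigma} = |\widetilde{Q}|_{\sigma}$ and $|Q|_{\Psi^{\ast}\omega} = |\widetilde{Q}|_{\omega}$. Since $\Psi \circ \Omega$ is again globally biLipschitz, the family $\{\widetilde{Q}\}_{Q \in \Omega \mathcal{Q}^{n}}$ is precisely the collection of $(\Psi \circ \Omega)$-quasicubes, and the biLipschitz Jacobian bounds supply $|Q| \approx |\widetilde{Q}|$ and hence $\ell(Q) \approx \ell(\widetilde{Q})$.

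First I would verify compatibility of the reproducing Poisson integrals. The change of variables $y = \Psi(x)$ in the defining integral, combined with the biLipschitz comparabilities $|\Psi^{-1}(y) - c_{Q}| \approx |y - \Psi(c_{Q})|$ and $|\Psi(c_{Q}) - c_{\widetilde{Q}}| \lesssim |\widetilde{Q}|^{1/n}$, yields
\begin{equation*}
\mathcal{P}^{\alpha}\bigl(Q, \mathbf{1}_{Q^{c}} \Psi^{\ast}\sigma\bigr) \;\approx\; \mathcal{P}^{\alpha}\bigl(\widetilde{Q}, \mathbf{1}_{\widetilde{Q}^{c}} \sigma\bigr).
\end{equation*}
Assembling this with the mass identities above gives $\mathcal{A}_{2}^{\alpha}(\Psi^{\ast}\sigma,\Psi^{\ast}\omega)$, taken over $\Omega \mathcal{Q}^{n}$, comparable to $\mathcal{A}_{2}^{\alpha}(\sigma,\omega)$ taken over $(\Psi \circ \Omega)\mathcal{Q}^{n}$, and analogously on the dual side. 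To close the circle one shows that $\mathcal{A}_{2}^{\alpha}(\sigma,\omega)$ computed over one biLipschitz quasicube family is comparable to that computed over another: every $\Omega_{1}$-quasicube is sandwiched, after bounded dilation and translation, between two $\Omega_{2}$-quasicubes of comparable sidelength via the containments (\ref{comp contain}), and $\mathcal{P}^{\alpha}$ together with the mass and volume ratios are stable under such bounded sandwiching.

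For the punctured conditions, the same scheme applies once we note that $\Psi$ is a bijection, so $\mathfrak{P}_{(\Psi^{\ast}\sigma, \Psi^{\ast}\omega)} = \Psi^{-1}(\mathfrak{P}_{(\sigma,\omega)})$ with individual point masses preserved; hence $\Psi^{\ast}\sigma(Q, \mathfrak{P}_{(\Psi^{\ast}\sigma,\Psi^{\ast}\omega)}) = \sigma(\widetilde{Q}, \mathfrak{P}_{(\sigma,\omega)})$, and the equivalence of $A_{2}^{\alpha,\limfunc{punct}}$ follows as above. Summing the four individual comparabilities yields the claim for $\mathfrak{A}_{2}^{\alpha}$. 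The main obstacle is essentially bookkeeping: $\Psi$ neither preserves the $\Omega$-quasicube family nor carries centers to centers, so every comparability must be tracked through the biLipschitz distortion, and the mildly technical point is the sandwiching step, which leans on (\ref{comp contain}) holding uniformly for both $\Omega$ and $\Psi \circ \Omega$.
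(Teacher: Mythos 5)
Your treatment of the two punctured conditions is essentially the paper's argument (the one point you gloss over is that $\mu\left( E,\mathfrak{P}\right) $ is not obviously monotone in $E$, since enlarging $E$ also enlarges the supremum being subtracted; the paper handles this with a short case analysis on whether the largest point masses of $\widetilde{K}$ and of the enclosing quasicube coincide). The genuine gap is in the third equivalence. Your plan is to prove that $\mathcal{A}_{2}^{\alpha }$ itself transforms well and then compare the two quasicube families by sandwiching, asserting that ``$\mathcal{P}^{\alpha }$ together with the mass and volume ratios are stable under such bounded sandwiching.'' For the one-tailed conditions \emph{with holes} this is false, and the paper explicitly flags the point immediately after the statement: no equivalence is claimed for $\mathcal{A}_{2}^{\alpha }$ or $\mathcal{A}_{2}^{\alpha ,\func{dual}}$ individually --- only the full sum $\mathfrak{A}_{2}^{\alpha }$ is asserted to be invariant. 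Concretely, if you sandwich $\widetilde{K}$ between quasicubes $Q\subset \widetilde{K}\subset P$ of comparable side length, then
\begin{equation*}
\mathcal{P}^{\alpha }\left( K,\mathbf{1}_{\widetilde{K}^{c}}\sigma \right)
\frac{\left\vert \widetilde{K}\right\vert _{\omega }}{\left\vert
K\right\vert ^{1-\frac{\alpha }{n}}}\leq \left[ \mathcal{P}^{\alpha }\left(
K,\mathbf{1}_{P^{c}}\sigma \right) +\mathcal{P}^{\alpha }\left( K,\mathbf{1}
_{P\setminus \widetilde{K}}\sigma \right) \right] \frac{\left\vert
P\right\vert _{\omega }}{\left\vert K\right\vert ^{1-\frac{\alpha }{n}}},
\end{equation*}
and while the first term is controlled by $\mathcal{A}_{2}^{\alpha }$ over the other family, the second pairs $\sigma $-mass in the annulus $P\setminus \widetilde{K}$ against $\omega $-mass of all of $P$. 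A large common point mass of $\sigma $ and $\omega $ sitting in that annulus then produces a term of the form $\sigma \left( \left\{ p\right\} \right) \omega \left( \left\{ p\right\} \right) /\left\vert Q\right\vert ^{2\left( 1-\frac{\alpha }{n}\right) }$, which no condition with holes over any quasicube family can see, and which the punctured conditions control only after the largest such mass is removed. So the hole and the full-measure factor do not realign after sandwiching, and your comparison of families breaks down exactly where the proposition is delicate.

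The paper closes this gap by proving only $\mathcal{A}_{2}^{\alpha }\left( \Psi ^{\ast }\sigma ,\Psi ^{\ast }\omega \right) \lesssim \mathfrak{A}_{2}^{\alpha }\left( \sigma ,\omega \right) $, i.e.\ by borrowing the punctured conditions. One subtracts the largest common point masses, $\dot{\sigma}=\sigma -\sigma \left( \left\{ y\right\} \right) $ and $\dot{\omega}=\omega -\omega \left( \left\{ z\right\} \right) $ with $y,z\in P$, bounds the two ``punctured'' products by $A_{2}^{\alpha ,\limfunc{punct}}+\mathcal{A}_{2}^{\alpha }$ and $A_{2}^{\alpha ,\limfunc{punct},\func{dual}}+\mathcal{A}_{2}^{\alpha }$ respectively, and then handles the residual product $\sigma \left( \left\{ y\right\} \right) \omega \left( \left\{ z\right\} \right) /\left\vert P\right\vert ^{2\left( 1-\frac{\alpha }{n}\right) }$ for $y\neq z$ by a separate geometric step: one finds a quasicube $R\subset P$ of comparable side length containing exactly one of $y$ and $z$, so that this product is dominated by a one-tailed condition with holes. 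A final case analysis ($y=z$ inside or outside $\widetilde{K}$) finishes the argument. To repair your proof you would need to replace the sandwiching step for the holed conditions with this point-mass surgery; without it the third equivalence does not follow.
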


Note the absence of any statement regarding the one-tailed Muckenhoupt
conditions with holes, $\mathcal{A}_{2}^{\alpha }$ and $\mathcal{A}%
_{2}^{\alpha ,\func{dual}}$, where it is not obvious that an equivalence is
possible.

\begin{proof}
For convenience we set $\widetilde{\sigma }=\Psi ^{\ast }\sigma $, $%
\widetilde{\omega }=\Psi ^{\ast }\omega $ and $\widetilde{Q}=\Psi \left(
Q\right) $. In order to show that%
\begin{equation*}
A_{2}^{\alpha ,\limfunc{punct}}\left( \Psi ^{\ast }\sigma ,\Psi ^{\ast
}\omega \right) \lesssim A_{2}^{\alpha ,\limfunc{punct}}\left( \sigma
,\omega \right) ,
\end{equation*}%
it suffices to show that%
\begin{equation*}
\frac{\widetilde{\omega }\left( K,\mathfrak{P}_{\left( \widetilde{\sigma },%
\widetilde{\omega }\right) }\right) }{\left\vert K\right\vert ^{1-\frac{%
\alpha }{n}}}\frac{\left\vert K\right\vert _{\widetilde{\sigma }}}{%
\left\vert K\right\vert ^{1-\frac{\alpha }{n}}}\lesssim \sup_{Q\in \Omega 
\mathcal{Q}^{n}}\frac{\omega \left( Q,\mathfrak{P}_{\left( \sigma ,\omega
\right) }\right) }{\left\vert Q\right\vert ^{1-\frac{\alpha }{n}}}\frac{%
\left\vert Q\right\vert _{\sigma }}{\left\vert Q\right\vert ^{1-\frac{\alpha 
}{n}}}
\end{equation*}%
for all $\Omega $-quasicubes $K\in \Omega \mathcal{Q}^{n}$. Now a change of
variable shows that%
\begin{equation*}
\frac{\widetilde{\omega }\left( K,\mathfrak{P}_{\left( \widetilde{\sigma },%
\widetilde{\omega }\right) }\right) }{\left\vert K\right\vert ^{1-\frac{%
\alpha }{n}}}\frac{\left\vert K\right\vert _{\widetilde{\sigma }}}{%
\left\vert K\right\vert ^{1-\frac{\alpha }{n}}}=\frac{\omega \left( 
\widetilde{K},\mathfrak{P}_{\left( \sigma ,\omega \right) }\right) }{%
\left\vert K\right\vert ^{1-\frac{\alpha }{n}}}\frac{\left\vert \widetilde{K}%
\right\vert _{\sigma }}{\left\vert K\right\vert ^{1-\frac{\alpha }{n}}},
\end{equation*}%
where of course $\left\vert K\right\vert \approx \left\vert \widetilde{K}%
\right\vert $. Now choose a quasicube $Q$ containing $\widetilde{K}$ with $%
\ell \left( Q\right) \leq C_{\Omega }\ell \left( K\right) $ so that we have $%
\left\vert K\right\vert \approx \left\vert Q\right\vert $. If a largest
common point mass for $\omega $ in $\widetilde{K}$ (respectively $Q$) occurs
at $x$ (respectively $y$), then $\omega \left( \left\{ x\right\} \right)
\leq \omega \left( \left\{ y\right\} \right) $ and so we have%
\begin{equation*}
\omega \left( \widetilde{K},\mathfrak{P}_{\left( \sigma ,\omega \right)
}\right) =\left\vert \widetilde{K}\right\vert _{\omega }-\omega \left(
\left\{ x\right\} \right) \delta _{x}\leq \left\vert Q\right\vert _{\omega
}-\omega \left( \left\{ y\right\} \right) \delta _{y}=\omega \left( Q,%
\mathfrak{P}_{\left( \sigma ,\omega \right) }\right) ,
\end{equation*}%
since if $x=y$ we use$\left\vert \widetilde{K}\right\vert _{\omega }\leq
\left\vert Q\right\vert _{\omega }$, while if $x\neq y$, then $y\notin 
\widetilde{K}$ and we use $\left\vert Q\right\vert _{\omega }-\omega \left(
\left\{ y\right\} \right) \delta _{y}\geq \left\vert \widetilde{K}%
\right\vert _{\omega }$. Thus%
\begin{eqnarray*}
\frac{\widetilde{\omega }\left( K,\mathfrak{P}_{\left( \widetilde{\sigma },%
\widetilde{\omega }\right) }\right) }{\left\vert K\right\vert ^{1-\frac{%
\alpha }{n}}}\frac{\left\vert K\right\vert _{\widetilde{\sigma }}}{%
\left\vert K\right\vert ^{1-\frac{\alpha }{n}}} &\leq &\frac{\omega \left( Q,%
\mathfrak{P}_{\left( \sigma ,\omega \right) }\right) }{\left\vert
K\right\vert ^{1-\frac{\alpha }{n}}}\frac{\left\vert \widetilde{K}%
\right\vert _{\sigma }}{\left\vert K\right\vert ^{1-\frac{\alpha }{n}}} \\
&\lesssim &\frac{\omega \left( Q,\mathfrak{P}_{\left( \sigma ,\omega \right)
}\right) }{\left\vert Q\right\vert ^{1-\frac{\alpha }{n}}}\frac{\left\vert
Q\right\vert _{\sigma }}{\left\vert Q\right\vert ^{1-\frac{\alpha }{n}}}\leq
A_{2}^{\alpha ,\limfunc{punct}}\left( \sigma ,\omega \right) ,
\end{eqnarray*}%
which completes the proof of the first assertion in Proposition \ref{Muck
equiv}. The second assertion is proved in similar fashion.

Now we turn to the third assertion in Proposition \ref{Muck equiv}, where in
view of what we have just shown, it suffices to show that%
\begin{equation*}
\mathcal{A}_{2}^{\alpha }\left( \widetilde{\sigma },\widetilde{\omega }%
\right) +\mathcal{A}_{2}^{\alpha ,\func{dual}}\left( \widetilde{\sigma },%
\widetilde{\omega }\right) \lesssim \mathfrak{A}_{2}^{\alpha }\left( \sigma
,\omega \right) .
\end{equation*}%
By symmetry it is then enough to show%
\begin{equation*}
\mathcal{P}\left( K,\mathbf{1}_{K^{c}}\widetilde{\sigma }\right) \frac{%
\left\vert K\right\vert _{\widetilde{\omega }}}{\left\vert K\right\vert ^{1-%
\frac{\alpha }{n}}}\lesssim \mathfrak{A}_{2}^{\alpha }\left( \sigma ,\omega
\right) ,
\end{equation*}%
for all $\Omega $-quasicubes $K\in \Omega \mathcal{Q}^{n}$. Now a change of
variable shows that%
\begin{eqnarray*}
\frac{\left\vert K\right\vert _{\widetilde{\omega }}}{\left\vert
K\right\vert ^{1-\frac{\alpha }{n}}}\mathcal{P}\left( K,\mathbf{1}_{K^{c}}%
\widetilde{\sigma }\right) &=&\frac{\left\vert K\right\vert _{\widetilde{%
\omega }}}{\left\vert K\right\vert ^{1-\frac{\alpha }{n}}}\int_{\mathbb{R}%
^{n}\setminus K}\left( \frac{\left\vert K\right\vert ^{\frac{1}{n}}}{\left(
\left\vert K\right\vert ^{\frac{1}{n}}+\left\vert x-c_{K}\right\vert \right)
^{2}}\right) ^{n-\alpha }d\widetilde{\sigma }\left( x\right) \\
&\approx &\frac{\left\vert \widetilde{K}\right\vert _{\omega }}{\left\vert
K\right\vert ^{1-\frac{\alpha }{n}}}\int_{\mathbb{R}^{n}\setminus \widetilde{%
K}}\left( \frac{\left\vert K\right\vert ^{\frac{1}{n}}}{\left( \left\vert
K\right\vert ^{\frac{1}{n}}+\left\vert x^{\prime }-c_{\widetilde{K}%
}\right\vert \right) ^{2}}\right) ^{n-\alpha }d\sigma \left( x^{\prime
}\right) \\
&\approx &\frac{\left\vert \widetilde{K}\right\vert _{\omega }}{\left\vert
K\right\vert ^{1-\frac{\alpha }{n}}}\mathcal{P}\left( \widetilde{K},\mathbf{1%
}_{\widetilde{K}^{c}}\sigma \right) \approx \frac{\left\vert \widetilde{K}%
\right\vert _{\omega }}{\left\vert K\right\vert ^{1-\frac{\alpha }{n}}}%
\mathcal{P}\left( K,\mathbf{1}_{\widetilde{K}^{c}}\sigma \right) ,
\end{eqnarray*}%
where of course $\left\vert K\right\vert \approx \left\vert \widetilde{K}%
\right\vert $ and $\mathcal{P}\left( K,\mu \right) \approx \mathcal{P}\left( 
\widetilde{K},\mu \right) $. We have written $\widetilde{K}$ in place of $K$
in the final equivalence only when it matters. Now choose quasicubes $Q,P\in
\Omega \mathcal{Q}^{n}$ such that%
\begin{equation*}
Q\subset \widetilde{K}\subset P\text{ and }\ell \left( P\right) \leq
C_{\Omega }\ell \left( Q\right) ,
\end{equation*}%
so that $\left\vert Q\right\vert \approx \left\vert \widetilde{K}\right\vert
\approx \left\vert P\right\vert $, and $\mathcal{P}\left( Q,\mu \right)
\approx \mathcal{P}\left( \widetilde{K},\mu \right) \approx \mathcal{P}%
\left( P,\mu \right) $ for any postive measure $\mu $. Let $y\in P$ be a
point where the largest common point mass of $\sigma $ occurs, and let $z\in
P$ be a point where the largest common point mass of $\omega $ occurs. Define%
\begin{equation*}
\dot{\sigma}=\sigma -\sigma \left( \left\{ y\right\} \right) \text{ and }%
\dot{\omega}=\omega -\omega \left( \left\{ z\right\} \right) .
\end{equation*}

Now we have the two `punctured' inequalities,%
\begin{eqnarray}
\frac{\left\vert \widetilde{K}\right\vert _{\omega }}{\left\vert
K\right\vert ^{1-\frac{\alpha }{n}}}\mathcal{P}\left( K,\mathbf{1}_{%
\widetilde{K}^{c}}\dot{\sigma}\right) &\leq &\frac{\left\vert P\right\vert
_{\omega }}{\left\vert K\right\vert ^{1-\frac{\alpha }{n}}}\mathcal{P}\left(
K,\mathbf{1}_{P}\dot{\sigma}\right) +\frac{\left\vert P\right\vert _{\omega }%
}{\left\vert K\right\vert ^{1-\frac{\alpha }{n}}}\mathcal{P}\left( K,\mathbf{%
1}_{P^{c}}\sigma \right)  \label{ineq 1} \\
&\lesssim &A_{2}^{\alpha ,\limfunc{punct}}\left( \sigma ,\omega \right) +%
\mathcal{A}_{2}^{\alpha }\left( \sigma ,\omega \right) \leq \mathfrak{A}%
_{2}^{\alpha }\left( \sigma ,\omega \right) ,  \notag
\end{eqnarray}%
and%
\begin{eqnarray}
\frac{\left\vert \widetilde{K}\right\vert _{\dot{\omega}}}{\left\vert
K\right\vert ^{1-\frac{\alpha }{n}}}\mathcal{P}\left( K,\mathbf{1}_{%
\widetilde{K}^{c}}\sigma \right) &\leq &\frac{\left\vert P\right\vert _{\dot{%
\omega}}}{\left\vert K\right\vert ^{1-\frac{\alpha }{n}}}\mathcal{P}\left( K,%
\mathbf{1}_{P}\sigma \right) +\frac{\left\vert P\right\vert _{\omega }}{%
\left\vert K\right\vert ^{1-\frac{\alpha }{n}}}\mathcal{P}\left( K,\mathbf{1}%
_{P^{c}}\sigma \right)  \label{ineq 2} \\
&\lesssim &A_{2}^{\alpha ,\limfunc{punct},\func{dual}}\left( \sigma ,\omega
\right) +\mathcal{A}_{2}^{\alpha }\left( \sigma ,\omega \right) \leq 
\mathfrak{A}_{2}^{\alpha }\left( \sigma ,\omega \right) .  \notag
\end{eqnarray}

Next, we claim that if $y\neq z$ , then%
\begin{equation}
\frac{\sigma \left( \left\{ y\right\} \right) }{\left\vert P\right\vert ^{1-%
\frac{\alpha }{n}}}\frac{\omega \left( \left\{ z\right\} \right) }{%
\left\vert P\right\vert ^{1-\frac{\alpha }{n}}}\lesssim \mathcal{A}%
_{2}^{\alpha }\left( \sigma ,\omega \right) +\mathcal{A}_{2}^{\alpha ,\func{%
dual}}\left( \sigma ,\omega \right) \leq \mathfrak{A}_{2}^{\alpha }\left(
\sigma ,\omega \right) .  \label{ineq 3}
\end{equation}%
Indeed, it is easy to find a quasicube $R\subset P$ with half the side
length of $P$ such that exactly one of $y$ and $z$ lies in $R$. For the
purpose of clarifying the remainder of this argument, we assume these
quasicubes are all ordinary cubes, and the reader can then easily modify the
argument to quasicubes. If $y$ and $z$ lie on opposite sides of a horizontal
or vertical line $L$, then $P\setminus L$ consists of two disjoint
rectangles, each containing one of the points $y$ and $z$. Clearly, the
larger of the two rectangles contains a cube $R$ with side length at least $%
\frac{1}{2}\ell \left( P\right) $ and containing one of $y$ and $z$.

With such a quasicube $R$ in hand, say with $y\in R$ and $z\in R^{c}$, then%
\begin{equation*}
\frac{\sigma \left( \left\{ y\right\} \right) }{\left\vert P\right\vert ^{1-%
\frac{\alpha }{n}}}\frac{\omega \left( \left\{ z\right\} \right) }{%
\left\vert P\right\vert ^{1-\frac{\alpha }{n}}}\lesssim \frac{\left\vert
R\right\vert _{\sigma }}{\left\vert R\right\vert ^{1-\frac{\alpha }{n}}}%
\mathcal{P}\left( R,\mathbf{1}_{R^{c}}\omega \right) \leq \mathcal{A}%
_{2}^{\alpha ,\func{dual}}\left( \sigma ,\omega \right) \leq \mathfrak{A}%
_{2}^{\alpha }\left( \sigma ,\omega \right) .
\end{equation*}

Now we write%
\begin{eqnarray*}
\frac{\left\vert \widetilde{K}\right\vert _{\omega }}{\left\vert
K\right\vert ^{1-\frac{\alpha }{n}}}\mathcal{P}\left( K,\mathbf{1}_{%
\widetilde{K}^{c}}\sigma \right) &=&\frac{\left\vert \widetilde{K}%
\right\vert _{\dot{\omega}+\omega \left( \left\{ z\right\} \right) }}{%
\left\vert K\right\vert ^{1-\frac{\alpha }{n}}}\mathcal{P}\left( K,\mathbf{1}%
_{\widetilde{K}^{c}}\left[ \dot{\sigma}+\sigma \left( \left\{ y\right\}
\right) \right] \right) \\
&\leq &\frac{\left\vert \widetilde{K}\right\vert _{\dot{\omega}}}{\left\vert
K\right\vert ^{1-\frac{\alpha }{n}}}\mathcal{P}\left( K,\mathbf{1}_{%
\widetilde{K}^{c}}\sigma \right) +\frac{\left\vert \widetilde{K}\right\vert
_{\omega }}{\left\vert K\right\vert ^{1-\frac{\alpha }{n}}}\mathcal{P}\left(
K,\mathbf{1}_{\widetilde{K}^{c}}\dot{\sigma}\right) +\frac{\sigma \left(
\left\{ y\right\} \right) }{\left\vert P\right\vert ^{1-\frac{\alpha }{n}}}%
\frac{\omega \left( \left\{ z\right\} \right) }{\left\vert P\right\vert ^{1-%
\frac{\alpha }{n}}}.
\end{eqnarray*}%
If $y\neq z$ then we have 
\begin{equation*}
\frac{\left\vert \widetilde{K}\right\vert _{\omega }}{\left\vert
K\right\vert ^{1-\frac{\alpha }{n}}}\mathcal{P}\left( K,\mathbf{1}_{%
\widetilde{K}^{c}}\sigma \right) \lesssim \mathfrak{A}_{2}^{\alpha }\left(
\sigma ,\omega \right)
\end{equation*}%
by the three inequalities (\ref{ineq 1}), (\ref{ineq 2}) and (\ref{ineq 3})
proved above. On the other hand, if $y=z$, then either $y\in \widetilde{K}$
or $z\in P\setminus \widetilde{K}$. If $y\in \widetilde{K}$ we have%
\begin{equation*}
\frac{\left\vert \widetilde{K}\right\vert _{\omega }}{\left\vert
K\right\vert ^{1-\frac{\alpha }{n}}}\mathcal{P}\left( K,\mathbf{1}_{%
\widetilde{K}^{c}}\sigma \right) \leq \frac{\left\vert \widetilde{K}%
\right\vert _{\omega }}{\left\vert K\right\vert ^{1-\frac{\alpha }{n}}}%
\mathcal{P}\left( K,\mathbf{1}_{\widetilde{K}^{c}}\dot{\sigma}\right)
\lesssim \mathfrak{A}_{2}^{\alpha }\left( \sigma ,\omega \right)
\end{equation*}%
by (\ref{ineq 1}), and if $z\in P\setminus \widetilde{K}$ we have%
\begin{equation*}
\frac{\left\vert \widetilde{K}\right\vert _{\omega }}{\left\vert
K\right\vert ^{1-\frac{\alpha }{n}}}\mathcal{P}\left( K,\mathbf{1}_{%
\widetilde{K}^{c}}\sigma \right) \leq \frac{\left\vert \widetilde{K}%
\right\vert _{\dot{\omega}}}{\left\vert K\right\vert ^{1-\frac{\alpha }{n}}}%
\mathcal{P}\left( K,\mathbf{1}_{\widetilde{K}^{c}}\sigma \right) \lesssim 
\mathfrak{A}_{2}^{\alpha }\left( \sigma ,\omega \right)
\end{equation*}%
by (\ref{ineq 2}). This completes the proof of Proposition \ref{Muck equiv}.
\end{proof}

\begin{proposition}
\label{change of variable}Suppose $\Psi :\mathbb{R}^{n}\rightarrow \mathbb{R}%
^{n}$ is a $C^{1,\delta }$ diffeomorphism, i.e. both $\Psi $ and its inverse 
$\Psi ^{-1}$ are globally $C^{1,\delta }$ maps, let $\sigma $ and $\omega $
be positive Borel measures (possibly having common point masses) with one of
the measures supported in a \emph{compact} subset $K$ of $\mathbb{R}^{n}$,
and let $\mathcal{F}$ be a collection of bounded Borel sets. Then with the
fractional Riesz transform $\mathbf{R}^{\alpha ,n}$ and the conformal
fractional Riesz transform $\mathbf{R}_{\Psi }^{\alpha ,n}$ as above, we
have the following three equivalences:%
\begin{eqnarray*}
&&\mathbf{1.}\ \mathfrak{N}_{\mathbf{R}^{\alpha ,n}}\left( \sigma ,\omega
\right) +\sqrt{\mathfrak{A}_{2}^{\alpha }\left( \sigma ,\omega \right) }%
\approx \mathfrak{N}_{\mathbf{R}_{\Psi }^{\alpha ,n}}\left( \Psi ^{\ast
}\sigma ,\Psi ^{\ast }\omega \right) +\sqrt{\mathfrak{A}_{2}^{\alpha }\left(
\sigma ,\omega \right) }, \\
&&\mathbf{2.}\ \mathfrak{T}_{\mathbf{R}^{\alpha ,n}}^{\mathcal{F}}\left(
\sigma ,\omega \right) +\sqrt{\mathfrak{A}_{2}^{\alpha }\left( \sigma
,\omega \right) }\approx \mathfrak{T}_{\mathbf{R}_{\Psi }^{\alpha ,n}}^{\Psi
^{\ast }\mathcal{F}}\left( \Psi ^{\ast }\sigma ,\Psi ^{\ast }\omega \right) +%
\sqrt{\mathfrak{A}_{2}^{\alpha }\left( \sigma ,\omega \right) }, \\
&&\mathbf{3.}\ \mathfrak{T}_{\mathbf{R}^{\alpha ,n}}^{\mathcal{F},\func{dual}%
}\left( \sigma ,\omega \right) +\sqrt{\mathfrak{A}_{2}^{\alpha }\left(
\sigma ,\omega \right) }\approx \mathfrak{T}_{\mathbf{R}_{\Psi }^{\alpha
,n}}^{\Psi ^{\ast }\mathcal{F},\func{dual}}\left( \Psi ^{\ast }\sigma ,\Psi
^{\ast }\omega \right) +\sqrt{\mathfrak{A}_{2}^{\alpha }\left( \sigma
,\omega \right) },
\end{eqnarray*}%
where the implied constants depend only $n,\alpha ,\limfunc{diam}\left(
K\right) ,\left\Vert \Gamma \right\Vert _{1,\delta }$ and $\left\Vert \Psi
\right\Vert _{C^{1,\delta }}+\left\Vert \Psi ^{-1}\right\Vert _{C^{1,\delta
}}$.
\end{proposition}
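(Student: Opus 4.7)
The plan is to reduce all three equivalences to a single kernel comparison. The change-of-variable computation already performed for the operator norm in (\ref{norm equality}) applies equally to testing integrals against characteristic functions of $F\in\mathcal{F}$ and their pullbacks $\Psi^{-1}(F)\in\Psi^{\ast}\mathcal{F}$, giving
\begin{equation*}
\mathfrak{T}_{\mathbf{R}^{\alpha,n}}^{\mathcal{F}}(\sigma,\omega)\;=\;\mathfrak{T}_{\Psi^{\ast}\mathbf{R}^{\alpha,n}}^{\Psi^{\ast}\mathcal{F}}(\Psi^{\ast}\sigma,\Psi^{\ast}\omega),
\end{equation*}
and likewise for dual testing. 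So it suffices to compare, on the fixed pair $(\Psi^{\ast}\sigma,\Psi^{\ast}\omega)$, the pullback operator $\Psi^{\ast}\mathbf{R}^{\alpha,n}$ with the conformal Riesz transform $\mathbf{R}_{\Psi}^{\alpha,n}$, both as an $L^{2}$-operator and at the level of testing against $\Psi^{\ast}\mathcal{F}$.

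The two kernels differ only through the $C^{1,\delta}$ Taylor expansion of $\Psi$ at $x$:
\begin{equation*}
\Psi(y)-\Psi(x)\;=\;D\Psi(x)(y-x)+r(x,y),\qquad |r(x,y)|\lesssim\|\Psi\|_{C^{1,\delta}}|y-x|^{1+\delta},
\end{equation*}
which gives the decomposition $\Psi^{\ast}\mathbf{K}^{\alpha,n}(x,y)=D\Psi(x)\,\mathbf{K}_{\Psi}^{\alpha,n}(x,y)+\mathbf{E}^{\alpha,n}(x,y)$ with error bound $|\mathbf{E}^{\alpha,n}(x,y)|\lesssim|y-x|^{\alpha-n+\delta}$, and (by differentiating the expansion) one extra degree of H\"older smoothness. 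The principal term is pointwise left-multiplication of $\mathbf{R}_{\Psi}^{\alpha,n}$ by the matrix $D\Psi$, which is uniformly bounded with bounded inverse $D\Psi^{-1}\!\circ\!\Psi$; consequently it is a bi-Lipschitz isomorphism on vector-valued $L^{2}(\Psi^{\ast}\omega)$, so its operator norm and its testing constants against any family are comparable to those of $\mathbf{R}_{\Psi}^{\alpha,n}$ itself. The proposition therefore reduces to the single bound
\begin{equation*}
\mathfrak{N}_{\mathbf{E}^{\alpha,n}}(\Psi^{\ast}\sigma,\Psi^{\ast}\omega)\;+\;\mathfrak{T}_{\mathbf{E}^{\alpha,n}}^{\Psi^{\ast}\mathcal{F}}(\Psi^{\ast}\sigma,\Psi^{\ast}\omega)\;+\;\mathfrak{T}_{\mathbf{E}^{\alpha,n}}^{\Psi^{\ast}\mathcal{F},\func{dual}}(\Psi^{\ast}\sigma,\Psi^{\ast}\omega)\;\lesssim\;\sqrt{\mathfrak{A}_{2}^{\alpha}(\sigma,\omega)},
\end{equation*}
where Proposition \ref{Muck equiv} has been used to identify $\mathfrak{A}_{2}^{\alpha}(\Psi^{\ast}\sigma,\Psi^{\ast}\omega)$ with $\mathfrak{A}_{2}^{\alpha}(\sigma,\omega)$.

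The main obstacle is this last error bound. Naively the error kernel defines a positive fractional integral of order $\alpha+\delta$, whose two-weight $L^{2}$ boundedness is characterized by Sawyer's testing condition rather than by $\mathfrak{A}_{2}^{\alpha}$, and the crude domination $|y-x|^{\delta}\leq D^{\delta}$ (where $D=\limfunc{diam}K$) would only reduce matters to boundedness of the positive Riesz potential $I^{\alpha}$ — not controlled by Muckenhoupt alone. The rescue must come from the compact support of $\omega$ combined with the extra $\delta$-decay scale by scale. The plan is to truncate at scale $D$ and split dyadically: on the annulus $|y-x|\sim2^{-k}$, the error has amplitude at most $2^{-k\delta}|y-x|^{\alpha-n}$, and a Schur-type pairing against $\mathfrak{A}_{2}^{\alpha}$-averages on dyadic cubes of side $2^{-k}$ telescopes via the geometric sum $\sum_{k}2^{-k\delta}<\infty$ into a uniform $\sqrt{\mathfrak{A}_{2}^{\alpha}}$ bound. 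The same argument, applied to $\mathbf{1}_{\Psi^{-1}(F)}$, handles the testing constants. It is precisely this telescoping that makes the H\"older exponent $\delta>0$ indispensable, and it is the only step in the proof that requires genuine work beyond the algebraic bookkeeping of the change of variable.
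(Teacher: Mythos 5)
Your proposal follows essentially the same route as the paper's proof: the change-of-variable identity $\mathfrak{N}_{\mathbf{R}^{\alpha ,n}}\left( \sigma ,\omega \right) =\mathfrak{N}_{\Psi ^{\ast }\mathbf{R}^{\alpha ,n}}\left( \Psi ^{\ast }\sigma ,\Psi ^{\ast }\omega \right) $ and its testing analogues, the tangent-line decomposition $\Psi ^{\ast }\mathbf{K}^{\alpha ,n}\left( x,y\right) =\Psi ^{\prime }\left( x\right) \mathbf{K}_{\Psi }^{\alpha ,n}\left( x,y\right) +\mathbf{E}_{1}^{\alpha }\left( x,y\right) $ with $\left\vert \mathbf{E}_{1}^{\alpha }\left( x,y\right) \right\vert \lesssim \left\vert y-x\right\vert ^{\alpha -n+\delta }$, the uniform invertibility of $D\Psi $ to dispose of the principal term, and the dyadic annular decomposition of the error operator with the geometric gain $\sum_{k}2^{-k\delta }$ paired against offset $A_{2}^{\alpha }$ averages are all exactly the steps the paper takes. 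Two details that your outline glosses over but that need saying: (i) for the \emph{dual} testing constant the matrix factor $\Psi ^{\prime }\left( x\right) $ from your single expansion sits at the integration variable and cannot be pulled out of $\int \Psi ^{\ast }\mathbf{K}^{\alpha ,n}\left( x,y\right) d\widetilde{\omega }\left( x\right) $, so one must instead use the tangent approximation at $y$ (the paper's second auxiliary kernel $\Psi ^{\ast ,\tan ,2}\mathbf{K}^{\alpha ,n}\left( x,y\right) =\Psi ^{\prime }\left( y\right) \mathbf{K}_{\Psi }^{\alpha ,n}\left( x,y\right) $), exploiting antisymmetry of the kernels — the discrepancy between the two expansions is again an error of the same H\"{o}lder-improved type and is absorbed into the same annular estimate; (ii) your annular sum only covers $\left\vert y-x\right\vert \lesssim \limfunc{diam}\left( K\right) $, and the far-field contribution, where the error bound $\left\vert y-x\right\vert ^{\alpha -n+\delta }$ is actually worse than $\left\vert y-x\right\vert ^{\alpha -n}$, must first be removed by reducing to both measures supported in a common compact set via the separated-supports estimate (\ref{separated}), which uses the one-tailed $\mathcal{A}_{2}^{\alpha }$ condition. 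Both fixes are routine and consistent with your framework.
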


In particular, we see that in the presence of the Muckenhoupt conditions $%
\mathfrak{A}_{2}^{\alpha }$, and when one of the measures is supported in a
compact set, the testing conditions for $\mathbf{T}_{\Gamma }^{\alpha }$ on
indicators of quasicubes $Q$ are equivalent to the testing conditions for $%
\mathbf{T}_{\Gamma _{\Psi }}^{\alpha ,n}$ on indicators of the new
quasicubes $\Psi ^{-1}\left( Q\right) $. Thus in order to deform the sets
over which we test from $Q$ to $\Psi ^{-1}\left( Q\right) $, we need only
push the measures forward and alter the conformal factor $\Gamma $ to the
associated conformal factor $\Gamma _{\Psi }$ in the operator, keeping the
critical phase $y-x$ in the numerator of the kernel unchanged. The same
results hold for the inverse $C^{1,\delta }$ diffeomorphism $\Psi ^{-1}$ in
place of $\Psi $.

Before beginning the proof it is convenient to introduce two auxilliary
operators $\Psi ^{\ast ,\tan ,1}\mathbf{R}^{\alpha ,n}$ and $\Psi ^{\ast
,\tan ,2}\mathbf{R}^{\alpha ,n}$ with kernels related to the pullback kernel 
$\Psi ^{\ast }\mathbf{K}^{\alpha ,n}$ defined above by%
\begin{equation*}
\Psi ^{\ast }\mathbf{K}^{\alpha ,n}\left( x,y\right) =\frac{\Psi \left(
y\right) -\Psi \left( x\right) }{\left\vert \Psi \left( y\right) -\Psi
\left( x\right) \right\vert ^{n+1-\alpha }}.
\end{equation*}%
We define the kernels of $\Psi ^{\ast ,\tan ,1}\mathbf{R}^{\alpha ,n}$ and $%
\Psi ^{\ast ,\tan ,2}\mathbf{R}^{\alpha ,n}$ by%
\begin{eqnarray*}
\Psi ^{\ast ,\tan ,1}\mathbf{K}^{\alpha ,n}\left( x,y\right) &\equiv &\Psi
^{\prime }\left( x\right) \ \frac{y-x}{\left\vert \Psi \left( y\right) -\Psi
\left( x\right) \right\vert ^{n+1-\alpha }}, \\
\Psi ^{\ast ,\tan ,2}\mathbf{K}^{\alpha ,n}\left( x,y\right) &\equiv &\Psi
^{\prime }\left( y\right) \ \frac{y-x}{\left\vert \Psi \left( y\right) -\Psi
\left( x\right) \right\vert ^{n+1-\alpha }}.
\end{eqnarray*}%
The superscript $\tan ,1$ (respectively $\tan ,2$) indicates that we are
replacing the phase function $\Psi \left( y\right) -\Psi \left( x\right) $
with its tangent line approximation at $x$ (respectively $y$). Now we prove
Proposition \ref{change of variable}.

\begin{proof}
We begin with the first statement, where we may assume that $\omega $ is
supported in a compact ball $B$. Moreover, we may assume the cubes below are
usual cubes since we invoke no testing or energy in the proof of this second
statement. Then we may also assume that $\sigma $ is supported in the double 
$2B$. Indeed, if $\omega $ is supported in a ball $B$ and $\sigma $ is
supported outside the double $2B$ of the ball $B$, then the associated norm
inequality for a fractional singular integral operator $\mathbf{T}^{\alpha
,n}$ is easily seen to be controlled solely in terms of the Muckenhoupt
constant $\mathcal{A}_{2}^{\alpha }$ with holes:%
\begin{eqnarray}
&&  \label{separated} \\
\int_{\mathbb{R}^{n}\setminus 2B}\left\vert \mathbf{T}^{\alpha ,n}\mathbf{1}%
_{B}g\omega \right\vert ^{2}d\sigma &\lesssim &\int_{\mathbb{R}^{n}\setminus
2B}\left\vert \int_{B}\left\vert x-y\right\vert ^{\alpha -n}g\left( x\right)
d\omega \left( x\right) \right\vert ^{2}d\sigma \left( y\right)  \notag \\
&\lesssim &\left( \int_{B}\left\vert g\right\vert ^{2}d\omega \right) \int_{%
\mathbb{R}^{n}\setminus 2B}\left( \int_{B}\left\vert x-y\right\vert
^{2\alpha -2n}d\omega \left( x\right) \right) d\sigma \left( y\right)  \notag
\\
&\lesssim &\left\Vert g\right\Vert _{L^{2}\left( \omega \right) }^{2}\frac{%
\left\vert B\right\vert _{\omega }}{\left\vert B\right\vert ^{1-\frac{\alpha 
}{n}}}\mathcal{P}^{\alpha }\left( B,\sigma \right) \lesssim \mathcal{A}%
_{2}^{\alpha }\left\Vert g\right\Vert _{L^{2}\left( \omega \right) }^{2}\ , 
\notag
\end{eqnarray}%
where we have used that $\left\vert x-y\right\vert ^{2\alpha -2n}\approx
\left\vert c_{B}-y\right\vert ^{2\alpha -2n}$ for $y\in \mathbb{R}%
^{n}\setminus 2B$ and $x\in B$.

We write the pullback $\Psi ^{\ast }\mathbf{K}^{\alpha ,n}\left( x,y\right) $
of the vector kernel $\mathbf{K}^{\alpha ,n}\left( x,y\right) =\frac{y-x}{%
\left\vert y-x\right\vert ^{n+1-\alpha }}$, given in formula (\ref{def
kernel pullback}), in the form%
\begin{eqnarray}
\Psi ^{\ast }\mathbf{K}^{\alpha ,n}\left( x,y\right) &=&\frac{\Psi \left(
y\right) -\Psi \left( x\right) }{\left\vert \Psi \left( y\right) -\Psi
\left( x\right) \right\vert ^{n+1-\alpha }}  \label{decomp} \\
&\equiv &\frac{\Psi ^{\prime }\left( x\right) \left( y-x\right) }{\left\vert
\Psi \left( y\right) -\Psi \left( x\right) \right\vert ^{n+1-\alpha }}+%
\mathbf{E}_{1}^{\alpha }\left( x,y\right)  \notag \\
&=&\Psi ^{\ast ,\tan ,1}\mathbf{K}^{\alpha ,n}\left( x,y\right) +\mathbf{E}%
_{1}^{\alpha }\left( x,y\right) ,  \notag
\end{eqnarray}%
where we write $\Psi ^{\prime }$ for the derivative $D\Psi $, and where $%
\Psi ^{\ast ,\tan ,1}\mathbf{K}^{\alpha ,n}$ is the first of our auxilliary
kernels defined above. We claim that the error kernel $\mathbf{E}%
_{1}^{\alpha }\left( x,y\right) $ satisfies the improved local estimate%
\begin{equation}
\mathbf{E}_{1}^{\alpha }\left( x,y\right) =O\left( \left\vert y-x\right\vert
^{\alpha -n+\delta };M\right)  \label{improved est}
\end{equation}%
for some constant $M$ depending on $\left\Vert \Gamma \right\Vert _{1,\delta
}$, $\left\Vert \Psi \right\Vert _{C^{1,\delta }}$ and $\left\Vert \left(
\Psi ^{\prime }\right) ^{-1}\right\Vert _{\infty }$. Indeed, we have%
\begin{eqnarray*}
&&\frac{\Psi \left( y\right) -\Psi \left( x\right) }{\left\vert \Psi \left(
y\right) -\Psi \left( x\right) \right\vert ^{n+1-\alpha }}-\frac{\Psi
^{\prime }\left( x\right) \left( y-x\right) }{\left\vert \Psi \left(
y\right) -\Psi \left( x\right) \right\vert ^{n+1-\alpha }} \\
&=&\frac{1}{\left\vert \Psi \left( y\right) -\Psi \left( x\right)
\right\vert ^{n+1-\alpha }}\left\{ \Psi \left( y\right) -\Psi \left(
x\right) -\Psi ^{\prime }\left( x\right) \left( y-x\right) \right\} .
\end{eqnarray*}%
Now we use the estimate%
\begin{equation}
\left\vert \Psi \left( y\right) -\Psi \left( x\right) -\Psi ^{\prime }\left(
x\right) \left( y-x\right) \right\vert \leq \left\Vert \Psi \right\Vert
_{C^{1+\delta }}\left\vert y-x\right\vert ^{1+\delta }=O\left( \left\vert
y-x\right\vert ^{1+\delta };\left\Vert \Psi \right\Vert _{C^{1+\delta
}}\right) ,  \label{first}
\end{equation}%
together with the bound $\left\vert \Gamma \left( \Psi \left( x\right) ,\Psi
\left( y\right) \right) \right\vert \leq \left\Vert \Gamma \right\Vert
_{\infty }$, to obtain (\ref{improved est}):%
\begin{equation*}
\left\vert \mathbf{E}_{1}^{\alpha }\left( x,y\right) \right\vert \lesssim 
\frac{1}{\left\vert y-x\right\vert ^{n+1-\alpha }}\left\vert y-x\right\vert
^{1+\delta }=\left\vert y-x\right\vert ^{\alpha -n+\delta },
\end{equation*}%
for $\left\vert y-x\right\vert \leq C$.

Recall that both $\widetilde{\sigma }$ and $\widetilde{\omega }$ are
supported in a fixed compact set $K$. Let $C_{2B}$ be a sufficiently large
constant exceeding the diameter of $2B$. Now we bound the operator norm of
the error term. For this we write $\widetilde{\sigma }=\Psi ^{\ast }\sigma $
and $\widetilde{\omega }=\Psi ^{\ast }\omega $ for convenience, and then
observe that the norm of the error operator%
\begin{equation*}
\mathcal{E}_{1}^{\alpha }f\widetilde{\sigma }\left( x\right) \equiv \int 
\mathbf{E}_{1}^{\alpha }\left( x,y\right) f\left( y\right) d\widetilde{%
\sigma }\left( y\right)
\end{equation*}%
as a map from $L^{2}\left( \widetilde{\sigma }\right) $ to $L^{2}\left( 
\widetilde{\omega }\right) $ is controlled by the offset $A_{2}^{\alpha }$
constant. Indeed, from the definition of $\mathbf{E}_{1}^{\alpha }\left(
x,y\right) $ in (\ref{decomp}), we see that the kernel $\mathbf{E}%
_{1}^{\alpha }$ vanishes on the diagonal, and so for $a\sim \log _{2}\frac{1%
}{C_{K}}$,%
\begin{eqnarray*}
\left\vert \mathcal{E}_{1}^{\alpha }f\widetilde{\sigma }\left( x\right)
\right\vert &\leq &\sum_{k=a}^{\infty }\int_{B\left( x,2^{-k}\right)
\setminus B\left( x,2^{-k-1}\right) }M\left\vert y-x\right\vert ^{\alpha
+\delta -n}\left\vert f\left( y\right) \right\vert d\widetilde{\sigma }%
\left( y\right) \\
&\lesssim &\sum_{k=a}^{\infty }2^{-k\delta }\left\vert B\left(
x,2^{-k}\right) \right\vert ^{\frac{\alpha }{n}-1}\int_{B\left(
x,2^{-k}\right) \setminus B\left( x,2^{-k-1}\right) }\left\vert f\left(
y\right) \right\vert d\widetilde{\sigma }\left( y\right) \\
&\lesssim &\sum_{k=a}^{\infty }2^{-k\delta }\mathcal{A}_{\alpha }^{k}\left( f%
\widetilde{\sigma }\right) \left( x\right) ,
\end{eqnarray*}%
where $\mathcal{A}_{\alpha ,C_{K}}^{k}$ is the \emph{annular }$\alpha $%
-averaging operator given by 
\begin{equation*}
\mathcal{A}_{\alpha }^{k}\left( f\widetilde{\sigma }\right) \left( x\right)
\equiv \left\vert B\left( x,2^{-k}\right) \right\vert ^{\frac{\alpha }{n}%
-1}\int_{B\left( x,2^{-k}\right) \setminus B\left( x,2^{-k-1}\right)
}\left\vert f\right\vert d\widetilde{\sigma }.
\end{equation*}

We now claim that the boundedness of $\mathcal{A}_{\alpha }^{k}$, and hence
also that of $\mathcal{E}_{1}^{\alpha }$, is controlled by the offset $%
A_{2}^{\alpha }$ constant. Indeed, for a sufficiently small positive
constant $c$, we have%
\begin{eqnarray*}
\left\Vert \mathcal{A}_{\alpha }^{k}\left( f\widetilde{\sigma }\right)
\right\Vert _{L^{2}\left( \widetilde{\omega }\right) }^{2} &\leq &\int_{%
\mathbb{R}^{n}}\left( 2^{-k\left( \alpha -n\right) }\int_{B\left(
x,2^{-k}\right) \setminus B\left( x,2^{-k-1}\right) }\left\vert f\right\vert
d\widetilde{\sigma }\right) ^{2}d\widetilde{\omega }\left( x\right) \\
&\leq &2^{-2k\left( \alpha -n\right) }\int_{\mathbb{R}^{n}}\left(
\int_{B\left( x,2^{-k}\right) }\left\vert f\right\vert ^{2}d\widetilde{%
\sigma }\right) \left\vert B\left( x,2^{-k}\right) \setminus B\left(
x,2^{-k-1}\right) \right\vert _{\widetilde{\sigma }}d\widetilde{\omega }%
\left( x\right) \\
&=&2^{-2k\left( \alpha -n\right) }\sum_{z\in \mathbb{Z}^{n}}\int_{B\left(
c2^{-k}z,\sqrt{n}c2^{-k}\right) }\left( \int_{B\left( x,2^{-k}\right)
}\left\vert f\right\vert ^{2}d\widetilde{\sigma }\right) \left\vert B\left(
x,2^{-k}\right) \setminus B\left( x,2^{-k-1}\right) \right\vert _{\widetilde{%
\sigma }}d\widetilde{\omega }\left( x\right) \\
&\leq &2^{-2k\left( \alpha -n\right) }\sum_{z\in \mathbb{Z}%
^{n}}\int_{B\left( c2^{-k}z,\sqrt{n}c2^{-k}\right) }\left( \int_{B\left(
c2^{-k}z,\left( \sqrt{n}c+1\right) 2^{-k}\right) }\left\vert f\right\vert
^{2}d\widetilde{\sigma }\right) \\
&&\ \ \ \ \ \ \ \ \ \ \ \ \ \ \ \ \ \ \ \ \ \ \ \ \ \ \ \ \ \ \times
\left\vert B\left( c2^{-k}z,\left( \sqrt{n}c+1\right) 2^{-k}\right)
\setminus B\left( c2^{-k}z,nc2^{-k}\right) \right\vert _{\widetilde{\sigma }%
}d\widetilde{\omega }\left( x\right) , \\
&=&\sum_{z\in \mathbb{Z}^{n}}\frac{\left\vert B\left( c2^{-k}z,\sqrt{n}%
c2^{-k}\right) \right\vert _{\widetilde{\omega }}\left\vert B\left(
c2^{-k}z,\left( \sqrt{n}c+1\right) 2^{-k}\right) \setminus B\left(
cn2^{-k}z,nc2^{-k}\right) \right\vert _{\widetilde{\sigma }}}{2^{2k\left(
\alpha -n\right) }} \\
&&\ \ \ \ \ \ \ \ \ \ \ \ \ \ \ \ \ \ \ \ \ \ \ \ \ \ \ \ \ \ \times
\int_{B\left( 2^{-k}z,\left( \sqrt{n}+1\right) 2^{-k}\right) }\left\vert
f\right\vert ^{2}d\widetilde{\sigma }\ .
\end{eqnarray*}%
Using the separation between $B\left( c2^{-k}z,\left( \sqrt{n}c+1\right)
2^{-k}\right) \setminus B\left( cn2^{-k}z,nc2^{-k}\right) $ and $B\left(
c2^{-k}z,\sqrt{n}c2^{-k}\right) $, it is easy to see that%
\begin{equation*}
\frac{\left\vert B\left( c2^{-k}z,\sqrt{n}c2^{-k}\right) \right\vert _{%
\widetilde{\omega }}\ \left\vert B\left( c2^{-k}z,\left( \sqrt{n}c+1\right)
2^{-k}\right) \setminus B\left( cn2^{-k}z,nc2^{-k}\right) \right\vert _{%
\widetilde{\sigma }}}{2^{2k\left( \alpha -n\right) }}\lesssim A_{2}^{\alpha
}\ .
\end{equation*}%
Combining inequalities we then obtain%
\begin{eqnarray*}
\left\Vert \mathcal{A}_{\alpha }^{k}\left( f\widetilde{\sigma }\right)
\right\Vert _{L^{2}\left( \widetilde{\omega }\right) }^{2} &\lesssim
&\sum_{z\in \mathbb{Z}^{n}}A_{2}^{\alpha }\int_{B\left( 2^{-k}z,\left( \sqrt{%
n}+1\right) 2^{-k}\right) }\left\vert f\right\vert ^{2}d\widetilde{\sigma }
\\
&=&A_{2}^{\alpha }\int_{\mathbb{R}^{n}}\sum_{z\in \mathbb{Z}^{n}}\mathbf{1}%
_{B\left( 2^{-k}z,\left( \sqrt{n}+1\right) 2^{-k}\right) }\left\vert
f\right\vert ^{2}d\widetilde{\sigma } \\
&\lesssim &A_{2}^{\alpha }\int_{\mathbb{R}^{n}}\left\vert f\right\vert ^{2}d%
\widetilde{\sigma }=A_{2}^{\alpha }\left\Vert f\right\Vert _{L^{2}\left( 
\widetilde{\sigma }\right) }^{2}\ ,
\end{eqnarray*}%
and hence%
\begin{equation*}
\left\Vert \mathcal{E}_{1}^{\alpha }f\widetilde{\sigma }\right\Vert
_{L^{2}\left( \widetilde{\omega }\right) }\lesssim \sum_{k=a}^{\infty
}2^{-k\delta }\left\Vert \mathcal{A}_{\alpha }^{k}\left( f\widetilde{\sigma }%
\right) \right\Vert _{L^{2}\left( \widetilde{\omega }\right) }\lesssim
\sum_{k=a}^{\infty }2^{-k\delta }A_{2}^{\alpha }\left\Vert f\right\Vert
_{L^{2}\left( \widetilde{\sigma }\right) }\lesssim C_{2B}^{\delta
}A_{2}^{\alpha }\left\Vert f\right\Vert _{L^{2}\left( \widetilde{\sigma }%
\right) }\ .
\end{equation*}

This completes the proof that the norm of the error operator $\mathcal{E}%
_{1}^{\alpha }$ as a map from $L^{2}\left( \widetilde{\sigma }\right) $ to $%
L^{2}\left( \widetilde{\omega }\right) $ is controlled by the offset $%
A_{2}^{\alpha }$ constant. For reference in proving statements (2) and (3)
below, we record that in similar fashion, using the reduction that $%
\widetilde{\sigma }$ also has compact support, that the norm of the dual
error operator%
\begin{eqnarray*}
\mathcal{E}_{2}^{\alpha }f\widetilde{\sigma }\left( x\right) &\equiv &\int 
\mathbf{E}_{2}^{\alpha }\left( x,y\right) f\left( y\right) d\widetilde{%
\sigma }\left( y\right) ; \\
\Psi ^{\ast }\mathbf{K}^{\alpha ,n}\left( x,y\right) &=&\Psi ^{\ast ,\tan ,2}%
\mathbf{K}^{\alpha ,n}\left( x,y\right) +\mathbf{E}_{2}^{\alpha }\left(
x,y\right) ,
\end{eqnarray*}%
as a map from $L^{2}\left( \widetilde{\omega }\right) $ to $L^{2}\left( 
\widetilde{\sigma }\right) $ is controlled by the offset $A_{2}^{\alpha }$
constant.

Now we further analyze the first kernel on the right hand side of (\ref%
{decomp}), namely%
\begin{equation*}
\Psi ^{\ast ,\tan ,1}\mathbf{K}^{\alpha ,n}\left( x,y\right) =\frac{\Psi
^{\prime }\left( x\right) \left( y-x\right) }{\left\vert \Psi \left(
y\right) -\Psi \left( x\right) \right\vert ^{n+1-\alpha }},
\end{equation*}%
by writing%
\begin{equation*}
\Psi ^{\ast ,\tan ,1}\mathbf{K}^{\alpha ,n}\left( x,y\right) =\Psi ^{\prime
}\left( x\right) \frac{y-x}{\left\vert \Psi \left( y\right) -\Psi \left(
x\right) \right\vert ^{n+1-\alpha }}=\Psi ^{\prime }\left( x\right) \mathbf{K%
}_{\Psi }^{\alpha ,n}\left( x,y\right) .
\end{equation*}%
We now compute that%
\begin{eqnarray}
&&\int_{\mathbb{R}^{n}}\left\vert \Psi ^{\ast ,\tan ,1}\mathbf{R}^{\alpha
,n}f\widetilde{\sigma }\right\vert ^{2}d\widetilde{\omega }
\label{compute that} \\
&=&\int_{\mathbb{R}^{n}}\left\vert \int \Psi ^{\ast ,\tan ,1}\mathbf{K}%
^{\alpha ,n}\left( x,y\right) f\left( y\right) d\widetilde{\sigma }\left(
y\right) \right\vert ^{2}d\widetilde{\omega }\left( x\right)  \notag \\
&=&\int_{\mathbb{R}^{n}}\left\vert \int \Psi ^{\prime }\left( x\right) 
\mathbf{K}_{\Psi }^{\alpha ,n}\left( x,y\right) f\left( y\right) d\widetilde{%
\sigma }\left( y\right) \right\vert ^{2}d\widetilde{\omega }\left( x\right) 
\notag \\
&=&\int_{\mathbb{R}^{n}}\left\vert \Psi ^{\prime }\left( x\right) \left\{
\int \mathbf{K}_{\Psi }^{\alpha ,n}\left( x,y\right) f\left( y\right) d%
\widetilde{\sigma }\left( y\right) \right\} \right\vert ^{2}d\widetilde{%
\omega }\left( x\right)  \notag \\
&=&\int_{\mathbb{R}^{n}}\left\vert \Psi ^{\prime }\left( x\right) \int 
\mathbf{R}_{\Psi }^{\alpha ,n}\left( f\widetilde{\sigma }\right) \left(
x\right) \right\vert ^{2}d\widetilde{\omega }\left( x\right) ,  \notag
\end{eqnarray}%
where the matrix $\Psi ^{\prime }\left( x\right) $ is acting on the vector $%
\mathbf{T}_{\Gamma _{\Psi }}^{\alpha ,n}f\widetilde{\sigma }\left( x\right)
=\int \mathbf{K}_{\Gamma _{\Psi }}^{\alpha ,n}\left( x,y\right) f\left(
y\right) d\widetilde{\sigma }\left( y\right) $. Using the inequality%
\begin{equation}
\left\vert \Psi ^{\prime }\left( x\right) v\right\vert \approx \left\vert
v\right\vert ,\ \ \ \ \ \text{uniformly in }x,  \label{uniform}
\end{equation}%
we conclude that%
\begin{equation*}
\int_{\mathbb{R}^{n}}\left\vert \Psi ^{\ast ,\tan ,1}\mathbf{R}^{\alpha ,n}f%
\widetilde{\sigma }\right\vert ^{2}d\widetilde{\omega }\approx \int_{\mathbb{%
R}^{n}}\left\vert \mathbf{R}_{\Psi }^{\alpha ,n}f\widetilde{\sigma }%
\right\vert ^{2}d\widetilde{\omega },
\end{equation*}%
which shows that 
\begin{equation*}
\mathfrak{N}_{\mathbf{R}_{\Psi }^{\alpha ,n}}\left( \widetilde{\sigma },%
\widetilde{\omega }\right) \approx \mathfrak{N}_{\Psi ^{\ast ,\tan ,1}%
\mathbf{R}^{\alpha ,n}}\left( \widetilde{\sigma },\widetilde{\omega }\right)
.
\end{equation*}%
Similarly we have%
\begin{equation*}
\mathfrak{N}_{\mathbf{R}_{\Psi }^{\alpha ,n}}\left( \widetilde{\sigma },%
\widetilde{\omega }\right) \approx \mathfrak{N}_{\Psi ^{\ast ,\tan ,2}%
\mathbf{R}^{\alpha ,n}}\left( \widetilde{\sigma },\widetilde{\omega }\right)
.
\end{equation*}

Reverting to the notation with $\Psi ^{\ast }$ it now follows from this, and
then the boundedness of the error operator $\mathcal{E}^{\alpha }$, that%
\begin{eqnarray*}
\mathfrak{N}_{\mathbf{R}_{\Psi }^{\alpha }}\left( \Psi ^{\ast }\sigma ,\Psi
^{\ast }\omega \right) &\approx &\mathfrak{N}_{\Psi ^{\ast ,\tan ,1}\mathbf{R%
}^{\alpha ,n}}\left( \Psi ^{\ast }\sigma ,\Psi ^{\ast }\omega \right) \\
&\lesssim &\mathfrak{N}_{\Psi ^{\ast }\mathbf{R}^{\alpha ,n}}\left( \Psi
^{\ast }\sigma ,\Psi ^{\ast }\omega \right) +\left[ \mathcal{A}_{2}^{\alpha
}+\mathcal{A}_{2}^{\alpha ,\func{dual}}\right] \\
&=&\mathfrak{N}_{\mathbf{R}^{\alpha }}\left( \sigma ,\omega \right) +\left[ 
\mathcal{A}_{2}^{\alpha }+\mathcal{A}_{2}^{\alpha ,\func{dual}}\right] ,
\end{eqnarray*}%
where the final equality is (\ref{norm equality}). The reverse inequality in
the second statement of Proposition \ref{change of variable} is proved in
similar fashion, or simply by replacing $\Psi $ with $\Psi ^{-1}$.

The second and third statements are proved in the same way as the first
statement just proved above, but with the following difference. The
functions $f$ under consideration are restricted to indicators $f=\mathbf{1}%
_{E}$ with $E\in \Psi ^{\ast }\mathcal{F}$, and as a result we have from (%
\ref{compute that}), and its dual version, the two indentities%
\begin{equation*}
\int_{\mathbb{R}^{n}}\left\vert \Psi ^{\ast ,\tan ,1}\mathbf{R}^{\alpha ,n}%
\mathbf{1}_{E}\widetilde{\sigma }\right\vert ^{2}d\widetilde{\omega }=\int_{%
\mathbb{R}^{n}}\left\vert \Psi ^{\prime }\left( x\right) \mathbf{R}_{\Psi
}^{\alpha ,n}\left( \mathbf{1}_{E}\widetilde{\sigma }\right) \left( x\right)
\right\vert ^{2}d\widetilde{\omega }\left( x\right) ,
\end{equation*}%
and%
\begin{equation*}
\int_{\mathbb{R}^{n}}\left\vert \Psi ^{\ast ,\tan ,2}\mathbf{R}^{\alpha ,n}%
\mathbf{1}_{E}\widetilde{\omega }\right\vert ^{2}d\widetilde{\sigma }=\int_{%
\mathbb{R}^{n}}\left\vert \Psi ^{\prime }\left( y\right) \mathbf{R}_{\Psi
}^{\alpha ,n}\left( \mathbf{1}_{E}\widetilde{\omega }\right) \left( y\right)
\right\vert ^{2}d\widetilde{\sigma }\left( y\right) ,
\end{equation*}%
since the kernels $\mathbf{K}^{\alpha ,n}\left( x,y\right) $ and $\mathbf{K}%
_{\Gamma _{\Psi }}^{\alpha ,n}\left( x,y\right) $ are antisymmetric. Just as
for the norm estimate above, we use (\ref{uniform}) to obtain both%
\begin{eqnarray*}
\int_{\mathbb{R}^{n}}\left\vert \Psi ^{\ast ,\tan ,1}\mathbf{R}^{\alpha ,n}%
\mathbf{1}_{E}\widetilde{\sigma }\right\vert ^{2}d\widetilde{\omega }
&\approx &\int_{\mathbb{R}^{n}}\left\vert \mathbf{R}_{\Psi }^{\alpha
,n}\left( x,y\right) \mathbf{1}_{E}d\widetilde{\sigma }\right\vert ^{2}d%
\widetilde{\omega }; \\
\mathfrak{T}_{\Psi ^{\ast ,\tan ,1}\mathbf{R}^{\alpha ,n}}^{\Psi ^{\ast }%
\mathcal{F}}\left( \Psi ^{\ast }\sigma ,\Psi ^{\ast }\omega \right) &\approx
&\mathfrak{T}_{\mathbf{R}_{\Psi }^{\alpha }}^{\Psi ^{\ast }\mathcal{F}%
}\left( \Psi ^{\ast }\sigma ,\Psi ^{\ast }\omega \right) ,
\end{eqnarray*}%
and%
\begin{eqnarray*}
\int_{\mathbb{R}^{n}}\left\vert \Psi ^{\ast ,\tan ,2}\mathbf{R}^{\alpha ,n}%
\mathbf{1}_{E}\widetilde{\omega }\right\vert ^{2}d\widetilde{\sigma }
&\approx &\int_{\mathbb{R}^{n}}\left\vert \mathbf{R}_{\Psi }^{\alpha
,n}\left( x,y\right) \mathbf{1}_{E}d\widetilde{\omega }\right\vert ^{2}d%
\widetilde{\sigma }; \\
\mathfrak{T}_{\Psi ^{\ast ,\tan ,2}\mathbf{R}^{\alpha ,n}}^{\Psi ^{\ast }%
\mathcal{F},\func{dual}}\left( \Psi ^{\ast }\sigma ,\Psi ^{\ast }\omega
\right) &\approx &\mathfrak{T}_{\mathbf{R}_{\Psi }^{\alpha }}^{\Psi ^{\ast }%
\mathcal{F},\func{dual}}\left( \Psi ^{\ast }\sigma ,\Psi ^{\ast }\omega
\right) .
\end{eqnarray*}%
Now, noting that both of the measures $\widetilde{\sigma }$ and $\widetilde{%
\omega }$ are compactly supported, we use that both of the error operators $%
\mathcal{E}_{1}^{\alpha }f\widetilde{\sigma }\left( x\right) \equiv \int 
\mathbf{E}_{1}^{\alpha }\left( x,y\right) f\left( y\right) d\widetilde{%
\sigma }\left( y\right) $ and $\mathcal{E}_{2}^{\alpha }f\widetilde{\omega }%
\left( x\right) \equiv \int \mathbf{E}_{2}^{\alpha }\left( x,y\right)
f\left( y\right) d\widetilde{\omega }\left( y\right) $ have norms controlled
by the offset $A_{2}^{\alpha }$ condition to obtain%
\begin{equation*}
\mathfrak{T}_{\Psi ^{\ast ,\tan ,1}\mathbf{R}^{\alpha ,n}}^{\Psi ^{\ast }%
\mathcal{F}}\left( \Psi ^{\ast }\sigma ,\Psi ^{\ast }\omega \right) +\left[ 
\mathcal{A}_{2}^{\alpha }+\mathcal{A}_{2}^{\alpha ,\func{dual}}\right]
\approx \mathfrak{T}_{\mathbf{R}^{\alpha ,n}}^{\mathcal{F}}\left( \sigma
,\omega \right) +\left[ \mathcal{A}_{2}^{\alpha }+\mathcal{A}_{2}^{\alpha ,%
\func{dual}}\right] ,
\end{equation*}%
and%
\begin{equation*}
\mathfrak{T}_{\Psi ^{\ast ,\tan ,2}\mathbf{R}^{\alpha ,n}}^{\Psi ^{\ast }%
\mathcal{F},\func{dual}}\left( \Psi ^{\ast }\sigma ,\Psi ^{\ast }\omega
\right) +\left[ \mathcal{A}_{2}^{\alpha }+\mathcal{A}_{2}^{\alpha ,\func{dual%
}}\right] \approx \mathfrak{T}_{\mathbf{R}^{\alpha ,n}}^{\mathcal{F},\func{%
dual}}\left( \sigma ,\omega \right) +\left[ \mathcal{A}_{2}^{\alpha }+%
\mathcal{A}_{2}^{\alpha ,\func{dual}}\right] .
\end{equation*}%
Note that once again we need the one-tailed Muckenhoupt conditions to reduce
to the case where both measures have common compact support. Combining
inequalities we have%
\begin{eqnarray*}
\mathfrak{T}_{\mathbf{R}_{\Psi }^{\alpha }}^{\Psi ^{\ast }\mathcal{F}}\left(
\Psi ^{\ast }\sigma ,\Psi ^{\ast }\omega \right) +\left[ \mathcal{A}%
_{2}^{\alpha }+\mathcal{A}_{2}^{\alpha ,\func{dual}}\right] &\approx &%
\mathfrak{T}_{\mathbf{R}^{\alpha ,n}}^{\mathcal{F}}\left( \sigma ,\omega
\right) +\left[ \mathcal{A}_{2}^{\alpha }+\mathcal{A}_{2}^{\alpha ,\func{dual%
}}\right] , \\
\mathfrak{T}_{\mathbf{R}_{\Psi }^{\alpha }}^{\Psi ^{\ast }\mathcal{F},\func{%
dual}}\left( \Psi ^{\ast }\sigma ,\Psi ^{\ast }\omega \right) +\left[ 
\mathcal{A}_{2}^{\alpha }+\mathcal{A}_{2}^{\alpha ,\func{dual}}\right]
&\approx &\mathfrak{T}_{\mathbf{R}^{\alpha ,n}}^{\mathcal{F},\func{dual}%
}\left( \sigma ,\omega \right) +\left[ \mathcal{A}_{2}^{\alpha }+\mathcal{A}%
_{2}^{\alpha ,\func{dual}}\right] ,
\end{eqnarray*}%
and this completes the proof of Proposition \ref{change of variable}.
\end{proof}

\subsection{A preliminary $T1$ theorem}

We can use just the change of variable Proposition \ref{change of variable}
and Theorem \ref{final} to prove the preliminary Theorem \ref{curve}. Recall
that $\mathcal{L}$ is presented as the graph of a $C^{1,\delta }$ function $%
\psi :\mathbb{R}\rightarrow \mathbb{R}^{n}$ given by%
\begin{equation*}
\psi \left( t\right) =\left( \psi ^{2}\left( t\right) ,\psi ^{3}\left(
t\right) ,...,\psi ^{n}\left( t\right) \right) ,
\end{equation*}%
and that both%
\begin{eqnarray*}
\Psi \left( x\right) &=&\left( x^{1},x^{2}-\psi ^{2}\left( x^{1}\right)
,x^{3}-\psi ^{3}\left( x^{1}\right) ,...,x^{n}-\psi ^{n}\left( x^{1}\right)
\right) =x-\left( 0,\psi \left( x^{1}\right) \right) , \\
\Psi ^{-1}\left( \xi \right) &=&\left( \xi ^{1},\xi ^{2}+\psi ^{2}\left( \xi
_{1}\right) ,\xi ^{3}+\psi ^{3}\left( \xi _{1}\right) ,...,\xi ^{n}+\psi
^{n}\left( \xi _{1}\right) \right) =\xi +\psi \left( \xi _{1}\right) ,
\end{eqnarray*}%
are $C^{1,\delta }$ maps, and that $\Psi $ is a $C^{1,\delta }$
homeomorphism from the curve $\mathcal{L}$ to the $x_{1}$-axis. Recall $\Psi
_{\ast }\mathcal{Q}=\left( \Psi ^{-1}\right) ^{\ast }\mathcal{Q}=\left\{
\Psi Q:Q\in \mathcal{Q}\right\} $. In the next subsection, the small
Lipschitz assumption (\ref{small Lip}) will be removed, and the testing
conditions below will be permitted to be taken over usual cubes.

Finally recall that in Theorem \ref{curve}, we assume the small Lipschitz
condition (\ref{small Lip}), i.e. 
\begin{equation*}
\left\Vert D\psi \right\Vert _{\infty }<\frac{1}{8n}\left( 1-\frac{\alpha }{n%
}\right) ,
\end{equation*}%
and that $\omega $ and $\sigma $ are positive Borel measures (possibly
having common point masses) with $\omega $ compactly supported in $\mathcal{L%
}$, and that $\mathcal{R}^{n}\mathcal{=}R\mathcal{P}^{n}$ where $R$ is a
fixed rotation that is $L$-transverse when $L$ is the $x_{1}$-axis. The
conclusion of Theorem \ref{curve} is then that%
\begin{eqnarray*}
&&\int_{\Psi Q}\left\vert \mathbf{R}^{\alpha ,n}\left( \mathbf{1}_{\Psi
Q}\sigma \right) \right\vert ^{2}d\omega \leq \mathfrak{T}_{\mathbf{R}%
^{\alpha ,n}}^{\Psi \mathcal{Q}}\left\vert \Psi Q\right\vert _{\sigma }, \\
&&\int_{\Psi Q}\left\vert \mathbf{R}^{\alpha ,n,\func{dual}}\left( \mathbf{1}%
_{\Psi Q}\omega \right) \right\vert ^{2}d\sigma \leq \mathfrak{T}_{\mathbf{R}%
^{\alpha ,n}}^{\Psi \mathcal{Q},\func{dual}}\left\vert \Psi Q\right\vert
_{\omega }, \\
&&\ \ \ \ \ \text{for all cubes }Q\in \mathcal{R}^{n}.
\end{eqnarray*}

\begin{proof}[Proof of Theorem \protect\ref{curve}]
By the testing equivalences (2) and (3) of Proposition \ref{change of
variable} with $\mathcal{F}=\Psi \mathcal{Q}$, and using $\Psi ^{\ast }%
\mathcal{F=}\Psi ^{\ast }\Psi \mathcal{Q=Q}$, we have 
\begin{eqnarray*}
&&\sqrt{\mathfrak{A}_{2}^{\alpha }\left( \sigma ,\omega \right) }+\mathfrak{T%
}_{\mathbf{R}^{\alpha ,n}}^{\Psi \mathcal{Q}}\left( \sigma ,\omega \right) +%
\mathfrak{T}_{\mathbf{R}^{\alpha ,n}}^{\Psi \mathcal{Q},\func{dual}}\left(
\sigma ,\omega \right) \\
&\approx &\sqrt{\mathfrak{A}_{2}^{\alpha }\left( \sigma ,\omega \right) }+%
\mathfrak{T}_{\mathbf{R}_{\Psi }^{\alpha ,n}}^{\mathcal{Q}}\left( \Psi
^{\ast }\sigma ,\Psi ^{\ast }\omega \right) +\mathfrak{T}_{\mathbf{R}_{\Psi
}^{\alpha ,n}}^{\mathcal{Q},\func{dual}}\left( \Psi ^{\ast }\sigma ,\Psi
^{\ast }\omega \right) \\
&\approx &\sqrt{\mathfrak{A}_{2}^{\alpha }\left( \sigma ,\omega \right) }+%
\mathfrak{N}_{\mathbf{R}_{\Psi }^{\alpha ,n}}\left( \Psi ^{\ast }\sigma
,\Psi ^{\ast }\omega \right) ,
\end{eqnarray*}%
where the final line follows from Theorem \ref{final} because $\Psi ^{\ast
}\omega $ is supported on a line. Then we continue with equivalence (1) of
Proposition \ref{change of variable} to obtain%
\begin{equation*}
\sqrt{\mathfrak{A}_{2}^{\alpha }\left( \sigma ,\omega \right) }+\mathfrak{N}%
_{\mathbf{R}_{\Psi }^{\alpha ,n}}\left( \Psi ^{\ast }\sigma ,\Psi ^{\ast
}\omega \right) \approx \sqrt{\mathfrak{A}_{2}^{\alpha }\left( \sigma
,\omega \right) }+\mathfrak{N}_{\mathbf{R}^{\alpha ,n}}\left( \sigma ,\omega
\right) .
\end{equation*}%
Altogether we now obtain from this that%
\begin{equation*}
\mathfrak{N}_{\mathbf{R}^{\alpha ,n}}\left( \sigma ,\omega \right) \approx 
\sqrt{\mathfrak{A}_{2}^{\alpha }\left( \sigma ,\omega \right) }+\mathfrak{T}%
_{\mathbf{R}^{\alpha ,n}}^{\Psi \mathcal{Q}}\left( \sigma ,\omega \right) +%
\mathfrak{T}_{\mathbf{R}^{\alpha ,n}}^{\Psi \mathcal{Q},\func{dual}}\left(
\sigma ,\omega \right) ,
\end{equation*}%
and this completes the proof of Theorem \ref{curve}.
\end{proof}

\begin{remark}
At this point, one can obtain a `$T1$ type' theorem when $\omega $ is
compactly supported on a $C^{1,\delta }$ curve $\mathcal{L}$, \emph{without}
any additional restriction on the Lipschitz constant of the curve, by
decomposing $\omega =\sum_{i=1}^{N}\omega _{i}$ into finitely many measures $%
\omega _{i}$ with support so small that the supporting curve is presented as
a graph of a $C^{1,\delta }$ function $\psi _{i}$ relative to a rotated
axis, and such that $\left\Vert D\psi _{i}\right\Vert _{\infty }<\frac{1}{8n}%
\left( 1-\frac{\alpha }{n}\right) $ (this requires some work). Then Theorem %
\ref{curve} applies to each measure pair $\left( \omega _{i},\sigma \right) $
(appropriately rotated), and the corresponding rotated quasitesting
conditions must now be taken over the finitely many measure pairs $\left(
\omega _{i},\sigma \right) $. In the next subsection we will improve on this
observation by eliminating the small Lipschitz assumption, and by taking the
testing conditions over the single measure pair $\left( \omega ,\sigma
\right) $.
\end{remark}

\subsection{The $T1$ theorem for a measure supported on a regular $C^{1,%
\protect\delta }$ curve\label{Sub curve}}

Now we prove our main result, Theorem \ref{general T1}.

\begin{proof}[Proof of Theorem \protect\ref{general T1}]
\textbf{Step 1}: Given $0\leq \alpha <n$, we define%
\begin{equation*}
\varepsilon \equiv \frac{1}{8n}\left( 1-\frac{\alpha }{n}\right) ,
\end{equation*}%
where the right hand side is the constant appearing in (\ref{small Lip}) in
Theorem \ref{curve'}. Now let $0<\varepsilon ^{\prime }<\varepsilon $ and
choose a finite collection of points $\left\{ \xi _{j}\right\}
_{j=1}^{j}\subset \mathbb{S}^{n-1}$ in the unit sphere such that the
spherical balls $\left\{ \mathcal{B}\left( \xi _{j},\frac{\varepsilon
^{\prime }}{4}\right) \right\} _{j=1}^{J}$ cover $\mathbb{S}^{n-1}$. Observe
that our curve $\Phi $ and its derivative $\Phi ^{\prime }$ are \emph{%
uniformly} continuous. We now claim that we can decompose the curve $%
\mathcal{L}$ into finitely many consecutive pieces $\left\{ \mathcal{L}%
_{i}\right\} _{i=0}^{N}$ such that with $\widehat{\mathcal{L}_{i}}$ defined
to be $\dbigcup\limits_{k=-C_{n}}^{C_{n}}\mathcal{L}_{i+k}$, the union of $%
\mathcal{L}_{i}$ and the previous and subsequent $C_{n}$ pieces, and $%
\mathcal{L}_{i}^{\ast }$ defined to be $\dbigcup\limits_{k=-2C_{n}}^{2C_{n}}%
\mathcal{L}_{i+k}$, where $C_{n}=5\widetilde{C_{n}}$ and $\widetilde{C_{n}}$
is a dimensional constant defined in (\ref{latter}) in step 2 below (without
circularity), the following three properties hold:

(\textbf{1}): there is $\eta >0$ such that $\mathcal{L}_{i}=\limfunc{range}%
\Phi _{i}$ where $\Phi _{i}=\Phi \mid _{\left[ \eta i,\eta \left( i+1\right)
\right) }$ is the restriction of $\Phi $ to the interval $\left[ \eta i,\eta
\left( i+1\right) \right) $ for all $i$, and

(\textbf{2}): for each $i$, there is $j=j\left( i\right) $ depending on $i$
such that, after a rotation $R_{i}$ that takes the point $\xi _{j}$ to the
point $\left( 0,...,0,1\right) $, followed by an appropriate translation $%
T_{i}$, the curve $T_{i}R_{i}\widehat{\mathcal{L}_{i}}$ is the graph of the
restriction $\psi _{i}\mid _{\left[ -\zeta _{i},\zeta _{i}\right) }$ of a
globally defined $C^{1,\delta }$ function $\psi _{i}:\mathbb{R}\rightarrow 
\mathbb{R}^{n-1}$ with $\left\Vert D\psi _{i}\right\Vert _{\infty
}<\varepsilon $, and

(\textbf{3}): $\psi _{i}\left( t\right) =\left( 0,...,0\right) \in \mathbb{R}%
^{n-1}$ for all $\left\vert t\right\vert >4\zeta _{i}$.

Thus we are claiming that we can locally rotate and translate the curve so
that it is given locally as part of the graph of a globally defined $%
C^{1,\delta }$ function $\psi _{i}$ with $\left\Vert D\psi _{i}\right\Vert
_{\infty }<\varepsilon $, where in view of the definition of $\varepsilon $,
this latter inequality is what is required in (\ref{small Lip}) of Theorem %
\ref{curve}. Note that $\zeta _{i}\approx C_{n}\eta $.

To see that these three properties can be obtained, we use uniform
continuity of $\Phi ^{\prime }$ to take a small piece $\mathcal{L}_{i}^{\ast
}$ of the curve, such that the oscillation of tangent lines across the piece 
$\mathcal{L}_{i}^{\ast }$ is less than $\varepsilon ^{\prime }$, and then
translate and rotate the chord joining its endpoints so as to lie on the $%
x_{1}$-axis. Note that with this done, the resulting curve is the graph of a
function $\psi _{i}^{\ast }\left( t\right) $ defined for $t\in I_{i}^{\ast }$%
, which satisfies 
\begin{equation*}
\left\vert \psi _{i}^{\ast }\left( t\right) \right\vert \leq C\frac{%
\varepsilon ^{\prime }}{2}\eta ,\ \ \ \ \ t\in I_{i}^{\ast },
\end{equation*}%
since $\psi _{i}^{\ast }=T_{i}R_{i}\Phi \mid _{I_{i}^{\ast }}$. Here we are
using the convention that $I_{i}$ is the parameter interval of $\Phi $
corresponding to the image $\mathcal{L}_{i}$, and similarly for $\widehat{%
I_{i}}$ and $I_{i}^{\ast }$.

Then we construct the extended function $\psi _{i}\left( t\right) $ so that
its graph includes the translated and rotated piece $\widehat{\mathcal{L}_{i}%
}$, and so that away from $\widehat{I_{i}}$ the function $\psi _{i}$
smoothly straightens out from $\psi _{i}^{\ast }$ so as to vanish on the
remaining $x_{1}$-axis, and in such a way that $\left\Vert D\psi
_{i}\right\Vert _{\infty }<\varepsilon $. This is most easily seen by taking 
$\psi _{i}\left( t\right) \equiv \psi _{i}^{\ast }\left( t\right) \rho
\left( t\right) $ where $\rho \left( t\right) $ is an appropriate smooth
bump function that is identically $1$ on $\widehat{I_{i}}$ and vanishes
outside $I_{i}^{\ast }$. Note then that 
\begin{equation*}
D\psi _{i}\left( t\right) =D\psi _{i}^{\ast }\left( t\right) \rho \left(
t\right) +\psi _{i}^{\ast }\left( t\right) D\rho \left( t\right)
\end{equation*}%
satisfies $\left\vert D\psi _{i}\left( t\right) \right\vert \leq
C\varepsilon ^{\prime }$ since $\left\vert \psi _{i}^{\ast }\left( t\right)
\right\vert \leq C\frac{\varepsilon ^{\prime }}{2}\eta $ and $\left\vert
D\rho \left( t\right) \right\vert \leq C\frac{1}{\eta }$. Consequently we
have 
\begin{equation*}
\left\Vert D\psi _{i}\right\Vert _{\infty }\leq C\varepsilon ^{\prime
}<\varepsilon .
\end{equation*}%
Of course the function $\psi _{i}=\psi _{i}^{\ast }\left( t\right) \rho
\left( t\right) $ is $C^{1,\delta }$ since $\limfunc{supp}\rho $ is
contained in the interior of the interval $I_{i}^{\ast }$. This completes
the verification of properties (1), (2) and (3) above.

In the next step, we will restrict $\omega $ to the small piece $\mathcal{L}%
_{i}$ and it will be important that we can straighten out the larger piece $%
\widehat{\mathcal{L}_{i}}$ via a global $C^{1,\delta }$ diffeomorphism $\Psi
_{i}^{-1}$ of $\mathbb{R}^{n}$ (defined using $\psi _{i}$), so that we can
derive a \emph{tripled} quasitesting condition for intervals in $I_{i}$
whose triples are contained in $\widehat{I_{i}}$, the straightened out
portion of $\widehat{\mathcal{L}_{i}}$.\\*[0.25in]

\textbf{Step 2}: We now apply Theorem \ref{main} to the pullbacks $\widehat{%
I_{i}}$ under $\psi _{i}$ of the localized pieces $\widehat{\mathcal{L}_{i}}$
as follows. Fix $i$ and denote by $\omega _{i}$ the restriction of $\omega $
to $\mathcal{L}_{i}$. We are assuming the usual cube testing conditions $%
\mathfrak{T}_{\mathbf{R}^{\alpha ,n}}^{\mathcal{Q}^{n}}\left( \sigma ,\omega
\right) $ and $\mathfrak{T}_{\mathbf{R}^{\alpha ,n}}^{\mathcal{Q}^{n},\func{%
dual}}\left( \sigma ,\omega \right) $ on the weight pair $\left( \sigma
,\omega \right) $ over all cubes $Q\in \mathcal{Q}^{n}$. Under the change of
variable given by the $C^{1,\delta }$ map $\Psi _{i}^{-1}:\mathbb{R}%
^{n}\rightarrow \mathbb{R}^{n}$, corresponding to $\Psi _{i}\left( x\right)
=\left( x_{1},x^{\prime }+\psi _{i}\left( x_{1}\right) \right) $, the pair
of measures $\left( \sigma ,\omega \right) $ is transformed to the pullback
pair $\left( \widetilde{\sigma },\widetilde{\omega }\right) $ (since $i$ is
fixed we suppress the dependence of the change of variable on $i$ and simply
write $\widetilde{\sigma }$ and $\widetilde{\omega }$, but we will use the
subscript $i$ to emphasize restrictions of $\widetilde{\omega }$). Now
define $\widetilde{\omega _{i}}$ to be the transform of the small piece of
measure $\omega _{i}$, and note that it is supported on the $x_{1}$-axis,
and moreover that the transform $\widehat{I_{i}}$ of the larger piece $%
\widehat{\mathcal{L}_{i}}$ is also supported on the $x_{1}$-axis. By
Proposition \ref{change of variable}, and in the presence of $\mathfrak{A}%
_{2}^{\alpha }$, the testing conditions $\mathfrak{T}_{\mathbf{R}^{\alpha
,n}}^{\mathcal{Q}^{n}}\left( \sigma ,\omega \right) $ and $\mathfrak{T}_{%
\mathbf{R}^{\alpha ,n}}^{\mathcal{Q}^{n},\func{dual}}\left( \sigma ,\omega
\right) $ for the measure pair $\left( \sigma ,\omega \right) $ over cubes
in $\mathcal{Q}^{n}$ for the $\alpha $-fractional Riesz transform $\mathbf{R}%
^{\alpha ,n}$ are transformed into the testing conditions $\mathfrak{T}_{%
\mathbf{R}_{\Psi }^{\alpha ,n}}^{\Psi _{i}^{\ast }\mathcal{Q}^{n}}\left(
\sigma ,\omega \right) $ and $\mathfrak{T}_{\mathbf{R}_{\Psi }^{\alpha
,n}}^{\Psi _{i}^{\ast }\mathcal{Q}^{n},\func{dual}}\left( \sigma ,\omega
\right) $ for the measure pair $\left( \widetilde{\sigma },\widetilde{\omega 
}\right) $ over quasicubes in $\Psi _{i}^{\ast }\mathcal{Q}^{n}$ for the
conformal $\alpha $-fractional Riesz transform $\mathbf{R}_{\Psi }^{\alpha
,n}$.

Now in order to apply Theorem \ref{main} to the conformal $\alpha $%
-fractional Riesz transform $\mathbf{R}_{\Psi }^{\alpha ,n}$, we will choose
below a specific rotation $\mathcal{R}^{n}=R\mathcal{P}^{n}$ of the
collection of cubes $\mathcal{P}^{n}$. Then we consider the testing
conditions for the pair $\left( \widetilde{\sigma },\widetilde{\omega }%
\right) $ over these quasicubes $\Psi _{i}^{\ast }\mathcal{R}^{n}$ that form
a subset of the quasicubes $\Psi _{i}^{\ast }\mathcal{Q}^{n}$. Provided we
choose $\varepsilon >0$ small enough in Step 1, the map $\Psi _{i}^{\ast }$
will have its derivative $D\Psi _{i}^{\ast }$ close to the identity $I$.
Choose a rotation $R$ that is $L$-transverse when $L$ is the $x_{1}$-axis.
From Lemma \ref{connected} applied with $\Omega =\Psi _{i}^{-1}\circ R$, we
then obtain (\ref{trans edges}) and the key geometric property:%
\begin{eqnarray}
&&\text{The intersection of any }Q\in \Psi _{i}^{\ast }\mathcal{R}^{n}\text{
with the }x_{1}\text{-axis}  \label{geometric} \\
&&\ \ \ \ \ \ \ \ \ \ \ \ \ \ \ \text{is an \textbf{interval }in the }x_{1}%
\text{-axis}.  \notag
\end{eqnarray}%
Using (\ref{trans edges}) and this geometric property we will now deduce,
for the special fractional Riesz transform $\mathbf{R}_{\Psi }^{\alpha ,n}$,
that in the presence of the $\mathcal{A}_{2}^{\alpha }$ conditions, the $%
\Psi _{i}^{\ast }\mathcal{R}^{n}$-quasicube testing conditions for the pair $%
\left( \widetilde{\sigma },\widetilde{\omega _{i}}\right) $ follow from the $%
\Psi _{i}^{\ast }\mathcal{R}^{n}$-quasicube testing conditions for the pair $%
\left( \widetilde{\sigma },\widetilde{\omega }\right) ${\Large .}

Indeed, fix a quasicube $Q\in \Psi _{i}^{\ast }\mathcal{R}^{n}$ and consider
the left hand sides of the two dual testing conditions, namely%
\begin{equation*}
\int_{Q}\left\vert \mathbf{R}_{\Psi }^{\alpha ,n}\mathbf{1}_{Q}\widetilde{%
\sigma }\right\vert ^{2}d\widetilde{\omega _{i}}\text{ and }%
\int_{Q}\left\vert \mathbf{R}_{\Psi }^{\alpha ,n}\mathbf{1}_{Q}\widetilde{%
\omega _{i}}\right\vert ^{2}d\widetilde{\sigma }.
\end{equation*}%
Now the first integral is trivially dominated by $\int_{Q}\left\vert \mathbf{%
R}_{\Psi }^{\alpha ,n}\mathbf{1}_{Q}\widetilde{\sigma }\right\vert ^{2}d%
\widetilde{\omega }\leq \left( \mathfrak{T}_{\mathbf{R}_{\Psi }^{\alpha
,n}}^{\Psi _{i}^{\ast }\mathcal{R}^{n}}\right) ^{2}\left\vert Q\right\vert _{%
\widetilde{\sigma }}$ as required. To estimate the second integral, we first
use (\ref{geometric}) to choose a quasi cube $Q^{\prime }\in \Psi ^{\ast }%
\mathcal{R}^{n}$ such that $Q^{\prime }\subset Q$ and $\mathbf{1}_{Q^{\prime
}}\widetilde{\omega }=\mathbf{1}_{Q}\widetilde{\omega _{i}}$, and in
addition that 
\begin{equation}
\ell \left( Q^{\prime }\right) \leq \widetilde{C_{n}}\ell \left( I\right)
\label{latter}
\end{equation}%
where $I=Q\cap \left\{ x_{1}-axis\right\} $. This latter condition (\ref%
{latter}) simply means that $Q^{\prime }$ is taken essentially as small as
possible so that $\mathbf{1}_{Q^{\prime }}\widetilde{\omega }=\mathbf{1}_{Q}%
\widetilde{\omega _{i}}$. Then we write%
\begin{eqnarray*}
\int_{Q}\left\vert \mathbf{R}_{\Psi }^{\alpha ,n}\mathbf{1}_{Q}\widetilde{%
\omega _{i}}\right\vert ^{2}d\widetilde{\sigma } &=&\int_{Q}\left\vert 
\mathbf{R}_{\Psi }^{\alpha ,n}\mathbf{1}_{Q^{\prime }}\widetilde{\omega }%
\right\vert ^{2}d\widetilde{\sigma } \\
&=&\int_{Q\setminus 3Q^{\prime }}\left\vert \mathbf{R}_{\Psi }^{\alpha ,n}%
\mathbf{1}_{Q^{\prime }}\widetilde{\omega }\right\vert ^{2}d\widetilde{%
\sigma }+\int_{Q\cap 3Q^{\prime }}\left\vert \mathbf{R}_{\Psi }^{\alpha ,n}%
\mathbf{1}_{Q^{\prime }}\widetilde{\omega }\right\vert ^{2}d\widetilde{%
\sigma } \\
&=&I+II.
\end{eqnarray*}%
Now $I\lesssim \mathcal{A}_{2}^{\alpha ,\func{dual}}\left\vert Q^{\prime
}\right\vert _{\widetilde{\omega }}=\mathcal{A}_{2}^{\alpha ,\func{dual}%
}\left\vert Q\right\vert _{\widetilde{\omega _{i}}}$ by a standard
calculation similar to that in (\ref{separated}), and 
\begin{equation*}
II\leq \int_{3Q^{\prime }}\left\vert \mathbf{R}_{\Psi }^{\alpha ,n}\mathbf{1}%
_{Q^{\prime }}\widetilde{\omega }\right\vert ^{2}d\widetilde{\sigma }\leq
\left( \mathfrak{T}_{\mathbf{R}_{\Psi }^{\alpha ,n}}^{\Psi _{i}^{\ast }%
\mathcal{R}^{n},\limfunc{triple},\func{dual}}\right) ^{2}\left\vert
Q^{\prime }\right\vert _{\widetilde{\omega }}=\left( \mathfrak{T}_{\mathbf{R}%
_{\Psi }^{\alpha ,n}}^{\Psi _{i}^{\ast }\mathcal{R}^{n},\limfunc{triple},%
\func{dual}}\right) ^{2}\left\vert Q\right\vert _{\widetilde{\omega _{i}}},
\end{equation*}%
by the \emph{local} backward triple quasicube testing condition for the pair 
$\left( \widetilde{\sigma },\widetilde{\omega }\right) $, whose necessity
was proved in Theorem \ref{main} above with the measure $\widetilde{\widehat{%
\omega _{i}}}$, the restriction of $\widetilde{\omega }$ to $\widehat{I_{i}}$%
, which is compactly supported on the real axis. By \emph{local} backward
triple quasicube testing here we mean that we are restricting attention to
those triples $3Q^{\prime }$ such that $3Q^{\prime }\cap \left\{
x_{1}-axis\right\} \subset \left[ -\zeta _{i},\zeta _{i}\right) $, the image
of the larger piece $\widehat{\mathcal{L}_{i}}$ under $\psi _{i}^{-1}$. This
restriction is necessary since our arguments for necessity of triple testing
require support on a line. Now we use condition (\ref{latter}) in our choice
of quasicube $Q^{\prime }$. Indeed, with this choice we then have $%
3Q^{\prime }\cap \left\{ x_{1}-axis\right\} \subset \left[ -\zeta _{i},\zeta
_{i}\right) $, and so have the backward tripled quasicube testing condition
at our disposal.\\*[0.25in]

\textbf{Step 3}: Now we use Theorem \ref{main} again to obtain the
quasienergy conditions for the conformal fractional Riesz transform $\mathbf{%
R}_{\Psi }^{\alpha ,n}$ for each pair $\left( \widetilde{\sigma },\widetilde{%
\omega _{i}}\right) $. Thus, \emph{assuming only the }$\mathfrak{A}%
_{2}^{\alpha }$\emph{\ conditions, and }$\mathcal{Q}^{n}$\emph{\ cube
testing conditions for the weight pair }$\left( \sigma ,\omega \right) $%
\emph{\ for the }$\alpha $\emph{-fractional Riesz transform }$\mathbf{R}%
^{\alpha ,n}$, we have established that the weight pair $\left( \widetilde{%
\sigma },\widetilde{\omega _{i}}\right) $ satisfies the $\mathfrak{A}%
_{2}^{\alpha }$ conditions, the quasienergy conditions, the quasitesting
conditions and the quasiweak boundedness property (which follows from the
backward triple quasitesting condition) all for the conformal $\alpha $%
-fractional Riesz transform $\mathbf{R}_{\Psi }^{\alpha ,n}$. Now Theorem %
\ref{T1 theorem} for conformal Riesz transforms (Conclusion \ref{con thm})
and parts (2) and (3) of Proposition \ref{change of variable} apply to show
that%
\begin{equation*}
\mathfrak{N}_{\mathbf{R}_{\Psi }^{\alpha ,n}}\left( \widetilde{\sigma },%
\widetilde{\omega _{i}}\right) \lesssim \sqrt{\mathfrak{A}_{2}^{\alpha
}\left( \sigma ,\omega \right) }+\mathfrak{T}_{\mathbf{R}^{\alpha ,n}}\left(
\sigma ,\omega \right) +\mathfrak{T}_{\mathbf{R}^{\alpha ,n}}^{\func{dual}%
}\left( \sigma ,\omega \right)
\end{equation*}%
for each $i$. Then by part (1) of Proposition \ref{change of variable} we
have%
\begin{equation*}
\mathfrak{N}_{\mathbf{R}_{\sigma }^{\alpha ,n}}\left( \sigma ,\omega
_{i}\right) \lesssim \mathfrak{N}_{\mathbf{R}_{\Psi }^{\alpha ,n}}\left( 
\widetilde{\sigma },\widetilde{\omega _{i}}\right) ,
\end{equation*}%
and we have 
\begin{equation*}
\mathfrak{N}_{\mathbf{R}_{\sigma }^{\alpha ,n}}\left( \sigma ,\omega \right)
\leq \sum_{i=1}^{N}\mathfrak{N}_{\mathbf{R}_{\sigma }^{\alpha ,n}}\left(
\sigma ,\omega _{i}\right) .
\end{equation*}%
This completes the proof of Theorem \ref{general T1}.
\end{proof}

\section{Appendix}

Here we state and prove extensions of Theorems \ref{main} and \ref{final}
that hold for measures $\sigma $ and $\omega $ supported in a $\left(
k_{1}+1\right) $-plane and $\left( k_{2}+1\right) $-plane respectively that
intersect in the $x_{1}$-axis at right angles. We begin with the following
extension of Theorem \ref{main} to perpendicular subspaces.

\begin{theorem}
\label{main'}Fix a collection of $\Omega $-quasicubes in $\mathbb{R}^{n}=%
\mathbb{R}\times \mathbb{R}^{k_{1}}\times \mathbb{R}^{k_{2}}$. Let 
\begin{eqnarray*}
S &=&\left\{ \left( x_{1},x^{\prime },0\right) \in \mathbb{R}\times \mathbb{R%
}^{k_{1}}\times \mathbb{R}^{k_{2}}:\left( x_{1},x^{\prime }\right) \in 
\mathbb{R}\times \mathbb{R}^{k_{1}}\right\} , \\
W &=&\left\{ \left( x_{1},0,x^{\prime \prime }\right) \in \mathbb{R}\times 
\mathbb{R}^{k_{1}}\times \mathbb{R}^{k_{2}}:\left( x_{1},x^{\prime \prime
}\right) \in \mathbb{R}\times \mathbb{R}^{k_{2}}\right\} , \\
L &=&S\cap W=\left\{ \left( x_{1},0,0\right) \in \mathbb{R}\times \mathbb{R}%
^{k_{1}}\times \mathbb{R}^{k_{2}}:x_{1}\in \mathbb{R}\right\} ,
\end{eqnarray*}%
be $\left( k_{1}+1\right) $-, $\left( k_{2}+1\right) $- and $1$- dimensional
subspaces respectively of $\mathbb{R}^{n}$. Let $\sigma $ and $\omega $ be
locally finite positive Borel measures supported on $S$ and $W$ respectively
(possibly having common point masses in the intersection $L$ of their
supports). Suppose that $\mathbf{R}_{\Psi }^{\alpha ,n}$ is a conformal $%
\alpha $-fractional Riesz transform with $0\leq \alpha <n$ and graphing
function $\Psi \left( x\right) =x-\left( 0,\psi \left( x_{1}\right) \right) $
that satisfies (\ref{small Lip}), i.e.%
\begin{equation}
\left\Vert D\psi \right\Vert _{\infty }<\frac{1}{8n}\left( 1-\frac{\alpha }{n%
}\right) ,  \label{small Lip'}
\end{equation}%
and consider the tangent line truncations for $\mathbf{R}_{\Psi }^{\alpha
,n} $ in the $\Omega $-quasitesting conditions. Then%
\begin{equation*}
\mathcal{E}_{\alpha }^{\Omega \mathcal{Q}^{n}}\lesssim \sqrt{\mathcal{A}%
_{2}^{\alpha }}+\mathfrak{T}_{\mathbf{R}_{\Psi }^{\alpha ,n}}^{\Omega 
\mathcal{Q}^{n}}\text{ and }\mathcal{E}_{\alpha }^{\Omega \mathcal{Q}^{n},%
\func{dual}}\lesssim \sqrt{\mathcal{A}_{2}^{\alpha ,\func{dual}}}+\mathfrak{T%
}_{\mathbf{R}_{\Psi }^{\alpha ,n}}^{\Omega \mathcal{Q}^{n},\func{dual}}\ .
\end{equation*}%
In addition if $\Omega $ is a $C^{1}$ diffeomorphism that is $L$-transverse,
then%
\begin{equation*}
\mathcal{WBP}_{\mathbf{R}_{\Psi }^{\alpha ,n}}^{\Omega \mathcal{Q}%
^{n}}\lesssim \mathfrak{T}_{\mathbf{R}_{\Psi }^{\alpha ,n}}^{\Omega \mathcal{%
Q}^{n},\limfunc{triple},\func{dual}}\lesssim \mathfrak{T}_{\mathbf{R}_{\Psi
}^{\alpha ,n}}^{\Omega \mathcal{Q}^{n},\func{dual}}+\sqrt{\mathcal{A}%
_{2}^{\alpha }}+\sqrt{\mathcal{A}_{2}^{\alpha ,\func{dual}}}\ .
\end{equation*}
\end{theorem}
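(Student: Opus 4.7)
The plan is to adapt, in parallel, the three-part argument of Theorem \ref{main} (backward quasienergy, forward quasienergy, backward tripled testing) to the orthogonal-subspace setup. Two structural simplifications drive the adaptation. Since $\omega$ is supported in $W=\{(x_{1},0,x'')\}$, for every dyadic quasicube $J$ the Haar projection $\mathsf{P}_{J}^{\omega}\mathbf{x}$ annihilates the $\mathbb{R}^{k_{1}}$-coordinates (these vanish identically on $\limfunc{supp}\omega$), so only the $x_{1}$- and $\mathbb{R}^{k_{2}}$-partial quasienergies contribute; symmetrically $\mathsf{P}_{J}^{\sigma}\mathbf{x}$ sees only the $x_{1}$- and $\mathbb{R}^{k_{1}}$-directions. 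Moreover the $k$-th coordinate of the kernel $\mathbf{K}_{\Psi}^{\alpha,n}(x,y)$ when $x\in W$ and $y\in S$ reduces to $(y-x)_{k}=y^{k}$ for $k$ in the $\mathbb{R}^{k_{1}}$ block and $(y-x)_{k}=-x^{k}$ for $k$ in the $\mathbb{R}^{k_{2}}$ block; these signed scalars power all the weak-reversal estimates.

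For $\mathcal{E}_{\alpha}^{\Omega\mathcal{Q}^{n},\func{dual}}$ I would split over $J$ meeting $\limfunc{supp}\sigma\subset S$ according to whether $3J\cap W=\emptyset$ or not. When $3J\cap W=\emptyset$, a finitely enumerable choice of unit direction $\xi\in\mathbb{R}^{k_{1}}$ yields $\xi\cdot\mathbf{K}_{\Psi}^{\alpha,n}(y,x)\gtrsim|J|^{1/n}/|y-x|^{n+1-\alpha}$ uniformly for $y\in J$ and $x\in I\cap W$, reproducing the weak reversal (\ref{weak control}); together with the trivial bound $\|\mathsf{P}_{J}^{\sigma}\mathbf{x}\|_{L^{2}(\sigma)}^{2}\leq|J|^{2/n}|J|_{\sigma}$ and pairwise disjointness of the $\gamma J$'s this closes the estimate. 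When $3J\cap W\neq\emptyset$ the quasicube lies essentially near $L$, and the strong-reversal expansion of $\Phi'(t)$ driven by (\ref{case 1}), (\ref{smallness}) and (\ref{provided''}) goes through unchanged for the $x_{1}$-partial of $\sigma$; the $\mathbb{R}^{k_{1}}$-partials of $\mathsf{P}_{J}^{\sigma}\mathbf{x}$ are controlled by the analog of (\ref{weak 1})--(\ref{weak 2}), since for $y\in J$ the $j$-th component $(y-x)_{j}=y^{j}$ ($j\in\mathbb{R}^{k_{1}}$) has constant sign in the integration over $\limfunc{supp}\omega\subset W$.

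For $\mathcal{E}_{\alpha}^{\Omega\mathcal{Q}^{n}}$ I would sum over $J$ with $J\cap W\neq\emptyset$. The $x_{1}$-partial of $\omega$ is treated by the natural end/side decomposition $\textsc{E}(J^{\ast})=\{y=(y_{1},y',0)\in (I\setminus J^{\ast})\cap S:|y'-c_{J}'|\leq\tfrac{10}{\gamma}|y_{1}-c_{J}^{1}|\}$: for $y\in\textsc{E}(J^{\ast})$ and $x\in J\cap W$ the decomposition $|y-x|^{2}=(y_{1}-x_{1})^{2}+|y'|^{2}+|x''|^{2}$ yields $|y'|,|x''|\lesssim\tfrac{1}{\gamma}|y_{1}-x_{1}|$, so the strong reversal (\ref{diff quotient}) of Section \ref{Subsec forward} proceeds unchanged when we differentiate in $x_{1}$ with $x''\in\mathbb{R}^{k_{2}}$ held fixed, and the side-piece bound via the Carleson shadow argument (\ref{satis})--(\ref{continue}), (\ref{small alpha}) depends only on the 1D projection onto $L$ and on (\ref{bounded overlap'}) so is untouched. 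The new $\mathbb{R}^{k_{2}}$-partials of $\omega$ require no further splitting: from the identity $|(\mathbf{R}_{\Psi}^{\alpha,n})_{k}(\mathbf{1}_{I\setminus\gamma J}\sigma)(x)|\approx|x^{k}|\mathrm{P}^{\alpha}(J,\sigma)/|J|^{1/n}$ for $x\in J\cap W$ and $k\in\mathbb{R}^{k_{2}}$, together with the variance bound $\|\mathsf{P}_{J}^{\omega}x^{k}\|_{L^{2}(\omega)}^{2}\leq\int_{J}|x^{k}|^{2}d\omega$, pairwise disjointness, and the splitting $\mathbf{1}_{I\setminus\gamma J}=\mathbf{1}_{I}-\mathbf{1}_{\gamma J}$, the corresponding partial quasienergy is dominated by $\mathfrak{T}_{\mathbf{R}_{\Psi}^{\alpha,n}}^{\Omega\mathcal{Q}^{n}}$ just as in (\ref{weak 1})--(\ref{weak 2}).

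The main obstacle is the backward tripled testing of Section \ref{Subsec triple}: the 1D Hardy reduction there relied on $\omega$ being supported literally on $L$, whereas $\omega$ now lives on the higher-dimensional $W$ and $\sigma$ on $S$. My plan is to push both measures forward to $L$ by orthogonal projection (integrating out the $\mathbb{R}^{k_{2}}$- and $\mathbb{R}^{k_{1}}$-coordinates respectively), producing 1D measures $\widetilde{\omega},\widetilde{\sigma}$ on $L$, and to check that the asymptotic $|y-x|^{\alpha-n}\approx(|y_{1}|+|x_{1}|)^{\alpha-n}$ used in Subsection \ref{Subsec triple} persists when $y=(y_{1},y',0)\in Q\cap S$ and $x=(x_{1},0,x'')\in Q'\cap W$ share a face-point placed at the origin, since $|y'|,|x''|\lesssim\ell(Q)$ is absorbed into $|y_{1}|+|x_{1}|$ once the two points are separated along $L$. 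Lemma \ref{connected} combined with $L$-transversality of $\Omega$ ensures that $Q\cap L$ is a single interval of length $\approx\ell(Q)$, so the selection of the minimal subquasicube $Q''\subset Q'$ with $Q''\cap L=Q'\cap L$ carries over verbatim; Muckenhoupt's two-weight Hardy inequality then closes the estimate, with the off-$L$ error terms absorbed into $\mathcal{A}_{2}^{\alpha}+\mathcal{A}_{2}^{\alpha,\func{dual}}$ via the standard separated-support estimate (\ref{separated}).
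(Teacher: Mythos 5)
The genuine gap is in your treatment of the backward tripled testing condition. You propose to push $\sigma$ and $\omega$ forward to $L$ by \emph{orthogonal projection} and to replace the kernel by $\left( \left\vert y_{1}\right\vert +\left\vert x_{1}\right\vert \right) ^{\alpha -n}$. That substitution does give an upper bound on the kernel (since $\left\vert y-x\right\vert \gtrsim \left\vert y\right\vert +\left\vert x\right\vert \geq \left\vert y_{1}\right\vert +\left\vert x_{1}\right\vert$), but it is far too lossy: for $y=\left( y_{1},y^{\prime },0\right) \in Q\cap S$ with $\left\vert y^{\prime }\right\vert =R\gg \left\vert y_{1}\right\vert =r$, the weight $\left\vert y_{1}\right\vert ^{2\alpha -2n}=r^{2\alpha -2n}$ appearing in your Hardy characteristic $\left( \int_{\left\{ \left\vert y_{1}\right\vert \geq r\right\} }\left\vert y_{1}\right\vert ^{2\alpha -2n}d\sigma \right) \left( \int_{\left\{ \left\vert x_{1}\right\vert \leq r\right\} }d\omega \right) $ exceeds the correct Poisson weight $R^{2\alpha -2n}$ by the unbounded factor $\left( R/r\right) ^{2\left( n-\alpha \right) }$, so this characteristic is \emph{not} controlled by $\mathcal{A}_{2}^{\alpha }$ (a point mass of $\sigma $ at $\left( r,e,0\right) $ with $\left\vert e\right\vert =R$ and a point mass of $\omega $ near the corner already defeats it). The correct one-dimensional variable is the \emph{radial} distance to the intersection point $P=Q\cap Q^{\prime }\cap L$, not the $x_{1}$-coordinate: since $y\in S$ and $x\in W$ give $\left\langle y,x\right\rangle =y_{1}x_{1}$, one has $\left\vert y-x\right\vert ^{2}=\left\vert y\right\vert ^{2}+\left\vert x\right\vert ^{2}-2y_{1}x_{1}$, and $L$-transversality (\ref{trans edges}) together with the separating quasiface forces $y_{1}x_{1}\leq \left( 1-\epsilon \right) \left\vert y\right\vert \left\vert x\right\vert $, whence $\left\vert y-x\right\vert \approx \left\vert y\right\vert +\left\vert x\right\vert $. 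This is exactly the paper's prescription --- decompose $Q\cap S$ and $Q^{\prime }\cap W$ into \emph{annuli} centered at $P$ --- and it is also what Subsection \ref{Subsec triple} already does in the line case, where the variable is $t=\left\vert y\right\vert $ rather than $y_{1}$. With the radial pushforwards the two-weight Hardy inequalities close as you intend.

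Two smaller points. First, in your forward quasienergy argument you apply the strong reversal (\ref{diff quotient}) to every $J$ meeting $W$, asserting $\left\vert x^{\prime \prime }\right\vert \lesssim \frac{1}{\gamma }\left\vert y_{1}-x_{1}\right\vert $ for $x\in J\cap W$; this requires $\limfunc{dist}\left( J,S\right) \lesssim \ell \left( J\right) $ and fails when $3J\cap S=\emptyset $. The paper first splits into $\mathcal{C}_{1}\left( I\right) =\left\{ J:3J\cap S=\emptyset \right\} $, handled entirely by weak reversal (every $\mathbb{R}^{k_{2}}$-component of $y-x$ equals $-x^{j}$ with $\left\vert x^{\prime \prime }\right\vert \gtrsim \ell \left( J\right) $ of constant sign), and $\mathcal{C}_{2}\left( I\right) =\left\{ 3J\cap S\neq \emptyset \right\} $, where your end/side argument applies; you perform exactly this split in your backward argument, so the fix is routine. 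Second, you do not need to run both energy arguments: the hypotheses are symmetric under interchanging $\left( \sigma ,S,k_{1}\right) $ with $\left( \omega ,W,k_{2}\right) $, so the paper proves only the forward condition and obtains the backward one for free.
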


\begin{proof}
In our current situation, the assumptions on $\sigma $ and $\omega $ are
symmetric so that it is enough to prove just that the forward quasienergy
condition $\mathcal{E}_{\alpha }^{\Omega \mathcal{Q}^{n}}$ is bounded by a
constant multiple of $\mathfrak{T}_{\mathbf{R}_{\Psi }^{\alpha ,n}}^{\Omega 
\mathcal{Q}^{n}}+\sqrt{\mathcal{A}_{2}^{\alpha }}$, where $\mathcal{A}%
_{2}^{\alpha }$ is the Muckenhoupt condition with holes. We must show%
\begin{equation*}
\sup_{\ell \geq 0}\sum_{r=1}^{\infty }\sum_{J\in \mathcal{M}_{\limfunc{deep}%
}^{\ell }\left( I_{r}\right) }\left( \frac{\mathrm{P}^{\alpha }\left( J,%
\mathbf{1}_{I\setminus J^{\ast }}\sigma \right) }{\left\vert J\right\vert ^{%
\frac{1}{n}}}\right) ^{2}\left\Vert \mathsf{P}_{J}^{\omega }\mathbf{x}%
\right\Vert _{L^{2}\left( \omega \right) }^{2}\lesssim \left( \left( 
\mathfrak{T}_{\mathbf{R}_{\Psi }^{\alpha ,n}}^{\Omega \mathcal{Q}%
^{n}}\right) ^{2}+\mathcal{A}_{2}^{\alpha }\right) \left\vert I\right\vert
_{\sigma }\ ,
\end{equation*}%
for all partitions of a dyadic quasicube $I=\overset{\cdot }{%
\dbigcup\limits_{r\geq 1}}I_{r}$ into dyadic subquasicubes $I_{r}$. We again
fix $\ell \geq 0$ and suppress both $\ell $ and $\mathbf{r}$ in the notation 
$\mathcal{M}_{\limfunc{deep}}\left( I_{r}\right) =\mathcal{M}_{\mathbf{r}-%
\limfunc{deep}}^{\ell }\left( I_{r}\right) $. We may assume that all the
quasicubes $J$ intersect $\limfunc{supp}\omega $, since otherwise $%
\left\Vert \mathsf{P}_{J}^{\omega }\mathbf{x}\right\Vert _{L^{2}\left(
\omega \right) }^{2}=0$, hence that all the quasicubes $I_{r}$ and $J$
intersect $W$, which contains $\limfunc{supp}\omega $. Thus what we must
show is%
\begin{equation}
\sum_{r=1}^{\infty }\sum_{J\in \mathcal{M}_{\limfunc{deep}}\left(
I_{r}\right) }\left( \frac{\mathrm{P}^{\alpha }\left( J,\mathbf{1}%
_{I\setminus J^{\ast }}\sigma \right) }{\left\vert J\right\vert ^{\frac{1}{n}%
}}\right) ^{2}\left\Vert \mathsf{P}_{J}^{\omega }\mathbf{x}\right\Vert
_{L^{2}\left( \omega \right) }^{2}\lesssim \left( \left( \mathfrak{T}_{%
\mathbf{R}_{\Psi }^{\alpha ,n}}^{\Omega \mathcal{Q}^{n}}\right) ^{2}+%
\mathcal{A}_{2}^{\alpha }\right) \left\vert I\right\vert _{\sigma }\ ,
\label{must show}
\end{equation}%
where since $\omega $ is supported in $W$,%
\begin{equation}
\left\Vert \mathsf{P}_{J}^{\omega }\mathbf{x}\right\Vert _{L^{2}\left(
\omega \right) }^{2}=\left\Vert \mathsf{P}_{J}^{\omega }x^{1}\right\Vert
_{L^{2}\left( \omega \right) }^{2}+\sum_{j=k_{1}+2}^{n}\left\Vert \mathsf{P}%
_{J}^{\omega }x^{j}\right\Vert _{L^{2}\left( \omega \right) }^{2}\ .
\label{W energy}
\end{equation}

Let 
\begin{equation*}
\mathcal{C}\left( I\right) \equiv \left\{ J\in \overset{\cdot }{%
\dbigcup\limits_{r\geq 1}}\mathcal{M}_{\limfunc{deep}}\left( I_{r}\right)
:J\cap W\neq \emptyset \right\}
\end{equation*}%
be the collection of all quasicubes $J$ arising in (\ref{must show}). We
divide the quasicubes $J$ in $\mathcal{C}\left( I\right) $ into two separate
collections $\mathcal{C}_{i}\left( I\right) $, $1\leq i\leq 2$, according to
how close $J$ is to the plane $S$ containing the support of $\sigma $:%
\begin{eqnarray*}
\mathcal{C}_{1}\left( I\right) &\equiv &\left\{ J\in \mathcal{C}\left(
I\right) :3J\cap S=\emptyset \right\} , \\
\mathcal{C}_{2}\left( I\right) &\equiv &\left\{ J\in \mathcal{C}\left(
I\right) :3J\cap S\neq \emptyset \right\} .
\end{eqnarray*}%
For the first collection $\mathcal{C}_{1}\left( I\right) $ we estimate the
corresponding sum in (\ref{must show}) using weak reversal of energy.
Indeed, for $J\in \mathcal{C}_{1}\left( I\right) $ we use the argument in (%
\ref{weak 1}) and (\ref{weak 2}), but with $\sigma $ and $\omega $
interchanged, to see that for any $x\in J$,%
\begin{eqnarray*}
&&\left( \frac{\mathrm{P}^{\alpha }\left( J,\mathbf{1}_{I\setminus J^{\ast
}}\sigma \right) }{\left\vert J\right\vert ^{\frac{1}{n}}}\right)
^{2}\left\Vert \mathsf{P}_{J}^{\omega }\mathbf{x}\right\Vert _{L^{2}\left(
\omega \right) }^{2} \\
&\lesssim &\left( \frac{\mathrm{P}^{\alpha }\left( J,\mathbf{1}_{I\setminus
J^{\ast }}\sigma \right) }{\left\vert J\right\vert ^{\frac{1}{n}}}\right)
^{2}\left\vert J\right\vert ^{\frac{2}{n}}\left\vert J\right\vert _{\omega }=%
\mathrm{P}^{\alpha }\left( J,\mathbf{1}_{I\setminus J^{\ast }}\sigma \right)
^{2}\left\vert J\right\vert _{\omega }\leq \mathrm{P}^{\alpha }\left( J,%
\mathbf{1}_{I}\sigma \right) ^{2}\left\vert J\right\vert _{\omega } \\
&\lesssim &\left\vert \mathbf{R}_{\Psi }^{\alpha ,n}\left( \mathbf{1}%
_{I}\sigma \right) \left( x\right) \right\vert ^{2}\left\vert J\right\vert
_{\omega }\ ,
\end{eqnarray*}%
and so using that the $J\in \mathcal{C}\left( I\right) $ are pairwise
disjoint and contained in $I$, we get%
\begin{eqnarray*}
&&\sum_{J\in \mathcal{C}_{1}\left( I\right) }\left( \frac{\mathrm{P}^{\alpha
}\left( J,\mathbf{1}_{I\setminus J^{\ast }}\sigma \right) }{\left\vert
J\right\vert ^{\frac{1}{n}}}\right) ^{2}\left\Vert \mathsf{P}_{J}^{\omega }%
\mathbf{x}\right\Vert _{L^{2}\left( \omega \right) }^{2}\lesssim \sum_{J\in 
\mathcal{C}_{1}\left( I\right) }\inf_{x\in J}\ \left\vert \mathbf{R}_{\Psi
}^{\alpha ,n}\left( \mathbf{1}_{I}\sigma \right) \left( x\right) \right\vert
^{2}\ \left\vert J\right\vert _{\omega } \\
&&\ \ \ \ \ \ \ \ \ \ \ \ \ \ \ \ \ \ \ \ \ \leq \int_{I}\left\vert \mathbf{R%
}_{\Psi }^{\alpha ,n}\left( \mathbf{1}_{I}\sigma \right) \left( x\right)
\right\vert ^{2}d\omega \left( x\right) \leq \left( \mathfrak{T}_{\mathbf{R}%
_{\Psi }^{\alpha ,n}}^{\Omega \mathcal{Q}^{n}}\right) ^{2}\left\vert
I\right\vert _{\sigma }\ .
\end{eqnarray*}

Now we consider the quasicubes $J$ in the second collection $\mathcal{C}%
_{2}\left( I\right) $. In this case we apply the arguments used above for
the forward quasienergy condition in Subsection \ref{Subsec forward}. Given $%
J\in \mathcal{C}_{2}\left( I\right) $ we consider the decomposition 
\begin{equation*}
I\setminus J^{\ast }=\text{\textsc{E}}\left( J^{\ast }\right) \dot{\cup}%
\text{\textsc{S}}\left( J^{\ast }\right)
\end{equation*}%
of $I\setminus J^{\ast }$ into \emph{end} \textsc{E}$\left( J^{\ast }\right) 
$ and \emph{side} \textsc{S}$\left( J^{\ast }\right) $ disjoint pieces
defined by%
\begin{eqnarray*}
\text{\textsc{E}}\left( J^{\ast }\right) &\equiv &\left( I\setminus J^{\ast
}\right) \cap \left\{ \left( y^{1},y^{\prime \prime \prime }\right)
:\left\vert y^{\prime \prime \prime }-c_{J}^{\prime \prime \prime
}\right\vert \leq \frac{10}{\gamma }\left\vert y^{1}-c_{J}^{1}\right\vert
\right\} ; \\
\text{\textsc{S}}\left( J^{\ast }\right) &\equiv &\left( I\setminus J^{\ast
}\right) \setminus \text{\textsc{E}}\left( J^{\ast }\right) \ ,
\end{eqnarray*}%
where we write $y=\left( y^{1},y^{\prime },y^{\prime \prime }\right) =\left(
y^{1},y^{\prime \prime \prime }\right) $ with $y^{\prime \prime \prime
}=\left( y^{\prime },y^{\prime \prime }\right) \in \mathbb{R}^{k_{1}+k_{2}}$%
. Then by (\ref{W energy}) it suffices to show 
\begin{eqnarray*}
A^{j} &\equiv &\sum_{J\in \mathcal{C}_{2}\left( I\right) }\left( \frac{%
\mathrm{P}^{\alpha }\left( J,\mathbf{1}_{\text{\textsc{E}}\left( J^{\ast
}\right) }\sigma \right) }{\left\vert J\right\vert ^{\frac{1}{n}}}\right)
^{2}\left\Vert \mathsf{P}_{J}^{\omega }x^{j}\right\Vert _{L^{2}\left( \omega
\right) }^{2}\lesssim \left( \left( \mathfrak{T}_{\mathbf{R}_{\Psi }^{\alpha
,n}}^{\Omega \mathcal{Q}^{n}}\right) ^{2}+\mathcal{A}_{2}^{\alpha }\right)
\left\vert I\right\vert _{\sigma }\ , \\
B &\equiv &\sum_{J\in \mathcal{C}_{2}\left( I\right) }\left( \frac{\mathrm{P}%
^{\alpha }\left( J,\mathbf{1}_{\text{\textsc{S}}\left( J^{\ast }\right)
}\sigma \right) }{\left\vert J\right\vert ^{\frac{1}{n}}}\right)
^{2}\left\Vert \mathsf{P}_{J}^{\omega }\mathbf{x}\right\Vert _{L^{2}\left(
\omega \right) }^{2}\lesssim \left( \left( \mathfrak{T}_{\mathbf{R}_{\Psi
}^{\alpha ,n}}^{\Omega \mathcal{Q}^{n}}\right) ^{2}+\mathcal{A}_{2}^{\alpha
}\right) \left\vert I\right\vert _{\sigma }\ ,
\end{eqnarray*}%
for $j=1$ and $k_{1}+2\leq j\leq n$.

We estimate $A^{1}$ involving the ends \textsc{E}$\left( J^{\ast }\right) $
as before, obtaining first a strong reversal of the $x^{1}$-energy $%
\left\Vert \mathsf{P}_{J}^{\omega }x^{1}\right\Vert _{L^{2}\left( \omega
\right) }^{2}$, followed by an `$\mathcal{A}_{2}^{\alpha }$ reversal' of the
other partial energies $\left\Vert \mathsf{P}_{J}^{\omega }x^{j}\right\Vert
_{L^{2}\left( \omega \right) }^{2}$ for $k_{1}+2\leq j\leq n$. In
particular, the strong reversal of $x^{1}$-energy that we obtain here is
analogous to that appearing in (\ref{diff quotient}), and to that appearing
in (\ref{other reversal}) with $\sigma $ and $\omega $ interchanged, and it
then delivers the following estimate analogous to (\ref{A123}),%
\begin{eqnarray*}
&&A^{1}\equiv \sum_{J\in \mathcal{C}_{2}\left( I\right) }\left( \frac{%
\mathrm{P}^{\alpha }\left( J,\mathbf{1}_{\text{\textsc{E}}\left( J^{\ast
}\right) }\sigma \right) }{\left\vert J\right\vert ^{\frac{1}{n}}}\right)
^{2}\int_{J\cap L}\left\vert x^{1}-\mathbb{E}_{J}^{\omega }x^{1}\right\vert
^{2}d\omega \left( x\right) \\
&\lesssim &\sum_{J\in \mathcal{C}_{2}\left( I\right) }\frac{1}{\left\vert
J\right\vert _{\omega }}\int_{J\cap L}\int_{J\cap L}\left\{ \left( \mathbf{R}%
_{\Psi }^{\alpha ,n}\right) _{1}\left( \mathbf{1}_{I}\sigma \right) \left(
x^{1},0^{\prime },x^{\prime \prime }\right) -\left( \mathbf{R}_{\Psi
}^{\alpha ,n}\right) _{1}\left( \mathbf{1}_{I}\sigma \right) \left(
z^{1},0^{\prime },z^{\prime \prime }\right) \right\} ^{2}d\omega \left(
x\right) d\omega \left( z\right) \\
&&+\sum_{J\in \mathcal{C}_{2}\left( I\right) }\frac{1}{\left\vert
J\right\vert _{\omega }}\int_{J\cap L}\int_{J\cap L}\left\{ \left( \mathbf{R}%
_{\Psi }^{\alpha ,n}\right) _{1}\left( \mathbf{1}_{J^{\ast }}\sigma \right)
\left( x^{1},0^{\prime },x^{\prime \prime }\right) -\left( \mathbf{R}_{\Psi
}^{\alpha ,n}\right) _{1}\left( \mathbf{1}_{J^{\ast }}\sigma \right) \left(
z^{1},0^{\prime },z^{\prime \prime }\right) \right\} ^{2}d\omega \left(
x\right) d\omega \left( z\right) \\
&&+\sum_{J\in \mathcal{C}_{2}\left( I\right) }\frac{1}{\left\vert
J\right\vert _{\omega }}\int_{J\cap L}\int_{J\cap L}\left\{ \left( \mathbf{R}%
_{\Psi }^{\alpha ,n}\right) _{1}\left( \mathbf{1}_{\text{\textsc{S}}\left(
J^{\ast }\right) }\sigma \right) \left( x^{1},0^{\prime },x^{\prime \prime
}\right) -\left( \mathbf{R}_{\Psi }^{\alpha ,n}\right) _{1}\left( \mathbf{1}%
_{\text{\textsc{S}}\left( J^{\ast }\right) }\sigma \right) \left(
z^{1},0^{\prime },z^{\prime \prime }\right) \right\} ^{2}d\omega \left(
x\right) d\omega \left( z\right) \\
&\equiv &A_{1}^{1}+A_{2}^{1}+A_{3}^{1},
\end{eqnarray*}%
where we have used the `paraproduct trick' $I=J^{\ast }\dot{\cup}\left(
I\setminus J^{\ast }\right) =J^{\ast }\dot{\cup}$\textsc{E}$\left( J^{\ast
}\right) \dot{\cup}$\textsc{S}$\left( J^{\ast }\right) $.

Now we can discard the differences in both of the terms $A_{1}^{1}$ and $%
A_{2}^{1}$ and use pairwise disjointedness of $J$ and bounded overlap of $%
J^{\ast }$ to control each of $A_{1}^{1}$ and $A_{2}^{1}$ by $\left( 
\mathfrak{T}_{\left( \mathbf{R}_{\Psi }^{\alpha ,n}\right) _{1}}^{\Omega 
\mathcal{Q}^{n}}\right) ^{2}\left\vert I\right\vert _{\sigma }$. Just as in
the previous argument, term $A_{3}^{1}$ is dominated by term $B$.

Now for $k_{1}+2\leq j\leq n$ we again use the `paraproduct trick' $%
I\setminus J^{\ast }=$\textsc{E}$\left( J^{\ast }\right) \dot{\cup}$\textsc{S%
}$\left( J^{\ast }\right) $ to dominate%
\begin{equation*}
A^{j}\equiv \sum_{J\in \mathcal{C}_{2}\left( I\right) }\left( \frac{\mathrm{P%
}^{\alpha }\left( J,\mathbf{1}_{\text{\textsc{E}}\left( J^{\ast }\right)
}\sigma \right) }{\left\vert J\right\vert ^{\frac{1}{n}}}\right)
^{2}\int_{J\cap L}\left\vert x^{j}-\mathbb{E}_{J}^{\omega }x^{j}\right\vert
^{2}d\omega \left( x\right)
\end{equation*}%
by%
\begin{eqnarray*}
&&\sum_{J\in \mathcal{C}_{2}\left( I\right) }\left( \frac{\mathrm{P}^{\alpha
}\left( J,\mathbf{1}_{I\setminus J^{\ast }}\sigma \right) }{\left\vert
J\right\vert ^{\frac{1}{n}}}\right) ^{2}\int_{J\cap L}\left\vert x^{j}-%
\mathbb{E}_{J}^{\omega }x^{j}\right\vert ^{2}d\omega \left( x\right) \\
&&+\sum_{J\in \mathcal{C}_{2}\left( I\right) }\left( \frac{\mathrm{P}%
^{\alpha }\left( J,\mathbf{1}_{\text{\textsc{S}}\left( J^{\ast }\right)
}\sigma \right) }{\left\vert J\right\vert ^{\frac{1}{n}}}\right)
^{2}\int_{J\cap L}\left\vert x^{j}-\mathbb{E}_{J}^{\omega }x^{j}\right\vert
^{2}d\omega \left( x\right) \\
&\equiv &A_{1}^{j}+A_{2}^{j},
\end{eqnarray*}%
where, since the directions of $x^{j}$ are perpendicular to the support of $%
\sigma $ for $k_{1}+2\leq j\leq n$, the term $A_{1}^{j}$ is treated using
`weak energy reversal'. Namely, for $x\in J\cap L$, we have as in (\ref{weak
1}) that,%
\begin{eqnarray*}
\frac{\left\vert x_{j}\right\vert }{\left\vert J\right\vert ^{\frac{1}{n}}}%
\mathrm{P}^{\alpha }\left( J,\mathbf{1}_{I\setminus J^{\ast }}\sigma \right)
&\lesssim &\frac{\left\vert x_{j}\right\vert }{\left\vert J\right\vert ^{%
\frac{1}{n}}}\int_{I\setminus J^{\ast }}\frac{\left\vert J\right\vert ^{%
\frac{1}{n}}}{\left( \left\vert J\right\vert ^{\frac{1}{n}}+\left\vert
y-x\right\vert \right) ^{n+1-\alpha }}d\sigma \left( y\right) \\
&\approx &\int_{I\setminus J^{\ast }}\frac{\left\vert x_{j}\right\vert }{%
\left\vert \Psi \left( y\right) -\Psi \left( x\right) \right\vert
^{n+1-\alpha }}d\sigma \left( y\right) \\
&\approx &\left\vert \left( \mathbf{R}_{\Psi }^{\alpha ,n}\right) _{j}\left( 
\mathbf{1}_{I\setminus J^{\ast }}\sigma \right) \left( x\right) \right\vert
\lesssim \left\vert \mathbf{R}_{\Psi }^{\alpha ,n}\left( \mathbf{1}%
_{I}\sigma \right) \left( x\right) \right\vert +\left\vert \mathbf{R}_{\Psi
}^{\alpha ,n}\left( \mathbf{1}_{J^{\ast }}\sigma \right) \left( x\right)
\right\vert ,
\end{eqnarray*}
which gives%
\begin{eqnarray*}
A_{1}^{j} &=&\sum_{J\in \mathcal{C}_{2}\left( I\right) }\left( \frac{\mathrm{%
P}^{\alpha }\left( J,\mathbf{1}_{I\setminus J^{\ast }}\sigma \right) }{%
\left\vert J\right\vert ^{\frac{1}{n}}}\right) ^{2}\int_{J\cap L}\left\vert
x^{j}-\mathbb{E}_{J}^{\omega }x^{j}\right\vert ^{2}d\omega \left( x\right) \\
&\leq &\sum_{J\in \mathcal{C}_{2}\left( I\right) }\left( \frac{\mathrm{P}%
^{\alpha }\left( J,\mathbf{1}_{I\setminus J^{\ast }}\sigma \right) }{%
\left\vert J\right\vert ^{\frac{1}{n}}}\right) ^{2}\int_{J\cap L}\left\vert
x^{j}\right\vert ^{2}d\omega \left( x\right) \\
&=&\sum_{J\in \mathcal{C}_{2}\left( I\right) }\int_{J\cap L}\left( \frac{%
\left\vert x^{j}\right\vert }{\left\vert J\right\vert ^{\frac{1}{n}}}\mathrm{%
P}^{\alpha }\left( J,\mathbf{1}_{I\setminus J^{\ast }}\sigma \right) \right)
^{2}d\omega \left( x\right) \\
&\lesssim &\sum_{J\in \mathcal{C}_{2}\left( I\right) }\int_{J\cap L}\left(
\left\vert \mathbf{R}_{\Psi }^{\alpha ,n}\left( \mathbf{1}_{I}\sigma \right)
\right\vert ^{2}+\left\vert \mathbf{R}_{\Psi }^{\alpha ,n}\left( \mathbf{1}%
_{J^{\ast }}\sigma \right) \right\vert ^{2}\right) d\omega \left( x\right) ,
\end{eqnarray*}%
and we now proceed as in (\ref{forward A1}) and (\ref{forward A2}). Then we
dominate the second term $A_{2}^{j}$ by term $B$.

Now we treat term $B$ using the `Carleson shadow method' that was used to
treat term $B$ in (\ref{B1 + B2}) in Subsection \ref{Subsec forward}. We
first assume that $n-1\leq \alpha <n$ so that $\mathrm{P}^{\alpha }\left( J,%
\mathbf{1}_{\text{\textsc{S}}\left( J^{\ast }\right) }\sigma \right) \leq 
\mathcal{P}^{\alpha }\left( J,\mathbf{1}_{\text{\textsc{S}}\left( J^{\ast
}\right) }\sigma \right) $, and then use $\left\Vert \mathsf{P}_{J}^{\omega }%
\mathbf{x}\right\Vert _{L^{2}\left( \omega \right) }^{2}\lesssim \left\vert
J\right\vert ^{\frac{2}{n}}\left\vert J\right\vert _{\omega }$ and apply the 
$\mathcal{A}_{2}^{\alpha }$ condition with holes to obtain the following `$%
\mathcal{A}_{2}^{\alpha }$ reversal' of quasienergy,%
\begin{equation*}
B\lesssim \mathcal{A}_{2}^{\alpha }\int_{I}\left\{ \sum_{J\in \mathcal{C}%
_{2}\left( I\right) }\left( \frac{\left\vert J\right\vert ^{\frac{1}{n}}}{%
\left\vert J\right\vert ^{\frac{1}{n}}+\left\vert y-c_{J}\right\vert }%
\right) ^{n+1-\alpha }\mathbf{1}_{\text{\textsc{S}}\left( J^{\ast }\right)
}\left( y\right) \right\} d\sigma \left( y\right) \equiv \mathcal{A}%
_{2}^{\alpha }\int_{I}F\left( y\right) d\sigma \left( y\right) .
\end{equation*}

At this point we claim as above that $F\left( y\right) \leq C$ with a
constant $C$ independent of the decomposition $\mathcal{C}_{2}\left(
I\right) $. For this we now define $\limfunc{Sh}\left( y;\gamma \right) $ to
be the Carleson shadow of the point $y$ onto the $x_{1}$-axis $L$ with sides
of slope $\frac{10}{\gamma }$, i.e. $\limfunc{Sh}\left( y;\gamma \right) $
is the interval on $L$ with length $\frac{1}{5}\gamma \limfunc{dist}\left(
y,L\right) $ and center equal to the point on $W$ that is closest to $y$. We
then proceed as above to the following variant of a previous estimate, where
we here redefine $\mathcal{J}\equiv 3J\cap L$ for $J\in \mathcal{C}%
_{2}\left( I\right) $:%
\begin{eqnarray*}
&&F\left( y\right) =\sum_{\substack{ J\in \mathcal{C}_{2}\left( I\right)  \\ 
\mathcal{J}\subset C\limfunc{Sh}\left( y;\gamma \right) }}\left( \frac{%
\left\vert J\right\vert ^{\frac{1}{n}}}{\left\vert J\right\vert ^{\frac{1}{n}%
}+\left\vert y-c_{J}\right\vert }\right) ^{n+1-\alpha }\mathbf{1}_{\text{%
\textsc{S}}\left( J^{\ast }\right) }\left( y\right) \\
&\lesssim &\sum_{r=1}^{\infty }\sum_{\substack{ J\in \mathcal{M}_{\mathbf{r}-%
\limfunc{deep}}\left( I_{r}\right)  \\ \emptyset \neq \mathcal{J}\subset C%
\limfunc{Sh}\left( y;\gamma \right) }}\left( \frac{\left\vert J\right\vert ^{%
\frac{1}{n}}}{\left\vert y-c_{J}\right\vert }\right) ^{n-\alpha }\frac{%
\left\vert J\right\vert ^{\frac{1}{n}}}{\limfunc{dist}\left( y,L\right) }%
\mathbf{1}_{\text{\textsc{S}}\left( J^{\ast }\right) }\left( y\right) ,
\end{eqnarray*}%
and then following the argument for (\ref{continue}), we can dominate this
by 
\begin{eqnarray*}
\frac{1}{\limfunc{dist}\left( y,L\right) }\sum_{r=1}^{\infty }\left\{ \sum 
_{\substack{ J\in \mathcal{M}_{\mathbf{r}-\limfunc{deep}}\left( I_{r}\right) 
\\ \emptyset \neq \mathcal{J}\subset C\limfunc{Sh}\left( y;\gamma \right) }}%
\left\vert J\right\vert ^{\frac{1}{n}}\right\} &\lesssim &\frac{1}{\limfunc{%
dist}\left( y,L\right) }\sum_{r=1}^{\infty }\beta \left\vert I_{r}\cap
C^{\prime }\limfunc{Sh}\left( y;\gamma \right) \right\vert \\
&\lesssim &\beta \frac{1}{\limfunc{dist}\left( y,L\right) }\left\vert
C^{\prime }\limfunc{Sh}\left( y;\gamma \right) \right\vert \lesssim \beta
\gamma ,
\end{eqnarray*}%
since the quasicubes $J^{\ast }$ have overlap bounded by $\beta $ (so that
we can essentially treat the shadows as being pairwise disjoint). This
completes the proof that%
\begin{equation*}
B\lesssim \mathcal{A}_{2}^{\alpha }\int_{I}F\left( y\right) d\sigma \left(
y\right) \lesssim \mathcal{A}_{2}^{\alpha }\left\vert I\right\vert _{\sigma
}\ .
\end{equation*}%
Finally, note that the case $0\leq \alpha <n-1$ is handled using the
Cauchy-Schwartz inequality as in (\ref{small alpha}).

\bigskip

\textbf{Remark}: Our decomposition into end and side pieces here uses the
line $L$ as the means of definition, rather than the possibly larger
subspace $W$, in order to exploit one-dimensional reversal of energy. Of
course in the argument for the forward quasienergy condition in Subsection %
\ref{Subsec forward}, the spaces $W$ and $L$ coincide.

\bigskip

Finally, we prove the estimate for the tripled testing condition $\mathfrak{T%
}_{\mathbf{R}_{\Psi }^{\alpha ,n}}^{\Omega \mathcal{Q}^{n},\limfunc{triple},%
\func{dual}}$ by exactly the same method as used before in Subsection \ref%
{Subsec triple}. Indeed, with notation analogous to that in Subsection \ref%
{Subsec triple}, we take absolute values inside the fractional singular
integral,%
\begin{equation*}
\int_{Q}\left\vert \mathbf{R}_{\Psi }^{\alpha ,n}\left( 1_{Q^{\prime
}}\omega \right) \right\vert ^{2}d\sigma \lesssim \int_{Q}\left\{
\int_{Q^{\prime }}\left\vert y-x\right\vert ^{\alpha -n}d\omega \left(
x\right) \right\} ^{2}d\sigma \left( y\right) ,
\end{equation*}%
and then decompose the two perpendicular sets $Q^{\prime }\cap W$ and $Q\cap
S$ in annuli away from their point of intersection $P\equiv Q^{\prime }\cap
Q\cap L$. Then using that $\Omega $ is a $C^{1}$ diffeomorphism and $L$%
-transverse, and that $Q$ and $Q^{\prime }$ are neighbouring $\Omega $%
-quasicubes, the Hardy operator applies just as before in Subsection \ref%
{Subsec triple}.
\end{proof}

It is now an easy matter to obtain from Theorems \ref{main'} and \ref{T1
theorem}\ the following $T1$ theorem that generalizes Theorem \ref{final}.

\begin{theorem}
Let 
\begin{eqnarray*}
S &=&\left\{ \left( x_{1},x^{\prime },0\right) \in \mathbb{R}\times \mathbb{R%
}^{k_{1}}\times \mathbb{R}^{k_{2}}:\left( x_{1},x^{\prime }\right) \in 
\mathbb{R}\times \mathbb{R}^{k_{1}}\right\} , \\
W &=&\left\{ \left( x_{1},0,x^{\prime \prime }\right) \in \mathbb{R}\times 
\mathbb{R}^{k_{1}}\times \mathbb{R}^{k_{2}}:\left( x_{1},x^{\prime \prime
}\right) \in \mathbb{R}\times \mathbb{R}^{k_{2}}\right\} , \\
L &=&S\cap W=\left\{ \left( x_{1},0,0\right) \in \mathbb{R}\times \mathbb{R}%
^{k_{1}}\times \mathbb{R}^{k_{2}}:x_{1}\in \mathbb{R}\right\} ,
\end{eqnarray*}%
be $\left( k_{1}+1\right) $-, $\left( k_{2}+1\right) $- and $1$- dimensional
subspaces respectively of $\mathbb{R}^{n}=\mathbb{R}\times \mathbb{R}%
^{k_{1}}\times \mathbb{R}^{k_{2}}$. Let $\sigma $ and $\omega $ be locally
finite positive Borel measures supported on $S$ and $W$ respectively
(possibly having common point masses in the intersection $L$ of their
supports). Suppose that $\Omega $ is a $C^{1}$ diffeomorphism and $L$%
-transverse. Suppose also that $\mathbf{R}_{\Psi }^{\alpha ,n}$ is a
conformal fractional Riesz transform with $0\leq \alpha <n$, where $\Psi $
is a $C^{1,\delta }$ diffeomorphism given by $\Psi \left( x\right) =x-\left(
0,\psi \left( x_{1}\right) \right) $ where $\psi $ satisfies (\ref{small Lip}%
). Set $\left( \mathbf{R}_{\Psi }^{\alpha ,n}\right) _{\sigma }f=\mathbf{R}%
_{\Psi }^{\alpha ,n}\left( f\sigma \right) $ for any smooth truncation of $%
\mathbf{R}_{\Psi }^{\alpha ,n}$. Then the operator norm $\mathfrak{N}_{%
\mathbf{R}_{\Psi }^{\alpha ,n}}$ of $\left( \mathbf{R}_{\Psi }^{\alpha
,n}\right) _{\sigma }$ as an operator from\thinspace $L^{2}\left( \sigma
\right) $ to $L^{2}\left( \omega \right) $, uniformly in smooth truncations,
satisfies%
\begin{equation*}
\mathfrak{N}_{\mathbf{R}_{\Psi }^{\alpha ,n}}\approx C_{\alpha }\left( \sqrt{%
\mathfrak{A}_{2}^{\alpha }}+\mathfrak{T}_{\mathbf{R}_{\Psi }^{\alpha
,n}}^{\Omega \mathcal{Q}^{n}}+\mathfrak{T}_{\mathbf{R}_{\Psi }^{\alpha
,n}}^{\Omega \mathcal{Q}^{n},\func{dual}}\right) .
\end{equation*}
\end{theorem}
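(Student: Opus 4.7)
The plan is to reduce this statement to the combination of Theorem \ref{main'} and Conclusion \ref{con thm} in precisely the same way that Theorem \ref{final} followed from Theorem \ref{main} and Conclusion \ref{con thm} in the case of a single line. Necessity of $\sqrt{\mathfrak{A}_{2}^{\alpha }}+\mathfrak{T}_{\mathbf{R}_{\Psi }^{\alpha ,n}}^{\Omega \mathcal{Q}^{n}}+\mathfrak{T}_{\mathbf{R}_{\Psi }^{\alpha ,n}}^{\Omega \mathcal{Q}^{n},\func{dual}}$ is standard: the quasitesting constants are bounded by $\mathfrak{N}_{\mathbf{R}_{\Psi }^{\alpha ,n}}$ by plugging $f=\mathbf{1}_{Q}$ into the norm inequality (and its dual), while the punctured and offset $\mathfrak{A}_{2}^{\alpha }$ conditions follow as in part (2) of Theorem \ref{T1 theorem}.

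For sufficiency, the first step is to invoke Theorem \ref{main'}, whose hypotheses are exactly those we have assumed: $\sigma ,\omega $ are supported on $S,W$ with $S\cap W=L$, the map $\Omega $ is a $C^{1}$ diffeomorphism that is $L$-transverse, and $\Psi $ satisfies the smallness condition (\ref{small Lip}). This yields
\begin{equation*}
\mathcal{E}_{\alpha }^{\Omega \mathcal{Q}^{n}}+\mathcal{E}_{\alpha }^{\Omega \mathcal{Q}^{n},\func{dual}}+\mathcal{WBP}_{\mathbf{R}_{\Psi }^{\alpha ,n}}^{\Omega \mathcal{Q}^{n}}\lesssim \sqrt{\mathfrak{A}_{2}^{\alpha }}+\mathfrak{T}_{\mathbf{R}_{\Psi }^{\alpha ,n}}^{\Omega \mathcal{Q}^{n}}+\mathfrak{T}_{\mathbf{R}_{\Psi }^{\alpha ,n}}^{\Omega \mathcal{Q}^{n},\func{dual}}\ ,
\end{equation*}
where the weak boundedness property is derived from the backward triple quasitesting bound via Cauchy--Schwarz as indicated after (\ref{triple testing}).

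The second step is to apply Conclusion \ref{con thm}, i.e.\ the extension of Theorem \ref{T1 theorem} to conformal $\alpha $-fractional Riesz transforms $\mathbf{R}_{\Psi }^{\alpha ,n}$ with $\Omega $-quasicubes. Its hypotheses are the two quasicube testing conditions, the quasiweak boundedness property, the full Muckenhoupt block $\mathfrak{A}_{2}^{\alpha }$, and the two quasienergy conditions $\mathcal{E}_{\alpha }^{\Omega \mathcal{Q}^{n}}+\mathcal{E}_{\alpha }^{\Omega \mathcal{Q}^{n},\func{dual}}<\infty $, all of which are in hand from the previous step. Thus
\begin{equation*}
\mathfrak{N}_{\mathbf{R}_{\Psi }^{\alpha ,n}}\lesssim \sqrt{\mathfrak{A}_{2}^{\alpha }}+\mathcal{E}_{\alpha }^{\Omega \mathcal{Q}^{n}}+\mathcal{E}_{\alpha }^{\Omega \mathcal{Q}^{n},\func{dual}}+\mathfrak{T}_{\mathbf{R}_{\Psi }^{\alpha ,n}}^{\Omega \mathcal{Q}^{n}}+\mathfrak{T}_{\mathbf{R}_{\Psi }^{\alpha ,n}}^{\Omega \mathcal{Q}^{n},\func{dual}}+\mathcal{WBP}_{\mathbf{R}_{\Psi }^{\alpha ,n}}^{\Omega \mathcal{Q}^{n}}\ ,
\end{equation*}
and feeding in the bound from Theorem \ref{main'} collapses the right-hand side to $\sqrt{\mathfrak{A}_{2}^{\alpha }}+\mathfrak{T}_{\mathbf{R}_{\Psi }^{\alpha ,n}}^{\Omega \mathcal{Q}^{n}}+\mathfrak{T}_{\mathbf{R}_{\Psi }^{\alpha ,n}}^{\Omega \mathcal{Q}^{n},\func{dual}}$.

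All of the real work has been pushed into Theorem \ref{main'}, so there is essentially no new obstacle at this stage; the only points worth verifying carefully are bookkeeping ones. First, that the symmetry of the hypothesis (one measure on $S$, the other on $W$) makes the forward and dual quasienergy estimates symmetric, so proving only one of them as done in Theorem \ref{main'} suffices. Second, that the dominant partial quasienergy for $\omega $ still lies along the line $L=S\cap W$ (plus the $k_{2}$ directions perpendicular to $S$ in which one only needs the weak-reversal estimate in the style of (\ref{weak 1})--(\ref{weak 2})), so the argument goes through without altering the Hardy-type one-dimensional inputs. Third, that the $L$-transversality of $\Omega $ is used only through Lemma \ref{connected}, whose conclusions (\ref{trans edges}) and (\ref{conn}) remain valid here since $L$ is a one-dimensional line regardless of the ambient decomposition $\mathbb{R}^{n}=\mathbb{R}\times \mathbb{R}^{k_{1}}\times \mathbb{R}^{k_{2}}$, so the backward tripled quasitesting and weak boundedness estimates in Subsection \ref{Subsec triple} carry over verbatim.
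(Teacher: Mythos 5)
Your proposal is correct and takes essentially the same route as the paper, which itself only remarks that the theorem follows "as an easy matter" from Theorem \ref{main'} combined with Theorem \ref{T1 theorem} (via Conclusion \ref{con thm}); your write-up simply makes that deduction explicit, including the correct handling of the weak boundedness property through the backward tripled quasitesting bound.
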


\begin{remark}
The above theorem generalizes Theorem \ref{final} by permitting the support
of the measure $\omega $ to extend into an othogonal subspace in a higher
dimension. There is an analogous theorem that generalizes Theorem \ref%
{general T1} in this way, but we will not pursue this here.
\end{remark}

\end{document}